\theoremstyle{plain}
\newtheorem*{theorem*}{Theorem}
\newtheorem*{corollary*}{Corollary}
\newtheorem{theorem}{Theorem}[section]
\newtheorem{claim}[theorem]{Claim}
\newtheorem{lemma}[theorem]{Lemma}
\newtheorem{proposition}[theorem]{Proposition}
\newtheorem{corollary}[theorem]{Corollary}
\theoremstyle{definition}
\newtheorem{definition}[theorem]{Definition}
\newtheorem{example}[theorem]{Example}
\newtheorem{question}[theorem]{Question}
\theoremstyle{remark}
\newtheorem{remark}[theorem]{Remark}
\newtheorem{fact}[theorem]{Fact}
\def\l{{\langle}}
\def\r{{\rangle}}
\newcommand{\FIN}{\mathrm{FIN}}
\newcommand{\fin}{\mathrm{fin}}
\def\mathunderaccent#1#2 {\let\theaccent#1\skewfactor#2
\mathpalette\putaccentunder}
\def\putaccentunder#1#2{\oalign{$#1#2$\crcr\hidewidth
\vbox to.2ex{\hbox{$#1\skew\skewfactor\theaccent{}$}\vss}\hidewidth}}
\def\smallbox#1{\leavevmode\thinspace\hbox{\vrule\vtop{\vbox
   {\hrule\kern1pt\hbox{\vphantom{\tt/}\thinspace{\tt#1}\thinspace}}
   \kern1pt\hrule}\vrule}\thinspace}
\DeclareMathOperator{\rng}{rng}
\title[Commutativity of products]{Commutativity of cofinal types}
\date{\today}
\author{Tom Benhamou}
\thanks{This research  was supported by the National Science Foundation under Grant
No. DMS-2346680}
\address[Benhamou]{Department of Mathematics, Rutgers University, New Brunswick ,NJ USA}
\email{tom.benhamou@rutgers.edu}
\subjclass[2020]{03E02, 03E04, 03E05, 03E55, 06A06, 06A07}
\keywords{ultrafilter, $p$-point, rapid ultrafilter,  Tukey order, cofinal type, Fubini product}
\begin{document}
\maketitle

\begin{abstract}
    We continue the study of the pseudo-intersection property with respect to an ideal introduced in \cite{TomNatasha2}.  Our theory applies to the study of the Tukey types of general sums of ultrafilters,  which, as evidenced by the results of this paper, can be quite complex. It also applies to construct a large class of ultrafilter $\mathcal{C}$ over $\omega$ such that any two ultrafilters $U,V\in \mathcal{C}$ commute; that is, $U\cdot V\equiv_T V\cdot U$.  The class $\mathcal{C}$ class contains most known cofinal types of ultrafilters on $\omega$. This is in sharp contrast to the Rudin-Keisler ordering. In the third part of this paper, we apply our results to study the class of ultrafilters Tukey above $\omega^\omega$. Specifically, we prove that ultrafilters without the $I$-p.i.p are always above $I^\omega$ and in particular non-$p$-points are Tukey above $\omega^\omega$. Finally, we introduce the hierarchy of $\alpha$-almost rapid ultrafilters. We prove that it is consistent for them to form a strictly wider class than the rapid ultrafilters, and give an example of a non-rapid $p$-point ultrafilter which is Tukey above $\omega^\omega$. This addresses and answers several questions from \cite{TomNatasha,TomNatasha2,Dobrinen/Todorcevic11,Milovich08}.
\end{abstract}
\section{Introduction}
The Tukey order stands out as one of the most studied orders of ultrafilters \cite{Milovich08,Dobrinen/Todorcevic11,Hrusak,Blass/Dobrinen/Raghavan15,Raghavan/Verner19,TomNatasha}. Its origins lie in the examination of Moore-Smith convergence, and it holds particular significance in unraveling the cofinal structure of the partial order $(U,\supseteq)$ of an ultrafilter.
Formally, given two posets, $(P,\leq_P)$ and $(Q,\leq_Q)$ we say that $(P,\leq_P)\leq_T (Q,\leq_Q)$ if there is map $f:Q\rightarrow P$, which is cofinal, namely, $f''\mathcal{B}$ is cofinal in $P$ whenever $\mathcal{B}\subseteq Q$ is cofinal. Schmidt \cite{Schmidt55} observed that this is equivalent to having a map $f:P\rightarrow Q$, which is unbounded, namely, $f''\mathcal{A}$ is unbounded in $Q$ whenever $\mathcal{A}\subseteq P$ is unbounded in $P$.
We say that $P$ and $Q$ are {\em Tukey equivalent},
and write
$P\equiv_T Q$,
if $P\leq_T Q$ and $Q\leq_T P$;  the equivalence class $[P]_T$ is called the \textit{Tukey type} or \textit{cofinal type} of $P$.

The scope of the study of cofinal types of ultrafilters covers several long-standing open problems such as: 
\begin{itemize}
    \item Isbell's problem\cite{Isbell65}: Is it provable within ZFC that a non-Tukey-top ultrafilter\footnote{A Tukey-top ultrafilter is an ultrafilter which is Tukey equivalent to the poset $([\mathfrak{c}]^{<\omega},\subseteq)$. Isbell \cite{Isbell65} constructed such ultrafilters in $ZFC$.} on $\omega$ exists?
    \item Kunen's Problem: Is it consistent that $\mathfrak{u}_{\aleph_1}<2^{\aleph_1}$? Namely, is it consistent to have a set $\mathcal{B}\subseteq P(\omega_1)$ of cardinality less than $2^{\aleph_1}$ which generates an ultrafilter?
\end{itemize} 
The Tukey order is also related to the Katovich problem. A systematic study of the Tukey order on ultrafilter over $\omega$, traces back to Isbell \cite{Isbell65}, later to Milovich \cite{Milovich08} and Dobrinen and Todorcevic \cite{Dobrinen/Todorcevic11}. Lately, Benhamou and Dobrinen  \cite{TomNatasha} extended this study to ultrafilters on cardinals greater than $\omega$. Over measurable cardinals, the Tukey order is connected to recent developments revolving the so-called Galvin property, studied by Abraham, Benhamou, Garti, Goldberg, Gitik, Hayut, Magidor, Poveda, Shelah and others \cite{AbrahamShelah1986,Garti2017WeakDA,GartiTilt,bgp,Non-GalvinFil,GalDet,OnPrikryandCohen,Parttwo,ghhm,TomGabe}; the Galvin property in one of its forms is equivalent to being Tukey-top as shown essentially by Isbell (in different terminology).  Moreover, being Tukey-top in the restricted class of $\kappa$-complete ultrafilters takes the usual studied forms of the Galvin property.

In this paper, we address the problem of commutativity of the Tukey types of Fubini products of ultrafilters $U,V$ over $\omega$ (Definition \ref{Definition: Product and power}), denoted by $U\cdot V$. This problem was suggested in \cite{TomNatasha2}, and was already partially addressed:
\begin{itemize}
    \item Dobrinen and Todorcevic \cite{Dobrinen/Todorcevic11} proved that if $U,V$ are rapid $p$-points then $U\cdot V\equiv_T V\cdot U$.
    \item Milovich \cite{Milovich12} extended this result to prove that if $U,V$ are just $p$-points, then $U\cdot V\equiv_T V\cdot U$.
    \item Benhamou and Dobrinen proved later that if $U,V$ are $\kappa$-complete ultrafilters over a measurable cardinal $\kappa$ then $U\cdot V\equiv_T V\cdot U$.
\end{itemize}
Our goal is to study larger classes where commutativity holds, these are called \textit{commutative classes} of ultrafilters. In that avenue, we prove the following:
\begin{theorem*}
    There is a commutative class of ultrafilter $\mathcal{C}$ which is closed under Fubini sums and includes all Tukey-top ultrafilters, $p$-points, stable ordered union ultrafilters, generic ultrafilters for $P(\omega)/I$ for a wide range of $I$'s. \footnote{$I$ should be simple, and $I^\omega\equiv_V I$, so for example $I$ can be $\fin^{\otimes\alpha}$.}
\end{theorem*}
The commutative class described in the previous theorem includes most known ultrafilters on $\omega$, and in fact it is open whether this class consistently excludes an ultrafilter  on $\omega$. 
The commutativity of cofinal types stands in sharp contrast to the Rudin-Keisler ordering which is known to be highly non commutative with respect to Fubini product\footnote{For example if $U,W$ are non-isomorphic Ramsey ultrafilters then $U\cdot W\not\equiv_{RK}W\cdot U$. Just otherwise, by a theorem of Rudin (see for example \cite[Thm. 5.5]{Kanamori1976UltrafiltersOA}, $U,W$ should be Rudin-Frol\'{i}k (and therefore Rudin-Keisler) comparable, contradicting the RK-minimality of Ramsey ultrafilters.}. On measurable cardinals, the situation is even more dramatic, due to a theorem of Solovey (see \cite[Thm. 5.7]{Kanamori1976UltrafiltersOA}) if $U,W$ are $\kappa$-complete ultrafilters on $\kappa$ the $U\cdot W\equiv_T W\cdot U$ if and only if $W\equiv_{RK} U^n$ for some $n$ or vise versa.
Recently, Goldberg \cite{goldberg2021products} examined situations of commutativity with respect to several product operations on countably complete ultrafilters.

The main idea that is used in the proof, is to analyze the cofinal types of ideals and filters connected to a given ultrafilter $U$. More specifically, we will exploit the idea of the \textit{pseudo-intersection property with respect to $I$} (Definition \ref{definition: the I-p.i.p}) which was introduced in \cite{TomNatasha2} and was used to prove that Milliken-Taylor ultrafilters and generic ultrafilters\footnote{For the definition of $I^{\otimes\alpha}$, see the paragraph before Fact \ref{Fact: dual operation}.} for $P(\omega^\alpha)/\fin^{\otimes\alpha}$ satisfy $U\cdot U\equiv_T U$. In fact we use the theory developed here to  prove some generalization of theorem regarding generic ultrafilters over $P(\omega^\alpha)/\fin^{\otimes\alpha}$:
\begin{theorem*}
    Let $I$ be a deterministic $\sigma$-ideal\footnote{A $\sigma$-ideal is an ideal such that $P(\omega)/I$ is $\sigma$-complete}. Then for every $\alpha<\omega_1$, and any $V$-generic ultrafilter for $P(\omega^\alpha)/I^{\otimes\alpha}$, $G\cdot G\equiv_T G$. 
\end{theorem*}
In ~\S\ref{section: p.i.p}, we provide a comprehensive study of this property. The main result of this section is
\begin{theorem*}
    Suppose that $\mathcal{A}$ is a discrete set of ultrafilters. Then for each $U\in \mathcal{A}$, $U$ has the $(\bigcap\mathcal{A})^*$-p.i.p.
\end{theorem*}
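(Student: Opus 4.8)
The plan is to exhibit, for a fixed $U\in\mathcal{A}$, a \emph{single} set that serves simultaneously as a pseudo-intersection modulo $I:=(\cap\mathcal{A})^*$ for the entire filter $U$. This is stronger than what the $I$-p.i.p.\ demands (which only asks, for each countable $\mathcal{F}\subseteq U$, for some $B\in U$ with $B\setminus F\in I$ for all $F\in\mathcal{F}$), and it renders the countability of the family irrelevant. First I would unpack the ideal: since $\cap\mathcal{A}=\bigcap_{V\in\mathcal{A}}V$, for any $X\subseteq\omega$,
$$X\in(\cap\mathcal{A})^*\iff \omega\setminus X\in V\text{ for every }V\in\mathcal{A}\iff X\notin V\text{ for every }V\in\mathcal{A},$$
the last equivalence using that each $V$ is an ultrafilter. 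So membership in $I$ is precisely the statement of being ``small'' with respect to every ultrafilter of $\mathcal{A}$ at once.

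Next I would invoke discreteness. For each $U\in\mathcal{A}$ it supplies a separating set $A_U\in U$ with $A_U\notin V$ for all $V\in\mathcal{A}\setminus\{U\}$ (isolating $U$ in $\mathcal{A}$ as a subspace of $\beta\omega$; if the paper's notion of discreteness is given instead by pairwise disjoint witnesses, these serve equally well). The claim I would then prove is that $A_U$ is a pseudo-intersection of $U$ modulo $I$: for every $Y\in U$ one has $A_U\setminus Y\in I$. Note also that $A_U\in U\subseteq I^+$, so this pseudo-intersection is $I$-positive and hence nontrivial.

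The verification splits into exactly two cases, according to the characterization of $I$ above. For $V=U$: since $Y\in U$ we have $\omega\setminus Y\notin U$, and $A_U\setminus Y\subseteq\omega\setminus Y$, whence $A_U\setminus Y\notin U$. For $V\in\mathcal{A}\setminus\{U\}$: since $A_U\setminus Y\subseteq A_U$ and $A_U\notin V$, we get $A_U\setminus Y\notin V$. Thus $A_U\setminus Y\notin V$ for every $V\in\mathcal{A}$, i.e.\ $A_U\setminus Y\in I$. Taking $B=A_U$ for any prescribed countable $\mathcal{F}\subseteq U$ then witnesses the $I$-p.i.p.

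I do not expect a serious obstacle: the content lies entirely in selecting the right witness and reading off the two-case check. The one point requiring care is the case $V=U$, about which the separating set says nothing; here one must instead exploit that the pseudo-intersection is being compared against members $Y$ of $U$, which is exactly what forces $A_U\setminus Y$ out of $U$. The only genuine risk is a mismatch with the paper's precise formulation of the $I$-p.i.p.\ (in particular that the witness is required in $U$ and that the relevant relation is $A_U\setminus Y\in I$ rather than the reverse inclusion); under that reading the argument is uniform and uses neither countability of $\mathcal{A}$ nor countability of the family.
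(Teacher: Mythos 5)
Your proof is correct and matches the paper's approach: the paper deduces the discrete case from the proposition that any $U$ which is not an accumulation point of $\mathcal{A}$ (equivalently, any $U$ admitting a separating set $A_U\in U$ with $A_U\notin V$ for all $V\in\mathcal{A}\setminus\{U\}$) has the $(\bigcap\mathcal{A})^*$-p.i.p., and its contradiction argument is exactly the contrapositive of your two-case check (for $V=U$ one uses $X_n\in U$; for $V\neq U$ one uses $A_U\notin V$ together with downward closure). The only difference is expository: by arguing directly you make explicit that the single set $A_U$ serves as a uniform witness for all of $U$ with countability nowhere needed, a strengthening that is implicit but not stated in the paper's proof.
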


The property of ideals which will be in frequent use, is the property of being  \textit{deterministic} (Definition \ref{Def: deterministinc}). This property guarantees that whenever $I\subseteq J$, $I\leq_T J$.

We also investigate the Tukey type of ultrafilters of the form $\sum_UV_\alpha$. The only general result regarding the Tukey-type of such ultrafilters is due\footnote{Dobrinen and Todorcevic proved in for $\kappa=\omega$, but the proof for a general $\kappa$ appears in \cite{TomNatasha}.} to Dobrinen and Todorcevic \cite{Dobrinen/Todorcevic11} where they prove that if $U$ is an ultrafilter on $\kappa$, and $V_\alpha$ is a sequence of ultrafilters, then $\sum_UV_\alpha\leq_T U\times\prod_{\alpha<\kappa}V_\alpha$. It turns out that the Tukey class of such ultrafilters is much more complicated and the nice characterization we have for $U\cdot V\equiv_T U\times V^\omega$ is missing in the general case.  It is not hard to see that the ultrafilter $\sum_UV_\alpha$ is Tukey below each ultrafilter in the set $\mathcal{B}(U,\l V_\alpha\mid \alpha<\kappa\r)=\{U\times \prod_{\alpha\in X}V_\alpha\mid X\in U\}$. We prove that in some sense it is the greatest lower bound:
\begin{theorem*}
    For complete directed ordered set $\mathbb{P}$, $\mathbb{P}$ is uniformly below \footnote{See Definition \ref{Def: uniformly below}.} $\mathcal{B}(U,\l V_\alpha\mid \alpha<\kappa\r)$
     if and only if  $\sum_UV_\alpha\geq_T \mathbb{P}$.
\end{theorem*}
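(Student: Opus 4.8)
The plan is to prove both implications, using throughout the canonical cofinal maps from the members of $\mathcal{B}$ onto $W := \sum_U V_\alpha$. For $X \in U$ write $Q_X = U \times \prod_{\alpha \in X} V_\alpha$, and define $\Phi_X : Q_X \to W$ by $\Phi_X(A,g) = \{(\alpha,\beta) : \alpha \in A \cap X,\ \beta \in g(\alpha)\}$. One checks immediately that $\Phi_X$ is monotone (with respect to $\supseteq$ in every coordinate) and cofinal, since any $B \in W$ is refined by $\Phi_X(A_B, B\upharpoonright X)$ where $A_B = \{\alpha : B_{(\alpha)} \in V_\alpha\} \in U$; this is the map already implicit in the stated fact $W \leq_T Q_X$. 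The essential feature I will exploit is that these maps cohere: for $X' \subseteq X$ in $U$ one has $\Phi_{X'}(A, g\upharpoonright X') \supseteq \Phi_X(A,g)$, so composing with a monotone map into $\mathbb{P}$ turns this into a pointwise inequality between the induced maps on $Q_X$ and $Q_{X'}$.

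For the backward direction, suppose $\mathbb{P} \leq_T W$ and fix a monotone cofinal $h : W \to \mathbb{P}$. Setting $f_X := h \circ \Phi_X$ gives a monotone cofinal map $Q_X \to \mathbb{P}$ for each $X \in U$, and by the coherence of the $\Phi_X$ together with monotonicity of $h$ the family $\langle f_X \mid X \in U\rangle$ satisfies $f_X(A,g) \le f_{X'}(A, g\upharpoonright X')$ whenever $X' \subseteq X$. This is precisely the coherent system of witnesses packaged by the notion of being uniformly below $\mathcal{B}(U, \langle V_\alpha \mid \alpha<\kappa\rangle)$, so $\mathbb{P}$ is uniformly below $\mathcal{B}$.

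For the forward direction -- the substantive one -- assume $\mathbb{P}$ is uniformly below $\mathcal{B}$, witnessed by a coherent family of monotone cofinal maps $f_X : Q_X \to \mathbb{P}$. For $B \in W$ and $X \in U$ with $X \subseteq A_B$, set $q_X(B) := (A_B, B\upharpoonright X) \in Q_X$, and define
$$ h(B) := \sup\{ f_X(q_X(B)) : X \in U,\ X \subseteq A_B\}. $$
Coherence shows that for $X_1, X_2 \subseteq A_B$ both values lie below $f_{X_1 \cap X_2}(q_{X_1\cap X_2}(B))$, so this set is directed and the supremum exists by completeness of $\mathbb{P}$. Monotonicity of $h$ is where completeness and coherence jointly pay off: given $B_1 \supseteq B_2$ one has $A_{B_1} \supseteq A_{B_2}$, and every index $X \subseteq A_{B_1}$ can be pushed to $X \cap A_{B_2} \subseteq A_{B_2}$ via the coherence inequality, after which the coordinatewise monotonicity of $f_{X\cap A_{B_2}}$ and the inclusions $A_{B_1} \supseteq A_{B_2}$, $B_1{}_{(\alpha)} \supseteq B_2{}_{(\alpha)}$ give $f_X(q_X(B_1)) \le h(B_2)$; taking the supremum over $X$ yields $h(B_1) \le h(B_2)$. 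For cofinality, given $p \in \mathbb{P}$ pick $X_0$ and $(A,g) \in Q_{X_0}$ with $f_{X_0}(A,g) \ge p$, put $B = \Phi_{X_0}(A,g)$ (so that $A_B = A \cap X_0$), and chain the coherence inequality from $X_0$ down to $A \cap X_0$ with coordinatewise monotonicity to obtain $h(B) \ge f_{X_0}(A,g) \ge p$. Thus $h : W \to \mathbb{P}$ is monotone and cofinal, giving $\mathbb{P} \leq_T W = \sum_U V_\alpha$.

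The main obstacle is the monotonicity of $h$ in the forward direction. The naive attempt to invert $\Phi_X$ by a single map $W \to Q_X$ fails: reconstructing a point of $\prod_{\alpha \in X} V_\alpha$ from $B$ forces one to assign the full underlying set as the $V_\alpha$-coordinate at indices $\alpha \in X \setminus A_B$, and this default is order-reversing, so $\Phi_X$ admits no monotone section. The notion of uniformly below is engineered precisely to avoid committing to a single $X$: the coherent family lets one take the directed supremum over all $X \subseteq A_B$, and it is the interaction of this supremum (which is where directed-completeness of $\mathbb{P}$ is genuinely used) with the coherence inequalities that restores monotonicity. Checking that the index sets $\{X \in U : X \subseteq A_B\}$ shift compatibly as $B$ varies, and that the relevant suprema are attained in $\mathbb{P}$, is the technical heart of the argument.
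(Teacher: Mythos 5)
Your backward direction is correct and coincides with the paper's argument: compose a monotone cofinal $h:\sum_U V_\alpha\to\mathbb{P}$ (which exists by completeness of $\mathbb{P}$) with the canonical maps $\pi_X$, and read off coherence from $\pi_Y\circ\pi_{X,Y}(C)\subseteq\pi_X(C)$. The forward direction, however, has a genuine gap at its first step: the supremum defining $h(B)$ need not exist. You correctly note that the set $\{f_X(q_X(B)):X\in U,\ X\subseteq A_B\}$ is upward directed (by coherence, values increase as $X$ shrinks), and you then invoke completeness of $\mathbb{P}$; but the paper's completeness hypothesis --- the one actually satisfied by the intended targets, namely filters under reverse inclusion and products of such --- only guarantees least upper bounds for \emph{bounded} subsets, and a directed set need not be bounded. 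In fact your set is typically unbounded. Take the canonical instance $\mathbb{P}=\sum_U V_\alpha$ itself, with the coherent family $f_X=\pi_X$ (this is exactly the witness produced by the right-to-left direction applied to the identity map). For a standard-form $B$ with $A_B=\pi_1''B$ one computes $f_X(q_X(B))=\bigcup_{\alpha\in X}\{\alpha\}\times (B)_\alpha$, so an upper bound for all $X\in U$ with $X\subseteq A_B$ would be a member of $\sum_U V_\alpha$ contained in $\bigcap_{X}\bigcup_{\alpha\in X}\{\alpha\}\times (B)_\alpha=\emptyset$, since $U$ is nonprincipal; no such member exists, and $h(B)$ is undefined. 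Note that the paper is careful on precisely this point in Lemma \ref{Lemma: uniformly below on a base}, where a supremum is taken only after exhibiting an explicit bound ($f_X(B')$ for some $B'\in\mathcal{X}_X$ below the argument). Your closing remark that what must be checked is that ``the relevant suprema are attained'' misdiagnoses the issue: the problem is existence, not attainment, and it cannot be repaired by passing to a cofinal subset, since the counterexample above already occurs at standard-form $B$.

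The correct resolution of the obstacle you identified (that no single $\Phi_X$ admits a monotone section) is not a directed supremum over ever-smaller indices --- that would require a directed-completeness which filters do not possess --- but the paper's device of letting the index vary with the argument and taking the \emph{largest} admissible index: restrict to the cofinal set $\mathcal{X}$ of standard-form $B\in\sum_U V_\alpha$ and define
$$F(B)=f_{\pi_1''B}\bigl(\bigl\langle \pi_1''B,\langle (B)_\alpha\mid\alpha\in\pi_1''B\rangle\bigr\rangle\bigr),$$
with no suprema at all. Monotonicity is then proved by interpolation: if $A\subseteq B$ are standard form, consider the element of $U\times\prod_{\alpha\in\pi_1''B}V_\alpha$ with first coordinate $\pi_1''A$ whose fibers agree with $A$ on $\pi_1''A$ and with $B$ on $\pi_1''B\setminus\pi_1''A$; applying coherence $(\dag)$ once and coordinatewise monotonicity of $f_{\pi_1''B}$ yields $F(A)\geq_{\mathbb{P}}F(B)$. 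Your cofinality computation is essentially the paper's and carries over to $F$ unchanged.
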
 By putting more assumptions on the sequence of ultrafilters, we are able to get nicer results. One of them is that under the assumption $$V_0\leq_T V_1\leq_T....$$ in which case we have that $\sum_U V_n\equiv_T U\times \prod_{n<\omega}V_n$. From this special case, we recover D. Milovich's formula $U\cdot V\equiv_T U\times V^{\omega}$ (See Theorem \ref{Thm: Milovich}). Another set of assumptions we consider is when for each $n<\omega$, $V_n\cdot V_n\equiv_T V_n\geq_T V_{n+1}$. These assumptions imply that $\sum_UV_n$ is a strict greatest least lower bound of $\mathcal{B}(U,\l V_n\mid n<\omega\r)$. This shows that the cofinal type of $\sum_UV_n$ is much more complicated than the cofinal type of $U\cdot V$.
We also provide an example (Prop. \ref{prop: example in full support}) where $U<_T\sum_U V_n<U\times \prod_{n\in X}V_n$ for every $X\in U$.

Finally, we study the class of ultrafilters $U$ such that $U\geq_T\omega^\omega$. The partial order $\omega^\omega$ appeared quite a bit in the literature \cite{Solecki/Todorcevic04,LOUVEAU1999171,Hrusak} in the context of the Tukey order on Borel ideals. In the context of general ultrafilters on $\omega$, it is known to be the immediate successor of the Tukey type $\omega$  \cite{LOUVEAU1999171}
\begin{theorem}[Louveau-Velickovic]\label{Thm: LouveauVel}
    If $I$ is any ideal such that $I<_T\omega^\omega$ then $I$ is countably generated.
\end{theorem}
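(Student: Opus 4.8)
The plan is to prove the contrapositive in the sharp form: if $I\leq_T\omega^\omega$ and $I$ is \emph{not} countably generated, then $\omega^\omega\leq_T I$; since this yields $I\equiv_T\omega^\omega$, it contradicts $I<_T\omega^\omega$, so under the hypothesis $I$ must be countably generated. First I would fix a cofinal map $\psi:\omega^\omega\to(I,\subseteq)$ witnessing $I\leq_T\omega^\omega$, which we may take to be monotone (both are directed). Recall that $\omega^\omega$ under everywhere domination is Tukey-equivalent to $\emptyset\times\mathrm{Fin}$ on $\omega\times\omega$ (the ideal of sets all of whose columns are finite), via $A\mapsto(n\mapsto\sup A_{(n)})$ and $g\mapsto\{(n,k):k\leq g(n)\}$. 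Hence it suffices to Tukey-embed $\emptyset\times\mathrm{Fin}$ into $I$, i.e.\ to produce a matrix $\langle b^n_k:n,k<\omega\rangle$ in $I$, increasing in $k$ along each column, such that (i) for every $h\in\omega^\omega$ the diagonal union $\bigcup_n b^n_{h(n)}$ lies in $I$, and (ii) for every $c\in I$ and every $n$ the set $\{k:b^n_k\subseteq c\}$ is finite. Indeed, (i) makes $A\mapsto\bigcup_{(n,k)\in A}b^n_k$ a well-defined map $\emptyset\times\mathrm{Fin}\to I$ (since $A_{(n)}$ is finite, this is $\bigcup_n b^n_{\max A_{(n)}}$), and (ii) makes it unbounded.

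The device that secures (i) for free is to set $b^n_k=\psi(g_{n,k})$, where each $g_{n,k}\in\omega^\omega$ vanishes on $[0,n)$ and is increasing in $k$. Then for any $h\in\omega^\omega$ the coordinatewise supremum $\overline h(i)=\max_{n\leq i}g_{n,h(n)}(i)$ is a genuine element of $\omega^\omega$, because only the finitely many $n\leq i$ can contribute at coordinate $i$; since $g_{n,h(n)}\leq\overline h$ for all $n$, monotonicity gives $b^n_{h(n)}=\psi(g_{n,h(n)})\subseteq\psi(\overline h)$, whence $\bigcup_n b^n_{h(n)}\subseteq\psi(\overline h)\in I$, which is exactly (i). For increasing columns, condition (ii) is equivalent to requiring that each column union $\bigcup_k b^n_k=\bigcup_k\psi(g_{n,k})$ fail to lie in $I$, i.e.\ that each column be unbounded in $(I,\subseteq)$; note this is not obstructed by additivity, since under everywhere domination already the constant functions $k\mapsto k$ form a countable unbounded subset of $\omega^\omega$.

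What remains, and is the heart of the matter, is to produce for each $n$ an increasing chain $\langle g_{n,k}\rangle_k$ vanishing on $[0,n)$ whose $\psi$-image is unbounded in $I$, that is $\bigcup_k\psi(g_{n,k})\notin I$. This is the only place where the failure of countable generation is used, and I expect it to be the main obstacle: a chain escaping $I$ for one value of $n$ need not escape for another, and the naive constant chains $k\mapsto k\cdot\chi_{[n,\infty)}$ need not have unbounded $\psi$-image at all. I would resolve this by a fusion recursion in which, at stage $n$, having already fixed the columns $b^m_\cdot$ for $m<n$, one uses that the countably generated ideal generated by these columns together with $\psi$ of all finitely-supported functions is \emph{not} cofinal in $I$ to select a function supported on $[n,\infty)$ whose $\psi$-image leaves the current bound, iterating within the column to force its union out of $I$. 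Should the direct fusion prove too delicate, one may instead invoke the original Louveau--Veli\v{c}kovi\'c combinatorial extraction of a copy of $\emptyset\times\mathrm{Fin}$ from a non-countably-generated $I\leq_T\omega^\omega$. In either case $\omega^\omega\leq_T I$ follows, contradicting $I<_T\omega^\omega$ and forcing $I$ to be countably generated.
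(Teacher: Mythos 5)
Your high-level framework is in fact the standard one (the paper itself does not prove this theorem -- it quotes it from Louveau--Velickovic -- so I am comparing against the standard argument): fix a monotone cofinal $\psi:\omega^\omega\to I$ (legitimate, since an ideal under $\subseteq$ is complete in the paper's sense), identify $\omega^\omega\equiv_T\emptyset\times\mathrm{Fin}$, reduce to the matrix conditions (i) and (ii), and use the ``supports move right'' device to get (i) from monotonicity. The gap is exactly at the step you flag as the heart of the matter, and it is not fillable as you describe, for two reasons. First, demanding that the $n$-th column consist of functions \emph{vanishing} on $[0,n)$ is provably too strong. Take $I=\emptyset\times\mathrm{Fin}$ itself and $\psi(f)=\{(m,k):k\le f(0)\cdot f(m+1)\}$: this $\psi$ is monotone and cofinal, yet every $g$ with $g(0)=0$ satisfies $\psi(g)=\omega\times\{0\}$, a single element of $I$. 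So no chain supported in $[1,\infty)$ (a fortiori in $[n,\infty)$) has unbounded $\psi$-image, and your matrix cannot be built from this $\psi$ at all, even though $I\equiv_T\omega^\omega$. The correct construction replaces ``vanishing on $[0,n)$'' by ``dominated by $x\restriction n$ on $[0,n)$'' for a suitably chosen branch $x$: call a stem $s\in\omega^{<\omega}$ \emph{small} if $u_s:=\bigcup\{\psi(f):f\restriction|s|\le s\}\in I$; either the tree of non-small stems has an infinite branch $x$, in which case columns taken inside $C_{x\restriction n}=\{f:f\restriction n\le x\restriction n\}$ work with your same sup device, or that tree is well-founded, in which case every $f\in\omega^\omega$ has a minimal small initial segment and the countably many sets $u_t$, for $t$ a minimal small stem, are cofinal in $I$, i.e.\ $I$ is countably generated. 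This dichotomy is the missing idea; nothing in your fusion produces the branch, and your fallback of ``invoking the original Louveau--Velickovic extraction'' is circular, since that extraction \emph{is} the theorem.

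Second, your mechanism for making a column's union leave $I$ -- iteratively ``escaping the current bound'' -- cannot work even in principle: a chain that escapes countably many prescribed bounds can perfectly well have union inside $I$, because the union only needs to lie below \emph{some} element of $I$, not below the bounds you diagonalized against, and you cannot diagonalize against all of $I$ in $\omega$ steps. What actually makes columns unbounded is a fact special to ideals on a \emph{countable} underlying set: a family $\mathcal{F}\subseteq I$ is unbounded in $I$ iff $\bigcup\mathcal{F}\notin I$, hence every unbounded family contains a countable unbounded subfamily (choose one member through each point of $\bigcup\mathcal{F}$), which directedness and monotonicity upgrade to an unbounded increasing chain of the form $\langle\psi(g_k)\rangle_k$. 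Your proposal never uses countability of the underlying set, and any correct proof must: the statement is false for ideals on $\omega_1$, e.g.\ $[\omega_1]^{\le\omega}\equiv_T\omega_1$ is not countably generated yet is strictly Tukey-below $\omega^\omega$ whenever $\mathfrak{b}=\omega_1<\mathfrak{d}$.
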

On the other hand, among analytic ideals, it is known to be minimal \cite{TodorcevicDirSets85}(See also \cite[Thm 6.6]{Hrusak}):
\begin{theorem}[Todorcevic]\label{Thm: Todorcevic analytic p}
    Suppose that $I$ is an analytic $p$-ideal, then either $I$ is countable generated or $I\geq_T \omega^\omega$.
\end{theorem}
This was later improved by Solecki and Todorcevic \cite[Proposition 4.3]{Solecki/Todorcevic04} to show that if $I$ is analytic, not locally compact ideal, then $I\geq_T \omega^\omega$. 
This partial order came up in the work of Milovich  who asked \cite[Question 4.7]{Milovich08} if there is an ultrafilter $U$ over $\omega$ such that $(U,\supseteq)\equiv_T \omega^\omega$. We will observe that this was basically answered in \cite[Cor. 54]{Solecki/Todorcevic04}:
\begin{theorem}[Solecki-Todorcevic]\label{Thm: todorcevic and solecki}
     Suppose that $D$ is an ordered separable metric space such that the predecessors of each element form a compact set, and $E$ is a basic \footnote{For the definition of basic see \cite[\S 3]{Solecki/Todorcevic04}.} analytic order such that $D\leq_T E$, then $D$ is analytic.
\end{theorem}
Later, Dobrinen and Todorcevic contributed a great deal to the understanding of this class, in particular, they proved the following \cite[Thm. 35]{Dobrinen/Todorcevic11}:
\begin{theorem}[Dobrinen-Todorcevic]\label{Thm: Dobrinen and todorcevic rapid p-points}
The following are equivalent for a $p$-point:
\begin{enumerate}
    \item $U\cdot U\equiv_T U$.
    \item  $U\geq_T \omega^\omega$.
\end{enumerate}
\end{theorem}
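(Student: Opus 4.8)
The plan is to route everything through the cofinal type of the countable power $U^\omega$, using the equivalence $(*)$ to trade the Fubini square for a power. Applying $(*)$ with $n=2$ and $U_1=U_2=U$ gives $U\cdot U\equiv_T U\times U\times U^\omega$, and since $U\times U\times U^\omega$ is isomorphic as a poset to $U^\omega$ (simply reindex the countable product), we obtain $U\cdot U\equiv_T U^\omega$. As $U\leq_T U^\omega$ always holds, statement (1) is equivalent to the single inequality $U^\omega\leq_T U$. I would also record one fact valid for every non-principal ultrafilter: $\omega^\omega\leq_T U^\omega$, witnessed by the monotone map $\Psi:U^\omega\to\omega^\omega$, $\Psi(\langle A_n\rangle)(n)=\min A_n$; this is well defined and monotone precisely because each coordinate $A_n\in U$ is nonempty and shrinking a coordinate can only raise its minimum, and its range is cofinal since $\Psi(\langle[g(n),\infty)\rangle)=g$. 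Granting this, the implication (1)$\Rightarrow$(2) needs nothing about $p$-points: if $U^\omega\leq_T U$ then $U\equiv_T U^\omega\geq_T\omega^\omega$.

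The substance is the converse. Assuming $U$ is a $p$-point with $\omega^\omega\leq_T U$, I aim to build a monotone cofinal map $\Phi:U\to U^\omega$, which yields $U^\omega\leq_T U$ and hence (1). First I would fix a monotone cofinal map $\psi:U\to(\omega^\omega,\leq)$ and normalize it by the standard $\max$-truncation so that $g_X:=\psi(X)$ is nondecreasing, $X'\subseteq X\Rightarrow g_{X'}\geq g_X$ pointwise, and $\{g_X:X\in U\}$ is dominating. Then I would define $\Phi$ coordinatewise by $\Phi_k(X)=X\cap[g_X(k),\infty)$. Each $\Phi_k(X)$ lies in $U$ because $U$ is non-principal, and $\Phi$ is monotone since shrinking $X$ both shrinks $X$ and raises the thresholds $g_X(k)$.

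It then remains to verify that $\Phi$ is cofinal, the only step that uses the $p$-point hypothesis. Given a target $\langle B_k\rangle\in U^\omega$, I would pass to the decreasing refinement $B'_k=\bigcap_{j\leq k}B_j$ and use that $U$ is a $p$-point to choose a pseudo-intersection $B\in U$ together with an increasing rate function $h$ such that $B\cap[h(k),\infty)\subseteq B'_k$ for all $k$. Cofinality of $\psi$ then provides $X_0\in U$ with $g_{X_0}\geq^* h$, say $g_{X_0}(k)\geq h(k)$ for $k\geq N$; set $X=X_0\cap B\cap[h(N),\infty)\in U$. One checks $\Phi_k(X)\subseteq B_k$ for every $k$: for $k\geq N$ pointwise monotonicity gives $g_X(k)\geq g_{X_0}(k)\geq h(k)$, so $\Phi_k(X)\subseteq B\cap[h(k),\infty)\subseteq B'_k$, while for $k<N$ we have $\Phi_k(X)\subseteq X\subseteq B\cap[h(N),\infty)\subseteq B\cap[h(k),\infty)\subseteq B'_k$ because $h$ is increasing. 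Thus the range of $\Phi$ is cofinal, and since $\Phi$ is monotone it is a cofinal (Tukey) map.

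I expect the main obstacle to be exactly this last construction: one needs a single set $X$ that simultaneously lies below the pseudo-intersection $B$ and whose $\psi$-value dominates the rate $h$ with a threshold $N$ that is uniform across all coordinates. The role of intersecting with the tail $[h(N),\infty)$ is to absorb the finitely many coordinates $k<N$ where domination has not yet taken effect, collapsing them to the trivial inclusion $X\subseteq B'_k$. In this analysis the $p$-point property is what makes the rate $h$ a single function of the target rather than an uncontrolled coordinatewise obstruction, while $\omega^\omega\leq_T U$ is precisely what guarantees that some set in $U$ outruns $h$.
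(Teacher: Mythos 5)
Your proof is correct. Note that the paper never proves this theorem at the point where it is stated: it quotes it from Dobrinen--Todorcevic and later recovers it as the special case $I=\fin$ of Proposition \ref{Prop: equivalence for simple ideals}, whose proof runs through Milovich's theorem ($U\cdot U\equiv_T U^\omega$), the simplicity of $\fin$, and two results imported from \cite{TomNatasha2} without proof (Proposition \ref{Prop: TomNatasha bound with p.i.p}, giving $U^\omega\leq_T U\times \fin^\omega$ from the $\fin$-p.i.p, and Theorem \ref{Thm: Sufficint condition for product from TomNatash2}). Your argument has the same skeleton --- trade $U\cdot U$ for $U^\omega$, then prove $U^\omega\leq_T U$ from the $p$-point property plus domination --- but where the paper delegates the hard step to the quoted $I$-p.i.p machinery, you prove it directly: your map $\Phi(X)=\langle X\cap[g_X(k),\infty)\rangle_{k<\omega}$ fuses into one construction the two steps the framework keeps separate, namely $U^\omega\leq_T U\times\omega^\omega$ (pseudo-intersection plus rate function $h$) and the absorption $U\times\omega^\omega\equiv_T U$ (domination of $h$ by some $g_{X_0}$), and your verification of cofinality, including the tail trick $[h(N),\infty)$ handling the finitely many coordinates below the domination threshold, is sound. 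What the paper's route buys is generality: the identical argument works for any simple (in particular deterministic) ideal $I$ in place of $\fin$, which is the point of Section \ref{section: above omega to omega}; what yours buys is a self-contained proof of exactly this theorem. Two minor remarks: invoking the paper's main formula $(*)$ to get $U\cdot U\equiv_T U^\omega$ is heavier than necessary, since Milovich's Theorem \ref{Thm: Milovich} gives it in one line (though no circularity arises, as the proof of $(*)$ nowhere uses the present theorem); and your observation that $(1)\Rightarrow(2)$ requires no $p$-point hypothesis matches the paper's own footnote to the same effect.
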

They also proved that rapid ultrafilters are above $\omega^\omega$ and deduced that rapid $p$-points satisfy $U\cdot U\equiv_T U$.
This was lately improved by Benhamou and Dobrinenn \cite[Thm 1.18]{TomNatasha2} to the general setup of the $I$-p.i.p.
\begin{theorem}[Dobrinen-B.]\label{Thm: TomNatahsa equivalence for product}
Let $U$ be an ultrafilter.  Then the following are equivalent:
\begin{enumerate}
    \item $U\cdot U\equiv_T U$.
    \item There is an ideal $I\subseteq U^*$ such that $U\geq_T I^\omega$ and $U$ has the $I$-p.i.p.
\end{enumerate}
\end{theorem}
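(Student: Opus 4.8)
The plan is to reduce statement (1) to the single inequality $U\cdot U\leq_T U$ and then prove the two implications separately, the construction of the ideal $I$ being the heart of the matter. First I would record the basic-set presentation of $U\cdot U$: the sets $S_{A,\langle B_n\rangle}=\{(n,m): n\in A,\ m\in B_n\}$ with $A\in U$ and $B_n\in U$ are cofinal in $(U\cdot U,\supseteq)$, and $S\supseteq S'$ holds exactly when the base and the fibers shrink. From this one reads off two unconditional reductions. The map $S\mapsto A_S:=\{n:(S)_n\in U\}$ is monotone with cofinal image (witnessed by the sets $A\times\omega$), giving $U\leq_T U\cdot U$; and $\Phi(A,\langle B_n\rangle)=S_{A,\langle B_n\rangle}$ is monotone cofinal, giving $U\cdot U\leq_T U\times U^\omega\equiv_T U^\omega$. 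Hence $U\leq_T U\cdot U\leq_T U^\omega$ always, so (1) is equivalent to $U\cdot U\leq_T U$, and the theorem becomes a criterion for this last inequality.

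For (2)$\Rightarrow$(1) I would build an explicit monotone cofinal map $g:U\to U\cdot U$. Fixing a monotone cofinal $\phi:U\to I^\omega$ witnessing $U\geq_T I^\omega$, set $g(X)=\{(n,m): n\in X,\ m\in X\setminus\phi(X)_n\}$; since $\phi(X)_n\in I\subseteq U^*$ the fiber $X\setminus\phi(X)_n$ lies in $U$, so $g(X)\in U\cdot U$, and monotonicity of $\phi$ makes $g$ monotone. For cofinality, given a basic target $S_{A,\langle B_n\rangle}$ I would apply the $I$-p.i.p. (Definition~\ref{definition: the I-p.i.p}) to $\langle B_n\rangle$ to obtain $B\in U$ with $A^*=\{n: B\setminus B_n\in I\}\in U$, feed $\langle B\setminus B_n\rangle_{n\in A^*}\in I^\omega$ into the cofinality of $\phi$ to get $Y\in U$ with $\phi(Y)_n\supseteq B\setminus B_n$, and take $X=Y\cap A\cap B\cap A^*$. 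Then $g(X)\subseteq S_{A,\langle B_n\rangle}$, since any $(n,m)\in g(X)$ with $m\notin B_n$ would force $m\in X\setminus B_n\subseteq B\setminus B_n\subseteq\phi(X)_n$, contradicting $m\in X\setminus\phi(X)_n$. Thus $U\cdot U\leq_T U$.

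For the converse (1)$\Rightarrow$(2) I would extract the ideal from a witnessing map. Fix a monotone cofinal $g:U\to U\cdot U$ with each $g(X)$ basic and base $A_X=\{n:(g(X))_n\in U\}$, and define $I=\{C\subseteq\omega:\exists X\in U,\ \{n: C\cap(g(X))_n=\emptyset\}\in U\}$. One checks that $I$ is an ideal, that $I\subseteq U^*$ (any $C\in I$ is disjoint from some fiber $(g(X))_n\in U$), and that the fiber-complements $\omega\setminus(g(X))_n$ for $n\in A_X$ are cofinal in $(I,\subseteq)$, the membership $\omega\setminus(g(X))_n\in I$ coming from applying cofinality of $g$ to the target $\omega\times(g(X))_n$. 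The $I$-p.i.p. is then immediate: given $\langle B_n\rangle\subseteq U$, the target $\{(n,m):m\in B_n\}$ yields $X\in U$ with $(g(X))_n\subseteq B_n$ for $n\in A_X$, whence $X\setminus B_n\subseteq\omega\setminus(g(X))_n\in I$ for all $n\in A_X\in U$, so $B=X$ witnesses the property.

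The remaining and genuinely hard point is $U\geq_T I^\omega$, and here lies the main obstacle. The natural candidate $X\mapsto\langle\omega\setminus(g(X))_n\rangle_n$ does land, on the base, in a cofinal subset of $I$; but cofinality of $g$ forces $A_X$ to shrink as $X$ shrinks — indeed no cofinal $g$ can have full base, since it must get below every $A\times\omega$ with $A\in U$ coinfinite — so coordinates repeatedly leave the base and the naive assignment is neither monotone nor cofinal there. Overcoming this base-instability is the crux: I would use that $I$ is \emph{deterministic} (Definition~\ref{Def: deterministinc}), so that the inclusion $I\subseteq U^*$ upgrades to a monotone reduction, together with a fusion that assigns the off-base coordinates values dictated by the $I$-p.i.p., thereby stabilizing $\langle\omega\setminus(g(X))_n\rangle_n$ into an honest monotone cofinal map $U\to I^\omega$. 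Carrying out this stabilization correctly is the step I expect to absorb essentially all of the technical work; once it is in place, $U\geq_T I^\omega$ follows and the equivalence is complete.
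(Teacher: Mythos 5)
Your reduction of (1) to the single inequality $U\cdot U\leq_T U$ and your proof of (2)$\Rightarrow$(1) are correct; the map $g(X)=\{(n,m)\mid n\in X,\ m\in X\setminus\phi(X)_n\}$ is a legitimate hands-on version of the route the paper takes through Proposition \ref{Prop: TomNatasha bound with p.i.p} and Theorem \ref{Thm: Sufficint condition for product from TomNatash2} (namely $U\cdot U\equiv_T U^\omega\leq_T U\times I^\omega\leq_T U\times U\equiv_T U$). The problems are all in (1)$\Rightarrow$(2). A repairable one: your p.i.p.\ verification only gives $X\setminus B_n\in I$ for $n\in A_X$, whereas Definition \ref{definition: the I-p.i.p} demands it for \emph{every} $n$; this can be fixed by first replacing $\langle B_n\rangle$ by its decreasing intersections, since then for every $k\geq n$ one has $(X\setminus B_n)\cap (g(X))_k\subseteq (X\setminus B_n)\cap B_n=\emptyset$, so the tail $[n,\omega)\in U$ itself witnesses $X\setminus B_n\in I$ directly from your definition of $I$. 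The fatal one: the clause $U\geq_T I^\omega$ --- half of what statement (2) asserts --- is never proved. You explicitly defer it to an unspecified ``stabilization''/``fusion'' argument, and along the way assert with no justification that your extracted ideal is deterministic; nothing in the construction supports that claim. As written, the implication (1)$\Rightarrow$(2) is incomplete.

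The deeper issue is strategic: you have made the easy direction hard. Under (1), the dual ideal $I=U^*$ already witnesses (2), with no extraction from a Tukey map needed. Every filter $F$ has the $F^*$-p.i.p.\ (a Fact recorded in the paper: for any $X\in U$ and any $B_n\in U$ we have $X\setminus B_n\subseteq\omega\setminus B_n\in U^*$, and $U^*$ is downward closed); and complementation is an order-isomorphism of $(U^*,\subseteq)$ with $(U,\supseteq)$, so Milovich's Theorem \ref{Thm: Milovich} together with (1) gives $(U^*)^\omega\equiv_T U^\omega\equiv_T U\cdot U\equiv_T U$, i.e.\ $U\geq_T I^\omega$. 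This is precisely why the paper singles out Theorem \ref{Thm: Sufficint condition for product from TomNatash2} as ``the important direction'' of the equivalence: the converse is a two-line argument with the canonical witness $I=U^*$, and the machinery you were attempting --- a canonical ideal extracted from a cofinal map, shown to be deterministic and Tukey-below $U$ in its $\omega$-th power --- is not needed for the theorem.
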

Taking $I=\fin$ reproduces the difficult part from Dobrinen and Todorcevic's result\footnote{Indeed, as pointed out by Dobrinen and Todorcevic, it is easy to see that $U\cdot U\geq_T\omega^\omega$, so $(1)\Rightarrow(2)$ is straightforward.}. In ~\S\ref{section: above omega to omega}, we first note that in some sense the assumption that $U$ has the $I$-p.i.p in the theorem above is not optimal.
\begin{theorem*}
    Let $U$ be any ultrafilter over a set $X$. If $U$ does not have the $I$-p.i.p then $U\geq_TI^X$. 
\end{theorem*}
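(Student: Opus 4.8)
The plan is to extract from the failure of the $I$-p.i.p. a combinatorial scale on $X$ and convert it into a Tukey reduction $\prod_{x\in X}I\le_T U$. First I would unpack the hypothesis: if $U$ fails the $I$-p.i.p. there is a sequence $\langle A_n : n<\omega\rangle\subseteq U$ with no pseudo-intersection modulo $I$ lying in $U$; replacing $A_n$ by $\bigcap_{i\le n}A_i$ and adjoining $X$ at the bottom, I may assume $A_0=X\supseteq A_1\supseteq A_2\supseteq\cdots$. Set $L_k=A_k\setminus A_{k+1}$ and $T=\bigcap_n A_n$, so that $\{L_k\}_{k<\omega}\cup\{T\}$ partitions $X$ into $U$-small pieces (each $L_k\notin U$, and $T\notin U$ since otherwise $T$ is a pseudo-intersection). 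The crucial reformulation, which I would record as a claim, is that for $B\subseteq X$ one has $B\setminus A_n\in I$ for all $n$ iff $B\cap L_k\in I$ for all $k$ (because $X\setminus A_n=\bigsqcup_{k<n}L_k$, so $B\setminus A_n=\bigcup_{k<n}(B\cap L_k)$ and $I$ is an ideal); hence the failure says precisely that no $B\in U$ has $B\cap L_k\in I$ for every $k$.

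Second, I would upgrade this to: every $A\in U$ is $I$-positive on infinitely many levels, i.e. $\{k : A\cap L_k\notin I\}$ is infinite. This follows by iterating — the reformulation yields some $k_0$ with $A\cap L_{k_0}\notin I$; since $A\setminus L_{k_0}\in U$, applying it to $A\setminus L_{k_0}$ gives $k_1\ne k_0$, and so on. This property is what will drive unboundedness, while the bare failure of the p.i.p. is exactly what keeps the constructed sets inside $U$: any $R\subseteq X$ with $R\cap L_k\in I$ for all $k$ satisfies $R\setminus A_n\in I$ for all $n$, so $R\notin U$ and hence $X\setminus R\in U$.

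Third, I would build the reduction as an unbounded map $\Phi\colon\prod_{x\in X}I\to U$. The mechanism is to send $h$ to a set $\Phi(h)=X\setminus R_h\in U$ obtained by deleting, level by level, an $I$-portion dictated by $h$; by the previous paragraph any such deletion keeps us in $U$, and monotonicity of $\Phi$ as a map $(\prod_x I,\subseteq)\to(U,\supseteq)$ is immediate. The content is to arrange the encoding so that $\Phi$ is \emph{unbounded}, i.e. for each fixed $A\in U$ the preimage $\{h : \Phi(h)\supseteq A\}$ is bounded in $\prod_{x\in X}I$. Here the two demands pull against each other — the deleted set $R_h$ must meet every level in an $I$-set (to keep $\Phi(h)\in U$) yet must become impossible for a fixed $A\in U$ to avoid as $h$ grows — and reconciling them is the main obstacle. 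The device I would use is the infinitude of positive levels of $A$ together with a diagonal placement distributing the $x$-th copy of $I$ across the levels, so that whichever positive level $A$ exposes detects the relevant coordinate of $h$.

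The deepest technical point, which I expect to be the true crux, is that the target is the \emph{full} product $\prod_{x\in X}I\equiv_T I^\omega$ and not the a priori smaller $\prod_k(I\restriction L_k)$: a naive level-trace map only produces copies of $I$ restricted to a single level, and $\prod_k(I\restriction L_k)\le_T I^\omega$ runs the wrong way. In the model case $I=\fin$ the difficulty evaporates, since every infinite level gives $\fin\restriction L_k\equiv_T\fin$ and $\prod_k(\fin\restriction L_k)\equiv_T\omega^\omega$, recovering that a non-$P$-point is Tukey above $\omega^\omega$. In general I would either refine the failing sequence so that each level carries a full copy of $I$, or encode a full copy of $I$ into $U$ directly off the positive sets $A\cap L_k$ rather than the traces $B\cap L_k$; it is precisely this step where I expect to invoke the deterministic-ideal machinery of the paper.
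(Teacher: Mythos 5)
Your reduction of the failure of the $I$-p.i.p to a partition picture is correct and matches the paper's starting point (Proposition \ref{Prop: equivalent condition}): the levels $L_k$, the reformulation that no $B\in U$ has $B\cap L_k\in I$ for all $k$, and the upgrade to ``every $A\in U$ is $I$-positive on infinitely many levels'' are all fine. But from that point on the proposal is a plan rather than a proof: the map $\Phi$ is never defined, and the two devices you defer to --- the ``diagonal placement distributing the $x$-th copy of $I$ across the levels'' and the hope of ``refining the failing sequence so that each level carries a full copy of $I$'' --- are exactly the missing mathematical content, not routine steps. Moreover, the obstruction you correctly identify (that level traces only produce $\prod_k\bigl(I\cap P(L_k)\bigr)$, and $I\cap P(L_k)$ need not be Tukey-equivalent to $I$) cannot be dodged by a cleverer encoding for an arbitrary ideal: the theorem is proved in the body of the paper only for \emph{deterministic} $I$ with $\fin\subseteq I\subseteq U^*$ (the introduction suppresses this hypothesis), and it is precisely determinism, via Fact \ref{Fact: Positive deterministic ideal is Tukey equivalent}, that yields $I\cap P(A)\equiv_T I$ for $A\in I^+$ and closes that gap. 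Your proposal predicts this input will be needed but never actually uses it, so the argument has a hole exactly at its crux.

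It is also worth seeing how the paper avoids engineering any explicit unbounded map $\prod_{x\in X}I\to U$: the proof is by contraposition through simplicity of deterministic ideals. Assume $U\not\geq_T I^\omega$ and fix a partition $\langle A_n\mid n<\omega\rangle$ as in Proposition \ref{Prop: equivalent condition}, arranged so that each $A_n\in I^+$. Transfer by a bijection $\pi$ with $\pi''A_n=\{n\}\times\omega$, set $I_n=\pi_*\bigl(I\cap P(A_n)\bigr)$ and $J=\sum_{\fin}I_n$. Then $J$ is deterministic and Tukey-equivalent to $I^\omega$ (Propositions \ref{Prop: deterministic invariates} and \ref{Prop: product of deterministic is also}, together with Fact \ref{Fact: Positive deterministic ideal is Tukey equivalent}). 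Deterministic ideals are simple, so if $J$ were contained in $\pi_*(U)^*$ we would get $I^\omega\equiv_T J\leq_T \pi_*(U)^*\equiv_T U$, contradicting the assumption; hence $J\not\subseteq\pi_*(U)^*$, i.e.\ some $X'\in J$ lies in the ultrafilter $\pi_*(U)$, and $\pi^{-1}[X']$ is a set in $U$ whose trace on every $A_n$ is in $I$ --- the desired p.i.p witness. This soft dichotomy (``a deterministic ideal inside $U^*$ is Tukey below $U$; otherwise it meets $U$'') is what replaces the unboundedness you were trying to build by hand, and it is the ingredient absent from your outline.
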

In particular, we get the following corollary:
\begin{corollary*}
    If $U$ is not a $p$-point ultrafilter over $\omega$ then $U\geq_T \omega^\omega$.
\end{corollary*}
     This last corollary is of the same flavor as  Theorems \ref{Thm: LouveauVel},\ref{Thm: Todorcevic analytic p}. Hence if we drop the $p$-point assumption, then we also get $U\geq_T\omega^\omega$. Hence, if we are looking for examples for ideals that are not Tukey above $\omega^\omega$ and are not countably generated, we might as well restrict our attention to non-analytic $p$-ideals. This is closely related to \cite[Question 42]{Dobrinen/Todorcevic11}. 
     
     Inside the class of $p$-point, it is known to be consistent that there are $p$-points which are not above $\omega^\omega$ (see \cite{Dobrinen/Todorcevic11}), and the class of rapid p-point is currently the largest subclass of $p$-points known to be above $\omega^\omega$. In the second part of Section~\ref{section: above omega to omega}, we enlarge this class by introducing the notion of $\alpha$-almost-rapid (Definition \ref{Def: almost-rapid}), which is a  weakening of rapidness. 
\begin{theorem*}
    Suppose that $U$ is $\alpha$-almost-rapid, then $U\geq_T \omega^\omega$.
\end{theorem*}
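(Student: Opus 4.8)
The plan is to exhibit a Tukey reduction witnessing $\omega^\omega\le_T U$ by constructing, via Schmidt's duality, a monotone map $\phi\colon(U,\supseteq)\to(\omega^\omega,\le)$ (where $\le$ is coordinatewise domination) whose range is cofinal. The template is the argument of Dobrinen and Todorcevic for genuinely rapid ultrafilters: send each $A\in U$ to its enumeration function $e_A$, where $e_A(n)$ is the $n$-th element of $A$. This map is automatically monotone for our purposes, since $A\supseteq B$ forces $e_A(n)\le e_B(n)$ for all $n$, i.e. passing to smaller sets in $U$ only speeds up the enumeration. The content of rapidity is precisely that the range is cofinal: for every $g\in\omega^\omega$ there is $A\in U$ with $e_A\ge g$. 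The task is to recover this cofinality from the weaker hypothesis of $\alpha$-almost-rapidity (Definition \ref{Def: almost-rapid}), at the cost of replacing $e_A$ by a suitable level-$\alpha$ enumeration.

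I would proceed by induction on $\alpha$. The base case is the classical rapid case, where the enumeration map works verbatim. For the successor and limit stages I would unfold the defining data of $\alpha$-almost-rapidity — the partition (or tree) structure on which the almost-rapid domination is demanded — and apply the inductive hypothesis to the pieces indexed by ordinals below $\alpha$. The partial reductions obtained at the lower levels are then amalgamated into a single map into $\omega^\omega$ by interleaving coordinates, using that a countable product of copies of $\omega^\omega$ is again Tukey-equivalent to $\omega^\omega$. Before running this, it is convenient to dispose of the non-$p$-point case outright: if $U$ is not a $p$-point then $U$ fails the $\fin$-p.i.p., and the preceding theorem (with $I=\fin$) already gives $U\ge_T\prod_{n<\omega}\fin\equiv_T\omega^\omega$. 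Thus one may assume $U$ is a $p$-point throughout the induction, which is exactly the setting in which the absorption arguments below are available.

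The crux of the cofinality verification is bridging the gap between the \emph{almost} domination supplied by the hypothesis and the everywhere domination required by membership in $\omega^\omega$. Given $g\in\omega^\omega$, $\alpha$-almost-rapidity yields a set $A\in U$ whose level-$\alpha$ enumeration dominates $g$ off a set that is small in the relevant ideal. I would correct the finitely-many (or ideal-small) exceptions by thinning $A$ to a further set $A'\in U$: using the $p$-point property one absorbs the exceptional coordinates, after which the adjusted enumeration of $A'$ dominates $g$ everywhere. Monotonicity of the corrected map is preserved because thinning within $U$ only increases the enumeration values coordinatewise.

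The step I expect to be the main obstacle is the inductive amalgamation together with this correction: one must choose the exception-absorbing thinning uniformly enough that a single element of $U$ witnesses domination of a given $g$ simultaneously across all levels of the $\alpha$-hierarchy, while keeping the combined map genuinely monotone. Coordinating the ideal-small error sets produced at the different levels — so that their union remains correctable inside $U$ — is where the $p$-point absorption and the determinacy of the ambient ideals (Definition \ref{Def: deterministinc}) must be used carefully, and is the only place where the full strength of $\alpha$-almost-rapidity, rather than plain rapidity, is needed.
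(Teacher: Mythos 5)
Your opening paragraph is on the right track --- the paper's proof is exactly a monotone map from $(U,\supseteq)$ into $(\omega^\omega,\le)$ with cofinal range, built from enumeration functions --- but everything after it rests on a misreading of Definition \ref{Def: almost-rapid}, and the machinery you propose cannot be run. There is no ``partition (or tree) structure'' in $\alpha$-almost-rapidity to unfold, and no ``pieces indexed by ordinals below $\alpha$'' to which an inductive hypothesis could apply: the definition says that for every $f\in\omega^\omega$ there is $X\in U$ with $exp_\alpha(f_X)\geq^* f$, where $exp_\alpha$ is a single operation on increasing functions defined by transfinite recursion (iterated exponentiation). Consequently the ``level-$\alpha$ enumeration'' you leave unspecified is simply $exp_\alpha(f_X)$, and the entire proof is: the map $X\mapsto exp_\alpha(f_X)$ is monotone, because $X\subseteq Y$ gives $f_X\geq f_Y$ pointwise and $exp_\alpha$ preserves pointwise domination (Lemma \ref{Lemma: Prop of exp}(2) --- an elementary induction on $\alpha$, which is the only induction anywhere in the argument); and its range is cofinal by $\alpha$-almost-rapidity. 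No induction on $\alpha$ at the level of Tukey reductions, no amalgamation of partial reductions, and no interleaving of coordinates occurs or is needed.

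Your error-correction step likewise addresses a problem that does not exist. The exceptional set in the definition is finite ($\geq^*$), not ``small in the relevant ideal,'' so no $p$-point absorption, no coordination of ideal-small error sets across levels, and no use of deterministic ideals enters. The gap between $\geq^*$ and everywhere domination closes trivially: given increasing $g$ and $X\in U$ with $exp_\alpha(f_X)(n)\geq g(n)$ for all $n\geq N$, delete from $X$ the finitely many elements below $g(N)$; the resulting $X'$ is still in $U$ (non-principality suffices, no $p$-point property is used), $f_{X'}\geq f_X$ pointwise, and since $exp_\alpha(h)\geq h$ and $exp_\alpha$ is monotone, $exp_\alpha(f_{X'})$ dominates $g$ everywhere. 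This is precisely what the Remark following Definition \ref{Def: almost-rapid} records: the everywhere-domination form of the definition is equivalent. So your reduction to the $p$-point case, while correct as far as it goes, is superfluous, and the step you single out as ``the main obstacle'' --- coordinating ideal-small errors across the levels of the $\alpha$-hierarchy so that a single set in $U$ witnesses domination simultaneously at all levels --- is not an obstacle in the actual argument, because no such levels or errors exist.
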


Finally, we prove that the class of almost rapid ultrafilters is a strict extension of the class of rapid ultrafilters, even among $p$-points.
\begin{theorem*}
    Assume CH. Then there is a non-rapid almost-rapid $p$-point ultrafilter.
\end{theorem*}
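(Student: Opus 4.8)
The plan is a transfinite recursion of length $\omega_1$ using CH. Fix once and for all a partition of $\omega$ into consecutive finite intervals $\langle I_n\mid n<\omega\rangle$ with $|I_n|$ growing very fast (say $|I_n|\ge 2^{2^n}$), write $I_n=[a_n,a_{n+1})$, and set $f(n)=a_{n+1}$. Using CH, enumerate all subsets of $\omega$ as $\langle X_\beta\mid\beta<\omega_1\rangle$, all functions $\omega\to\omega$ as $\langle g_\beta\mid\beta<\omega_1\rangle$, and all instances of the clause appearing in Definition \ref{Def: almost-rapid} as $\langle r_\beta\mid\beta<\omega_1\rangle$, with bookkeeping that revisits each family cofinally. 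I would build a $\subseteq^*$-decreasing sequence $\langle A_\beta\mid\beta<\omega_1\rangle$ of infinite sets, together with an auxiliary $\subseteq^*$-decreasing sequence $\langle H_\beta\mid\beta<\omega_1\rangle$ of infinite index sets, maintaining the \emph{heaviness invariant} $(\star_\beta)$: for every $n\in H_\beta$, $|A_\beta\cap I_n|>n$. The ultrafilter will be $U=\{X\mid\exists\beta\ A_\beta\subseteq^* X\}$, and for $A\in U$ I write $e_A$ for its increasing enumeration.

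At stage $\beta$ I would first decide $X_\beta$, steering $A_{\beta+1}$ mod-finitely into whichever of $X_\beta,\omega\setminus X_\beta$ retains heaviness on infinitely many blocks; this is possible because for infinitely many $n\in H_\beta$ one side contains at least half of $A_\beta\cap I_n$, and the fast growth of $|I_n|$ leaves more than $n$ points there after reselecting $H_{\beta+1}\subseteq H_\beta$. This makes $U$ an ultrafilter. I would then shrink further to make $g_\beta$ constant or finite-to-one on $A_{\beta+1}$, forcing the $p$-point property, again retaining heaviness on an infinite subset of indices. The \emph{reason this yields non-rapidity} is that every $A\in U$ satisfies $A\supseteq^* A_\gamma$ for some $\gamma$, hence $e_A\le^* e_{A_\gamma}$; and $(\star_\gamma)$ gives $|A_\gamma\cap[0,f(n))|>n$, i.e. $e_{A_\gamma}(n)<f(n)$, for infinitely many $n$, so $f(n)>e_A(n)$ for infinitely many $n$. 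Thus no $e_A$ dominates $f$, the family $\{e_A\mid A\in U\}$ is not $\le^*$-cofinal in $\omega^\omega$, and $U$ is not rapid.

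The crux — the step I expect to be the main obstacle — is maintaining $(\star_\beta)$ through the \emph{entire} recursion while simultaneously meeting the almost-rapidity clauses $r_\beta$. At limits $\delta$ (always of cofinality $\omega$) I must pick a pseudo-intersection $A_\delta\subseteq^* A_{\beta_k}$ of a cofinal $\omega$-chain that is still infinite, still a $p$-point-style pseudo-intersection, and still heavy on an infinite index set; since a fixed block $I_n$ is thinned only finitely often before its index leaves the decreasing $H_\beta$'s, the fast growth $|I_n|\ge 2^{2^n}$ lets a sparse subsequence of blocks absorb all the thinnings aimed at it, and a fusion selecting one surviving heavy block beyond each $a_{n_k}$ produces the required $A_\delta$ and $H_\delta$. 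The genuinely new difficulty is compatibility with $r_\beta$: almost-rapidity demands a rapid-like domination, but \emph{only} relative to the coarser rank-$\alpha$ structure of Definition \ref{Def: almost-rapid}, so I would discharge each $r_\beta$ by thinning $A_{\beta+1}$ aggressively on coordinates \emph{transverse} to the retained heavy blocks $\{I_n\mid n\in H_{\beta+1}\}$, which it leaves untouched. Verifying that a single shrinkage can be simultaneously thin (for $r_\beta$) and heavy (for $(\star)$), and that this survives the limit fusions, is exactly where the precise form of Definition \ref{Def: almost-rapid} is essential.

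Once the construction is complete, $U$ is a nonprincipal ultrafilter by the set-deciding clause, a $p$-point by the function-deciding clause together with the pseudo-intersections at limits, $\alpha$-almost-rapid because every instance $r_\beta$ was met, and non-rapid by the heaviness argument above. Hence $U$ is a non-rapid almost-rapid $p$-point; since $\alpha$-almost-rapid ultrafilters are Tukey above $\omega^\omega$, $U$ is in addition a non-rapid $p$-point with $U\geq_T\omega^\omega$, so by Theorem \ref{Thm: Dobrinen and todorcevic rapid p-points} it satisfies $U\cdot U\equiv_T U$, witnessing that the class of almost-rapid ultrafilters strictly extends the rapid ones.
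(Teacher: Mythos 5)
Your overall architecture is the same as the paper's: a CH-length recursion with bookkeeping for the ultrafilter, $p$-point, and almost-rapidity tasks, together with a block-heaviness invariant that every set in the tower must satisfy, so that a single function $f$ built from the blocks witnesses non-rapidity of the final ultrafilter. But the step you yourself flag as the crux is exactly the mathematical content of the theorem, and you never carry it out: at a successor stage you must produce one set that is simultaneously (i) an infinite pseudo-intersection of the sets chosen so far, (ii) heavy on infinitely many blocks, and (iii) such that $exp(f_X)$ dominates the prescribed function $g_\beta$. Saying you will "thin aggressively on coordinates transverse to the retained heavy blocks" is a restatement of the goal, not an argument; the tension is real, since heaviness forces the enumeration $f_X$ to be slow infinitely often while (iii) demands that the iterated values $f_X(f_X(\cdots f_X(0)))$ race ahead of $g_\beta$. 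The paper isolates precisely this as its key lemma (given a $\subseteq$-decreasing sequence of $I$-positive sets and any $f$, there is a pseudo-intersection $B$ which is still $I$-positive and has $exp(f_B)>f$), and its proof is a delicate recursion in which the heavy clusters are interleaved with the exponential orbit $a'_{k+1}=a_{a'_k}$ of the enumeration, using the fact that the orbit positions are "not yet defined" when each cluster is inserted, so they can be pushed above $f$. Without this lemma, or an equivalent, your proposal is a plan rather than a proof.

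There is a second, independent problem: your invariant $(\star_\beta)$, namely $|A_\beta\cap I_n|>n$ for $n\in H_\beta$, is too fragile to survive even the steps you do describe. In the ultrafilter step, $A_\beta\cap I_n$ may have exactly $n+1$ elements (heaviness gives no more), so after splitting between $X_\beta$ and its complement the better side may have only about $n/2$ points; the size $|I_n|\ge 2^{2^n}$ of the ambient block is irrelevant, and $(\star_{\beta+1})$ can fail at every $n$. The same constant-factor loss occurs in the $p$-point partition step, and since the tower is decreasing there is no way to restore full heaviness later. The paper avoids this by formulating the invariant as positivity for the ideal $I=\{A\mid\exists k\,\forall n\ |A\cap P_n|\le k\cdot n\}$: failing \emph{every} linear bound is preserved under halving, under passing to finitely many cells, and under finite modifications, which is exactly what lets the invariant pass through all $\omega_1$ stages; it still kills rapidity because any rapidity witness for $n\mapsto\max P_n$ would lie in $I$. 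Finally, a smaller slip: from $A\supseteq^* A_\gamma$ you infer $e_A\le^* e_{A_\gamma}$, which is false in general --- if $A$ is $A_\gamma$ with its least element removed, then $A\supseteq^* A_\gamma$ but $e_A(n)=e_{A_\gamma}(n+1)>e_{A_\gamma}(n)$ for all $n$. Your non-rapidity conclusion survives, but only by arguing blockwise: $A_\gamma\setminus A$ is finite, hence meets only finitely many blocks, so on all but finitely many $n\in H_\gamma$ one has $A\cap I_n\supseteq A_\gamma\cap I_n$ and therefore $|A\cap f(n)|>n$.
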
 
This theorem produces a large class of ultrafilters which are above $\omega^\omega$.
This paper is organized as follows:
\begin{itemize}
    \item In~\S \ref{Sec: Sums and prem}, we start with some preliminary definitions and known results. The main goal of this section is the investigation of the cofinal type of $\sum_UV_\alpha$.
    \item  In~\S \ref{section: p.i.p}, we provide a systematic study of the $I$-p.i.p and deterministic ideals.
    \item  In~\S \ref{Sec: proof of the main result}, we prove our results regarding the commutative classes.
    \item In~\S \ref{section: above omega to omega} we investigate the class of ultrafilters Tukey above $\omega^\omega$.
    \item  In~\S \ref{Sec:Question} we present some open problems and possible directions.
 \end{itemize}
\subsection*{Notations} $[X]^{<\lambda}$ denotes the set of all subsets of $X$ of cardinality less than $\lambda$. Let $\fin=[\omega]^{<\omega}$, and $\FIN=\fin\setminus\{\emptyset\}$.
For a collection of sets $(P_i)_{i\in I}$ we let $\prod_{i\in I}P_i=\{f:I\rightarrow \bigcup_{i\in I}P_i\mid \forall i,\, f(i)\in P_i\}$. If $P_i=P$ for every $i$, then $P^I=\prod_{i\in I}P$. Given a set $X\subseteq \omega$, such that $|X|=\alpha\leq\omega$, we denote by $\l X(\beta)\mid \beta<\alpha\r$ be the increasing enumeration of $X$. Given a function $f:A\rightarrow B$, for $X\subseteq A$ we let $f''X=\{f(x)\mid x\in X\}$, for $Y\subseteq B$ we let $f^{-1}Y=\{x\in X\mid f(x)\in Y\}$, and let $\rng(f)=f''A$. Given sets $\{A_i\mid i\in I\}$ we denote by $\biguplus_{i\in I}A_i$ the union of the $A_i$'s when the sets $A_i$ are pairwise disjoint. Two partially ordered set $\mathbb{P},\mathbb{Q}$ are isomorphic, denoted by $\mathbb{P}\simeq \mathbb{Q}$, if there is a bijection $f:\mathbb{P}\rightarrow \mathbb{Q}$ which is order-preserving.

\section{On the Cofinal types of Fubini sums of ultrafilters}\label{Sec: Sums and prem}
\subsection{Some basic definitions and facts}
The principal operation we are considering in this paper is the Fubini/tensor sums and products of ultrafilters.
\begin{definition}\label{Definition: Product and power}
    Suppose that $F$ is a filter over an infinite set $X$ and for each $x\in X$, $G_x$ is a filter over an infinite set $Y_x$. We denote by $\sum_FG_x$ the filter over $\bigcup_{x\in X}\{x\}\times Y_x$, defined by
    $$A\in \sum_FG_x\text{ if and only if }\{x\in X\mid (A)_x\in G_x\}\in F$$
    where $(A)_x=\{y\in Y_x\mid \l x,y\r\in A\}$ is the \textit{$x^{\text{th}}$-fiber} of $A$.  If for every $x$, $G_x=G$ for some fixed $V$ over a set $Y$, then $F\cdot G$ is defined as $\sum_FG$, which is a filter over $X\times Y$. $F^2$ denotes the filter $F\cdot F$ over $X\times X$.
\end{definition}

We distinguish here between $F\cdot G$ and $F\times G$ which is the cartesian product of $F$ and $G$ with the order defined pointwise\footnote{There are papers which consider the filter $\{A\times B\mid A\in F,B\in G\}$ and denote it by $F\times G$, this filter will not be considered in this paper so there is no risk of confusion.}.

A filter $F$ on a regular cardinal $\kappa\geq\omega$ is called \textit{uniform}\footnote{Or non-principal.} if $J^*_{bd}=\{X\subseteq \kappa\mid \kappa\setminus X\text{ is bounded in }\kappa\}\subseteq U$.
\begin{definition} 
Let $F$ be a filter over a 
 regular cardinal $\kappa\geq\omega$.
\begin{enumerate}
    \item $F$ is {\em $\lambda$-complete} if $F$ is closed under intersections of less than $\lambda$ many of its members.
    \item $F$ is {\em Ramsey} if for any function $f:[\kappa]^2\rightarrow 2$ there is an $X\in F$ such that $f\restriction [X]^2$ is constant. 
    \item $F$ is {\em selective} if for every function $f:\kappa \rightarrow \kappa$, there is an $X \in F$ such that
 $f\restriction X$ is  either constant or one-to-one.
\item (Kanamori \cite{KanPPoint}) $F$ is {\em rapid} if for each normal function $f: \kappa \rightarrow \kappa$ (i.e.\ increasing and continuous),
there exists an $X \in F$ such that
$\text{otp}(X \cap f(\alpha)) \leq \alpha$ for each $\alpha< \kappa$.
(i.e.\ bounded pre-images), there is an $X\in F$ such
 that $|f^{-1}(\{\alpha\})\cap X|\le \alpha$ for every $\alpha< \kappa$.
\item $F$ is a {\em $p$-point} if whenever $f:\kappa\rightarrow\kappa$ is unbounded\footnote{Namely, $f^{-1}[\alpha]\notin F$ for every $\alpha<\kappa$} on a set in $F$, it is almost one-to-one mod $F$, i.e. there is an $X\in F$ such that for every $\gamma<\kappa$,  $|f^{-1}[\gamma]\cap X|<\kappa$.
\item $U$ is a {\em $q$-point} 
if every  function $f:\kappa\rightarrow\kappa$  which is almost one-to-one mod $ F $ is injective mod $ F $.

\end{enumerate}

a \textit{$\kappa$-filter} is a  uniform, $\kappa$-complete filter.
\end{definition}
The following facts are well known.
\begin{fact}
    The following are equivalent for a $\kappa$-ultrafilter $U$:
    \begin{enumerate}
        \item $U$ is Ramsey.
        \item $U$ is selective.
        \item $U$ is a $p$-point and a $q$-point.
    \end{enumerate}
\end{fact}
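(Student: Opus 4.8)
The plan is to establish the cycle $(1)\Rightarrow(2)\Rightarrow(3)\Rightarrow(1)$, isolating the construction of a genuinely homogeneous set in $U$ as the only substantial step; throughout, $U$ is a $\kappa$-ultrafilter, that is, uniform and $\kappa$-complete. For $(1)\Rightarrow(2)$, given $f:\kappa\to\kappa$ I would color a pair $\{\alpha,\beta\}$ by $0$ if $f(\alpha)=f(\beta)$ and by $1$ otherwise; a Ramsey-homogeneous $X\in U$ makes $f\restriction X$ constant in the first case and injective in the second, which is precisely the selectivity dichotomy.

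For $(2)\Rightarrow(3)$, both properties fall out of the dichotomy ``constant or injective''. To see that $U$ is a $p$-point, let $f$ be unbounded on a set in $U$ (so $f^{-1}[\gamma]\notin U$ for all $\gamma<\kappa$) and pick $X\in U$ with $f\restriction X$ constant or injective; a constant value would put some $f^{-1}[\gamma]$ into $U$, so $f\restriction X$ is injective, whence $|f^{-1}[\gamma]\cap X|\le|\gamma|<\kappa$ for every $\gamma$, i.e.\ $f$ is almost one-to-one mod $U$. To see that $U$ is a $q$-point, let $f$ be almost one-to-one mod $U$, witnessed by $W\in U$, and again pick $X\in U$ with $f\restriction X$ constant or injective; if $f\restriction X$ were constant with value $c$, then $X\cap W\in U$ would be contained in $f^{-1}[c+1]$, contradicting $|f^{-1}[c+1]\cap W|<\kappa$ together with uniformity, so $f\restriction X$ is injective and $f$ is injective mod $U$.

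The implication $(3)\Rightarrow(1)$ is where the work lies. First I would recover selectivity from $(3)$: given $f:\kappa\to\kappa$, if $f$ is bounded mod $U$ then it takes fewer than $\kappa$ values on a set in $U$ and $\kappa$-completeness makes it constant on a set in $U$; if $f$ is unbounded mod $U$, the $p$-point property makes it almost one-to-one mod $U$ and then the $q$-point property makes it injective mod $U$. With selectivity in hand, I reduce an arbitrary coloring $c:[\kappa]^2\to 2$ to a single sequence of large sets: using uniformity and $\kappa$-completeness, for each $\alpha$ exactly one color $g(\alpha)\in 2$ satisfies $A_\alpha:=\{\beta>\alpha:c(\alpha,\beta)=g(\alpha)\}\in U$; since $g:\kappa\to 2$ is constant on some $X_0\in U$, say with value $i$, I obtain a sequence $\langle A_\alpha:\alpha\in X_0\rangle$ of sets in $U$ with $A_\alpha\subseteq\{\beta>\alpha:c(\alpha,\beta)=i\}$.

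It then suffices to find $Y\in U$, $Y\subseteq X_0$, that \emph{threads} the sequence, i.e.\ $\beta\in A_\alpha$ whenever $\alpha<\beta$ are both in $Y$; any such $Y$ is homogeneous in color $i$. This diagonalization is the main obstacle. The naive candidate, the diagonal intersection $D=\{\beta<\kappa:(\forall\alpha<\beta)\,\beta\in A_\alpha\}$, is homogeneous but need not lie in $U$ (membership for every sequence is essentially normality of $U$, which is not available a priori), and feeding the regressive ``first failure'' map $f(\beta)=\min\{\alpha<\beta:\beta\notin A_\alpha\}$ into selectivity rules out its being constant but leaves the injective case, which by itself produces no thread. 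The correct route is the classical selective (happy-family) diagonalization: use the $p$-point property to replace $\langle A_\alpha\rangle$ by a single pseudo-intersection $B\in U$ with $B\subseteq^* A_\alpha$, record the tail function $h(\alpha)=\min\{\delta:(B\setminus\delta)\subseteq A_\alpha\}$, and then apply the selector form of selectivity to a partition of $\kappa$ into intervals growing fast enough relative to $h$ that one representative per interval, intersected with $B$, yields $Y\in U$ whose increasing enumeration satisfies $y\in A_{y'}$ for all $y'<y$ in $Y$. Arranging the interval bookkeeping so that the selected points genuinely dominate $h$ (the same mechanism that shows selective ultrafilters are rapid) is the delicate point; for $\kappa>\omega$ the initial segments of length $<\kappa$ are absorbed by $\kappa$-completeness and only the cofinal selection needs the selector. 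I expect this bookkeeping, rather than any conceptual difficulty, to be where care is required, and it is the standard argument of Kunen and Kanamori.
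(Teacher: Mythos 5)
The paper itself offers no proof of this Fact (it is quoted as ``well known''), so the only meaningful comparison is with the classical argument of Kunen--Booth--Kanamori, which is exactly the route you take: the cycle $(1)\Rightarrow(2)\Rightarrow(3)\Rightarrow(1)$. Your implications $(1)\Rightarrow(2)$ and $(2)\Rightarrow(3)$ are complete and correct (the uniformity appeal in the $q$-point case is exactly right, since for regular $\kappa$ uniformity forces every $U$-set to have size $\kappa$), and so is the recovery of selectivity from $(3)$ via the bounded/unbounded dichotomy and $\kappa$-completeness.

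Two points in $(3)\Rightarrow(1)$ need repair, one routine and one genuine. The routine one: the paper defines $p$-point via almost one-to-one functions, not pseudo-intersections, so ``use the $p$-point property to get $B\in U$ with $B\subseteq^* A_\alpha$'' requires a short derivation -- replace $A_\alpha$ by $\bigcap_{\beta\le\alpha}A_\beta$ (using $\kappa$-completeness) and apply the paper's definition to the first-failure map $\xi\mapsto\min\{\alpha : \xi\notin A_\alpha\}$, which is unbounded mod $U$ unless $\bigcap_\alpha A_\alpha\in U$; then $B\setminus A_\alpha\subseteq f^{-1}[\alpha+1]\cap B$ has size $<\kappa$. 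The genuine gap: as literally stated, ``one representative per interval'' cannot work on $\omega$, no matter how fast the intervals grow. The condition it needs is $h(\alpha)\le d_{i+1}$ for every $\alpha\in[d_i,d_{i+1})$, and already for $h(n)=2n$ this forces $d_{i+1}\le 2$; the obstruction is that for $\alpha$ in the $i$-th interval, $h(\alpha)$ is only bounded by the endpoint $d_{i+2}$ of the \emph{next} interval, so representatives of two adjacent intervals need not thread. The standard repair is the parity split: build the intervals by $d_{i+1}=\sup\{h(\alpha)+1 : \alpha<d_i\}$ (continuously at limits), use selectivity to get $Y_0\in U$ meeting each interval at most once (the constant alternative is excluded by uniformity, each interval being bounded), and then split $Y_0$ according to the parity of the interval index, writing each index as $\lambda+n$ with $\lambda$ limit or zero and taking the parity of $n$; one of the two halves is in $U$, and any two of its points lie in intervals at least two apart, which is exactly what makes $\beta\ge d_{i+2}>h(\alpha)$ go through. (For uncountable regular $\kappa$ you can avoid the parity device by taking the $d_i$ to be closure points of $h$, which form a club; but on $\omega$ closure points of $h$ need not exist, so the split is unavoidable there, and $\omega$ is the main case of this Fact.) With these two repairs your argument is correct.
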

\begin{fact}\label{Fact: Sum is never p-point}
    Suppose that $U,V_\alpha$ are ultrafilters on $\kappa\geq \omega$ where each $V_\alpha$ is uniform. Then $\sum_UV_\alpha$ is not a $p$-point.
\end{fact}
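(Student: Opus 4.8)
The plan is to exhibit a single function that witnesses the failure of the $p$-point property, namely the projection onto the first coordinate. Write $W=\sum_U V_\alpha$; by the definition of the Fubini sum its underlying set is $\biguplus_{x<\kappa}\{x\}\times\kappa=\kappa\times\kappa$. Since being a $p$-point is invariant under relabelling the underlying set by a bijection $\phi\colon\kappa\times\kappa\to\kappa$, I may test the property directly on the first-coordinate projection $\pi\colon\kappa\times\kappa\to\kappa$, $\pi(\langle x,y\rangle)=x$, in place of $\pi\circ\phi^{-1}$. (Here I also use that $U$ is non-principal, i.e.\ uniform: if $U$ were principal at some $x_0$ then $W$ would be isomorphic to $V_{x_0}$ and the conclusion could fail, so uniformity of $U$ is needed and I take it as part of the standing hypotheses.) The strategy is then to show that $\pi$ is unbounded mod $W$ but not almost one-to-one mod $W$, which is exactly the negation of the $p$-point property for this $f=\pi$.

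For unboundedness, fix $\alpha<\kappa$ and compute $\pi^{-1}[\alpha]=\alpha\times\kappa$. Its $x$-th fiber $(\pi^{-1}[\alpha])_x$ equals $\kappa$ when $x<\alpha$ and $\emptyset$ otherwise, so, using $\kappa\in V_x$ and $\emptyset\notin V_x$, we get $\{x<\kappa : (\pi^{-1}[\alpha])_x\in V_x\}=\alpha$. Since $U$ is uniform, $\alpha\notin U$, hence $\pi^{-1}[\alpha]\notin W$. As $\alpha$ was arbitrary, $\pi$ is unbounded mod $W$.

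For the failure of almost one-to-oneness, let $X\in W$ be arbitrary. By definition $S_X:=\{x<\kappa : (X)_x\in V_x\}\in U$, so in particular $S_X\neq\emptyset$; choose any $x_0\in S_X$. Then $(X)_{x_0}\in V_{x_0}$, and since $V_{x_0}$ is uniform every member of it has cardinality $\kappa$, so $|(X)_{x_0}|=\kappa$. Taking $\gamma=x_0+1$ gives $\pi^{-1}[\gamma]\cap X\supseteq\{x_0\}\times (X)_{x_0}$, whence $|\pi^{-1}[\gamma]\cap X|=\kappa$. Thus no $X\in W$ has all of its initial-segment preimages under $\pi$ of size less than $\kappa$, i.e.\ $\pi$ is not almost one-to-one mod $W$, and therefore $W$ is not a $p$-point.

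The argument is short, so the only real care needed is conceptual rather than computational. First, one must justify the transfer of the $p$-point property across the bijection $\kappa\times\kappa\cong\kappa$: writing $g=\pi\circ\phi^{-1}$ and $X=\phi^{-1}(X')$, one checks $g^{-1}[\alpha]=\phi(\pi^{-1}[\alpha])$ and $|g^{-1}[\gamma]\cap X'|=|\pi^{-1}[\gamma]\cap X|$, so that both the unboundedness and the size conditions are preserved and testing $\pi$ is legitimate. Second, one should pin down exactly where uniformity enters: uniformity of $U$ drives the unboundedness step (no bounded set lies in $U$), while uniformity of the $V_\alpha$ drives the failure of almost one-to-oneness (each selected fiber has full size $\kappa$). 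No deeper obstacle arises.
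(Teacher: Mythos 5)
Your proof is correct and is exactly the paper's argument: the paper disposes of this fact in one line by observing that the first-coordinate projection $\pi_1$ is never almost one-to-one on any set in $\sum_U V_\alpha$. You simply supply the details the paper leaves implicit---the unboundedness of $\pi_1$ mod the sum (which, as you rightly flag, requires $U$ to be uniform, a hypothesis the paper's statement omits but clearly intends) and the routine transfer across a bijection $\kappa\times\kappa\cong\kappa$.
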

Indeed the function $\pi_1$, the projection to the first coordinate, is never almost one-to-one on a set in $X\in \sum_UV_\alpha$. 
\begin{definition}
    Let $F,G$ be filters on $X,Y$ resp. We say that $F$ is \textit{Rudin-Keisler} below $G$, denoted by $F\leq_{RK} G$, if there is a \textit{Rudin-Keisler} projection $f:Y\rightarrow X$ such that $$f_*(G):=\{A\subseteq X\mid f^{-1}[A]\in G\}=F$$
    We say that are RK-isomorphic, and denote it by $F\equiv_{RK}G$ if there is a bijection $f$ such that $f_*(F)=G$.
\end{definition}
It is well known that if $F\leq_{RK}G\wedge G\leq_{RK}F$ then $F\equiv_{RK}G$ and that $F,G\leq_{RK}F\cdot G$ via the projection to the first and second coordinates respectively. Also, the Rudin-Keisler order implies the Tukey order.
 A Ramsey ultrafilter over $\kappa$ is characterized as being Rudin-Keisler minimal among $\kappa$-ultrafilters.

  Next, let us record some basic terminology and facts regarding cofinal types. Given two directed partially ordered sets $\mathbb{P},\mathbb{Q}$, the Cartesian product $\mathbb{P}\times\mathbb{Q}$ ordered pointwise, is the least upper bound of $\mathbb{P},\mathbb{Q}$ in the Tukey order (see \cite{DobrinenTukeySurvey15}). It follows that $F\times G\leq_T F\cdot G$.  More generally, for partially ordered sets $(\mathbb{P}_i,\leq_i)$ for $i\in I$, we denote by $\prod_{i\in I}(\mathbb{P}_i,\leq_i)$ to be the order over the underlining set $\prod_{i\in I}\mathbb{P}_i$ with the everywhere domination order, namely $f\leq g$ iff for all $i\in I$, $f(i)\leq_i g(i)$. If the order is clear from the context we omit it and just write $\prod_{i\in I}\mathbb{P}_i$. This is the case when $\mathbb{P}_i=U_i$ is a filter ordered by reversed inclusion of an ideal ordered by inclusion. If for every $i\in I$, $\mathbb{P}_i=\mathbb{P}$ we simply write $\mathbb{P}^I$.

\subsection{The cofinal type of sums and products}
The following theorem \cite{Dobrinen/Todorcevic11} provides the starting point for the analyses of the cofinal type of sums of ultrafilters:
\begin{theorem}[Dobrinen-Todorcevic]\label{Thm:DobTod}
    Let $F,G_x$ be filters as in Definition \ref{Definition: Product and power}. Then:
    \begin{enumerate}
        \item $\sum_FG_x\leq_T F\times \prod_{x\in X}G_x$.
        \item If $G_x=G$ for every $x$, then $F\cdot G \leq_T F\times G^X$.
        \item If $F=G$, then $F\cdot F\leq_T F^X$.
    \end{enumerate}
\end{theorem}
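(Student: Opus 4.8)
The plan is to reduce everything to item (1), which is the only part requiring a construction, and to obtain (2) and (3) as formal consequences. To prove (1) I would use the standard criterion that, between directed posets, a \emph{monotone} map $\varphi\colon Q\to P$ whose range is cofinal in $P$ already witnesses $P\leq_T Q$: given a cofinal $\mathcal{B}\subseteq Q$ and any $p\in P$, cofinality of $\rng(\varphi)$ gives some $q$ with $p\leq\varphi(q)$, cofinality of $\mathcal{B}$ gives $q'\geq q$ in $\mathcal{B}$, and monotonicity yields $p\leq\varphi(q')$, so $\varphi''\mathcal{B}$ is cofinal. Throughout, every filter carries the order of reverse inclusion, so ``below'' means ``contains'' and a cofinal set is a base of the filter.

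For (1) I would write down the explicit map $\varphi\colon F\times\prod_{x\in X}G_x\to\sum_F G_x$ given by
\[
\varphi\bigl(A,\langle B_x : x\in X\rangle\bigr)=\bigcup_{x\in A}\{x\}\times B_x .
\]
The $x$-th fiber of this set is $B_x$ for $x\in A$ and $\emptyset$ otherwise, so the set of coordinates with fiber in $G_x$ is exactly $A\in F$; hence the value lies in $\sum_F G_x$ and $\varphi$ is well defined. Monotonicity in the reverse-inclusion order is immediate from the formula: passing to a smaller $A$ and to smaller sets $B_x$ (i.e.\ larger elements of the filters) shrinks the union, so $\varphi$ is order-preserving. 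For cofinality of the range, given any $C\in\sum_F G_x$ I put $A=\{x:(C)_x\in G_x\}\in F$, set $B_x=(C)_x$ for $x\in A$ and $B_x=Y_x$ otherwise; then $\varphi(A,\langle B_x\rangle)\subseteq C$, so $\rng(\varphi)$ is cofinal. This establishes (1).

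Item (2) is simply the instance $G_x=G$ for all $x$ of (1), in which $\sum_F G=F\cdot G$ and $\prod_{x\in X}G=G^X$. For (3) I would take $G=F$ in (2), obtaining $F\cdot F\leq_T F\times F^X$, and then absorb the extra factor: the coordinate projection $\langle B_x\rangle\mapsto B_{x_0}$ is a monotone surjection $F^X\to F$, so $F\leq_T F^X$; since $F\times F^X$ is the least upper bound of $F$ and $F^X$ in the Tukey order (as recorded in the excerpt) and $F^X$ is itself an upper bound of both, we conclude $F\times F^X\leq_T F^X$, whence $F\cdot F\leq_T F^X$.

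I do not expect a serious obstacle here: the entire content is the single map of (1), and the only places demanding care are the orientation of the orders (reverse inclusion, under which a cofinal set is a base and $\varphi$ comes out order-preserving rather than reversing) and, in (3), the fact that (2) produces a spurious factor $F$ that must be shown redundant over $F^X$. That last point is exactly where $F\leq_T F^X$ together with the least-upper-bound property of the Cartesian product are invoked.
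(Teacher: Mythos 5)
Your proof is correct and takes essentially the same approach as the paper: although Theorem \ref{Thm:DobTod} is quoted there from Dobrinen--Todorcevic without proof, your map $\varphi(A,\langle B_x\rangle)=\bigcup_{x\in A}\{x\}\times B_x$ is exactly the monotone map with cofinal image that the paper uses in proving Fact \ref{fact: basic fact on turkey of sum}, together with the same standard criterion that monotone maps with cofinal range witness Tukey reducibility. Your derivation of (2) as a special case and of (3) by absorbing the extra factor $F$ into $F^X$ (via the projection and the least-upper-bound property of the product) is the standard and correct way to finish.
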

These Tukey types are invariant under Rudin-Keisler isomorphic copies of the ultrafilters involved, hence we may assume for the rest of this paper that ultrafilters are defined on regular (infinite) cardinals. 
 It was later discovered \cite{Milovich12} that $(2),(3)$ of Theorem \ref{Thm:DobTod} are in fact an equivalences:
\begin{theorem}[Milovich]\label{Thm: Milovich}
    Let $F,G$ are $\kappa$-filters, then $F\cdot G\equiv_T F\times G^\kappa$ and in particular $F\cdot F\equiv_T F^\kappa$.
\end{theorem}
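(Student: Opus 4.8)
The inequality $F\cdot G\le_T F\times G^\kappa$ is exactly Theorem~\ref{Thm:DobTod}(2) (taking $X=\kappa$), so the entire content lies in the reverse inequality $F\times G^\kappa\le_T F\cdot G$. The plan is to exploit that $\mathbb{P}\times\mathbb{Q}$ is the \emph{least} upper bound of $\mathbb{P},\mathbb{Q}$ in the Tukey order: hence $F\times G^\kappa\le_T F\cdot G$ will follow once I check separately that $F\le_T F\cdot G$ and that $G^\kappa\le_T F\cdot G$. The first is immediate, since the projection to the first coordinate witnesses $F\le_{RK}F\cdot G$, and $\le_{RK}$ implies $\le_T$. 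All of the work is in the second inequality.

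To prove $G^\kappa\le_T F\cdot G$ I would exhibit an unbounded map (in the sense of Schmidt) $g\colon G^\kappa\to F\cdot G$. The first thing to try, sending $\vec C=\langle C_\alpha:\alpha<\kappa\rangle$ to $\bigcup_{\alpha<\kappa}\{\alpha\}\times C_\alpha$, fails: a pseudo-intersection $W^*$ of the image need only have its fibers in $G$ on some set $E\in F$, and since such an $E$ can omit a large (even $G$-positive) set of indices, the coordinates of $\vec C$ outside $E$ become invisible to $W^*$ and cannot be bounded. This coordinate loss is the main obstacle, and the idea for overcoming it is to make \emph{every} coordinate of $\vec C$ visible along a cofinal (hence $F$-unavoidable) set of fibers. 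Concretely, I would set
\[
 g(\vec C)=\bigcup_{\beta<\kappa}\{\beta\}\times\Bigl(\bigcap_{\alpha\le\beta}C_\alpha\Bigr).
\]
Here the $\kappa$-completeness of $G$ is used crucially: each fiber $\bigcap_{\alpha\le\beta}C_\alpha$ is an intersection of $|\beta+1|<\kappa$ members of $G$ and hence lies in $G$, so every fiber is $G$-large and $g(\vec C)\in F\cdot G$. The map is visibly monotone.

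It then remains to verify that $g$ is unbounded, which I would do through the contrapositive: if $g''\mathcal{C}$ is bounded in $F\cdot G$, then $\mathcal{C}$ is bounded in $G^\kappa$. Suppose $W^*\in F\cdot G$ satisfies $W^*\subseteq g(\vec C)$ for every $\vec C\in\mathcal{C}$, and let $E=\{\beta<\kappa:(W^*)_\beta\in G\}$, which lies in $F$ and is therefore cofinal in $\kappa$ by the uniformity of $F$. Fixing a coordinate $\alpha<\kappa$ and choosing $\beta_0\in E$ with $\beta_0\ge\alpha$, the containment $W^*\subseteq g(\vec C)$ yields, on the $\beta_0$-th fiber, $(W^*)_{\beta_0}\subseteq\bigcap_{\alpha'\le\beta_0}C_{\alpha'}\subseteq C_\alpha$; thus the single set $P_\alpha:=(W^*)_{\beta_0}\in G$ is contained in $C_\alpha$ for every $\vec C\in\mathcal{C}$, i.e.\ it is a pseudo-intersection in $G$ of the $\alpha$-th coordinates. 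Running this over all $\alpha$ produces $\vec P=\langle P_\alpha:\alpha<\kappa\rangle\in G^\kappa$ bounding $\mathcal{C}$, so $g$ is unbounded and $G^\kappa\le_T F\cdot G$. Combining the two inequalities with the least-upper-bound property gives $F\times G^\kappa\le_T F\cdot G$, and together with Theorem~\ref{Thm:DobTod} the equivalence $F\cdot G\equiv_T F\times G^\kappa$; the case $F=G$ specializes to $F\cdot F\equiv_T F^\kappa$. I expect the only genuinely delicate point to be the coordinated use of the two hypotheses: the $\kappa$-completeness of $G$ is what keeps the nested-intersection fibers inside $G$, while the uniformity of $F$ is what forces the support $E$ to be cofinal so that some $\beta_0\ge\alpha$ is always available.
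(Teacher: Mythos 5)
Your proof is correct, but it follows a genuinely different route from the paper's. The paper never proves Theorem \ref{Thm: Milovich} directly: it quotes it from Milovich \cite{Milovich12}, and only later re-derives it as the constant-sequence special case of Lemma \ref{Lemma: increasing v_n's case} (``taking each $V_n=V$''), whose proof runs through the coherent-maps machinery of Definition \ref{Def: uniformly below}, Theorem \ref{thm: uniformly below characterize} and Lemma \ref{Lemma: uniformly below on a base}, and which is explicitly restricted there to ultrafilters on $\omega$. You instead argue directly: $F\cdot G\leq_T F\times G^\kappa$ is Theorem \ref{Thm:DobTod}(2); for the converse you use that $F\times G^\kappa$ is the least upper bound of $F$ and $G^\kappa$, get $F\leq_T F\cdot G$ from the Rudin--Keisler projection, and prove $G^\kappa\leq_T F\cdot G$ via the single Schmidt-unbounded map $\vec C\mapsto\bigcup_{\beta<\kappa}\{\beta\}\times\bigcap_{\alpha\leq\beta}C_\alpha$, where $\kappa$-completeness of $G$ keeps each nested fiber in $G$ and uniformity of $F$ makes the support $E$ of any bound $W^*$ cofinal. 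The delicate point --- that the witness $\beta_0\in E$ with $\beta_0\geq\alpha$ depends only on $W^*$ and $\alpha$, never on $\vec C$, so that the single set $P_\alpha=(W^*)_{\beta_0}$ bounds the $\alpha$-th coordinates of the whole family --- is handled correctly, and the final step $F\times F^\kappa\equiv_T F^\kappa$ is immediate since $F\times F^\kappa\simeq F^{1+\kappa}=F^\kappa$. What your approach buys: it is elementary, self-contained, and valid in the full stated generality (uniform $\kappa$-complete filters on any regular $\kappa$, not necessarily ultra); it also yields at once the paper's remark following the theorem, that the argument goes through when $F$ is any uniform ultrafilter on $\lambda$ and $G$ is $\lambda$-complete. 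What the paper's heavier machinery buys is a different axis of generality: it handles non-constant sequences $\langle V_\alpha\rangle$, where your nested-intersection trick is meaningless (one cannot intersect sets coming from different ultrafilters; the paper must instead transport along cofinal maps $f_{m,n}:V_m\to V_n$), and this is what powers the analysis of $\sum_U V_\alpha$ in Lemmas \ref{Lemma: increasing v_n's case} and \ref{Lemma: strict infinum configuration}, with Milovich's theorem recovered as a byproduct.
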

The proof of Milovich's Theorem go through in case $F$ is any ultrafilter over $\lambda$ and $G$ is $\lambda$-complete.
\begin{corollary}[Milovich]\label{Cor: Milovich corollary}
    For any two $\kappa$-filters $F,G$, $F\cdot (G\cdot G)\equiv_T F\cdot G$ and $F^3\equiv_T F^2$.
\end{corollary}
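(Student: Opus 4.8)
The plan is to reduce both statements to Milovich's product formula (Theorem \ref{Thm: Milovich}) together with the observation that an iterated Fubini product of $\kappa$-filters is again, up to Rudin--Keisler isomorphism, a $\kappa$-filter over a set of size $\kappa$, so that the formula may be applied a second time. I would prove the first equivalence $F\cdot(G\cdot G)\equiv_T F\cdot G$ in full, and then obtain $F^3\equiv_T F^2$ as the special case $G=F$: since the Fubini product is associative up to RK-isomorphism and Tukey type is invariant under such isomorphisms, $F^3=F\cdot(F\cdot F)\equiv_T F\cdot F=F^2$.

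First I would verify that $G\cdot G$, a priori a filter on $\kappa\times\kappa$, becomes a genuine $\kappa$-filter once transported to $\kappa$ along any bijection $e:\kappa\times\kappa\to\kappa$. The $\kappa$-completeness of $G$ passes to $G\cdot G$ by the standard fiberwise argument (intersect $<\kappa$ many sets in the base and $<\kappa$ many in each fiber). Uniformity follows because every member of $G\cdot G$ is co-$(<\kappa)$: if $S\subseteq\kappa\times\kappa$ has size $<\kappa$, then all but boundedly many fibers of its complement equal $\kappa\in G$, so the complement lies in $G\cdot G$; hence $e_*(G\cdot G)$ contains every cobounded subset of $\kappa$. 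Because the Tukey type is invariant under RK-isomorphic copies, I may then apply Theorem \ref{Thm: Milovich} with second coordinate $G\cdot G$.

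The core computation is the chain
\[
F\cdot(G\cdot G)\ \equiv_T\ F\times(G\cdot G)^\kappa\ \equiv_T\ F\times(G^\kappa)^\kappa\ \equiv_T\ F\times G^\kappa\ \equiv_T\ F\cdot G,
\]
where the first and last equivalences are Milovich's formula. The second equivalence uses $G\cdot G\equiv_T G^\kappa$ (the ``in particular'' clause of Theorem \ref{Thm: Milovich}) together with the fact that $\equiv_T$ is preserved under taking $\kappa$-th powers, which one sees by applying the witnessing Tukey maps coordinatewise. The third equivalence uses the poset isomorphism $(G^\kappa)^\kappa=G^{\kappa\times\kappa}\simeq G^\kappa$ induced by any bijection $\kappa\times\kappa\to\kappa$, isomorphic posets being Tukey equivalent.

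I expect the main obstacle to be the second paragraph: confirming that $G\cdot G$ is legitimately a $\kappa$-filter so that Milovich's theorem is applicable to it, and carefully justifying that Tukey equivalence is stable under the power operation $P\mapsto P^\kappa$ and under reindexing the exponent by a set of the same cardinality. Once these structural points are in place, the remainder is a mechanical substitution into Theorem \ref{Thm: Milovich}, and the second assertion $F^3\equiv_T F^2$ requires no further work beyond the associativity remark.
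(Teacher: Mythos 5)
Your proof is correct and follows essentially the route the paper intends: the corollary is stated as an immediate consequence of Theorem \ref{Thm: Milovich}, obtained by applying the product formula twice together with $G\cdot G\equiv_T G^\kappa$ and the reindexing isomorphism $(G^\kappa)^\kappa\simeq G^{\kappa\times\kappa}\simeq G^\kappa$, exactly as in your displayed chain (the paper's remark that Milovich's theorem goes through when the second filter is merely $\lambda$-complete is its way of licensing the application to $G\cdot G$, which you instead justify by transporting $G\cdot G$ to a genuine $\kappa$-filter on $\kappa$). One wording slip worth fixing: you assert ``every member of $G\cdot G$ is co-$(<\kappa)$,'' which is false in general; what your argument actually shows, and what uniformity requires, is the reverse inclusion --- every co-$(<\kappa)$ subset of $\kappa\times\kappa$ belongs to $G\cdot G$.
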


It is tempting to conjecture that $\sum_UV_\alpha\equiv_T U\times \prod_{\alpha<\lambda}V_\alpha$, however, this will not be the case  in general, as indicated by the following example:
\begin{example}
    Suppose that $U$ and $V$ are Tukey incomparable ultrafilters on $\omega$, and $U\equiv_T U\cdot U$. This situation is obtained for example under  $Cov(\mathcal{M})=\mathfrak{c}$ \footnote{By Ketonen \cite{Ketonen1976}, this assumption implies that there are $(2^{c})^+$-many distinct selective ultrafilters. Then there are two Tukey incomparable selective ultrafilters and by Dobrinen and Todorcevic \cite{Dobrinen/Todorcevic11}, $U\cdot U\equiv_TU$ for any selective ultrafilter.}. The incomparability requirement ensures that $U\times V>_T U$. Let $V_0=V$ and $V_n=U$ for $n>0$. Then $$\sum_U V_n=U\cdot U\equiv_T U<_T U\times V\leq_T U\times \prod_{n<\omega} V_n.$$  
\end{example}
The point of the example is that the sum is insensitive to removing a neglectable set of coordinates, while the product changes if we remove even a single coordinate. Another quite important difference is that $U\times \prod_{x\in X}V_x$ is insensitive to permutations of the indexing set, while $\sum_U V_x$ is.  
Formally, this is expressed by the following fact: \begin{fact}\label{fact: basic fact on turkey of sum}
    Let $U$ be an ultrafilter over $\lambda\geq\omega$ and $U_\alpha$ on $\delta_\alpha$. For every $X\in U$, $\sum_UV_\alpha\leq_T U\times \prod_{\alpha\in X}V_\alpha\leq_T U\times \prod_{\alpha<\lambda} V_\alpha$.
\end{fact}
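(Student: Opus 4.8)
The plan is to prove the two displayed inequalities separately, using the characterization of $\leq_T$ by monotone cofinal maps: for directed posets $P,Q$ (all ordered here by reverse inclusion), $P\leq_T Q$ holds whenever there is an order-preserving $g\colon Q\to P$ whose range is cofinal in $P$. The second inequality $U\times\prod_{\alpha\in X}V_\alpha\leq_T U\times\prod_{\alpha<\lambda}V_\alpha$ is the routine one and uses no new idea: writing $\prod_{\alpha<\lambda}V_\alpha\simeq\prod_{\alpha\in X}V_\alpha\times\prod_{\alpha\in\lambda\setminus X}V_\alpha$, the coordinate projection is order-preserving with cofinal range, so a factor of a product is always Tukey below the whole product, and keeping the common factor $U$ preserves this.

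The substance is the first inequality $\sum_UV_\alpha\leq_T U\times\prod_{\alpha\in X}V_\alpha$, and the idea is to restrict the Dobrinen--Todorcevic map underlying Theorem \ref{Thm:DobTod}(1) to the coordinates lying in $X$. Concretely, I would define
\[
g\colon U\times\prod_{\alpha\in X}V_\alpha\longrightarrow \sum_UV_\alpha,\qquad g(A,f)=\bigcup_{\alpha\in A\cap X}\{\alpha\}\times f(\alpha),
\]
for $A\in U$ and $f\in\prod_{\alpha\in X}V_\alpha$ (so $f(\alpha)\in V_\alpha$ for each $\alpha\in X$). First one checks that $g(A,f)$ genuinely belongs to $\sum_UV_\alpha$: its $\alpha^{\text{th}}$ fiber equals $f(\alpha)\in V_\alpha$ for every $\alpha\in A\cap X$, so the set of coordinates with large fiber contains $A\cap X$, which lies in $U$ because $A,X\in U$ and $U$ is a filter. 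This step---that passing from $A$ to $A\cap X$ stays inside $U$---is precisely where the hypothesis $X\in U$ enters, and it is the only place the argument deviates from the full-product case of Theorem \ref{Thm:DobTod}(1).

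Monotonicity is immediate: if $A\supseteq A'$ and $f(\alpha)\supseteq f'(\alpha)$ for all $\alpha\in X$ (i.e.\ $(A,f)\leq(A',f')$ in the reverse-inclusion order on the domain), then $g(A',f')\subseteq g(A,f)$, that is $g(A,f)\leq g(A',f')$ in $\sum_UV_\alpha$. For cofinality of the range, given any $B\in\sum_UV_\alpha$, set $A_B=\{\alpha\mid (B)_\alpha\in V_\alpha\}\in U$ and choose $f$ with $f(\alpha)=(B)_\alpha$ for $\alpha\in X\cap A_B$ and $f(\alpha)=\delta_\alpha$ otherwise; then $g(A_B,f)=\bigcup_{\alpha\in A_B\cap X}\{\alpha\}\times (B)_\alpha\subseteq B$, so $B\leq g(A_B,f)$ in $\sum_UV_\alpha$. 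Thus $g$ is order-preserving with cofinal range, witnessing $\sum_UV_\alpha\leq_T U\times\prod_{\alpha\in X}V_\alpha$, and chaining with the second inequality via transitivity of $\leq_T$ yields the full chain. I do not expect a genuine obstacle here; the only subtlety is keeping the reverse-inclusion order straight and noting the filter-closure step $A\cap X\in U$.
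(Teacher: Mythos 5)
Your proof is correct and follows essentially the same route as the paper: the map $g(A,f)=\bigcup_{\alpha\in A\cap X}\{\alpha\}\times f(\alpha)$ is exactly the paper's witnessing map, and your cofinality check (intersecting a given $B\in\sum_UV_\alpha$ with the coordinates in $X$) is the same observation the paper phrases as cofinality of the sets $Y$ with $\pi_1''Y\subseteq X$. The only difference is that you spell out the routine verifications (membership via $A\cap X\in U$, monotonicity, and the projection argument for the right-hand inequality) which the paper leaves implicit.
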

\begin{proof}
    The right inequality is clear. The left one is also simple, since the set $\mathcal{X}\subseteq \sum_{U}V_\alpha$, of all $Y$ such that $\pi_1''Y\subseteq X$ is a cofinal set in $\sum_{U}V_\alpha$ and therefore the map $F:U\times \prod_{\alpha\in X}V_\alpha\rightarrow \sum_UV_\alpha$ defined by
    $$F(\l Z,\l A_\alpha\mid \alpha\in X\r\r)=\bigcup_{\alpha\in Z\cap X}\{\alpha\}\times A_\alpha$$
    is monotone and has cofinal image.
\end{proof}

In this section, we provide further insight into the cofinal type of $\sum_UV_\alpha$. 
We will focus on $\kappa$-ultrafilters, so our initial assumption is that $U$ is a $\lambda$-ultrafilter for $\lambda\geq\omega$ and $\l V_\alpha\mid \alpha<\lambda\r$ is a sequence of ultrafilters such that each $V_\alpha$ is a $\delta_\alpha$-ultrafilter where $\delta_\alpha\geq\omega$. Towards our first result, consider the set $$\mathcal{B}(U,\l V_\alpha\mid\alpha<\lambda\r)=\{U\times \prod_{\alpha\in X}V_\alpha\mid X\in U\}$$ ordered by the Tukey order. This is clearly a downward-directed set. Our goal is to prove that in some sense, $\sum_UV_\alpha$ is the greatest lower bound of $\mathcal{B}(U,\l V_\alpha\mid\alpha<\lambda\r)$. 
Consider the maps $$\pi_X:U\times \prod_{\alpha\in X}V_\alpha\rightarrow \sum_UV_\alpha, \ \ \ \ \pi_{X,Y}:U\times \prod_{\alpha\in X}V_\alpha\rightarrow U\times \prod_{\alpha\in Y}V_\alpha$$ Defined for $X,Y\in U$ where $Y\subseteq X$ defined by $$\pi_X(\l Z,\l A_\alpha\mid \alpha\in X\r\r)=\bigcup_{\alpha\in X\cap Z}\{\alpha\}\times A_\alpha\ \text{ and }$$ 
$$\pi_{X,Y}(\l Z,\l A_\alpha\mid \alpha\in X\r\r)=\l Z,\l A_\alpha\mid \alpha\in Y\r\r.$$ Then
\begin{enumerate}
    \item $\pi_X$ is monotone cofinal and $\rng(\pi_X)$ is exactly all the sets $B\in \sum_UV_\alpha$ in standard form\footnote{A set $B\in\sum_
UV_\alpha$ is said to be in \textit{standard form} if for every $\alpha<\lambda$, either $(B)_\alpha=\emptyset$ or $(B)_\alpha\in V_\alpha$.} such that $\pi'' B\subseteq X$.
    \item $\pi_{X,Y}$ is monotone cofinal.
    \item $\pi_{Y}\circ \pi_{X,Y}(C)\subseteq \pi_X(C)$.
\end{enumerate}

Suppose that $\sum_UV_\alpha\geq_T \mathbb{P}$. Recall that if $\mathbb{P}$ is complete\footnote{i.e., every bounded subset of $\mathbb{P}$ has a least upper bound.} (e.g. $\mathbb{P}=F$ is a filter ordered by reverse inclusion or any product of complete orders), then $\mathbb{Q}\geq_T\mathbb{P}$ implies that there is a monotone\footnote{$f:\mathbb{Q}\rightarrow\mathbb{P}$ is called monotone if $q_1\leq_{\mathbb{Q}} q_2\Rightarrow f(q_1)\leq_{\mathbb{P}} f(q_2)$.} cofinal map $f:\mathbb{Q}\rightarrow \mathbb{P}$. Suppose that $\mathbb{P}$ is complete and let $g:\sum_UV_\alpha\rightarrow \mathbb{P}$ be monotone cofinal. Define $f_X=g\circ \pi_X$. Then $f_X$ is monotone cofinal from $U\times \prod_{\alpha\in X}V_\alpha$ to $\mathbb{P}$. Moreover, we have that if $Y\subseteq X$ then $$f_Y(\pi_{X,Y}(C))=g(\pi_Y(\pi_{X,Y}(C))\geq_{\mathbb{P}} g(\pi_X(C))=f_X(C)$$
\begin{definition}\label{Def: uniformly below}
A sequence of monotone cofinal maps $$\l f_X:U\times \prod_{\alpha\in X}V_\alpha\rightarrow \mathbb{P}\mid X\in U\r$$ if said to be \textit{coherent} if 
$$(\dag) \ \ \text{whenever } Y\subseteq X, \text{ and }C\in U\times\prod_{\alpha\in X}V_\alpha, \ f_Y(\pi_{X,Y}(C))\geq_{\mathbb{P}} f_X(C).$$
A poset $\mathbb{P}$ is said to be \textit{uniformly below} $\mathcal{B}(U,\l V_\alpha\mid\alpha<\lambda\r)$ if there is a coherent sequence of monotone cofinal maps $\l f_X:U\times \prod_{\alpha\in X}V_\alpha\rightarrow \mathbb{P}\mid X\in U\r$.

\end{definition}
The following theorem says that $\sum_UV_\alpha$ is the greatest lower bound among all the posts uniformly below $\mathcal{B}(U,\l V_\alpha\mid\alpha<\lambda\r)$.
\begin{theorem}\label{thm: uniformly below characterize}
    Suppose that $\mathbb{P}$ is a complete order. Then $\mathbb{P}$ is uniformly below $\mathcal{B}(U,\l V_\alpha\mid\alpha<\lambda\r)$
     if and only if  $\sum_UV_\alpha\geq_T \mathbb{P}$.
\end{theorem}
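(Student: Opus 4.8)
The plan is to treat the two implications separately. The implication ``$\sum_U V_\alpha \geq_T \mathbb{P}\Rightarrow \mathbb{P}$ is uniformly below $\mathcal{B}(U,\l V_\alpha\mid\alpha<\lambda\r)$'' is essentially carried out already in the discussion preceding Definition~\ref{Def: uniformly below}: since $\mathbb{P}$ is complete, $\sum_U V_\alpha\geq_T\mathbb P$ supplies a monotone cofinal $g\colon \sum_U V_\alpha\to\mathbb P$, one sets $f_X=g\circ\pi_X$, and coherence $(\dag)$ is exactly the relation $\pi_Y\circ\pi_{X,Y}(C)\subseteq\pi_X(C)$ (i.e.\ ``$\geq$'' in the reverse-inclusion order) pushed through the monotone $g$. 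So the real work is the converse: starting from a coherent family $\l f_X\mid X\in U\r$ of monotone cofinal maps, I must build a single map out of $\sum_U V_\alpha$ witnessing $\sum_U V_\alpha\geq_T\mathbb P$. It suffices to produce a \emph{monotone} map with \emph{cofinal range}, since such a map is automatically cofinal.

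For the construction, given $B\in\sum_U V_\alpha$ I would set $W_B=\{\alpha<\lambda\mid (B)_\alpha\in V_\alpha\}$, which lies in $U$ precisely because $B\in\sum_U V_\alpha$, let $C_B=\l W_B,\l (B)_\alpha\mid \alpha\in W_B\r\r\in U\times\prod_{\alpha\in W_B}V_\alpha$, and define $g(B)=f_{W_B}(C_B)$. On a standard-form set this is just $g(B)=f_{\pi''B}\bigl(\l \pi''B,\l(B)_\alpha\r\r\bigr)$; passing to $W_B$ in general is harmless because the assignment $B\mapsto(W_B,\l(B)_\alpha\r)$ respects inclusion.

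For monotonicity, suppose $B_1\supseteq B_2$ (so $B_1\leq B_2$ in $\sum_U V_\alpha$) and aim for $g(B_1)\leq_{\mathbb P} g(B_2)$. Fiberwise $(B_1)_\alpha\supseteq (B_2)_\alpha$ forces $W_{B_2}\subseteq W_{B_1}$. Applying $(\dag)$ with $Y=W_{B_2}\subseteq X=W_{B_1}$ to $C=C_{B_1}$ yields $f_{W_{B_2}}\bigl(\pi_{W_{B_1},W_{B_2}}(C_{B_1})\bigr)\geq_{\mathbb P} f_{W_{B_1}}(C_{B_1})=g(B_1)$. Since $\pi_{W_{B_1},W_{B_2}}(C_{B_1})=\l W_{B_1},\l (B_1)_\alpha\mid\alpha\in W_{B_2}\r\r$, a coordinatewise comparison inside $U\times\prod_{\alpha\in W_{B_2}}V_\alpha$ (using $W_{B_2}\subseteq W_{B_1}$ in the $U$-coordinate and $(B_2)_\alpha\subseteq (B_1)_\alpha$ in each $V_\alpha$-coordinate, both ``$\geq$'' in reverse inclusion) gives $C_{B_2}\geq \pi_{W_{B_1},W_{B_2}}(C_{B_1})$, whence monotonicity of $f_{W_{B_2}}$ delivers $g(B_2)\geq_{\mathbb P} g(B_1)$. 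For cofinality of the range, fix $p\in\mathbb P$, take $X=\lambda\in U$, and use that $f_\lambda$ is cofinal (so its range is cofinal) to get $C=\l Z,\l A_\alpha\mid\alpha<\lambda\r\r$ with $f_\lambda(C)\geq_{\mathbb P} p$; then $B:=\pi_\lambda(C)=\bigcup_{\alpha\in Z}\{\alpha\}\times A_\alpha$ has $W_B=Z$ and $C_B=\pi_{\lambda,Z}(C)$, so $(\dag)$ gives $g(B)=f_Z(\pi_{\lambda,Z}(C))\geq_{\mathbb P} f_\lambda(C)\geq_{\mathbb P} p$.

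The main obstacle I anticipate is not any single computation but the bookkeeping forced by the fact that the domain $U\times\prod_{\alpha\in W_B}V_\alpha$ of the relevant $f_X$ \emph{changes with $B$}: $g$ is not a composition with one fixed $f_X$, and the whole point of the coherence condition $(\dag)$ is to let one transfer between $f_{W_1}$ and $f_{W_2}$ on overlapping index sets with the inequalities falling on the correct side for both monotonicity and cofinality. Consequently the one place where care is genuinely needed is keeping the reverse-inclusion order straight throughout (``larger set'' meaning ``lower in the poset''), since that is exactly where a direction could be reversed by accident.
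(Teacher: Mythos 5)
Your proposal is correct and takes essentially the same approach as the paper: the right-to-left direction is the computation preceding Definition~\ref{Def: uniformly below}, and the converse glues the coherent family into a single monotone map $B\mapsto f_{W_B}(C_B)$ whose range is cofinal (the paper's $F$ on standard-form sets, where $W_B=\pi_1''B$). The only cosmetic differences are that the paper works on the cofinal set of standard-form sets rather than all of $\sum_U V_\alpha$, and in the monotonicity check it interpolates an auxiliary hybrid element and applies $(\dag)$ on the $f_{W_{B_2}}$-side before using monotonicity of $f_{W_{B_1}}$, whereas you apply $(\dag)$ to $C_{B_1}$ directly and finish with monotonicity of $f_{W_{B_2}}$ --- the same two ingredients in the opposite order.
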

\begin{proof}
    From right to left was already proven in the paragraph before Definition \ref{Def: uniformly below}. Let us prove from left to right. Let $\l f_X\mid X\in U\r$ be the sequence witnessing that $\mathbb{P}$ is uniformly below $\mathcal{B}(U,\l V_\alpha\mid\alpha<\lambda\r)$. Let $\mathcal{X}\subseteq\sum_{U}V_\alpha$ be the usual cofinal set of all the sets $A\in \sum_UV_\alpha$ is a standard form. Let us define $F:\chi\rightarrow \mathbb{P}$ monotone and cofinal,
    $$F(A)=f_{\pi_1''A}(\l \pi_1''A,\l (A)_\alpha\mid \alpha\in \pi''A\r\r)$$
    We claim first (and most importantly) that $F$ is monotone. Suppose that $A,B\in \chi$ are such that $A\subseteq B$. Then, \begin{enumerate}
        \item [(a.)] $\pi_1''A\subseteq \pi_1'' B$ and
        \item [(b.)] for every $\alpha<\lambda$, $(A)_\alpha\subseteq (B)_\alpha$. 
    \end{enumerate} 
    Define the sequence $\l X_\alpha\mid \alpha\in \pi_1''B\r$ by $X_\alpha=(A)_\alpha$ for $\alpha\in \pi_1''A$ and $X_\alpha=(B)_\alpha$ for $\alpha\in \pi_1''B\setminus\pi_1''A$. Note that $$\pi_{\pi_1''B,\pi_1''A}(\l \pi_1''A,\l X_\alpha\mid \alpha\in \pi_1''B\r\r)=\l \pi_1''A,\l(A)_\alpha\mid \alpha\in \pi_1''A\r\r$$ and that $X_\alpha\subseteq (B)_\alpha$. It follows by monotonicity of the functions, and by $(\dag)$ that
    $$F(A)=f_{\pi_1''A}(\l \pi_1''A,\l(A)_\alpha\mid \alpha\in \pi_1''A\r\r)=f_{\pi_1''A}(\pi_{\pi_1''B,\pi_1''A}(\l \pi_1''A,\l X_\alpha\mid \alpha\in \pi_1''B\r\r)$$
    $$\geq_{\mathbb{P}} f_{\pi_1''B}(\l \pi_1''A,\l X_\alpha\mid \alpha\in \pi_1''B\r\r)\geq_{\mathbb{P}} f_{\pi_1''B}(\l \pi_1''B,\l (B)_\alpha\mid \alpha\in \pi_1''B\r\r=F(B)$$
    To see it is cofinal, let $p\in \mathbb{P}$ be any element, fix any $X\in U$, since $f_X$ is cofinal, there is $\l Z, \l A_\alpha\mid \alpha\in X\r\r\in U\times\prod_{\alpha\in X}V_\alpha$ such that $f_X(\l Z, \l A_\alpha\mid \alpha\in X\r\r)\geq_{\mathbb{P}}p$.
    Consider $A=\cup_{\alpha\in Z\cap X}\{\alpha\}\times A_\alpha$. Then
    $$F(A)=f_{Z\cap X}(\l Z\cap X,\l A_\alpha\mid \alpha\in Z\cap X\r\r)=$$
    $$f_{Z\cap X}(\pi_{X,Z'\cap X}(\l Z\cap X,\l A_\alpha\mid \alpha\in X\r\r))\geq_{\mathbb{P}} f_X(\l Z\cap X,\l A_\alpha\mid \alpha\in X\r)\geq_{\mathbb{P}} p$$
    
\end{proof}



\begin{lemma}\label{Lemma: uniformly below on a base}
    Suppose that $\mathbb{P}$ is complete and for each $X\in U$, $\mathcal{X}_X\subseteq U\times \prod_{\alpha\in X}V_\alpha$ is such that:
    \begin{enumerate}
        \item $\mathcal{X}_X$ is a cofinal subset of $U\times\prod_{\alpha\in X}V_\alpha$.
        \item $f_X:\mathcal{X}_X\rightarrow \mathbb{P}$ is monotone cofinal.
        \item whenever $Y\subseteq X$, $\pi_{X,Y}''\mathcal{X}_X\subseteq \mathcal{X}_Y$ and
        $f_Y(\pi_{X,Y}(C))\geq_{\mathbb{P}}f_X(C)$
    \end{enumerate}
Then $\mathbb{P}$ is uniformly below $\mathcal{B}(U,\l B_\alpha\mid \alpha<\lambda\r)$.
\end{lemma}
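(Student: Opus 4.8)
The plan is to construct directly the coherent family of monotone cofinal maps on the \emph{full} products demanded by Definition \ref{Def: uniformly below}, by extending each given $f_X$ off of its cofinal domain $\mathcal{X}_X$. Since $\mathbb{P}$ is complete and the empty set is (vacuously) bounded above, $\mathbb{P}$ has a least element $0_\mathbb{P}$, namely the least upper bound of $\emptyset$; I therefore set $\bigvee\emptyset=0_\mathbb{P}$. For $C\in U\times\prod_{\alpha\in X}V_\alpha$ I define
$$\tilde f_X(C)=\bigvee\{\,f_X(D)\mid D\in\mathcal{X}_X,\ D\leq C\,\}.$$
First I would check this is well defined. Given $C$, cofinality of $\mathcal{X}_X$ (hypothesis (1)) supplies some $D_0\in\mathcal{X}_X$ with $D_0\geq C$; then every $D\leq C$ in $\mathcal{X}_X$ satisfies $D\leq D_0$, so $f_X(D)\leq_{\mathbb{P}} f_X(D_0)$ by monotonicity, whence the set $\{f_X(D)\mid D\in\mathcal{X}_X,\ D\leq C\}$ is bounded above by $f_X(D_0)$ and its supremum exists by completeness of $\mathbb{P}$. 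That $\tilde f_X$ extends $f_X$ is immediate: for $C\in\mathcal{X}_X$ the element $f_X(C)$ both lies in the set and dominates it, so it is the supremum.

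Next I would record the routine properties. Monotonicity holds because $C_1\leq C_2$ enlarges the indexing set $\{D\in\mathcal{X}_X\mid D\leq C\}$, and the supremum over a larger (still bounded) set can only grow. Cofinality is inherited: $\tilde f_X$ agrees with the cofinal map $f_X$ on $\mathcal{X}_X$, so its image contains the cofinal image of $f_X$, and a monotone map with cofinal image is cofinal. The substantive step is the coherence condition $(\dag)$, and this is where hypothesis (3) is used in full. Fix $Y\subseteq X$ and $C\in U\times\prod_{\alpha\in X}V_\alpha$. For every $D\in\mathcal{X}_X$ with $D\leq C$, hypothesis (3) gives $\pi_{X,Y}(D)\in\mathcal{X}_Y$, while monotonicity of $\pi_{X,Y}$ gives $\pi_{X,Y}(D)\leq\pi_{X,Y}(C)$; hence $f_Y(\pi_{X,Y}(D))\leq_{\mathbb{P}}\tilde f_Y(\pi_{X,Y}(C))$ by the definition of $\tilde f_Y$. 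Combining this with the value inequality $f_X(D)\leq_{\mathbb{P}} f_Y(\pi_{X,Y}(D))$, again from hypothesis (3), I obtain $f_X(D)\leq_{\mathbb{P}}\tilde f_Y(\pi_{X,Y}(C))$ for every admissible $D$. Taking the supremum over such $D$ yields $\tilde f_X(C)\leq_{\mathbb{P}}\tilde f_Y(\pi_{X,Y}(C))$, which is precisely $(\dag)$. Thus $\l\tilde f_X\mid X\in U\r$ is a coherent sequence of monotone cofinal maps, witnessing that $\mathbb{P}$ is uniformly below $\mathcal{B}(U,\l V_\alpha\mid\alpha<\lambda\r)$.

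The one genuinely delicate point, which I would flag explicitly, is that cofinality of $\mathcal{X}_X$ only supplies elements \emph{above} a given $C$, so the set $\{D\in\mathcal{X}_X\mid D\leq C\}$ may well be empty. This is exactly why the supremum must be permitted to range over the empty set, and why the least element $0_\mathbb{P}$ furnished by completeness is needed: for such $C$ one has $\tilde f_X(C)=0_\mathbb{P}$, which is harmless for monotonicity, cofinality (governed by the nonempty case), and $(\dag)$ (the trivial lower bound). Everything else is bookkeeping in the reverse-inclusion order together with the already-recorded monotonicity of the projections $\pi_{X,Y}$.
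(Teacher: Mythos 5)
Your proof is correct and takes essentially the same route as the paper: the paper also extends each $f_X$ to the full product by $f^*_X(C)=\sup\{f_X(D)\mid D\in\mathcal{X}_X,\ D\leq C\}$, uses cofinality of $\mathcal{X}_X$ together with completeness of $\mathbb{P}$ to see the supremum exists, and verifies monotonicity, cofinality, and $(\dag)$ by exactly the comparison of suprema you give. Your explicit handling of the empty-supremum case via the least element $0_{\mathbb{P}}$ is a small detail the paper glosses over, but it does not alter the argument.
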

\begin{proof}
 Define $f^*_X:U\times\prod_{\alpha\in X}V_\alpha\rightarrow \mathbb{P}$ by $$f^*_X(A)=\sup\{f_X(B)\mid A\subseteq B\in \mathcal{X}_X\}.$$
Note that if $B'\in\mathcal{X}_X$ is such that $B'\subseteq A$, then the set $\{f_X(B)\mid A\subseteq B\in\mathcal{X}_X\}$ is bound in $\mathbb{P}$ by $f_X(B')$ (as $f_X$ is monotone). Hence $f^*_X(A)$ is well defined by completeness of $\mathbb{P}$. It is straightforward that Since $f_X$ is monotone cofinal, $f^*_X$ is monotone cofinal. To see $(\dag)$, suppose that $Y\subseteq X$, and $C\in U\times\prod_{\alpha<\lambda}V_\alpha$, then for every $C\subseteq B\in\mathcal{X}_X$, then by $(3)$ $\pi_{X,Y}(C)\subseteq\pi_{X,Y}(B)\in\mathcal{X}_X$ and $f_{Y}(\pi_{X,Y}(B))\geq_{\mathbb{P}} f_X(B)$. If follows that
$$f^*_X(C)=\sup\{f_X(B)\mid C\subseteq B\in\mathcal{X}_X\}\leq \sup\{ f_Y(\pi_{X,Y}(B))\mid C\leq_{\mathbb{P}} B\in\mathcal{X}_X\}$$
$$\leq_{\mathbb{P}} \sup\{f_Y(B')\mid \pi_{X,Y}(C)\subseteq B'\in\mathcal{X}_Y\}=f^*_Y(\pi_{X,Y}(C)).$$
Hence $\l f_X^*\mid X\in U\r$ is coherent and therefore $\mathbb{P}$ is uniformly below $\mathcal{B}(U,\l V_\alpha\mid\alpha<\lambda\r)$.
\end{proof}
\begin{corollary}\label{cor: below each V_n}
Let $U$ be an ultrafilter on $\lambda\geq\omega$ and that each $V_\alpha$ is a $\delta_\alpha$-complete ultrafilter on some $\delta_\alpha>\alpha$. $\mathbb{P}\leq_T  V_\alpha$ for every $\alpha<\lambda$, then $\mathbb{P}^\lambda\leq_T \sum_UV_\alpha$.
\end{corollary}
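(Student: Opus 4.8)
The plan is to construct, via Schmidt's reformulation of the Tukey order, an explicit \emph{unbounded} map $\Phi:\mathbb{P}^\lambda\to\sum_UV_\alpha$; producing a cofinal map in the other direction looks awkward precisely because $\sum_UV_\alpha$ is a sum and not a product, so I would stay on the unbounded side throughout. Note also that the target $\mathbb{P}^\lambda$ is genuinely below the \emph{sum}, not merely below $U\times\prod_\alpha V_\alpha$, so the content is that much information survives the passage from the product to the sum.

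The preliminary step is to upgrade the hypothesis $\mathbb{P}\le_T V_\alpha$ to $\mathbb{P}^{\alpha+1}\le_T V_\alpha$ for every $\alpha<\lambda$. Indeed, monotonicity of products under the Tukey order gives $\mathbb{P}^{\alpha+1}\le_T V_\alpha^{\alpha+1}$, and the diagonal map $A\mapsto\langle A\mid\gamma\le\alpha\rangle$ witnesses $V_\alpha^{\alpha+1}\le_T V_\alpha$: it is monotone, and it is cofinal because for any $\langle B_\gamma\mid\gamma\le\alpha\rangle$ the intersection $\bigcap_{\gamma\le\alpha}B_\gamma$ lies in $V_\alpha$ (here is where I use that $V_\alpha$ is $\delta_\alpha$-complete with $\delta_\alpha>\alpha$, so $|\alpha+1|<\delta_\alpha$), and its diagonal dominates the given tuple. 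Transitivity of $\le_T$ then yields $\mathbb{P}^{\alpha+1}\le_T V_\alpha$, and I fix an unbounded witness $\psi_\alpha:\mathbb{P}^{\alpha+1}\to V_\alpha$. I then define
\[
\Phi(p)=\bigcup_{\alpha<\lambda}\{\alpha\}\times\psi_\alpha(p\restriction(\alpha+1)),
\]
which lies in $\sum_UV_\alpha$ since each fibre $(\Phi(p))_\alpha=\psi_\alpha(p\restriction(\alpha+1))$ belongs to $V_\alpha$ (so $\Phi(p)$ even has full support $\lambda\in U$).

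It remains to verify that $\Phi$ is unbounded, which is the crux of the argument. Suppose $\mathcal{A}\subseteq\mathbb{P}^\lambda$ is unbounded but $\Phi''\mathcal{A}$ is bounded in $\sum_UV_\alpha$, say $B\subseteq\Phi(p)$ for all $p\in\mathcal{A}$, and put $W=\{\alpha\mid(B)_\alpha\in V_\alpha\}\in U$. For each $\alpha\in W$ we get $(B)_\alpha\subseteq\psi_\alpha(p\restriction(\alpha+1))$ for every $p$, so $\{\psi_\alpha(p\restriction(\alpha+1))\mid p\in\mathcal{A}\}$ is bounded in $V_\alpha$; as $\psi_\alpha$ is unbounded, $\{p\restriction(\alpha+1)\mid p\in\mathcal{A}\}$ is bounded in $\mathbb{P}^{\alpha+1}$ by some $r^\alpha$. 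The key point is that each coordinate $\gamma<\lambda$ is ``watched'' by \emph{every} $V_\alpha$ with $\alpha\ge\gamma$, and since $U$ is a uniform $\lambda$-ultrafilter, $W$ is unbounded in $\lambda$, so for each $\gamma$ there is $\alpha\in W$ with $\alpha\ge\gamma$; setting $r_\gamma=r^\alpha_\gamma$ for such an $\alpha$ produces $r\in\mathbb{P}^\lambda$ that bounds all of $\mathcal{A}$ (because $p_\gamma\le r^\alpha_\gamma$ whenever $\gamma\le\alpha$), contradicting unboundedness of $\mathcal{A}$. Hence $\mathbb{P}^\lambda\le_T\sum_UV_\alpha$. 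I expect this final gluing to be the main obstacle: the naive full-support map in which coordinate $\gamma$ is tracked by the single factor $V_\gamma$ fails, since $U$ discards individual coordinates and a one-coordinate witness to unboundedness is killed in the sum. Threading the initial segment $p\restriction(\alpha+1)$ through $\psi_\alpha$, so that each coordinate is tracked by a \emph{tail} of indices, together with the uniformity of $U$, is exactly what makes unboundedness survive.
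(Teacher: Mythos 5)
Your proof is correct, and it takes a genuinely different route from the paper's. The paper proves this corollary inside its ``uniformly below'' framework: it fixes monotone cofinal maps $f_\alpha:V_\alpha\rightarrow\mathbb{P}$, and for each $X\in U$ builds a cofinal subset $\mathcal{X}_X\subseteq U\times\prod_{\alpha\in X}V_\alpha$ on which the values $f_\alpha(A_\alpha)$ are increasing along $X$ (this is where $\delta_\alpha$-completeness enters, shrinking each fiber below cofinal witnesses for all earlier indices); the map $\l Z,\l A_\alpha\mid\alpha\in X\r\r\mapsto\l f_{X(\alpha)}(A_{X(\alpha)})\mid\alpha<\lambda\r$ then gives a coherent sequence of monotone cofinal maps into $\mathbb{P}^\lambda$, and Lemma \ref{Lemma: uniformly below on a base} plus Theorem \ref{thm: uniformly below characterize} finish the job. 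You bypass that machinery and stay entirely on Schmidt's unbounded side: you upgrade $\mathbb{P}\leq_T V_\alpha$ to $\mathbb{P}^{\alpha+1}\leq_T V_\alpha$ by the diagonal (the same use of completeness, just packaged per index rather than along a recursion), and glue the unbounded witnesses $\psi_\alpha$ into one unbounded map $\Phi$, with uniformity of $U$ playing in your gluing step exactly the role it plays in the paper's use of the increasing enumeration $X(\alpha)$ of a set $X\in U$ (both proofs need it, even though the corollary's statement only says ``ultrafilter''; it is the section's standing convention). The underlying combinatorics is shared --- $V_\alpha$ absorbs all coordinates $\gamma\leq\alpha$, so every coordinate is watched by a tail of indices and hence by some index surviving into any $B\in\sum_UV_\alpha$ --- but your architecture buys something concrete: the paper's route formally requires $\mathbb{P}^\lambda$ (hence $\mathbb{P}$) to be a complete order, since Theorem \ref{thm: uniformly below characterize} and Lemma \ref{Lemma: uniformly below on a base} convert Tukey inequalities into monotone cofinal maps and take suprema, whereas your unbounded-map argument imposes no completeness (or even directedness) hypothesis on $\mathbb{P}$. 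What the paper's route buys instead is cohesion with the rest of the section, where the corollary serves as an application of its main characterization theorem.
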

\begin{proof}
    We fix for every $\alpha<\lambda$, $f_\alpha:V_\alpha\rightarrow \mathbb{P}$ monotone and cofinal. Now for every $X\in U$, we define a cofinal set $\mathcal{X}_X\subseteq U\times \prod_{\alpha\in X}V_\alpha$ consisting of all the elements $\l Z,\l A_\alpha\mid \alpha\in X\r\r\in U\times \prod_{\alpha\in X}V_\alpha$ such that for every $\alpha<\beta$ in $X$, $f_\alpha(A_\alpha)\leq_{\mathbb{P}} f_\beta(A_\beta)$. \begin{claim}
        $\mathcal{X}_X$ is cofinal in  $U\times \prod_{\alpha\in X}V_\alpha$
    \end{claim}
    \begin{proof}
        Let $\l Z,\l B_\alpha\mid \alpha\in X\r\r$, let us construct $A_\alpha$ recursively. Let $B_0=A_0$. Suppose that $A_\alpha$ for $\alpha\in X\cap \beta$ where defined for some $\beta\in X$. Then for each  $\alpha\in X\cap\beta$, we find (by cofinality of $f_\beta$) a set $C_\alpha\in V_\beta$ such that $f_\alpha(A_\alpha)\leq_{\mathbb{P}}f_{\beta}(C_\alpha)$. Let $A_\beta=B_\beta\cap\bigcap_{\alpha<\beta}C_\alpha$. By $\delta_\beta$-completeness of $V_\beta$, $A_\beta\in V_\beta$. By monotonicity of $f_\beta$, we conclude that for every $\alpha<\beta$, $f_{\alpha}(A_\alpha)\leq_{\mathcal{P}}f_{\beta}(A_\beta)$. It is now clear that $\l Z,\l A_\alpha\mid \alpha\in X\r\r\in \mathcal{X}_X$ and above $\l Z,\l B_\alpha\mid \alpha\in X\r\r$. 
        \end{proof}
          Note that $\pi_{X,Y}''\mathcal{X}_X\subseteq\mathcal{X}_Y$. Define $f_X:\mathcal{X}_X\rightarrow \mathbb{P}^\lambda$ by $$f_X(\l Z,\l A_\alpha\mid \alpha\in X\r\r)=\l f_{X(\alpha)}(A_{X(\alpha)})\mid \alpha<\lambda\r.$$ Let $\l Z,\l A_\alpha\mid \alpha\in X\r\r\in\mathcal{X}_X$, and let $Y\subseteq X$, then $Y(\alpha)\geq X(\alpha)$. Hence,
    by definition of $\mathcal{X}_X$, 
    $f_{X(\alpha)}(A_{X(\alpha)})\leq_{\mathbb{P}} f_{Y(\alpha)}(A_{Y(\alpha)})$. We conclude that
    $$f_X(\l Z,\l A_\alpha\mid \alpha\in X\r\r)=\l f_{X(\alpha)}(A_{X(\alpha)})\mid \alpha<\lambda\r\leq_{\mathbb{P}^\omega}\l f_{Y(\alpha)}(A_{Y(\alpha)})\mid \alpha<\lambda\r$$
    $$=f_Y(\l Z,\l A_\alpha\mid \alpha\in Y\r\r)=f_Y(\pi_{X,Y}(\l Z,\l A_\alpha\mid \alpha\in X\r\r))$$
    Hence by Lemma \ref{Lemma: uniformly below on a base} $\mathbb{P}^\lambda$ is uniformly below $\mathcal{B}(U,\l V_\alpha\mid \alpha<\lambda\r)$ and by Theorem \ref{thm: uniformly below characterize}, $\mathbb{P}^{\lambda}\leq_T\sum_UV_\alpha$.
\end{proof}
In particular, $U_\alpha$ is Tukey-top for a set of $\alpha$'s in $U$, then $\sum_U U_\alpha$ is Tukey top.

It is unclear whether being uniformly below $\mathcal{B}(U,\l V_\alpha\mid \alpha<\lambda\r)$ is equivalent to simply being a Tukey below each $X\in\mathcal{B}(U,\l V_\alpha\mid \alpha<\lambda\r)$. Hence it is unclear if $\sum_UV_\alpha$ is indeed the greatest lower bound of $\mathcal{B}(U,\l V_\alpha\mid \alpha<\lambda\r)$ in the usual sense; if every  $\mathbb{P}$ which is a lower bound in the Tukey order for $\mathcal{B}(U,\l V_\alpha\mid \alpha<\lambda\r)$ is Tukey below $\sum_UV_\alpha$. Let us give a few common configurations of the Tukey relation among the ultrafilters $V_\alpha$ in which $\sum_UV_\alpha$ is the greatest lower bound in the usual sense. Let us denote that by $\sum_UV_\alpha=\inf(\mathcal{B}(U,\l V_\alpha\mid \alpha<\lambda\r))$.

The following is a straightforward corollary from Theorem \ref{thm: uniformly below characterize}:
\begin{corollary}\label{Cor: Concentrating on a single set}
    Let $X_0\in U$, then 
       $\sum_UV_\alpha\equiv_T U\times \prod_{\alpha\in X_0}V_\alpha$ if and only if $U\times\prod_{\alpha\in X_0}V_\alpha$ is uniformly below $\mathcal{B}(U,\l V_\alpha\mid\alpha<\lambda\r)$. In that case $\sum_UV_\alpha=\inf(\mathcal{B}(U,\l V_\alpha\mid \alpha<\lambda\r)$.
\end{corollary}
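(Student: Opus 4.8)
The plan is to instantiate Theorem \ref{thm: uniformly below characterize} with the specific poset $\mathbb{P}=U\times\prod_{\alpha\in X_0}V_\alpha$ and combine it with the always-available inequality from Fact \ref{fact: basic fact on turkey of sum}. First I would check that $\mathbb{P}=U\times\prod_{\alpha\in X_0}V_\alpha$ is a complete order, so that Theorem \ref{thm: uniformly below characterize} is applicable: a filter ordered by reverse inclusion is complete, since the least upper bound of a bounded family $\{A_i\}$ is $\bigcap_i A_i$, which lies in the filter because by boundedness it contains some filter set; and products of complete orders are complete.

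By Fact \ref{fact: basic fact on turkey of sum} we always have $\sum_UV_\alpha\leq_T U\times\prod_{\alpha\in X_0}V_\alpha=\mathbb{P}$, as $X_0\in U$. Hence the Tukey equivalence $\sum_UV_\alpha\equiv_T\mathbb{P}$ reduces to the single remaining inequality $\sum_UV_\alpha\geq_T\mathbb{P}$. By Theorem \ref{thm: uniformly below characterize}, this last inequality holds precisely when $\mathbb{P}$ is uniformly below $\mathcal{B}(U,\l V_\alpha\mid\alpha<\lambda\r)$, which is exactly the claimed equivalence.

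For the final clause, the key observation is that $\mathbb{P}=U\times\prod_{\alpha\in X_0}V_\alpha$ is itself a member of $\mathcal{B}(U,\l V_\alpha\mid\alpha<\lambda\r)$, again because $X_0\in U$. On one hand, $\sum_UV_\alpha$ is a lower bound for $\mathcal{B}(U,\l V_\alpha\mid\alpha<\lambda\r)$, since Fact \ref{fact: basic fact on turkey of sum} yields $\sum_UV_\alpha\leq_T U\times\prod_{\alpha\in X}V_\alpha$ for every $X\in U$. On the other hand, if $\mathbb{Q}$ is any lower bound for $\mathcal{B}(U,\l V_\alpha\mid\alpha<\lambda\r)$, then in particular $\mathbb{Q}\leq_T\mathbb{P}$; but under the hypothesis $\mathbb{P}\equiv_T\sum_UV_\alpha$, so $\mathbb{Q}\leq_T\sum_UV_\alpha$. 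Thus $\sum_UV_\alpha$ dominates every lower bound of $\mathcal{B}(U,\l V_\alpha\mid\alpha<\lambda\r)$, i.e.\ it is the greatest lower bound, $\sum_UV_\alpha=\inf(\mathcal{B}(U,\l V_\alpha\mid\alpha<\lambda\r))$.

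Since every step is a direct substitution into results already established, there is no substantive obstacle; the only points needing a moment of care are the verification that $\mathbb{P}$ is complete, so that Theorem \ref{thm: uniformly below characterize} applies, and the elementary observation that $\mathbb{P}$ lies in $\mathcal{B}(U,\l V_\alpha\mid\alpha<\lambda\r)$, which is what forces any lower bound of $\mathcal{B}(U,\l V_\alpha\mid\alpha<\lambda\r)$ to be dominated by $\sum_UV_\alpha$ in the equivalence case.
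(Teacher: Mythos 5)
Your proposal is correct and follows exactly the route the paper intends: the paper states this result as a straightforward consequence of Theorem \ref{thm: uniformly below characterize} (combined with Fact \ref{fact: basic fact on turkey of sum}), and your argument—checking completeness of $U\times\prod_{\alpha\in X_0}V_\alpha$, reducing the equivalence to the single inequality $\sum_UV_\alpha\geq_T U\times\prod_{\alpha\in X_0}V_\alpha$, and then using membership of $U\times\prod_{\alpha\in X_0}V_\alpha$ in $\mathcal{B}(U,\l V_\alpha\mid\alpha<\lambda\r)$ to get the greatest-lower-bound clause—is precisely the intended instantiation.
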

The second case in which $\sum_UV_\alpha$ turns out to be the greatest lower bound is the following:
\begin{lemma}\label{Lemma: strict infinum configuration}
    Suppose that there is a set $X_0\in U$ such that for every $\alpha< \beta\in X_0$, $V_\alpha$ is a $\kappa$-complete ultrafilter such that $V_\alpha\cdot V_\alpha\equiv_T V_\alpha>_T V_\beta$. Then $\sum_UV_\alpha=\inf(\mathcal{B}(U,\l V_\alpha\mid \alpha<\lambda\r))$ is a strict greatest lower bound. 
\end{lemma}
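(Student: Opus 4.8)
The plan is to reduce the family $\mathcal{B}(U,\l V_\alpha\mid\alpha<\lambda\r)$ to a strictly $\le_T$-decreasing chain, then prove separately that $\sum_U V_\alpha$ is a greatest lower bound and that this bound is strict, with essentially all of the work in the first assertion. \emph{Reduction.} Since $X_0\in U$, by Fact \ref{fact: basic fact on turkey of sum} I may restrict attention to $X\in U$ with $X\subseteq X_0$. For such $X$ every $V_\alpha$ with $\alpha\in X$ satisfies $V_\alpha\le_T V_{\min X}$, so $\prod_{\alpha\in X}V_\alpha\le_T V_{\min X}^{|X|}$, while the projection to the least coordinate gives $V_{\min X}\le_T\prod_{\alpha\in X}V_\alpha$. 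The hypothesis $V_{\min X}\cdot V_{\min X}\equiv_T V_{\min X}$ together with Milovich's Theorem \ref{Thm: Milovich} (which gives $V_{\min X}\cdot V_{\min X}\equiv_T V_{\min X}^{\delta_{\min X}}$) yields $V_{\min X}^{\delta_{\min X}}\equiv_T V_{\min X}$, so that $\prod_{\alpha\in X}V_\alpha\equiv_T V_{\min X}$ provided $|X|\le\delta_{\min X}$ (which holds in the relevant uniform setting). Hence $U\times\prod_{\alpha\in X}V_\alpha\equiv_T U\times V_{\min X}$, and cofinally in $\mathcal{B}$ the family is the strictly $\le_T$-decreasing chain $\l U\times V_\gamma\mid\gamma\in X_0\r$ indexed by the possible minima $\gamma=\min X$.

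\emph{Greatest lower bound.} By Theorem \ref{thm: uniformly below characterize} it suffices to show that every complete lower bound $\mathbb{P}$ of $\mathcal{B}$ is uniformly below $\mathcal{B}$. Fix, for each $\gamma\in X_0$, a monotone cofinal map $s_\gamma\colon U\times V_\gamma\to\mathbb{P}$ witnessing $\mathbb{P}\le_T U\times V_\gamma$. The aim is to feed Lemma \ref{Lemma: uniformly below on a base} a coherent base: for $X\subseteq X_0$ in $U$ let $\mathcal{X}_X$ be the set of $\l Z,\l A_\alpha\mid\alpha\in X\r\r$ for which $\l s_\alpha(Z,A_\alpha)\mid\alpha\in X\r$ is $\le_{\mathbb{P}}$-nondecreasing, and put $f_X(\l Z,\l A_\alpha\mid\alpha\in X\r\r)=s_{\min X}(Z,A_{\min X})$. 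Granting cofinality of $\mathcal{X}_X$ and monotone cofinality of $f_X$ on it, condition $(\dag)$ of Definition \ref{Def: uniformly below} is then immediate: for $Y\subseteq X$ one has $\min Y\ge\min X$ with both minima in $X$, so nondecreasingness forces $f_Y(\pi_{X,Y}(C))=s_{\min Y}(Z,A_{\min Y})\ge_{\mathbb{P}} s_{\min X}(Z,A_{\min X})=f_X(C)$, and a restriction of a nondecreasing sequence is nondecreasing, giving $\pi_{X,Y}''\mathcal{X}_X\subseteq\mathcal{X}_Y$.

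\emph{Main obstacle.} The crux is the cofinality of $\mathcal{X}_X$, i.e.\ building, above a given condition, one whose values $s_\alpha(Z,A_\alpha)$ increase along $X$. The difficulty is that each $s_\alpha$ entangles the $U$-coordinate $Z$ with the $V_\alpha$-coordinate $A_\alpha$, so forcing $s_\alpha(Z,A_\alpha)$ above the supremum of the earlier values may require shrinking $Z$; one must run this recursion of length $\le|X|$ while keeping $Z\in U$ and each $A_\alpha\in V_\alpha$, which is exactly where the $\kappa$-completeness of the $V_\alpha$ and of $U$ (with a diagonal thinning when $|X|$ approaches $\kappa$) enters, and where the self-absorption $V_\gamma\equiv_T V_\gamma^{\delta_\gamma}$ guarantees that the single leading coordinate can carry the whole product. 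I expect this coherent threading of the reductions $s_\gamma$ across the moving leading coordinate to be the principal technical burden.

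\emph{Strictness.} That the bound is strict is where $>_T$ is used. Since $\sum_U V_\alpha\le_T U\times V_{\min X}$ for every $X\in U$, were $\sum_U V_\alpha\equiv_T U\times V_{\gamma_0}$ to hold for some $\gamma_0\in X_0$, then choosing $\gamma_1\in X_0$ with $\gamma_1>\gamma_0$ would give $U\times V_{\gamma_0}\equiv_T\sum_U V_\alpha\le_T U\times V_{\gamma_1}$, contradicting that multiplying the fixed $U$ by the strictly $\le_T$-decreasing, $\kappa$-complete ultrafilters $V_\gamma$ produces the strictly $\le_T$-decreasing chain $U\times V_{\gamma_1}<_T U\times V_{\gamma_0}$. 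Hence no member of $\mathcal{B}$ realizes the infimum, and $\sum_U V_\alpha=\inf(\mathcal{B}(U,\l V_\alpha\mid\alpha<\lambda\r))$ is a strict greatest lower bound.
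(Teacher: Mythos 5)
Your reduction step and your overall architecture (collapse $\mathcal{B}(U,\langle V_\alpha\mid\alpha<\lambda\rangle)$ to the chain $\{U\times V_\gamma\mid \gamma\in X_0\}$ via $\prod_{m\in Y}V_m\equiv_T V_{\min(Y)}$, then verify ``uniformly below'' through Theorem \ref{thm: uniformly below characterize} and Lemma \ref{Lemma: uniformly below on a base}) are consistent with the paper, but your proof has a genuine gap precisely at the step you yourself label the ``main obstacle'': the cofinality of $\mathcal{X}_X$ is never proved, and it is not a routine technicality --- as designed, the construction fails. Your maps $s_\gamma\colon U\times V_\gamma\to\mathbb{P}$ entangle the $U$-coordinate $Z$ with the $V_\gamma$-coordinate, so the recursion that is supposed to make $\alpha\mapsto s_\alpha(Z,A_\alpha)$ nondecreasing may be forced to shrink $Z$ at every stage, and after infinitely many stages one needs the intersection of all the chosen $Z$'s to remain in $U$. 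But there is no completeness hypothesis on $U$ at all: in the motivating application (Proposition \ref{prop: example in full support}) $U=U_0$ is an ultrafilter on $\omega$, hence not even $\sigma$-complete, and no ``diagonal thinning'' is available; the $\kappa$-completeness in the hypothesis belongs to the $V_\alpha$'s only. A secondary gap occurs in your strictness argument: you assert $U\times V_{\gamma_1}<_T U\times V_{\gamma_0}$ from $V_{\gamma_1}<_T V_{\gamma_0}$, but strict Tukey inequalities need not survive multiplication by a fixed factor (they collapse, for instance, whenever both $V_{\gamma_i}\leq_T U$), so this requires justification.

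The paper's proof avoids the entanglement by decoupling the two coordinates \emph{before} any recursion is run, and this is the idea your proposal is missing. It first argues that any lower bound $\mathbb{P}$ of $\mathcal{B}(U,\langle V_\alpha\mid\alpha<\lambda\rangle)$ satisfies $\mathbb{P}\leq_T V_\alpha$ for every $\alpha\in X_0$ (using the same computation $\prod_{m\in Y}V_m\equiv_T V_{\min(Y)}$ that you use), and then it invokes the already-established Corollary \ref{cor: below each V_n}: there the coherent family fed into Lemma \ref{Lemma: uniformly below on a base} is built from monotone cofinal maps $f_\alpha\colon V_\alpha\to\mathbb{P}$ depending \emph{only} on the $V_\alpha$-coordinate, so the recursion arranging $f_\alpha(A_\alpha)\leq_{\mathbb{P}}f_\beta(A_\beta)$ shrinks only the $V_\beta$-coordinates --- intersecting fewer than $\delta_\beta$ many sets at stage $\beta$, which the completeness of $V_\beta$ permits --- and never touches $Z\in U$. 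For strictness, the paper then observes that $\sum_U V_\alpha$ is itself a lower bound of $\mathcal{B}(U,\langle V_\alpha\mid\alpha<\lambda\rangle)$, hence $\sum_U V_\alpha\leq_T V_{\beta+1}<_T V_\beta$ for every $\beta$, so no member of the family can realize the infimum; this bypasses the product-strictness claim your argument relies on.
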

\begin{proof}
    First note that for every $Y\subseteq X$, by the assumptions, $$V_{\min(Y)}\leq_T\prod_{m\in Y}V_m\leq_T\prod_{m\in Y}V_{\min(Y)}\equiv_T V_{\min(Y)}\cdot V_{\min(Y)}\equiv_T V_{\min(Y)}.$$ Therefore, if $\mathbb{P}\leq_T B$ for every $B\in \mathcal{B}(U,\l V_\alpha\mid \alpha<\lambda\r)$ then $\mathbb{P}\leq_T V_\alpha$ for every $\alpha\in X$. By corollary \ref{cor: below each V_n}, it follows than that $\mathbb{P}\leq_T\sum_U V_\alpha$. Moreover, $\sum_UV_\alpha$ is strictly below $\mathcal{B}(U,\l V_\alpha\mid \alpha<\lambda\r)$, since for every $\beta<\lambda$, $\sum_UV_\alpha\leq_T V_{\beta+1}<_T V_\beta$. 
\end{proof}
We will later show that the assumptions in the above Lemma are consistent. Before that, we consider
the third configuration in which the ultrafilters are increasing, the proof below works only for ultrafilters on $\omega$ and we do not know whether it is possible to generalize it to other ultrafilters. 
\begin{lemma}\label{Lemma: increasing v_n's case}
    Suppose that $U,V_n$ are ultrafilters on $\omega$, such that on a set $X_0\in U$, for every $n\leq m\in X_0$, $V_n\leq_T V_m$. Then $$U\times\prod_{n\in X_0}V_n\equiv_T\sum_UV_n=\inf(\mathcal{B}(U,\l V_\alpha\mid \alpha<\lambda\r))$$
\end{lemma}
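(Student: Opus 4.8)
The inequality $\sum_UV_n\leq_T U\times\prod_{n\in X_0}V_n$ is immediate from Fact \ref{fact: basic fact on turkey of sum} applied to $X=X_0$, so the whole content is the reverse inequality $U\times\prod_{n\in X_0}V_n\leq_T\sum_UV_n$; granting it, Corollary \ref{Cor: Concentrating on a single set} upgrades the resulting Tukey equivalence to the infimum statement $\sum_UV_n=\inf(\mathcal B(U,\l V_\alpha\mid\alpha<\lambda\r))$. My plan is to prove the reverse inequality not by producing a monotone cofinal map $\sum_UV_n\to U\times\prod_{n\in X_0}V_n$, but by exhibiting an \emph{unbounded} map in the opposite direction, invoking Schmidt's characterization $\mathbb P\leq_T\mathbb Q$ iff there is an unbounded map $\mathbb P\to\mathbb Q$.

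The idea is to push the information stored in the coordinate $A_n$ up into every fiber with index $m\geq n$. For each pair $n\leq m$ in $X_0$ fix, using the hypothesis $V_n\leq_T V_m$ together with Schmidt's equivalence, an unbounded map $\psi_{n,m}:V_n\to V_m$, and define $h:U\times\prod_{n\in X_0}V_n\to\sum_UV_n$ by
$$h(\l Z,\l A_n\mid n\in X_0\r\r)=\bigcup_{m\in Z\cap X_0}\{m\}\times\bigcap_{n\in X_0,\,n\leq m}\psi_{n,m}(A_n).$$
The crucial point, and the only place where the restriction to $\omega$ is used, is that the index set $\{n\in X_0\mid n\leq m\}$ is \emph{finite}, so each fiber is a finite intersection of members of $V_m$ and hence lies in $V_m$; as the support is $Z\cap X_0\in U$, the value $h(\dots)$ is a standard-form element of $\sum_UV_n$.

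To see $h$ is unbounded I verify the contrapositive: if $\mathcal A\subseteq U\times\prod_{n\in X_0}V_n$ has $h''\mathcal A$ bounded in $\sum_UV_n$, then $\mathcal A$ is bounded. Let $B$ be an upper bound of $h''\mathcal A$; since standard-form sets are cofinal and upper bounds are preserved under shrinking, I may take $B$ in standard form, with support $S=\pi_1''B\in U$ and $S\subseteq X_0$. For each $a=\l Z_a,\l A^a_n\r\r\in\mathcal A$ the relation $B\subseteq h(a)$ forces $S\subseteq Z_a$ and, for every $m\in S$ and every $n\leq m$ in $X_0$, $(B)_m\subseteq\psi_{n,m}(A^a_n)$. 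Now fix $n\in X_0$ and choose any $m\in S$ with $m\geq n$ (possible since $S$ is infinite); then the family $\{\psi_{n,m}(A^a_n)\mid a\in\mathcal A\}$ is bounded above in $(V_m,\supseteq)$ by the single set $(B)_m\in V_m$. Because $\psi_{n,m}$ is unbounded, the family $\{A^a_n\mid a\in\mathcal A\}$ must be bounded in $(V_n,\supseteq)$; let $A^*_n\in V_n$ satisfy $A^*_n\subseteq A^a_n$ for all $a$. Doing this for every $n$, the element $\l S,\l A^*_n\mid n\in X_0\r\r$ satisfies $S\subseteq Z_a$ and $A^*_n\subseteq A^a_n$ for all $a,n$, hence is an upper bound for $\mathcal A$, as required.

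With both inequalities in hand we obtain $U\times\prod_{n\in X_0}V_n\equiv_T\sum_UV_n$, and Corollary \ref{Cor: Concentrating on a single set} then gives $\sum_UV_n=\inf(\mathcal B(U,\l V_\alpha\mid\alpha<\lambda\r))$. The main obstacle is to find a reduction insensitive to the ``index shift'' inherent in the sum: the naive monotone cofinal map $\sum_UV_n\to U\times\prod V_n$ that reads the $n$-th coordinate off the least fiber above $n$ fails to be monotone, since enlarging a set in $\sum_UV_n$ enlarges its support and so changes which fiber is consulted, while averaging by a union over all fibers above $n$ restores monotonicity but destroys cofinality. Routing the argument through an unbounded map sidesteps this tension completely, and the finiteness of the initial segments of $\omega$ is precisely what keeps every fiber inside the corresponding $V_m$; this is exactly the feature that is lost for general index sets, explaining why the proof is confined to ultrafilters on $\omega$.
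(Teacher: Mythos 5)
Your proof is correct, but it takes a genuinely different route from the paper. The paper proves the inequality $U\times\prod_{n\in X_0}V_n\leq_T\sum_UV_n$ by verifying the \emph{uniformly below} condition: it fixes monotone cofinal maps $f_{m,n}:V_m\rightarrow V_n$ going \emph{downward} (from the Tukey-larger to the Tukey-smaller ultrafilter), builds for each $X\in U$ a cofinal set $\mathcal{X}_X$ of ``linked'' sequences satisfying $f_{m,n}((A)_m)\subseteq (A)_n$, defines a coherent family of monotone cofinal maps on these sets, and then invokes Lemma \ref{Lemma: uniformly below on a base} and Theorem \ref{thm: uniformly below characterize}. You instead dualize via Schmidt's characterization: you fix \emph{unbounded} maps $\psi_{n,m}:V_n\rightarrow V_m$ going \emph{upward} and exhibit a single explicit unbounded map $h$ into $\sum_UV_n$, whose fibers are finite intersections $\bigcap_{n\leq m,\,n\in X_0}\psi_{n,m}(A_n)\in V_m$; your verification that a bound $B$ on $h''\mathcal{A}$ pulls back to a bound on $\mathcal{A}$ is sound (the passage to a standard-form $B$, the inclusions $S\subseteq Z_a\cap X_0$, and the use of unboundedness of $\psi_{n,m}$ coordinate-by-coordinate all check out, with the tacit use of uniformity of $U$ to get $m\in S$ above $n$, which is the section's standing assumption). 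Both arguments exploit the finiteness of initial segments of $\omega$ in the same essential place — the paper when recursively building $\mathcal{X}_X$ by intersecting finitely many sets, you in keeping each fiber inside $V_m$ — which is why neither generalizes to uncountable index sets. What each approach buys: yours is shorter and self-contained, bypassing the uniformly-below machinery entirely (you only need Fact \ref{fact: basic fact on turkey of sum} and the easy observation that Tukey equivalence with a member of $\mathcal{B}(U,\l V_\alpha\mid\alpha<\lambda\r)$ forces the infimum statement); the paper's route, while heavier, directly certifies the equivalent condition appearing in Corollary \ref{Cor: Concentrating on a single set} and exercises the coherence framework that is reused elsewhere in the paper (e.g.\ in Corollary \ref{cor: below each V_n} and Lemma \ref{Lemma: strict infinum configuration}).
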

\begin{proof} By Corollary \ref{Cor: Concentrating on a single set}, if $\sum_UV_n\equiv_T U\times \prod_{n\in X_0}V_\alpha$, then it must be the greatest lower bound as well. To prove the Tukey-equivalence, first note that $\sum_{U}V_n\leq_T U\times\prod_{n\in X_0} V_n$ by Fact \ref{fact: basic fact on turkey of sum}. For the other direction, define for every $n\in X_0$, $n^+=\min(X_0\setminus n+1)$ and let $f_{n^+,n}:V_{n^+}\rightarrow V_n$ monotone and cofinal. Denote by $n^{+2}=(n^+)^+$ and $n^{+k}=(n^{+(k-1)})^+$ be the $k^{\text{the}}$ successor of $n$ in $X_0$. For any $n<m\in X_0$, suppose that $m=n^{+k}$ and let $f_{m,n}=f_{n^{+k},n^{+(k-1)}}\circ f_{n^{+(k-1)},n^{+(k-1)}}\circ...\circ f_{n^+,n}$. Moreover, let $f_{n,n}:V_n\rightarrow V_n$ be the identity. Hence $f_{m,n}:V_m\rightarrow V_n$ is monotone cofinal, and if $k\in X_0\cap[n,m]$ then $f_{m,n}=f_{k,n}\circ f_{m,k}$.

Let us define a coherent sequence of cofinal maps from a cofinal subset of $U\times \prod_{n\in X}V_n$ to $U\times \prod_{n\in X_0}V_n$ for $X\in U$. Consider the collection $\mathcal{X}_X\subseteq U\times\prod_{n\in X} V_n$ of all $\l Z,\l A_n\mid n\in X\r\r$ such that for all $n,m\in X\cap X_0$, if $n<m$ then $f_{m,n}((A)_m)\subseteq (A)_{n}$. It is straightforward to check that if $Y\subseteq$ then $\pi_{X,Y}''\mathcal{X}_X\subseteq\mathcal{X}_Y$.
    \begin{claim}
        $\mathcal{X}_X$ is cofinal in $U\times \prod_{n\in X}V_n$.
    \end{claim}
    \begin{proof}
        Let $\l Z,\l A_n\mid n\in X\r\r\in U\times \prod_{n\in X} V_n$. We define a sequence $X_n$ be induction on $n\in X$. $X_{n_0}=(A)_{n_0}$. Suppose we have defined $X_{n_k}\in V_{n_k}$ for some $k<m$, For each $k$, we find $C_{m,k}\in V_m$ such that $f_{m,k}(C_{m,k})\subseteq X_{n_k}$. Define $X_{n_m}=(A)_{n_m}\cap(\bigcap_{k<m}C_{m,k})$. By monotonicity of the $f_{m,k}$'s $f_{m,k}(X_{n_m})\subseteq X_{n_k}$. Let $A_1=\l Z,\l X_n\mid n\in X\r\r$, then $A_1\in\mathcal{X}$ and $A_1\geq \l Z,\l A_\alpha\mid n\in X\r\r$.   
    \end{proof} Fix the unique order isomorphism $\sigma_{X,X_0}:X\rightarrow X_0$ (which then satisfy $\sigma(n)\leq n$ as $X\subseteq X_0$) and let $f_X:\mathcal{X}_X\rightarrow U\times\prod_{n\in X_0}V_n$ be defined by $$f_X(\l Z,\l A_n\mid n\in X\r\r)=\l Z,\l f_{\sigma_{X,X_0}^{-1}(n),n}((A)_{\sigma_{X,X_0}^{-1}(n)})\mid n\in X_0\r\r.$$ Clearly, $f_X$ is monotone, let us check that it is cofinal and that the sequence $\l f_X\mid X\in U\restriction X_0\r$ is coherent. Suppose that $C_1=\l Z,\l A_n\mid n\in X_0\r\r\in U\times \prod_{n\in X_0}V_n$. ,
    We find $\l B_n\mid n\in X_0\r\geq \l A_n\mid n\in X_0\r$ such that $f_{m,n}(B_m)\subseteq B_n$ for every $n<m\in X_0$. This is possible as before, constructing the $B_n$'s by induction and the fact that at each step we only have finitely many requirements, so we can intersect the corresponding finitely many sets. Now take $\l Z, \l B_n\mid n\in X\r\r\in\mathcal{X}_X$. Then $$f_X(\l Z,\l B_n\mid n\in X\r\r)=\l Z, \l f_{\sigma_{X,X_0}^{-1}(n),n}(B_{\sigma_{X,X_0}^{-1}(n)})\mid n\in X_0\r\r.$$ Since $\sigma_{X,X_0}^{-1}(n)\geq n$ for every $n\in X_0$, we conclude that $$f_{\sigma_{X,X_0}^{-1}(n),n}(B_{\sigma_{X,X_0}^{-1}(n)})\subseteq B_n,$$ and therefore $f_X$ is cofinal. Similarly, to see $(\dag)$, if $Y\subseteq X\subseteq X_0$, and $\l Z,\l A_n\mid n\in X\r\r\in\mathcal{X}_X$, then $\sigma_{Y,X_0}^{-1}(n)\geq \sigma_{X,X_0}^{-1}(n)$, and therefore, for every $n<\omega$,
    $$f_{\sigma_{Y,X_0}^{-1}(n),n}((A)_{\sigma_{Y,X_0}^{-1}(n)})=f_{\sigma_{X,X_0}^{-1}(n),n}(f_{\sigma_{Y,X_0}^{-1}(n),\sigma_{X,X_0}^{-1}(n)}(A_{\sigma_{Y,X_0}^{-1}(n)}))\subseteq$$
    $$\subseteq f_{\sigma_{X,X_0}^{-1}(n),n}(A_{\sigma_{X,X_0}^{-1}(n)}).$$
    It follows that $f_Y(\pi_{X,Y}(\l Z,\l A_n\mid n\in X\r\r))\geq f_X(\l Z,\l A_n\mid n\in X\r\r)$
\end{proof}
The above Lemma recovers Milovich's theorem \ref{Thm: Milovich}, taking each $V_n=V$ for every $n$.

Our next goal is to prove that the assumptions of Lemma \ref{Lemma: strict infinum configuration} are consistent. This example shows that the cofinal type of $\sum_UV_n$ in general can be quite complicated. To do that, we will need a theorem of Raghavan and Todorcevic from \cite{Raghavan/Todorcevic12} regarding the canonization of cofinal maps from basically generated ultrafilters. The notion of basically generated ultrafilters was introduced by Dobrinen and Todorcevic \cite{Dobrinen/Todorcevic11} as an attempt to approximate the class of ultrafilters which are not Tukey-top. Recall that an ultrafilter $U$ is called \textit{basically generated} if there is a cofinal set $\mathcal{B}\subseteq U$ such that for every sequence $\l b_n\mid n<\omega\r\subseteq \mathcal{B}$ which converges\footnote{A sequence $\l A_n\mid n<\omega\r$ of subsets $\omega$ is said to converge to $A$ if for every $n<\omega$ there is $N<\omega$ such that for every $m\geq N$, $A_m\cap n=A\cap n$.} to an element of $\mathcal{B}$, there is $I\in [\omega]^{\omega}$ such that $\cap_{i\in I}A_i\in U$. A $p$-point ultrafilter $U$ is basically generated as witnessed by $\mathcal{B}=U$ (\cite[Thm. 14]{Dobrinen/Todorcevic11}). Dobrinen and Todorcevic proved that produces and sums of $p$-points must also be basically generated (\cite[Thm. 16]{Dobrinen/Todorcevic11}).
\begin{theorem}[Raghavan-Todorcevic]\label{Thm: Raghavan-Tod}
    Let $U$ be a basically generated ultrafilter and $V$ be any ultrafilter such that $V\leq_T U$. Then there is $P\subseteq \FIN$ such that:
    \begin{enumerate}
        \item $\forall t,s\in P, \  t\subseteq s\Rightarrow t=s$.
        \item $V$ is Rudin-Keisler below $U(P)$, namely, there is $f:P\rightarrow \omega$ such that $V=\{X\subseteq\omega\mid f^{-1}[X]\in U(P)\}$.
        \item $U(P)\equiv_TV$.
    \end{enumerate}
    Where is the filter $U(P)=\{A\subseteq P\mid \exists a\in U. [a]^{<\omega}\subseteq A\}$.
\end{theorem}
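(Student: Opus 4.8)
The plan is to canonize a Tukey reduction witnessing $V\le_T U$ through a \emph{finitary monotone} map on $\FIN$, and then to read off the antichain $P$ and the Rudin--Keisler map from that finitary data. Since $V$, being a filter, is a complete directed poset, the relation $V\le_T U$ supplies a monotone cofinal map $\phi:U\to V$, where ``monotone'' is with respect to reverse inclusion, so that $a\supseteq b\Rightarrow \phi(a)\supseteq\phi(b)$, and the range of $\phi$ is cofinal (i.e.\ $\{\phi(a)\mid a\in U\}$ is a base for $V$). Fix a cofinal family $\mathcal B\subseteq U$ witnessing that $U$ is basically generated, and work with $\phi\restriction\mathcal B$ throughout; all subsequent maps are constructed over $\mathcal B$.

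The crucial step is to show that $\phi$ agrees, on a cofinal subset of $\mathcal B$, with a map \emph{generated by a monotone finitary map} $g:\FIN\to\fin$, meaning that $g$ is $\subseteq$-monotone and
$$\phi(a)=\bigcup\{g(t)\mid t\in[a]^{<\omega}\}$$
for cofinally many $a\in\mathcal B$; equivalently, the decision ``$k\in\phi(a)$'' depends only on a finite subset of $a$. The route is to prove that $\phi$ is \emph{continuous} on $\mathcal B$, equipped with the topology of pointwise convergence (the convergence in the footnote to the definition of basic generation). If $\phi$ were discontinuous at some $b\in\mathcal B$, one would obtain a sequence $\langle b_n\mid n<\omega\rangle\subseteq\mathcal B$ converging to $b$ along which the membership of some fixed $k$ in $\phi(b_n)$ oscillates; basic generation then yields an infinite $I$ with $b^\ast=\bigcap_{i\in I}b_i\in U$, and since $b^\ast\subseteq b_i$ for each $i\in I$ and also $b^\ast\subseteq b$ (the limit agrees with each $b_i$ on long initial segments), monotonicity of $\phi$ pins $\phi(b^\ast)$ simultaneously below $\phi(b)$ and below each $\phi(b_i)$, which is incompatible with the oscillation. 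A diagonalization (a fusion) over all $k$ upgrades continuity to the existence of the generating finitary monotone $g$. This fusion against basic generation is the main obstacle and is precisely where the hypothesis on $U$ is consumed.

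With $g$ in hand, extract $P$ and the label $f$ from the ``level structure'' of $g$: since $\{t\mid k\in g(t)\}$ is $\subseteq$-upward closed for each $k$, its minimal elements form finite antichains, and one selects from these a single antichain $P\subseteq\FIN$ together with $f:P\to\omega$ so that, writing $P_a=\{t\in P\mid t\subseteq a\}$, one has $f''P_a=^{*}\phi(a)$ for $a\in\mathcal B$. Keeping $P$ an antichain (clause (1)) while preserving this labeling is finitary bookkeeping of front-type, exploiting the monotonicity of $g$. Clause (2) is then immediate: as $\{\phi(a)\mid a\in\mathcal B\}$ is a base for $V$ and the sets $P_a$ generate $U(P)$, the pushforward $f_\ast(U(P))$ equals $V$, so $V\le_{RK}U(P)$ and hence $V\le_T U(P)$.

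Finally, to obtain clause (3) it remains to prove $U(P)\le_T V$, the direction not automatic from (2). Here one uses $g$ to build a monotone cofinal map $V\to U(P)$ of the form $B\mapsto\{t\in P\mid g(t)\subseteq B\}$: such a set lies in $U(P)$ because cofinality of $\phi$ gives $a\in\mathcal B$ with $\phi(a)\subseteq B$, forcing $g(t)\subseteq\phi(a)\subseteq B$ for all $t\subseteq a$ and hence $P_a\subseteq\{t\mid g(t)\subseteq B\}$; cofinality of the map relies on the $\subseteq$-minimality built into $P$, so that $g(t)\subseteq\phi(a)$ confines the relevant $t$ near $a$. Since this direction rests on the finiteness of the witnesses in $g$ and the antichain structure of $P$, it should be treated carefully, but it is bookkeeping rather than the genuine difficulty. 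Combining it with $V\le_T U(P)$ yields $U(P)\equiv_T V$, completing all three clauses.
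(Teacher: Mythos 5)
Two things up front: the paper itself gives no proof of this statement --- it is quoted verbatim from Raghavan--Todorcevic \cite{Raghavan/Todorcevic12} --- so your proposal can only be measured against that original argument; and your overall architecture (canonize a monotone cofinal $\phi\colon U\to V$ through finitary data on $\FIN$, then extract a thin $P$ and a labelling $f\colon P\to\omega$, then prove the two Tukey inequalities separately) is indeed the architecture of that proof. The genuine gap is your crucial step: the claim that $\phi$ is \emph{continuous}, i.e.\ that on a cofinal set $\phi(a)=\bigcup\{g(t)\mid t\in[a]^{<\omega}\}$ for a monotone $g\colon\FIN\to\fin$, and the oscillation argument offered for it. Suppose $b_n\to b$ in $\mathcal{B}$ and membership of a fixed $k$ in $\phi(b_n)$ oscillates; basic generation gives an infinite $I$ with $b^*=\bigcap_{i\in I}b_i\in U$, and (as you correctly note) $b^*\subseteq b_i$ and $b^*\subseteq b$, so monotonicity gives $\phi(b^*)\subseteq\phi(b_i)$ for all $i\in I$ and $\phi(b^*)\subseteq\phi(b)$. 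But these are only \emph{upper} bounds in $\subseteq$: they are simultaneously satisfiable no matter how the oscillation looks, for instance by $k\notin\phi(b^*)$. Monotonicity can force points \emph{out} of $\phi(b^*)$, never \emph{into} it, so no contradiction materializes. This is not a local slip: continuity of monotone cofinal maps on a cofinal set is the Dobrinen--Todorcevic canonization for $p$-points \cite{Dobrinen/Todorcevic11}, and basic generation is not known to yield it; the actual proof deliberately establishes something weaker.

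What basic generation does give --- and this is the key lemma of the real argument --- is that the \emph{finitization} of $\phi$ is itself cofinal, not that it recovers $\phi$. Assume, as in \cite{Dobrinen/Todorcevic11,Raghavan/Todorcevic12} (and absent from your sketch, which never takes finite intersections), that $\mathcal{B}$ is closed under finite intersections. Put $\phi^*(t)=\bigcap\{\phi(b)\mid t\subseteq b\in\mathcal{B}\}$ for $t\in\FIN$ and $\psi(a)=\bigcup\{\phi^*(t)\mid t\in[a]^{<\omega}\}\subseteq\phi(a)$. The claim is that $\psi(a)\in V$ for every $a\in\mathcal{B}$, so that $\psi$ is a monotone cofinal map generated by the finitary monotone map $\phi^*$. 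If not, then $\phi(a)\setminus\psi(a)=\{z_0<z_1<\cdots\}\in V$; for each $n$, since $z_0,\dots,z_n\notin\phi^*\bigl(a\cap(a(n)+1)\bigr)$, there are $b_0,\dots,b_n\in\mathcal{B}$ containing $a\cap(a(n)+1)$ with $z_j\notin\phi(b_j)$, and then $c_n=a\cap b_0\cap\cdots\cap b_n\in\mathcal{B}$ satisfies $a\cap(a(n)+1)\subseteq c_n\subseteq a$ and $z_0,\dots,z_n\notin\phi(c_n)$. The $c_n$ converge to $a$, so basic generation yields an infinite $I$ with $c^*=\bigcap_{i\in I}c_i\in U$; since $I$ is infinite, $\phi(c^*)\cap\{z_n\mid n<\omega\}=\emptyset$, whence $\phi(c^*)\cap\phi(a)\in V$ is contained in $\psi(a)$, a contradiction. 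Note the shape: one fixes $a$, kills \emph{all} bad points at once using finite intersections inside $\mathcal{B}$, and applies basic generation to a single convergent sequence --- not a pointwise oscillation analysis. With $\psi$ in place of $\phi$ your remaining steps can in principle be carried out, but be aware that they hide the second soft spot: $P$ and $f$ must be arranged so that $f$ is (essentially) injective and $\{f''\{t\in P\mid t\subseteq a\}\mid a\in\mathcal{B}\}$ is a base of $V$, since injectivity is what makes $B\mapsto f^{-1}[B]$ a monotone cofinal map from $V$ to $U(P)$; your appeal to ``minimality built into $P$'' does not by itself secure this, and it is more than bookkeeping. Still, the step that actually fails as written is the continuity claim.
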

The forcing notion $P(\omega)/\fin$ consists of infinite sets, ordered by inclusion up to a finite set. Namely, $ X\leq^* Y$ if $X\setminus Y$ is finite. In the next proposition, we consider the forcing notion $\mathbb{P}=\prod_{n<\omega} P(\omega)/\fin$, where elements of the product have full support. For more information regarding forcing we refer the reader to \cite{Kunen}. 

The following items summarize the properties of $\mathbb{P}$ which we will need:
\begin{itemize}
    \item $\mathbb{P}$ is $\sigma$-closed, and therefore does not add new subsets of $\omega$, and $\omega_1$ is preserved.
    \item For each $n$, the projection $\pi_n$ of $\mathbb{P}$ to the $n^{\text{th}}$ coordinate is a forcing projection from $\mathbb{P}$ to $P(\omega)/\fin$\footnote{A function from $f:\mathbb{P}\rightarrow\mathbb{Q}$ is called a projection of forcing notions if $f$ is order-preserving, $\rng(f)$ is dense in $\mathbb{Q}$, and for every $p\in\mathbb{P}$ and $q\leq_{\mathbb{Q}}p$, there is $p'\leq_{\mathbb{P}}p$ such that $f(p')\leq_{\mathbb{Q}} q$.}.
    \item If $G\subseteq \mathbb{P}$ is generic over $V$, then $U_n:=\overline{\pi_n''G}=\{X\in P(\omega)\mid \exists f\in G. f(n)\leq^* X\}$ is an ultrafilter over $\omega$ in $V[G]$. Moreover, $U_n$ is a generic ultrafilter for $P(\omega)/\fin$.
    \item Each $U_n$ is a selective ultrafilter     \item  $U_n\notin V[\l U_m\mid m\in \omega\setminus\{n\}\r]$.
\end{itemize}
\begin{proposition}\label{prop: example in full support}
    Let $\mathbb{P}$ be a full support product of $\omega$-copies of $P(\omega)/\fin$. Let $G\subseteq \mathbb{P}$ be generic over $V$. Then in $V[G]$ there is a sequence of ultrafilters $V_n$, such that $V_0>_TV_1>_TV_2...$ and $V_n\cdot V_n\equiv V_n$.
\end{proposition}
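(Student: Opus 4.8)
The plan is to realize the configuration demanded by Lemma \ref{Lemma: strict infinum configuration}: out of the mutually generic selective ultrafilters $U_n$ that $G$ produces, I would manufacture a strictly $\leq_T$-decreasing $\omega$-chain of \emph{idempotent} ultrafilters. The design principle is that $V_n$ should be an ultrafilter built \emph{only} from the tail sequence $\langle U_k\mid k\ge n\rangle$, Rudin--Keisler above $U_n$, and Rudin--Keisler above $V_{n+1}$. Concretely I would seek a single master ultrafilter $W$ on $\prod_{k<\omega}\omega$ arising from $G$ whose $k$-th coordinate marginal is $U_k$ and whose projection $V_n:=(\pi_{\ge n})_*W$ onto the coordinates $\ge n$ depends only on $\langle U_k\mid k\ge n\rangle$; then, identifying $\omega^{[n,\omega)}$ with $\omega$, dropping coordinate $n$ gives $V_n\geq_{RK}V_{n+1}$ and projecting to coordinate $n$ gives $V_n\geq_{RK}U_n$. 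Since $\leq_{RK}$ implies $\leq_T$, this at once yields $V_n\geq_T V_{n+1}$ and $V_n\geq_T U_n$ for every $n$.

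Idempotence is then essentially free. Each $U_n$ is selective, hence a rapid $p$-point, hence $U_n\geq_T\omega^\omega$, so by Theorem \ref{Thm: Dobrinen and todorcevic rapid p-points} we get $U_n\cdot U_n\equiv_T U_n$; since $V_n\geq_T U_n$ and the class of ultrafilters satisfying $U\cdot U\equiv_T U$ is upward closed in the Tukey order (the structural corollary stated in the introduction), we conclude $V_n\cdot V_n\equiv_T V_n$. Thus the whole chain is idempotent and Tukey-decreasing, and it only remains to verify strictness.

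The strictness $V_n>_T V_{n+1}$ is the heart of the argument, and here the two key features of the forcing enter. Suppose toward a contradiction that $V_n\leq_T V_{n+1}$. Then $U_n\leq_{RK}V_n\leq_T V_{n+1}$, so $U_n\leq_T V_{n+1}$. Now $V_{n+1}$ is (a sum/product-type combination of) the $p$-points $\langle U_k\mid k>n\rangle$, hence basically generated by the Dobrinen--Todorcevic closure of basically generated ultrafilters under sums and products, and it lives in $V[\langle U_k\mid k>n\rangle]\subseteq V[\langle U_m\mid m\neq n\rangle]$. Applying the Raghavan--Todorcevic canonization, Theorem \ref{Thm: Raghavan-Tod}, to the pair $U_n\leq_T V_{n+1}$ yields an antichain $P\subseteq\FIN$ and a map $f:P\to\omega$ with $U_n=f_*\big(V_{n+1}(P)\big)$. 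Because $\mathbb{P}$ is $\sigma$-closed it adds no reals, so $P$ and $f$, being coded by reals, already belong to $V$; consequently $V_{n+1}(P)$ and therefore $U_n=f_*(V_{n+1}(P))$ are definable from $V_{n+1},P,f$ and lie in $V[\langle U_m\mid m\neq n\rangle]$. This contradicts the listed mutual-genericity fact $U_n\notin V[\langle U_m\mid m\in\omega\setminus\{n\}\rangle]$, and the contradiction gives $V_n\not\leq_T V_{n+1}$.

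I expect the genuine obstacle to be the construction in the first paragraph, rather than the idempotence or strictness arguments, which are then clean. The naive infinite Fubini (or full-box) product of the $U_k$ is \emph{not} an ultrafilter, so one cannot simply take $W=\bigotimes_k U_k$; the work is to produce an ultrafilter $W$ on $\prod_k\omega$ (equivalently, the tail ultrafilters $V_n$ directly) that simultaneously has the prescribed coordinate marginals $U_k$, is basically generated, and enjoys the tail-independence making $V_n$ definable from $\langle U_k\mid k\ge n\rangle$ alone. This is exactly where I would exploit the $\sigma$-closed full-support product $\mathbb{P}=\prod_n P(\omega)/\fin$ and its projection structure: the projections $\pi_n$ and the tail subproducts, together with the fact that $\mathbb{P}$ adds no reals, should let one read off $W$ (and the $V_n$) from the generic $G$ in a way that keeps each $V_n$ controlled by the corresponding tail of $G$.
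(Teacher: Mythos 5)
There is a genuine gap, and you have located it yourself: the ultrafilters $V_n$ are never actually constructed. Your idempotence and strictness arguments are conditional on producing a ``master'' ultrafilter $W$ on $\prod_k\omega$ with prescribed marginals $U_k$, basic generation, and tail-independence, and your last paragraph concedes that this construction is the real obstacle (correctly noting that the naive infinite Fubini/box product is not an ultrafilter). Nothing in the proposal resolves that obstacle, and it is not clear such a $W$ exists; in particular, the claim that $V_{n+1}$ would be basically generated rests on it being ``a sum/product-type combination'' of the $U_k$, which is exactly the unbuilt object. So the heart of the proposition --- exhibiting the sequence $V_n$ --- is missing.

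The idea you are missing is that one does not need an infinite-product ultrafilter at all: the paper takes
$$V_n=\sum_{U_0}\bigl(U_{n+1}\cdot U_{n+2}\cdots U_{n+m}\bigr)_{0<m<\omega},$$
a Fubini sum over $U_0$ of the \emph{finite} products along the tail. This is automatically an ultrafilter, automatically basically generated (Dobrinen--Todorcevic: sums and products of $p$-points are basically generated), and --- since the finite products $U_{n+1}\cdots U_{n+m}$ are Tukey-increasing in $m$ --- Lemma \ref{Lemma: increasing v_n's case} applies and yields $V_n\equiv_T U_0\times\prod_{n<m<\omega}U_m$. That single computation realizes the Tukey type of the infinite product as an honest ultrafilter, and everything you wanted follows: $V_n\geq_T V_{n+1}$ from the product form, idempotence from $(V_n)^\omega\equiv_T V_n$ (the paper does this directly via Milovich's Theorem \ref{Thm: Milovich}, avoiding the forward reference to the upward-closure corollary, which is a consequence of the main theorem of Section \ref{Sec: proof of the main result}; your route is not circular, since that theorem does not depend on this proposition, but it is a forward reference), and strictness by essentially the argument you gave: if $V_n\equiv_T V_{n+1}$ then $U_{n+1}\leq_T V_{n+1}$, and Raghavan--Todorcevic canonization plus $\sigma$-closure of $\mathbb{P}$ (so $P$ and the projection $f$ are ground-model reals) puts $U_{n+1}$ into $V[U_0,\langle U_m\mid m>n+1\rangle]$, contradicting mutual genericity. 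Note also that the paper's $V_n$ all retain $U_0$ as the index of the sum; your stronger requirement that $V_n$ depend only on the tail $\langle U_k\mid k\geq n\rangle$ is unnecessary --- it suffices that the one coordinate $U_{n+1}$ being separated is absent from the model computing $V_{n+1}$.
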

\begin{proof}
    For each $n<\omega$, $U_n$ is a selective ultrafilter and therefore by Theorem \ref{Thm: Dobrinen and todorcevic rapid p-points}, $U_n\cdot U_n\equiv_T U_n\equiv_T (U_n)^\omega$.
     For every $n<\omega$, define\footnote{We thank Gabe Goldberg for pointing out this definition of $V_n$.} $$V_n=\sum_{U_0}(U_{n+1}\cdot U_{n+2}\cdot...\cdot U_{n+m})_{0<m<\omega}$$
    Note that each $U_{n+1}\cdot...\cdot U_{n+m}$ is basically generated as the product of $p$-points. Therefore, $V_n$ is also basically generated. 
    \begin{lemma}\label{Lemma: Properties of generic mutual}
        \begin{enumerate}
            
            \item $V_n\equiv_T U_0\times \prod_{n<m<\omega}U_m$.
            \item  $V_n\cdot V_n\equiv_T V_n$.
            \item $V_0>_TV_1>_TV_2...$.
            
        \end{enumerate}
    \end{lemma}
    \begin{proof}[\textit{Proof of Lemma.}] For $(1)$, we note that the ultrafilters $U_{n+1}\cdot... \cdot U_{n+m}$ over which we sum in the definition of $V_n$ are increasing in the Tukey order. Hence by Lemma \ref{Lemma: increasing v_n's case}
    $$V_n\equiv_T U_0\times \prod_{0<m<\omega} U_{n+1}\cdot...\cdot U_{n+m}$$
    By Milovich's Theorem \ref{Thm: Milovich}, and by our assumptions, for each $n,m$ $$U_{n+1}\cdot...\cdot U_{n+m}\equiv_T U_{n+1}\times U_{n+2}\cdot U_{n+2}\times...\times U_{n+m}\cdot U_{n+m}\equiv_T U_{n+1}\times...\times U_{n+m}.$$ Hence $$V_n\equiv_T U_0\times \prod_{0<m<\omega} U_{n+1}\times...\times U_{n+m}\equiv_TU_0\times\prod_{0<m<\omega} (U_{n+m})^\omega\equiv_T$$
    $$\equiv_T U_0\times \prod_{0<m<\omega} U_{n+m}\cdot U_{n+m}\equiv_T U_0\times \prod_{0<m<\omega}U_{n+m}$$
        Now for $(2)$, we use $(1)$. For each $n<\omega$
        $$V_n\cdot V_n\equiv_T (V_n)^\omega\equiv_T (U_0\times \prod_{0<m<\omega}U_{n+m})^\omega\equiv_T(U_0)^\omega\times \prod_{0<m<\omega}(U_{n+m})^\omega\equiv_T $$
        $$\equiv_T U_0\times \prod_{0<m<\omega}U_{n+m}\equiv_T V_n$$
        For $(3)$, it follows from $(1)$ that $$V_0\geq_T V_1\geq_T V_2....$$ Suppose toward a contradiction that $V_n\equiv_T V_{n+1}$ for some $n$. Then $U_{n+1}\leq_T V_{n+1}$. Note that $$V_{n+1}\in V[U_0,\l U_m\mid n+1<m<\omega\r].$$ By mutual genericity $U_{n+1}\notin V[U_0,\l U_m\mid n+1<m<\omega\r]$. Since $V_{n+1}$ is basically generated, Theorem \ref{Thm: Raghavan-Tod} implies that there is $P\subseteq FIN$ such that $U_{n+1}\leq_{RK}V_n(P)$. Note that since $\mathbb{P}$ is $\sigma$-closed, $P\in V$ and therefore $V_n(P)\in V[U_0,\l U_m\mid n+1<m<\omega\r]$. Also the Rudin-Keisler projection $f$ such that $f_*(V_n(P))=U_{n+1}$ is in the ground model and therefore $U_{n+1}\in V[U_0,\l U_m\mid n+1<m<\omega\r]$, contradiction. 
    \end{proof}
\end{proof}
It follows that $\sum_{U_0}V_n=\inf(\mathcal{B}(U_0,\l V_n\mid 0<n<\omega\r))$ is a strict greatest lower bound. Let us also prove that $U_0<_T\sum_UV_n$. We will need the following folklore fact.
\begin{fact}
    Suppose that $\sum_UV_n=\sum_UV_n'$ then $\{n<\omega\mid V_n=V'_n\}\in U$
\end{fact}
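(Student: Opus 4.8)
The plan is to prove the statement by contradiction, exploiting that $U$ is an ultrafilter. Since $\{n<\omega\mid V_n=V'_n\}$ and $S:=\{n<\omega\mid V_n\neq V'_n\}$ are complementary subsets of $\omega$, exactly one of them belongs to $U$. Thus it suffices to assume $S\in U$ and derive a contradiction with the hypothesis $\sum_UV_n=\sum_UV'_n$; this forces $\{n<\omega\mid V_n=V'_n\}\in U$, which is what we want.

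The key step is to manufacture a single set $B\subseteq\omega\times\omega$ lying in $\sum_UV_n$ but not in $\sum_UV'_n$, built by diagonalizing the witnesses of the disagreements. For each $n\in S$ the ultrafilters $V_n$ and $V'_n$ are distinct, so their symmetric difference is nonempty; choosing a set in it and, if necessary, replacing it by its complement (here we use that $V_n$ and $V'_n$ are both ultrafilters, hence closed under complementation), I obtain a set $A_n$ with $A_n\in V_n$ and $A_n\notin V'_n$. For $n\notin S$ I simply put $A_n=\omega$. Then I let $B=\bigcup_{n<\omega}\{n\}\times A_n$, so that its $n$-th fiber is exactly $(B)_n=A_n$.

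Now I compute membership of $B$ in the two sums directly from the definition of the Fubini sum. On one hand $\{n\mid (B)_n\in V_n\}=\omega\in U$, since $A_n\in V_n$ for every $n$ (for $n\in S$ by the choice of $A_n$, and for $n\notin S$ because $\omega\in V_n$); hence $B\in\sum_UV_n$. On the other hand $\{n\mid (B)_n\in V'_n\}\subseteq\omega\setminus S$, because $A_n\notin V'_n$ for every $n\in S$; since $S\in U$ we have $\omega\setminus S\notin U$, so this index set is not in $U$ and therefore $B\notin\sum_UV'_n$. This contradicts $\sum_UV_n=\sum_UV'_n$, completing the argument.

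There is no serious obstacle here — the statement is folklore and the proof reduces to a single diagonalization. The only point deserving care is extracting the separating sets $A_n$ in a consistent direction, always in $V_n\setminus V'_n$, which is precisely where closure of ultrafilters under complements is invoked; everything else is a routine unwinding of the definition of $\sum_U$ together with the ultrafilter dichotomy $S\in U\iff \omega\setminus S\notin U$.
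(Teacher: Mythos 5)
Your proof is correct and follows essentially the same approach as the paper: assume the disagreement set is in $U$, choose for each disagreeing index a set in $V_n\setminus V'_n$, and build a set of the sum whose fibers separate the two filters. The only cosmetic difference is that the paper produces two disjoint sets, one in each sum, and contradicts properness of the common filter, whereas you produce a single set lying in $\sum_UV_n$ but not in $\sum_UV'_n$ — the diagonalization is identical.
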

\begin{proof}
    Just otherwise, $Y=\{n<\omega\mid V_n\neq V'_n\}\in U$, in which case, for every $n\in Y$ take $X_n\in V_n$ such that $X_n^c\in V'_n$. Then $A=\bigcup_{n\in Y}\{n\}\times X_n\in \sum_UV_n$, while $A'=\bigcup_{n\in Y}\{n\}\times X_n^c\in\sum_UV_n'$. However $A\cap A'=\emptyset$ which contradicts $\sum_UV_n=\sum_UV'_n$.
\end{proof}
\begin{proposition}
    $U_0<_T \sum_{U_0}V_n$
\end{proposition}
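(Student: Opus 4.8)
The plan is to prove the two inequalities separately. The direction $U_0\leq_T\sum_{U_0}V_n$ is immediate, since projection to the first coordinate is a Rudin--Keisler map witnessing $U_0\leq_{RK}\sum_{U_0}V_n$, and $\leq_{RK}$ implies $\leq_T$. All the content lies in the strictness, i.e.\ that $\sum_{U_0}V_n\not\leq_T U_0$, which I would establish by contradiction: assume $W:=\sum_{U_0}V_n\leq_T U_0$.

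First I would push this Tukey reduction into the ground model, exactly as in the proof of Lemma~\ref{Lemma: Properties of generic mutual}(3). Since $U_0$ is selective it is a $p$-point, hence basically generated, so Theorem~\ref{Thm: Raghavan-Tod} applies and yields $P\subseteq\FIN$ with $W\leq_{RK}U_0(P)$ and $U_0(P)\equiv_T W$. Because $\mathbb P$ is $\sigma$-closed it adds no reals, so both $P$ and the Rudin--Keisler projection $f$ with $W=f_*(U_0(P))$ lie in $V$; as $U_0(P)$ is definable from $U_0$ and $P$, this gives $W\in V[U_0]$.

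The heart of the matter is then to contradict $W\in V[U_0]$ by a symmetry argument resting on the preceding folklore fact. Write $\mathbb P=\mathbb P_0\times\mathbb P_{\geq1}$, where $\mathbb P_0$ is the coordinate-$0$ copy of $P(\omega)/\fin$ and $\mathbb P_{\geq1}=\prod_{k\geq1}P(\omega)/\fin$, so that $V[U_0]=V[G_0]$ and the tail $\l U_k\mid k\geq1\r$ is generic over $V[G_0]$. Fix a condition $q$ in the $\mathbb P_{\geq1}$-generic filter forcing, over $V[G_0]$, that $\dot W$ equals the fixed object $W\in V[G_0]$. For each $k\geq1$ choose (in $V$) a nontrivial almost-permutation $\phi_k$ of $\omega$ whose support is contained in $q(k)$; then $\Pi:=\prod_{k\geq1}\phi_k$ is an automorphism of $\mathbb P_{\geq1}$ in $V$ with $\Pi(q)=q$, while by genericity $U'_k:=(\phi_k)_*U_k\neq U_k$ for every $k\geq1$. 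Applying $\Pi$ to the forcing statement (the symmetry lemma, using $\Pi(q)=q$ and that $\check W$ is fixed since $W\in V[G_0]$) yields $q\Vdash\dot W'=\check W$, where $\dot W'$ computes $W':=\sum_{U_0}\sum_{U_0}(U'_{n+1}\cdots U'_{n+m})$; since $q$ is in the generic filter this forces $W'=W$. On the other hand, because the first projection of a Fubini product recovers its first factor and $U'_{n+1}\neq U_{n+1}$, the inner products $U'_{n+1}\cdots U'_{n+m}$ differ from $U_{n+1}\cdots U_{n+m}$ for every $m$; two applications of the folklore fact (first to each inner sum, giving $V'_n\neq V_n$ for all $n$, then to the outer sum) give $W'\neq W$, the desired contradiction.

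I expect the main obstacle to be the choice of automorphism. The naive move is to permute coordinates, but a condition $q$ whose coordinates $q(k)$ are pairwise almost disjoint admits no nontrivial coordinate permutation fixing it; this is precisely why I would instead use within-coordinate almost-permutations supported inside each $q(k)$, which simultaneously fix $q$ and move every $U_k$. The remaining points are routine: that such $\phi_k$ genuinely move the generic $U_k$ (a density argument over the model generated by the other coordinates, via mutual genericity as in Lemma~\ref{Lemma: Properties of generic mutual}), and the rigidity of Fubini products under changing a single factor.
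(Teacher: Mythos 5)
Your proof is correct, but it takes a genuinely different route from the paper's. The paper uses the full strength of selectivity of $U_0$: by Raghavan--Todorcevic's canonization for selective ultrafilters, $\sum_{U_0}V_n\leq_T U_0$ would give $\sum_{U_0}V_n\equiv_{RK}U_0^{\alpha}$ for some $\alpha<\omega_1$; the case $\alpha=1$ contradicts Fact \ref{Fact: Sum is never p-point}, while for $\alpha>1$ the paper compares the two sum representations to get $V_n=_{RK}U_0^{\alpha_n}$ for $U_0$-many $n$, hence $U_{n+1}\in V[U_0]$, contradicting mutual genericity. You instead use only the basically-generated canonization (Theorem \ref{Thm: Raghavan-Tod}), exactly as in Lemma \ref{Lemma: Properties of generic mutual}(3), to pull $W=\sum_{U_0}V_n$ into $V[U_0]$, and then derive the contradiction by a symmetry argument: an automorphism of the tail product fixing a condition $q$ deciding $\dot W=\check W$, followed by two applications of the folklore fact and the first-coordinate projection. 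This is necessary in your setup, since no ground-model Rudin--Keisler map recovers any single $U_k$ (or any single fiber $V_n$) from $W$ --- fibers of a sum are only determined $U_0$-almost everywhere --- so mutual genericity cannot be invoked directly. What your route buys: it avoids the paper's delicate step of transferring the folklore fact (stated for literal equality of sums) across an RK-isomorphism, needs no case analysis, and uses only that $U_0$ is basically generated rather than selective; what it costs is the symmetric-names machinery.

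One point needs tightening: a \emph{nontrivial} almost-permutation $\phi_k$ with support merely \emph{contained} in $q(k)$ does not suffice for $U_k'\neq U_k$. If $\phi_k$ has finite support (e.g.\ a transposition), then $\phi_k[X]=^*X$ for every $X$, so $\phi_k$ induces the identity on $P(\omega)/\fin$ and $U_k'=U_k$ outright; and if the support $S$ satisfies $q(k)\setminus S$ infinite, then any condition whose $k$-th coordinate is $q(k)\setminus S$ forces $U_k'=U_k$, so genericity cannot rescue the claim. You need the support to be essentially all of $q(k)$: take $\phi_k$ to be a fixed-point-free involution of $q(k)$, identity elsewhere. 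Then for each $k$ the set of $r\leq q$ with $r(k)\cap\phi_k[r(k)]=\emptyset$ is dense below $q$ (choose one point from each orbit pair inside $r(k)$), and any such $r$ in the generic filter yields $X=r(k)\in U_k$ with $\phi_k[X]\notin U_k$, hence $U_k'\neq U_k$. With that choice, the rest of your argument goes through.
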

\begin{proof}
    Otherwise, there would have been a continuous cofinal map $f:U_0\rightarrow \sum_{U_0}
    V_n$. Since $U_0$ is a selective ultrafilter, by Todorcevic \cite{Raghavan/Todorcevic12}, if $V\leq_T U_0$, then there is $\alpha<\omega_1$ such that $V=_{RK} U_0^\alpha$ for some $\alpha<\omega_1$. It follows that $\sum_{U_0}V_n\equiv_{RK}U_0^\alpha$ for some $\alpha<\omega_1$.
    If $\alpha>1$, then $U_0^\alpha=\sum_{U_0}U_0^{\alpha_n}$ for some $\alpha_n<\alpha$ (The $\alpha_n$'s might be constant). 
    It follows that $Y=\{n<\omega\mid V_n=_{RK}U_0^{\alpha_n}\}\in U_0$. Since for any $\beta<\omega_1$, $U_0^\beta\in V[U_0]$, for any $0<n\in Y$, we conclude that $V_n\in V[U_0]$ and in particular $U_1\in V[U_0]$,
    contradicting the mutual genericity. If $\alpha=1$, then $U_0=_{RK}\sum_{U_0}V_n$ which then implies that $\sum_{U_0}V_n$ is a $p$-point, in contradiction to Fact \ref{Fact: Sum is never p-point}. 
    
    
    
\end{proof}
\section{Two properties of filter}\label{section: p.i.p} 
In this section, we present two properties of filters which play a key role in the proof of our main result. The first is the $I$-p.i.p which was introduced in \cite{TomNatasha2}, and the second is a new concept called \textit{ deterministic} ideals. Both of them provide an abstract framework in which one can analyze the connection between the Tukey type of an ultrafilter and ideals related to it. We start this section with a systematic study of the $I$-p.i.p. Many of our results in this section generalize to $\kappa$-filters for $\kappa\geq\omega$. However, we will restrict our attention to ultrafilters on $\omega$, as the main application is the main result regarding the commutativity of Fubini products which is already known for $\kappa$-complete ultrafilters on measurable cardinals \cite{TomNatasha}.
        \subsection{The pseudo intersection property relative to a set.}
        Given set $\mathcal{F}\subseteq P(X)$, we denote by $\mathcal{F}^*=\{X\setminus A\mid A\in \mathcal{F}\}$. When $\mathcal{F}$ is a filter, $\mathcal{F}^*$ is an ideal which we call \textit{the dual ideal}, and when $\mathcal{I}$ is an ideal $\mathcal{I}^*$ is a filter which we call \textit{the dual filter}. Ideals are always considered with the (regular) inclusion order.
        \begin{fact}
            For every filter $F$, $(F,\subseteq)\simeq(F^*,\supseteq)$ and in particular $(F,\subseteq)\equiv_T (F^*,\supseteq)$.
        \end{fact}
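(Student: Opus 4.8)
The plan is to exhibit complementation as the required order isomorphism, with no further ingredients. Writing $X$ for the underlying set of $F$, I would define $c\colon F\to F^*$ by $c(A)=X\setminus A$. By the very definition $F^*=\{X\setminus A\mid A\in F\}$, the map $c$ does land in $F^*$ and is surjective; moreover complementation is self-inverse on $P(X)$, since $X\setminus(X\setminus A)=A$, so the restriction of $c$ is a bijection between $F$ and $F^*$ (if $B=X\setminus A\in F^*$, then $c(B)=A\in F$ recovers a preimage).

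Next I would check that $c$ reverses inclusion. For $A_1,A_2\in F$ one has $A_1\subseteq A_2$ iff $X\setminus A_2\subseteq X\setminus A_1$, i.e.\ iff $c(A_1)\supseteq c(A_2)$. Hence $c$ is an order-isomorphism from $(F,\subseteq)$ onto $(F^*,\supseteq)$, which is exactly the asserted $(F,\subseteq)\simeq(F^*,\supseteq)$.

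For the ``in particular'' clause I would simply record that any order isomorphism witnesses Tukey equivalence in both directions. If $f\colon\mathbb{P}\to\mathbb{Q}$ is an isomorphism and $\mathcal{B}\subseteq\mathbb{Q}$ is cofinal, then $f^{-1}$ sends $\mathcal{B}$ to a cofinal subset of $\mathbb{P}$: given $p\in\mathbb{P}$, pick $b\in\mathcal{B}$ with $f(p)\leq_{\mathbb{Q}} b$, whence $p=f^{-1}(f(p))\leq_{\mathbb{P}} f^{-1}(b)$. Thus $f^{-1}$ is a cofinal map giving $\mathbb{P}\leq_T\mathbb{Q}$, and symmetrically $f$ gives $\mathbb{Q}\leq_T\mathbb{P}$. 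Applying this to $c$ yields $(F,\subseteq)\equiv_T(F^*,\supseteq)$.

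There is essentially no obstacle: the entire statement is a routine verification. The only point that calls for mild care is the bookkeeping of order directions, namely matching complementation (which reverses $\subseteq$) against the \emph{reversed} inclusion $\supseteq$ placed on $F^*$, so that $c$ emerges as an order-preserving map of posets rather than being compared against the regular inclusion with which ideals are otherwise equipped in the paper.
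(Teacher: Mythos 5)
Your proof is correct and is exactly the standard argument the paper has in mind: the paper states this as a fact without proof, and complementation $A\mapsto X\setminus A$ as an order-isomorphism (plus the observation that order-isomorphisms witness Tukey equivalence) is the intended verification. No gaps; the care you take in matching $\subseteq$ on $F$ against $\supseteq$ on $F^*$ is precisely the one point worth being explicit about.
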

        \begin{definition}\label{definition: the I-p.i.p}
            A filter $F$ over a countable set $S$ such that $T\subseteq F^*$ ($T$ is any subset), is said to satisfy the $T$-pseudo intersection property ($T$-p.i.p) if for every sequence $\l X_n\mid n<\omega\r\subseteq F$, there is $X\in F$ such that for every $n$, there is $t\in T$ such that $X\setminus X_n\subseteq t$.
        \end{definition}
         The proof for these simple facts can be found in \cite{TomNatasha2}:
        \begin{fact}
            \begin{enumerate}
                \item Any filter $F$ has the $F^*$-p.i.p.
                \item $F$ is a $p$-point iff $F$ has $\fin$-p.i.p
            \end{enumerate}
        \end{fact}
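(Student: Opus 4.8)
The plan is to treat the two items separately: the first is immediate, and the second is the classical reformulation of the $p$-point property, whose only real content is a bookkeeping argument encoding a decreasing tower by a single function.

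For (1), I would show that \emph{any} $X\in F$ already witnesses the $F^*$-p.i.p., so no choice of $X$ depending on the sequence is needed. Given $\langle X_n\mid n<\omega\rangle\subseteq F$, take $X=S$ (which lies in $F$). For each $n$ we have $X\setminus X_n\subseteq S\setminus X_n$, and since $X_n\in F$ the set $t:=S\setminus X_n$ belongs to $F^*$. Thus the requirement ``$X\setminus X_n\subseteq t$ for some $t\in T$'' holds for $T=F^*$, which is all that is demanded.

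For (2), the first step is to unwind the $\fin$-p.i.p.\ over $S=\omega$: since the members of $\fin$ are exactly the finite sets, ``$X\setminus X_n\subseteq t$ for some $t\in\fin$'' means precisely that $X\setminus X_n$ is finite, i.e.\ $X\subseteq^* X_n$. Hence $F$ has the $\fin$-p.i.p.\ iff every countable family $\langle X_n\rangle\subseteq F$ admits a \emph{pseudo-intersection} $X\in F$ (an $X$ with $X\subseteq^* X_n$ for all $n$), and the task reduces to matching this against the almost-one-to-one definition of a $p$-point. (Here I use that $F$ is an ultrafilter, the intended setting for $p$-points; this hypothesis is genuinely needed.) For the easy direction (pseudo-intersection $\Rightarrow$ $p$-point), take $f:\omega\to\omega$ unbounded on a set in $F$, so $f^{-1}[n]\notin F$ for all $n$, whence $Y_n:=\omega\setminus f^{-1}[n]\in F$ by ultrafilterness; a pseudo-intersection $X\in F$ of $\langle Y_n\rangle$ then has $X\cap f^{-1}[n]=X\setminus Y_n$ finite for each $n$, which is the required witness. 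For the converse ($p$-point $\Rightarrow$ pseudo-intersection), I would first pass to a decreasing sequence by replacing $X_n$ with $\bigcap_{k\le n}X_k$. If $\bigcap_n X_n\in F$ I take $X=\bigcap_n X_n$ outright; otherwise $W:=\omega\setminus\bigcap_n X_n\in F$, and I encode the depth of a point by setting $f(m)=1+\max\{k:m\in X_k\}$ on $X_0\cap W$ (well defined since $m\notin\bigcap_n X_n$) and $f(m)=0$ elsewhere. A short computation yields $f^{-1}[k]\cap W=W\setminus X_{k-1}\notin F$, so $f$ is unbounded on a set in $F$; the resulting $p$-point witness $X$, intersected with $X_0\cap W$, satisfies $X\setminus X_n=X\cap f^{-1}[n+1]$, which is finite for every $n$, giving the pseudo-intersection.

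The main obstacle lies in this converse: one must manufacture a single function whose unboundedness mod $F$ faithfully records the entire decreasing tower, and the points of $\bigcap_n X_n$ must be quarantined so that the depth function is well defined. This is exactly where the ultrafilter hypothesis is indispensable — both to pass to $W\in F$ and, in the easy direction, to obtain $Y_n\in F$ from $f^{-1}[n]\notin F$. Everything else is routine verification that the relevant slices $X\cap f^{-1}[\gamma]$ are finite.
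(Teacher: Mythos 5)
Your proof is correct. The paper itself gives no proof of this fact (it defers to \cite{TomNatasha2}), but your argument for item (2) is essentially the same as the paper's proof of its own generalization, Proposition \ref{Prop: equivalent condition}, specialized to $I=\fin$: your depth function $f(m)=1+\max\{k : m\in X_k\}$ is the mirror image of the function $f(n)=\min\{m : n\notin B_m\}$ used there, and the reduction to a decreasing tower with the intersection removed is the same device. Your observation that the ultrafilter hypothesis is genuinely needed is accurate and worth emphasizing: as literally stated for filters, item (2) is false. The Fr\'echet filter of cofinite sets has the $\fin$-p.i.p (take $X=\omega$), yet fails the paper's $p$-point definition: the map $f$ with $f(2k)=0$, $f(2k+1)=k$ is unbounded mod $F$, but $f^{-1}[1]$ contains all even numbers and hence meets every cofinite set in an infinite set. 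Note that the failure occurs exactly in your ``easy'' direction, where ultrafilterness is what converts $f^{-1}[n]\notin F$ into $\omega\setminus f^{-1}[n]\in F$; in the converse direction your use of $W\in F$ is actually avoidable (send all of $\bigcap_n X_n$ to $0$ along with the complement of $X_0$, and check that $f^{-1}[k]\cap X_{k-1}=\bigcap_n X_n\notin F$ still forces $f^{-1}[k]\notin F$), so ultrafilterness is indispensable only for ($\fin$-p.i.p $\Rightarrow$ $p$-point), not for the direction you flagged as the main obstacle.
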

        The following facts are also easy to verify:
        \begin{fact}
\begin{enumerate}
    \item If $T$ is downward closed with respect to inclusion, then $F$ has the $T$-p.i.p if and only if for every sequence $\l X_n\mid n<\omega\r\subseteq F$, there is $X\in F$ such that $X\setminus X_n\in T$.
    \item $F$ has $\{\emptyset\}$-p.i.p if and only if $F$ is $\sigma$-complete (and therefore, if $F$ is on $\omega$, then it is principal).
\end{enumerate}            
        \end{fact}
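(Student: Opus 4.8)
The plan is to prove both items by directly unwinding Definition \ref{definition: the I-p.i.p}, since under the stated hypotheses the equivalences are essentially formal; the only step requiring a genuine idea is the final parenthetical claim in (2).

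For item (1), I would show that downward closure of $T$ collapses the two formulations into literally the same condition. Fix a sequence $\l X_n\mid n<\omega\r\subseteq F$. If $F$ has the $T$-p.i.p., take the witness $X\in F$; for each $n$ there is $t_n\in T$ with $X\setminus X_n\subseteq t_n$, and since $T$ is downward closed this yields $X\setminus X_n\in T$ directly. Conversely, if for every sequence there is $X\in F$ with $X\setminus X_n\in T$ for all $n$, then choosing $t_n=X\setminus X_n\in T$ witnesses the defining clause $X\setminus X_n\subseteq t_n$. Thus the equivalence is immediate, with no calculation beyond this bookkeeping.

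For item (2), I would first note that $\emptyset=S\setminus S\in F^*$, so $\{\emptyset\}\subseteq F^*$ is a legitimate choice of $T$, and $\{\emptyset\}$ is trivially downward closed. Applying (1), $F$ has the $\{\emptyset\}$-p.i.p.\ exactly when for every $\l X_n\mid n<\omega\r\subseteq F$ there is $X\in F$ with $X\setminus X_n=\emptyset$ for all $n$, i.e.\ $X\subseteq\bigcap_n X_n$. The existence of such an $X$ is then equivalent to $\sigma$-completeness: if $X\in F$ and $X\subseteq\bigcap_n X_n$, then $\bigcap_n X_n\in F$ by upward closure of $F$, so $F$ is closed under countable intersections; conversely, if $F$ is $\sigma$-complete one simply takes $X=\bigcap_n X_n\in F$. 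For the parenthetical claim that a $\sigma$-complete filter $F$ on $\omega$ is principal, I would show $\bigcap F\in F$: enumerate the (necessarily countable) complement $\omega\setminus\bigcap F=\{m_k\mid k<\omega\}$, for each $k$ choose $A_k\in F$ with $m_k\notin A_k$, and set $A:=\bigcap_k A_k\in F$ by $\sigma$-completeness. Every $m\in A$ avoids all $m_k$, so $A\subseteq\bigcap F$, while $\bigcap F\subseteq A$ trivially; hence $\bigcap F=A\in F$ and $F$ is generated by $\bigcap F$ (which is nonempty, as $F$ is proper).

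I expect no serious obstacle in this argument, as everything follows from the definitions. The one step that is more than pure unwinding is the last one, where the point is that the complement of $\bigcap F$ inside the countable set $\omega$ is itself countable, so it can be separated from $\bigcap F$ using only countably many members of $F$ — precisely the situation in which $\sigma$-completeness can be brought to bear.
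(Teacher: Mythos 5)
Your proof is correct, and since the paper states this fact without proof (``easy to verify''), your direct unwinding of Definition \ref{definition: the I-p.i.p} is exactly the intended argument: downward closure lets one replace the witnesses $t_n$ by $X\setminus X_n$ itself in (1), and specializing to $T=\{\emptyset\}$ in (2) turns the condition into $X\subseteq\bigcap_n X_n$, which by upward closure of $F$ is equivalent to $\sigma$-completeness. The argument for the parenthetical claim (separating $\bigcap F$ from its countable complement by countably many members of $F$ and invoking $\sigma$-completeness) is also the standard one and is correct.
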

       Benhamou 
 and Dobrinen proved the following:
        \begin{proposition}[\cite{TomNatasha2}]\label{Prop: TomNatasha bound with p.i.p} Suppose that $F$ is a filter and $I\subseteq F^*$ is any ideal such that $F$ has the $I$-p.i.p. Then 
        $F^\omega\leq_T F\times I^\omega$.
            
        \end{proposition}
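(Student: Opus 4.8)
The plan is to exhibit a single map $g:F\times I^\omega\to F^\omega$ that is monotone and has cofinal range; any such map is automatically a cofinal (Tukey) map, and therefore witnesses $F^\omega\leq_T F\times I^\omega$. Indeed, if $\mathcal{B}\subseteq F\times I^\omega$ is cofinal and $p\in F^\omega$, then cofinality of the range gives some $q$ with $g(q)\geq p$, cofinality of $\mathcal{B}$ gives $b\in\mathcal{B}$ with $b\geq q$, and monotonicity yields $g(b)\geq g(q)\geq p$; so $g''\mathcal{B}$ is cofinal in $F^\omega$. (Equivalently one could invoke completeness of the orders, but this elementary check suffices and needs nothing.)

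The map I would use is the one suggested by the shape of $F\times I^\omega$: given $\l X,\l t_n\mid n<\omega\r\r\in F\times I^\omega$, set
$$g(\l X,\l t_n\mid n<\omega\r\r)=\l X\setminus t_n\mid n<\omega\r.$$
First I would check this is well defined, i.e.\ that each coordinate lands in $F$: since $t_n\in I\subseteq F^*$ we have $S\setminus t_n\in F$, and $F$ is closed under finite intersection, so $X\setminus t_n=X\cap(S\setminus t_n)\in F$. Monotonicity is then immediate from the (opposite) order conventions: if $\l X,\l t_n\r\r\leq\l X',\l t'_n\r\r$ in $F\times I^\omega$ — meaning $X\supseteq X'$ for the reverse-inclusion order on $F$ and $t_n\subseteq t'_n$ for the inclusion order on $I$ — then $X'\setminus t'_n\subseteq X\setminus t_n$ for each $n$, which is exactly $g(\l X,\l t_n\r\r)\leq g(\l X',\l t'_n\r\r)$ in $F^\omega$.

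The heart of the argument, and the only place the hypothesis is used, is cofinality of the range. Fix an arbitrary target $\l Z_n\mid n<\omega\r\in F^\omega$. Applying the $I$-p.i.p.\ to the sequence $\l Z_n\mid n<\omega\r\subseteq F$, and using that $I$ is an ideal and hence downward closed (so the p.i.p.\ takes the clean form: there is $X\in F$ with $X\setminus Z_n\in I$ for every $n$), I obtain such an $X$ and set $t_n:=X\setminus Z_n\in I$. Then $X\setminus t_n=X\cap Z_n\subseteq Z_n$, so
$$g(\l X,\l t_n\mid n<\omega\r\r)=\l X\cap Z_n\mid n<\omega\r\geq\l Z_n\mid n<\omega\r\quad\text{in }F^\omega.$$
Hence the range of $g$ is cofinal, completing the reduction.

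I expect no genuine obstacle beyond correctly tracking the two opposite order conventions on $F$ and $I$; all the real content is packaged into the $I$-p.i.p., which is precisely the assertion that the natural ``truncate $X$ by a member of $I$'' operation can reach every prescribed sequence $\l Z_n\r$ of filter sets simultaneously. The single point deserving a sentence of care is the passage to the downward-closed form of the p.i.p., namely that $I$ being an ideal is exactly the downward-closure hypothesis of the relevant Fact.
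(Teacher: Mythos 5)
Your proof is correct. Note that the paper does not actually prove Proposition~\ref{Prop: TomNatasha bound with p.i.p} — it is quoted from \cite{TomNatasha2} without proof — but your argument is the natural one: the map $g(\l X,\l t_n\mid n<\omega\r\r)=\l X\setminus t_n\mid n<\omega\r$ is monotone with cofinal range (the only content being the $I$-p.i.p.\ applied in its downward-closed form, exactly as you flag), and a monotone map with cofinal range is a Tukey reduction, giving $F^\omega\leq_T F\times I^\omega$. All order conventions (reverse inclusion on $F$, inclusion on $I$, pointwise on products) are tracked correctly, so this is a complete and correct substitute for the cited proof.
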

        \begin{theorem}[\cite{TomNatasha2}]\label{Thm: Sufficint condition for product from TomNatash2}
            Suppose that $U$ is an ultrafilter and $I\subseteq U^*$ is an ideal such that:
            \begin{enumerate}
                \item $U$ has the $I$-p.i.p.
                \item $I^\omega\leq_T U$
            \end{enumerate}
            Then $U\cdot U\equiv_T U$.
        \end{theorem}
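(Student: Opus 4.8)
The plan is to prove the two Tukey inequalities $U\leq_T U\cdot U$ and $U\cdot U\leq_T U$ separately. The first is immediate and uses neither hypothesis: since $U\leq_{RK}U\cdot U$ via the projection onto the first coordinate, and the Rudin--Keisler order implies the Tukey order, we get $U\leq_T U\cdot U$. All the content lies in the reverse inequality, which I would obtain by assembling the three facts already established in the excerpt into a single chain.

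The idea for $U\cdot U\leq_T U$ is the chain
\[
U\cdot U \leq_T U^\omega \leq_T U\times I^\omega \leq_T U.
\]
The first inequality is Theorem \ref{Thm:DobTod}(3) (equivalently, the easy half of Milovich's Theorem \ref{Thm: Milovich}), with $X=\omega$. The second inequality is exactly Proposition \ref{Prop: TomNatasha bound with p.i.p} applied to $F=U$ and the ideal $I\subseteq U^*$, which is available precisely because hypothesis (1) guarantees that $U$ has the $I$-p.i.p. The third inequality is where hypothesis (2) enters.

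For the last step I would invoke the fact recalled in the preliminaries that the pointwise product $U\times I^\omega$ is the \emph{least upper bound} of $U$ and $I^\omega$ in the Tukey order. Since $U$ is itself an upper bound of both posets---trivially $U\leq_T U$, and $I^\omega\leq_T U$ by hypothesis (2)---the least upper bound must satisfy $U\times I^\omega\leq_T U$. Chaining the three inequalities gives $U\cdot U\leq_T U$, and combined with $U\leq_T U\cdot U$ this yields $U\cdot U\equiv_T U$.

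The argument is essentially a bookkeeping assembly of prior results, so there is no serious technical obstacle; the only real point to appreciate is \emph{why} the two hypotheses are exactly the right ones. The $I$-p.i.p is what converts the potentially much larger power $U^\omega$ into the product $U\times I^\omega$, isolating all the ``extra'' complexity of $U^\omega$ over $U$ into the single factor $I^\omega$; and the assumption $I^\omega\leq_T U$ is precisely what is needed to absorb that extra factor back into $U$ via the least-upper-bound property of the product. If either hypothesis failed, one would expect $U\times I^\omega$ to sit strictly above $U$, consistent with the existence of ultrafilters satisfying $U\cdot U>_T U$.
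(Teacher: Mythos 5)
Your proof is correct and is essentially the intended argument: the paper cites this theorem from \cite{TomNatasha2} without reproving it, and the chain $U\cdot U\leq_T U^\omega\leq_T U\times I^\omega\leq_T U$ --- using Theorem \ref{Thm:DobTod}(3), Proposition \ref{Prop: TomNatasha bound with p.i.p} (hypothesis (1)), and the least-upper-bound property of the product together with hypothesis (2) --- combined with $U\leq_{RK}U\cdot U$, is exactly how the result is assembled. This is also precisely how the paper itself deploys these ingredients in the analogous argument of Proposition \ref{Prop: equivalence for simple ideals}.
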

        This theorem is the important direction in the equivalence of Theorem \ref{Thm: TomNatahsa equivalence for product}.
This subsection is devoted to a systematic study of this property, which will be used in the proof for our main theorem regarding the commutativity of the cofinal types of Fubini products of ultrafilters.
First let us provide an equivalent condition to being $I$-p.i.p, similar to the one we have for $p$-points.
\begin{proposition}\label{Prop: equivalent condition}
    Let $U$ be any ultrafitler. Then the following are equivalent:
    \begin{enumerate}
        \item $U$ has the $I$-p.i.p
        \item For any partition $\l A_n\mid n<\omega\r$ such  that for any $n$, $A_n\notin U$, there is $A\in U$ such that $A\cap A_n\in I$ for every $n<\omega$.
        \item Every function $f:\omega\rightarrow\omega$ which is unbounded modulo $U$ is $I$-to-one modulo $U$, i.e. there is $A\in U$ such that for every $n<\omega$, $f^{-1}[n+1]\cap A\in I$. 
    \end{enumerate}
\end{proposition}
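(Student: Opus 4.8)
The plan is to prove the three conditions equivalent in a cycle $(1)\Rightarrow(3)\Rightarrow(2)\Rightarrow(1)$, translating between the language of sequences of large sets, partitions, and functions. The key observation is that these three formulations are really the same statement dressed up differently: a sequence $\langle X_n\mid n<\omega\rangle\subseteq U$ can be converted into a partition by taking differences, a partition into blocks not in $U$ corresponds to the level sets of a function unbounded mod $U$, and the ``$I$-smallness'' of tails, intersections, and preimages all coincide. Throughout I would freely use the reformulation from the earlier \textbf{Fact} that when the witnessing set is downward closed one may replace ``$X\setminus X_n\subseteq t$ for some $t\in T$'' with the cleaner ``$X\setminus X_n\in I$''; here $I$ is an ideal, hence downward closed, so the $I$-p.i.p. literally says: for every $\langle X_n\rangle\subseteq U$ there is $X\in U$ with $X\setminus X_n\in I$ for all $n$.

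For $(1)\Rightarrow(3)$, given $f$ unbounded mod $U$, set $X_n=\omega\setminus f^{-1}[n+1]$. Since $f$ is unbounded mod $U$, each $f^{-1}[n+1]\notin U$, so $X_n\in U$ (using that $U$ is an ultrafilter). Apply the $I$-p.i.p. to get $X\in U$ with $X\setminus X_n\in I$ for each $n$; but $X\setminus X_n=X\cap f^{-1}[n+1]$, which is exactly the required conclusion. For $(3)\Rightarrow(2)$, given a partition $\langle A_n\rangle$ with each $A_n\notin U$, define $f:\omega\to\omega$ by $f(k)=n$ iff $k\in A_n$. Then $f^{-1}[n+1]=A_0\cup\cdots\cup A_n$, which is not in $U$ for any $n$ (a finite union of sets outside an ultrafilter is outside it), so $f$ is unbounded mod $U$. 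Condition $(3)$ yields $A\in U$ with $A\cap f^{-1}[n+1]\in I$ for all $n$; since $A\cap A_n\subseteq A\cap f^{-1}[n+1]$ and $I$ is downward closed, $A\cap A_n\in I$ as desired.

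For $(2)\Rightarrow(1)$, take a sequence $\langle X_n\mid n<\omega\rangle\subseteq U$; the goal is to produce $X\in U$ with $X\setminus X_n\in I$ for all $n$. The natural move is to manufacture a partition from the complements. Without loss of generality replace $X_n$ by $\bigcap_{k\le n}X_k$, so that the sequence is decreasing and still in $U$; then set $A_n=X_n\setminus X_{n+1}$ for the ``layers'' together with a leftover block, and note the $A_n$ are pairwise disjoint with each $A_n\notin U$ (as $A_n\subseteq \omega\setminus X_{n+1}\in U^*$). One subtlety is that these $A_n$ need not cover $\omega$, since $\bigcap_n X_n$ may be nonempty; I would absorb this tail into the large set or add it as an extra block not in $U$ to obtain a genuine partition into pieces outside $U$. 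Applying $(2)$ gives $A\in U$ with $A\cap A_n\in I$ for every $n$; the final bookkeeping step is to check that $X:=A$ (or $A$ intersected with $X_0$) satisfies $X\setminus X_n\in I$, which follows because $X\setminus X_n\subseteq \bigcup_{k<n}(A\cap A_k)$ is a finite union of $I$-sets, hence in $I$. I expect this last implication to be the main obstacle: the reductions in the first two implications are essentially formal, but turning a single pseudo-intersection requirement into a partition statement requires care about disjointification, about the residual set $\bigcap_n X_n$, and about ensuring each block genuinely lies outside $U$ so that $(2)$ applies.
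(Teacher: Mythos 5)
Your proof is correct and takes essentially the same route as the paper: the same standard dictionary between sequences in $U$, partitions into non-$U$ blocks, and functions unbounded mod $U$ (complements, level sets, and the layers $X_n\setminus X_{n+1}$), only traversing the cycle in the opposite direction $(1)\Rightarrow(3)\Rightarrow(2)\Rightarrow(1)$ instead of the paper's $(1)\Rightarrow(2)\Rightarrow(3)\Rightarrow(1)$. Your explicit case split on the residual set $\bigcap_n X_n$ (trivial if it lies in $U$, otherwise adjoined as an extra block) correctly handles the same point that the paper disposes of with an unjustified ``without loss of generality $\bigcap_n B_n=\emptyset$'' in its $(3)\Rightarrow(1)$ step.
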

\begin{proof}
The proof is standard and is just a generalization of the usual characterization of $p$-points.
    \begin{enumerate}
        \item [$\underline{(1)\Rightarrow (2)}$] Let $\l A_n\mid n<\omega\r$ be a partition such that $A_n\notin U$. Let $B_n=\omega\setminus A_n\in U$ and by the $I$-p.i.p there is $A\in U$ such that $A\setminus B_n\in I$. It remains to note that $A\setminus B_n=A\cap A_n$ to conclude $(2)$.
        \item [$\underline{(2)\Rightarrow(3)}$] Let 
        $f:\omega\rightarrow\omega$ be unbounded modulo $U$. Let $A_n=f^{-1}[\{n\}]$, then $A_n\notin U$. Apply $(2)$ to the partition $\l A_n\mid n<\omega\r$ to find $A\in U$ such that $A\cap A_n\in I$. For any $n<\omega$, $f^{-1}[n+1]\cap A=\cup_{m\leq n} f^{-1}[\{m\}]\cap A\in I$. Hence $f$ is $I$-to-one modulo $U$.
        \item [$\underline{(3)\Rightarrow(1)}$] Let $\l B_n\mid n<\omega\r\subseteq U$, and let us assume without loss of generality that it is $\subseteq$-decreasing and that $\bigcap_{n<\omega}B_n=\emptyset$. Define $f(n)=\min\{m\mid n\notin B_m\}$. Since $\bigcap_{n<\omega}B_n=\emptyset$, $f:\omega\rightarrow\omega$ is a well defined function.
        Apply $(3)$, to find $A\in U$ such that for every $n<\omega$ $f^{-1}[n+1]\cap A\in I$. Now for each $x\in A\setminus B_n$, $f(x)\leq n$ and therefore $x\in f^{-1}[n+1]\cap A$ and therefore $A\setminus B_n\subseteq f^{-1}[n+1]\cap A\in I$. It follows that $A\setminus B_n\in I$ and that $U$ has the $I$-p.i.p. 
    \end{enumerate}
\end{proof}
        \begin{proposition}
          \begin{enumerate}
       \item If $F$ has $T$-p.i.p and $T\subseteq S$, then $F$ has $S$-p.i.p.
       \item Suppose that $T_1,T_2\subseteq F^*$ are downwards-closed with respect to inclusion, and $F$
 has both the $T_1$-p.i.p and the $T_2$-p.i.p, then $F$ has the $T_1\cap T_2$-p.i.p 
 \item Suppose that $f_*(G)=F$ and $G$ has the $T$-p.i.p then $F$ has the $\{f''t\mid t\in T\}$-p.i.p. 
   \end{enumerate}
  
        \end{proposition}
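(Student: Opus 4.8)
The plan is to handle the three parts separately, each by a short manipulation of the sequences witnessing a pseudo-intersection, using the reformulation recorded earlier for downward-closed families (namely, that for downward-closed $T$ the $T$-p.i.p is equivalent to: for every sequence in $F$ there is $X\in F$ with $X\setminus X_n\in T$ for all $n$).

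For (1) I would simply exploit that monotonicity is built into the definition. Given any sequence $\l X_n\mid n<\omega\r\subseteq F$, the $T$-p.i.p produces an $X\in F$ together with, for each $n$, a witness $t\in T$ with $X\setminus X_n\subseteq t$. Since $T\subseteq S$, that same $X$ and those same witnesses $t$ already certify the $S$-p.i.p, so nothing further is required.

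For (2) the idea is to interleave the two applications. Given $\l X_n\mid n<\omega\r\subseteq F$, I would first apply the $T_1$-p.i.p (in its downward-closed form) to obtain $Y\in F$ with $Y\setminus X_n\in T_1$ for all $n$, and then apply the $T_2$-p.i.p to the sequence $\l Y\cap X_n\mid n<\omega\r\subseteq F$ to obtain $Z\in F$ with $Z\setminus(Y\cap X_n)\in T_2$ for all $n$. Setting $X=Y\cap Z\in F$, I would verify the two containments $X\setminus X_n\subseteq Y\setminus X_n$ and $X\setminus X_n\subseteq Z\setminus(Y\cap X_n)$; by downward closure these give $X\setminus X_n\in T_1$ and $X\setminus X_n\in T_2$, hence $X\setminus X_n\in T_1\cap T_2$ (which is again downward closed), completing the argument.

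For (3), where $f_*(G)=F$ and $G$ has the $T$-p.i.p, the natural move is to transport the sequence along $f^{-1}$ but the witnesses along $f''$. Given $\l A_n\mid n<\omega\r\subseteq F$, the pullbacks $f^{-1}[A_n]$ lie in $G$ by definition of $f_*(G)$, so the $T$-p.i.p of $G$ yields $B\in G$ and witnesses $t_n\in T$ with $B\setminus f^{-1}[A_n]\subseteq t_n$. I would then put $A=f''B$. Since $B\subseteq f^{-1}[f''B]$ and $B\in G$, upward closure of $G$ gives $f^{-1}[A]\in G$, i.e.\ $A\in F$. Finally, for $x\in A\setminus A_n$, writing $x=f(y)$ with $y\in B$ and noting that $x\notin A_n$ forces $y\notin f^{-1}[A_n]$, we get $y\in B\setminus f^{-1}[A_n]\subseteq t_n$, whence $x\in f''t_n$; thus $A\setminus A_n\subseteq f''t_n$, which certifies the $\{f''t\mid t\in T\}$-p.i.p.

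I do not expect any of the three steps to present a genuine obstacle; the only point requiring care is the ordering and bookkeeping in (2), where one must arrange the two applications so that thinning by $T_2$ does not spoil the $T_1$-smallness already obtained. Choosing $X=Y\cap Z$ (rather than $Z$ alone) and invoking downward closure is precisely what makes both smallness conditions hold simultaneously. In (3) the analogous subtlety is to remember that the sequence is moved by $f^{-1}$ while the witnesses are moved by $f''$; once this is kept straight, the verification that $f''B\in F$ and that $A\setminus A_n\subseteq f''t_n$ is immediate.
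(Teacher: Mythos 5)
Your proof is correct and follows essentially the same route as the paper: (1) is immediate from the definition, (3) pulls the sequence back along $f^{-1}$ and pushes the witnesses forward along $f''$ exactly as in the paper, and (2) rests on the same key idea of intersecting the two witnessing sets and invoking downward closure. The only cosmetic difference is in (2), where the paper applies both pseudo-intersection properties in parallel to the same sequence $\l X_n\mid n<\omega\r$ and takes $A\cap B$, whereas you apply them sequentially (the second to the modified sequence $\l Y\cap X_n\mid n<\omega\r$); your extra step is harmless but unnecessary.
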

        \begin{proof}
            For $(1)$, see \cite{TomNatasha2}. For $(2)$, suppose that $U$ has both the $T_1$-p.i.p and the $T_2$-p.i.p. and $T_1,T_2$ are downwards closed.  Let $\l A_n\mid n<\omega\r$ be a sequence, then there are $A,B\in F$ such that for every $n$, $A\setminus A_n\in T_1$ and $B\setminus A_n\in T_2$. It follows that $A\cap B\in F$, fix $n<\omega$, then $A\cap B\setminus A_n$ is included in both  $A\setminus A_n$ and $B\setminus A_n$ which implies that $A\cap B\setminus A_n\in T_1\cap T_1$ as both $T_1,T_2$ are downwards closed. Hence $F$ has the $T_1\cap T_2$-p.i.p.

            For $(3)$, let $\l X_n\mid n<\omega\r\subseteq F$, then $\l f^{-1}[X_n]\mid n<\omega\r\subseteq G$. Therefore, there is $Y\in G$ such that for every $n$ there is $t_n\in T$ such that $Y\setminus f^{-1}[X_n]\subseteq t_n$. 
            Let $X=f''Y\in F$, we have that
            $$X\setminus X_n\subseteq f''[Y\setminus f^{-1}[X_n]]\subseteq f''t_n$$
        \end{proof}
        
      
\begin{corollary}
    Let $F$ be any filter.
Denoted by  
    $U_F\subseteq P(F^*)$ the set generated by all $T$'s such that $F$ has the $T$-p.i.p, namely,
    $$U_{F}=\{S\in P(F^*)\mid \exists T\subseteq S\text{  downwards closed }F\text{ has }T\text{-p.i.p}\}.$$ Then $U_F$ is a filter over $F^*$.
\end{corollary}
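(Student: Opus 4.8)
The plan is to verify directly that $U_F$ satisfies the three defining properties of a filter on the underlying set $F^*$: it contains the top element $F^*$, it is upward closed, and it is closed under finite intersections. Two of these require essentially no work. For the top element, I would recall that $F^*$ is an ideal and hence downward closed under inclusion, so that by the fact that every filter has the $F^*$-p.i.p.\ the family $T=F^*$ is an admissible witness; thus $F^*\in U_F$, and in particular $U_F\neq\emptyset$. Upward closure is immediate from the definition: if $S\in U_F$, as witnessed by a downward-closed $T\subseteq S$ for which $F$ has the $T$-p.i.p., and $S\subseteq S'\subseteq F^*$, then $T\subseteq S'$ is still downward closed and still witnesses the $T$-p.i.p., so $S'\in U_F$. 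No appeal to any lemma is needed here.

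The substantive step is closure under intersection, and this is exactly where I would invoke part $(2)$ of the preceding Proposition. Given $S_1,S_2\in U_F$, choose downward-closed witnesses $T_1\subseteq S_1$ and $T_2\subseteq S_2$ with $F$ having the $T_1$-p.i.p.\ and the $T_2$-p.i.p. I would then set $T:=T_1\cap T_2$ and check that $T$ is again an admissible witness for $S_1\cap S_2$: it is downward closed, being an intersection of two downward-closed families; it is contained in $S_1\cap S_2$ since $T_1\subseteq S_1$ and $T_2\subseteq S_2$; and, because $T_1,T_2\subseteq F^*$ are downward closed and $F$ has both the $T_1$- and the $T_2$-p.i.p., part $(2)$ of the Proposition yields that $F$ has the $(T_1\cap T_2)$-p.i.p. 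Hence $S_1\cap S_2\in U_F$, and closure under finite intersections follows by iterating this binary case.

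I do not expect any genuine obstacle: all the combinatorial content has already been isolated in part $(2)$ of the Proposition (which is precisely what combines two p.i.p.\ witnesses into a single one), and the remaining verifications are purely formal. The only point worth flagging explicitly is properness. Here I would note that $\emptyset\notin U_F$: the sole candidate witness for $\emptyset\in U_F$ is $T=\emptyset$, and $F$ cannot have the $\emptyset$-p.i.p., since for any sequence $\langle X_n\mid n<\omega\rangle\subseteq F$ the requirement ``there is $t\in T$ with $X\setminus X_n\subseteq t$'' is unsatisfiable when $T=\emptyset$. Consequently $U_F$ is a \emph{proper} filter on $F^*$, which completes the verification.
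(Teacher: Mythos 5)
Your proof is correct and follows essentially the same route the paper intends: the corollary is stated as an immediate consequence of the preceding Proposition, with upward closure coming straight from the definition (the same downward-closed witness works for any superset), closure under intersections coming from part $(2)$ applied to the intersection $T_1\cap T_2$ of the two witnesses, and non-emptiness from the fact that every filter has the $F^*$-p.i.p. Your added observation that $U_F$ is proper (since $T=\emptyset$ can never witness the p.i.p.) is a small bonus beyond what the paper records, and it is also correct.
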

$U_F$ is almost an ultrafilter:
\begin{proposition}
    Suppose that $X_1,X_2\subseteq P(F^*)$, and $F$ has the $X_1\cup X_2$-p.i.p, then either $F$ has the $X_1$-p.i.p or $X_2$-p.i.p
\end{proposition}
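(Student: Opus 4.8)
The plan is to prove the contrapositive: assuming $F$ has \emph{neither} the $X_1$-p.i.p \emph{nor} the $X_2$-p.i.p, I will exhibit a single sequence in $F$ witnessing the failure of the $(X_1\cup X_2)$-p.i.p. (Here I read the hypothesis as $X_1,X_2\in P(F^*)$, i.e.\ $X_1,X_2\subseteq F^*$, so that $X_1\cup X_2\subseteq F^*$ and the statement ``$F$ has the $(X_1\cup X_2)$-p.i.p'' is well-posed.)

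First I would unwind the negation of Definition \ref{definition: the I-p.i.p}: $F$ fails the $T$-p.i.p precisely when there is a sequence $\l A_n\mid n<\omega\r\subseteq F$ such that for every $X\in F$ there is an $n$ with $X\setminus A_n\not\subseteq t$ for all $t\in T$. Thus the failure of the $X_1$-p.i.p supplies a witnessing sequence $\l A_n\mid n<\omega\r\subseteq F$, and the failure of the $X_2$-p.i.p supplies a witnessing sequence $\l B_m\mid m<\omega\r\subseteq F$.

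Next I would combine these into the doubly-indexed family $\{A_n\cap B_m\mid n,m<\omega\}\subseteq F$ (using that $F$ is closed under finite intersections) and re-enumerate it as an $\omega$-sequence $\l C_k\mid k<\omega\r$ via a bijection $\omega\leftrightarrow\omega\times\omega$. The key identity is $X\setminus(A_n\cap B_m)=(X\setminus A_n)\cup(X\setminus B_m)$. Given an arbitrary $X\in F$, I pick $n$ from the $X_1$-witness and $m$ from the $X_2$-witness, and argue by cases on a hypothetical $t\in X_1\cup X_2$ with $X\setminus(A_n\cap B_m)\subseteq t$: if $t\in X_1$ then $X\setminus A_n\subseteq t$, contradicting the choice of $n$; if $t\in X_2$ then $X\setminus B_m\subseteq t$, contradicting the choice of $m$. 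Hence no single $t\in X_1\cup X_2$ covers $X\setminus C_k$ for the index $k$ corresponding to the pair $(n,m)$, so $\l C_k\mid k<\omega\r$ witnesses the failure of the $(X_1\cup X_2)$-p.i.p, which is what the contrapositive requires.

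I do not expect a genuine obstacle: the argument is a short covering/case analysis relying only on closure of $F$ under finite intersections and the countability of $\omega\times\omega$. The single point to keep straight is that the chosen pair $(n,m)$ (equivalently the index $k$) is permitted to depend on $X$ — exactly what the negated definition allows — and that the disjunction $t\in X_1$ or $t\in X_2$ routes the contradiction to the correct one of the two witnessing sequences.
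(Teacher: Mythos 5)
Your proof is correct and takes essentially the same approach as the paper's: both merge the two witnessing sequences by intersecting them and then split cases on whether the covering set $t$ lies in $X_1$ or in $X_2$, routing the contradiction to the appropriate witness. The only cosmetic differences are that the paper argues by contradiction using the decreasing sequence $\bigl\langle \bigcap_{k\le l}A_k\cap\bigcap_{k\le l}B_k \mid l<\omega\bigr\rangle$ and the index $N=\max(n_A,m_A)$, whereas you take the contrapositive and index by all pairwise intersections $A_n\cap B_m$.
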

\begin{proof} Suppose otherwise that $X_1,X_2$ are downward closed, $F$ has the $X_1\cup X_2$-p.i.p , but does not have the neither the $X_1$-p.i.p nor the $X_2$-p.i.p. Then there are sequences $\l A_n\mid n<\omega\r$ and $\l B_n\mid n<\omega\r$ such that for every $A,B$ there are $n_A,m_B$ such that for every $t_1\in X_1$ and every $t_2\in X_2$, $A\setminus A_{n_A}\not\subseteq t_1$ and $B\setminus B_{m_B}\not\subseteq t_2$. Consider the sequence
    $\l \bigcap_{k\leq n}A_k\cap\bigcap_{k\leq n}B_k\mid n<\omega\r$. Then there is $A\in F$ such that for every $l$ there is $t_l\in X_1\cup X_2$ for which $A\setminus \bigcap_{k\leq l}A_k\cap\bigcap_{k\leq l}B_k\subseteq t_l$. For $A$, there are suitable $n_A,m_A$ as above and fix $N=\max(n_A,m_A)$. Without loss of generality, $t_N\in X_1$, in which case, we have $A\setminus A_n\subseteq A\setminus \bigcap_{k\leq N}A_k\cap\bigcap_{k\leq M}B_k\subseteq t_N$, contradicting the choice of $n_A$. 
\end{proof}
   Note that we cannot ensure that either $X_1$ or $X_2$ contains a downward closed subset. Our next result investigates how the $I$-p.i.p is preserved under sums of ideals and ultrafilters.
   
   Let $I$ be an ideal on $X$ and for each $x\in X$ let $J_x$ be ideals on $Y_x$ (resp.). We define the Fubini sum of the ideals $\sum_IJ_x$  over $\bigcup_{x\in X}\{x\}\times Y_x$: For $A\subseteq \bigcup_{x\in X}\{x\}\times Y_x$,
        $$A\in \sum_IJ_x\text{ iff } \{x\in X\mid (A)_x\notin J_x\}\in I.$$
        We denote by $I\otimes J=\sum_IJ$. When $I$ is an ideal on $\omega$, we define transifinitely for $\alpha<\omega_1$ $I^{\otimes\alpha}$. $I^{\otimes 1}=I$, at the successor step $I^{\otimes(\alpha+1)}=I^{\otimes\alpha}\times I$. At limit step $\alpha$, we fix some cofinal sequence $\l \alpha_n\mid n<\omega\r$ unbounded in $\alpha$ and define $I^\alpha=\sum_II^{\otimes\alpha_n}$. The above definition of Fubini sum is nothing but the dual operation of the Fubini sum of filters:
        \begin{fact}\label{Fact: dual operation}
        $(\sum_IJ_x)^*=\sum_{I^*}J_x^*$ and in particular $(I\otimes J)^*=I^*\cdot J^*$.
        \end{fact}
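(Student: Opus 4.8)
The plan is to prove this as a pure identity of set systems, verified by chasing the definitions of the dual operation and the Fubini sum. Both $\sum_I J_x$ and $\sum_{I^*}J_x^*$ are collections of subsets of the same ground set $Z=\bigcup_{x\in X}\{x\}\times Y_x$, so I would fix an arbitrary $A\subseteq Z$ and show that the two defining conditions for $A$ coincide.

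The pivotal observation---and the only place where anything needs checking rather than mere symbol-pushing---is that complementation in the ground set commutes with taking fibers: for every $x\in X$, $(Z\setminus A)_x=Y_x\setminus (A)_x$. This is immediate from the definition of the fiber, since $y\in (Z\setminus A)_x$ iff $\l x,y\r\notin A$ iff $y\in Y_x\setminus (A)_x$. Alongside it I would record the elementary fiberwise duality $(A)_x\in J_x^*$ iff $Y_x\setminus (A)_x\in J_x$, which follows directly from $J_x^*=\{Y_x\setminus B\mid B\in J_x\}$, so that its negation reads $(A)_x\notin J_x^*$ iff $Y_x\setminus (A)_x\notin J_x$.

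With these in hand the computation is forced. On the left, $A\in(\sum_I J_x)^*$ iff $Z\setminus A\in\sum_I J_x$ iff $\{x\mid (Z\setminus A)_x\notin J_x\}\in I$, which by the fiber-complement identity equals $\{x\mid Y_x\setminus (A)_x\notin J_x\}\in I$, and by the fiberwise duality equals $\{x\mid (A)_x\notin J_x^*\}\in I$. On the right, unwinding the Fubini sum of filters gives $A\in\sum_{I^*}J_x^*$ iff $\{x\mid (A)_x\in J_x^*\}\in I^*$, and then the duality $D\in I^*$ iff $X\setminus D\in I$ turns this into $\{x\mid (A)_x\notin J_x^*\}\in I$. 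The two membership conditions are identical, which establishes $(\sum_I J_x)^*=\sum_{I^*}J_x^*$. For the ``in particular'' clause I would simply specialize to the constant case $J_x=J$ for all $x$: then $\sum_I J=I\otimes J$ by the definition preceding the Fact, while $\sum_{I^*}J^*=I^*\cdot J^*$ by Definition \ref{Definition: Product and power}, so the general identity collapses to $(I\otimes J)^*=I^*\cdot J^*$.

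I do not anticipate a genuine obstacle here: the entire argument is De Morgan-style bookkeeping. The only real care required is to stay consistent about which complement is being taken at each step---in $Z$, in the fibers $Y_x$, or in the index set $X$---since the three complementations interleave, and conflating them is the sole way the computation could go wrong.
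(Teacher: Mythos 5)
Your proof is correct, and the paper in fact states this Fact with no proof at all, treating it as routine; your De Morgan-style computation (fiber-complement identity plus the two dualities, in the fibers and in the index set) is exactly the intended verification, including the specialization $J_x=J$ for the $(I\otimes J)^*=I^*\cdot J^*$ clause.
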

 \begin{proposition}
      Let $F,F_n$ be filters over countable sets. Suppose that $I\subseteq F^*$ and $J_n\subseteq F_n^*$ are ideals for every $n<\omega$. Then if $F$ has $I$-p.i.p and for every $n<\omega$, $F_n$ has $J_n$-p.i.p, then $\sum_FF_n$ has $\sum_IJ_n$-p.i.p. 
    
 \end{proposition}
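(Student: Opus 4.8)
The plan is to peel the double-layer pseudo-intersection into two separate applications of the p.i.p.\ — one for the base filter $F$ through its $I$-p.i.p, and one for each fibre filter $F_n$ through its $J_n$-p.i.p — and then to reconcile the two by controlling, at every level $k$, the coordinates where the fibre requirement cannot be met using an $I$-small set. Before that I would record that the statement typechecks: since the Fubini sum of ideals is monotone in both coordinates, $I\subseteq F^*$ and $J_n\subseteq F_n^*$ give $\sum_I J_n\subseteq \sum_{F^*}F_n^*=(\sum_F F_n)^*$ by Fact~\ref{Fact: dual operation}. All three of $I$, the $J_n$, and $\sum_I J_n$ are ideals, hence downward closed, so throughout I may freely use the downward-closed reformulation of the p.i.p.

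Now let $\langle A^k\mid k<\omega\rangle\subseteq\sum_F F_n$ be given. Replacing $A^k$ by $\bigcap_{j\le k}A^j$ I may assume the sequence is $\subseteq$-decreasing; this makes each fibre sequence $\langle (A^k)_n\mid k<\omega\rangle$ decreasing, and, writing $B^k=\{n\mid (A^k)_n\in F_n\}\in F$, it makes $\langle B^k\mid k<\omega\rangle$ decreasing as well, by upward closure of the $F_n$. Applying the $I$-p.i.p of $F$ to $\langle B^k\rangle$ produces $B\in F$ with $B\setminus B^k\in I$ for every $k$. The coordinates of $B$ now split into those that survive in \emph{all} the $B^k$, namely $D=B\cap\bigcap_k B^k$, and the rest.

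I would then define the fibres of the target set $A$ as follows: put $(A)_n=\emptyset$ for $n\notin B$; for $n\in D$, apply the $J_n$-p.i.p of $F_n$ to the decreasing sequence $\langle (A^k)_n\mid k<\omega\rangle\subseteq F_n$ to get $C_n\in F_n$ with $C_n\setminus (A^k)_n\in J_n$ for all $k$, and set $(A)_n=C_n$; and for $n\in B\setminus D$, letting $k^*_n$ be least with $n\notin B^{k^*_n}$, set $(A)_n=\bigcap_{j<k^*_n}(A^j)_n$, a finite intersection of members of $F_n$ (the empty intersection $Y_n$ when $k^*_n=0$), hence in $F_n$. Since $(A)_n\in F_n$ for every $n\in B$, we have $\{n\mid (A)_n\in F_n\}\supseteq B\in F$, so $A\in\sum_F F_n$. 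It then remains to verify $A\setminus A^k\in\sum_I J_n$ for each $k$, i.e.\ that $\mathrm{Bad}_k=\{n\mid (A)_n\setminus(A^k)_n\notin J_n\}\in I$: coordinates $n\notin B$ contribute nothing as $(A)_n=\emptyset$; coordinates in $D$ contribute nothing by the choice of $C_n$; for $n\in B\setminus D$ with $k<k^*_n$ one has $(A)_n\subseteq (A^k)_n$, so the difference is empty; and the only possibly bad coordinates are the $n\in B\setminus D$ with $k\ge k^*_n$, for which $B^k\subseteq B^{k^*_n}\not\ni n$, whence $n\in B\setminus B^k$. Thus $\mathrm{Bad}_k\subseteq B\setminus B^k\in I$, and downward closure of $I$ finishes the verification.

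The main obstacle — and the genuine content of the proof — is exactly this handling of coordinates lying in $B$ but dropping out of some $B^k$. The $J_n$-p.i.p is only available on the coordinates of $D$, where infinitely many fibre constraints must be met simultaneously; on the remaining coordinates one cannot hope to satisfy all the constraints, and the key observation is that at each \emph{fixed} level $k$ the problematic remainder is absorbed into the $I$-small set $B\setminus B^k$ furnished by the base p.i.p. Arranging the sequence to be decreasing is what keeps this bookkeeping clean, by collapsing the ``$k<k^*_n$'' case to an empty difference.
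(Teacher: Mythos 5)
Your proof is correct and takes essentially the same route as the paper's: apply the $I$-p.i.p to the base sets, then split the coordinates into those where infinitely many fibre constraints apply (your $D$, where the $J_n$-p.i.p is invoked — the paper's ``$E_m$ infinite'' case) and those where only finitely many apply (handled by a finite intersection), and finally absorb the bad coordinates at each level $k$ into the $I$-small set $B\setminus B^k$. Your preliminary reduction to a decreasing sequence is a cosmetic difference; the paper instead works with arbitrary sequences via the sets $E_m=\{n\mid m\in X_n\}$, but the underlying decomposition is identical.
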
  
\begin{proof}
    Let $\l A_n\mid n<\omega\r$ be a sequence in $\sum_FF_n$. For each $n$, let $X_n=\{m<\omega\mid (A_n)_m\in F_m\}\in F$. We find $X\in F$ such that for every $n<\omega$, $X\setminus X_n\in I$. For each $m\in X$, we consider $E_m=\{n<\omega\mid m\in X_n\}$. If $E_m$ is finite, we let $Y_m=\bigcap_{n\in E_m}(A_n)_m\in F_m$ (if $E_m$ is empty, we let $Y_m=\omega$). Otherwise, we find $Y_m\in F_m$ such that for all $n\in E_m$, $Y_m\setminus (A_n)_m\in J_m$. Let $A=\bigcup_{m\in X}\{m\}\times Y_m$. Then Clearly, $A\in \sum_FF_n$. Let $n<\omega$ and consider $A\setminus A_n$. If $$A\setminus A_n=(\bigcup_{x\in X\cap X_n}\{x\}\times Y_x\setminus (A_n)_x) \cup(\bigcup_{x\in X\setminus X_n}\{x\}\times Y_x).$$ If $x\notin X\setminus X_n$ then $(A\setminus A_n)_x=Y_x\setminus (A_n)_x$. Since $x\in X_n$, we have $n\in E_x$ and therefore $Y_x\setminus (A_n)_x\in J_x$. We conclude that $$\{x<\omega\mid (A\setminus A_n)_x\notin J_n\}=X\setminus X_n\in I.$$ Hence $A\setminus A_n\in \sum_I J_n$.
\end{proof}
One way to obtain non-trivial sets $T$ for which an ultrafilter $U$ has the $T$-p.i.p is by intersecting $U$ with another ultrafilter: 
 \begin{theorem}\label{Thm: intersection of ultrafilters and p.i.p}
     Suppose that $U_1,U_2,...U_n$ are any ultrafilters, then for each $1\leq i\leq n$, $U_i$ have the $(U_1\cap U_2\cap...\cap U_n)^*$-p.i.p. In particular $U_i^\omega\leq_T U_i\times (U_1\cap U_2\cap...\cap U_n)^{*\omega}$.
 \end{theorem}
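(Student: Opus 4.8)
The plan is to set $T := (U_1 \cap \cdots \cap U_n)^*$ and begin by recording the elementary identity $T = \bigcap_{j=1}^n U_j^*$: a set $t$ lies in $T$ precisely when $\omega \setminus t \in U_j$ for every $j$, i.e. when $t \notin U_j$ for every $j$. In particular $T$ is an ideal with $T \subseteq U_i^*$, so the $T$-p.i.p is meaningful for $U_i$, and since $T$ is downward closed it suffices to produce, for each sequence $\langle X_k \mid k<\omega\rangle \subseteq U_i$, a set $X \in U_i$ with $X \setminus X_k \in T$ for all $k$.

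The key point, somewhat surprisingly, is that one fixed $X$, depending only on $i$ and not on the sequence, will work. To build it, I would note that whenever $U_j \neq U_i$ the two ultrafilters are incomparable (an ultrafilter is a maximal filter), so I may fix $W_j \in U_i \setminus U_j$; as $U_j$ is an ultrafilter, $\omega \setminus W_j \in U_j$, i.e. $W_j \in U_j^*$. Since there are only finitely many indices, I set $X = \bigcap\{W_j \mid 1 \le j \le n,\ U_j \neq U_i\} \in U_i$, a finite intersection of $U_i$-sets (with the convention $X = \omega$ if no such $j$ occurs). Then $X \in U_j^*$ for every $j$ with $U_j \neq U_i$.

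It then remains to verify $X \setminus X_k \in T$, i.e. $X \setminus X_k \notin U_j$ for every $j$ and every $k$, and I would split on $j$. If $U_j = U_i$, then $X \setminus X_k \subseteq \omega \setminus X_k$, and $\omega \setminus X_k \notin U_i = U_j$ because $X_k \in U_i$; so $X \setminus X_k \notin U_j$ for free. If $U_j \neq U_i$, then $X \setminus X_k \subseteq X \in U_j^*$, whence $X \setminus X_k \in U_j^*$, i.e. $X \setminus X_k \notin U_j$. Thus $X \setminus X_k \in \bigcap_j U_j^* = T$ for all $k$, proving that $U_i$ has the $T$-p.i.p. The displayed Tukey bound is then immediate from Proposition \ref{Prop: TomNatasha bound with p.i.p} applied to $F = U_i$ and the ideal $I = T \subseteq U_i^*$, giving $U_i^\omega \le_T U_i \times T^\omega = U_i \times (U_1 \cap \cdots \cap U_n)^{*\omega}$.

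The only delicate spot, and the single step that prevents a one-line argument, is the treatment of repeated ultrafilters: when $U_j = U_i$ no separating set $W_j \in U_i \setminus U_j$ exists, but precisely for those indices the requirement $X \setminus X_k \notin U_j$ holds automatically because $X \setminus X_k \subseteq \omega \setminus X_k$; everything else is bookkeeping. I would emphasize that the finiteness of the family is essential, since the finite intersection defining $X$ has no analogue for an infinite family; this is exactly why the infinite (discrete) case is treated separately by a genuine diagonalization rather than by a single fixed $X$.
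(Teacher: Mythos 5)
Your proof is correct, and it is organized differently from the paper's. The paper argues by contradiction: if the p.i.p.\ failed for some sequence, then every $X\in U_i$ would lie in some $U_j$ with $U_j\neq U_i$, i.e.\ $U_i\subseteq\bigcup_{U_j\neq U_i}U_j$; it then refutes this covering by intersecting finitely many separating sets $X_j\in U_i\setminus U_j$ (exactly your $W_j$'s) and invoking maximality of ultrafilters. So the combinatorial core --- distinct ultrafilters can be separated, and finite intersections stay in $U_i$ --- is the same in both arguments. What your direct organization buys is the explicit, strictly stronger conclusion that a single set $X\in U_i\cap\bigcap_{U_j\neq U_i}U_j^*$ witnesses the $T$-p.i.p.\ uniformly for \emph{all} sequences at once, since every subset of $X$ that misses a member of $U_i$ automatically lands in every $U_j^*$; this uniformity is only implicit in the contrapositive of the paper's argument. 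You also treat repeated ultrafilters explicitly, whereas the paper's phrasing (``there is $j\neq i$'') quietly relies on $X\setminus X_n\in U_i^*$ to exclude indices with $U_j=U_i$. One small caveat on your closing remark: in the paper's infinite generalizations (non-accumulation points and discrete sequences), the witness is again a single separating set $A\in U$ with $A\notin V$ for all other $V$ in the family --- supplied by the topological hypothesis rather than by any diagonalization --- so the real contrast with the finite case is where the separating set comes from, not uniform-versus-diagonal arguments.
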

 \begin{proof}
     Fix any $1\leq i\leq n$. Note that $(\bigcap_{j=1}^n U_j)^*=\bigcap_{j=1}^n (U_j)^*$. Suppose otherwise, then there is a sequence $\l X_n\mid n<\omega\r\subseteq U_i$ such that for every $X\in U_i$ there is $n<\omega$ such that $X\setminus X_n\notin \bigcap_{j=1}^n U_j^*$. Since $X\setminus X_n\in U_i^*$, this means that there is $j\neq i$ such that $X\setminus X_n\notin U_j^*$. Since $U_j$ is an ultrafilter, it follows that $X\setminus X_n\in U_j$ and therefore $X\in U_j$. We conclude that $U_i\subseteq \bigcup_{j\neq i}U_j$. There must be $j\neq i$ such that $U_i\subseteq U_j$, just otherwise, for each $j\neq i$ find $X_j\in U_i$ such that $X_j\notin U_j$. Then $X^*=\bigcap_{j\neq i}X_j\in U_i$. But then there is $j'\neq i$ such that $X^*\in U_{j'}$. It follows that $X_{j'}\in U_{j'}$, contradicting our choice of $X_{j'}$. Since ultrafilters are maximal with respect to inclusion, and $U_i\subseteq U_j$, we conclude that $U_i=U_j$. Now this is again a contradiction since for some (any) $X$, $X_n$ and $X\setminus X_n$ disjoint and both in $U$. 
 \end{proof}
 The argument above can be generalized to an infinite set of ultrafilters in some cases. 
 Recall that $U$ is an accumulation point (in the topological space $\beta\omega\setminus\omega$) of a set of ultrafilters $\mathcal{A}\subseteq \beta\omega\setminus\omega$ if and only if $U\subseteq \bigcup \mathcal{A}\setminus\{U\}$.
 \begin{proposition}
     Suppose that $U$ is not an accumulation point of $\mathcal{A}\subseteq \beta \omega\setminus\omega$. Then $U$ has the $(\bigcap\mathcal{A})^*$-p.i.p. In particular $U^\omega\leq_T U\times (\bigcap\mathcal{A})^\omega$.
 \end{proposition}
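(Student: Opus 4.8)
The plan is to turn the topological non-accumulation hypothesis into a single combinatorial witness and then to verify the pseudo-intersection property completely explicitly, without any inductive construction. Throughout I take $U\in\mathcal{A}$, which is the natural reading: the very statement that $U$ has the $(\bigcap\mathcal{A})^*$-p.i.p.\ presupposes $(\bigcap\mathcal{A})^*\subseteq U^*$, i.e.\ $\bigcap\mathcal{A}\subseteq U$, and for the finite companion Theorem~\ref{Thm: intersection of ultrafilters and p.i.p} this is exactly the case $U=U_i$. (Indeed, a finite set of distinct ultrafilters has no accumulation points, so that theorem is precisely the special case where non-accumulation is automatic.)

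First I would unpack the hypothesis. By the recalled characterization, $U$ is an accumulation point of $\mathcal{A}$ iff $U\subseteq\bigcup(\mathcal{A}\setminus\{U\})$; negating this produces a set $A_*\in U$ lying in no $W\in\mathcal{A}\setminus\{U\}$, equivalently $A_*^c\in W$ for every such $W$, since each $W$ is an ultrafilter. This $A_*$ drives everything: because each $W\neq U$ is upward closed and $A_*\notin W$, every subset of $A_*$ is automatically outside every $W\in\mathcal{A}\setminus\{U\}$. Second, I record the general duality $(\bigcap\mathcal{A})^*=\bigcap_{W\in\mathcal{A}}W^*$ (the infinite version of the identity $(\bigcap_j U_j)^*=\bigcap_j U_j^*$ used in Theorem~\ref{Thm: intersection of ultrafilters and p.i.p}), so that for any $Y$, the assertion $Y\in(\bigcap\mathcal{A})^*$ is exactly the statement that $Y\notin W$ for every $W\in\mathcal{A}$.

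The core step is then immediate: given an arbitrary sequence $\langle X_n\mid n<\omega\rangle\subseteq U$, I claim that $X:=A_*$ witnesses the $(\bigcap\mathcal{A})^*$-p.i.p., with $t_n:=A_*\setminus X_n$. It suffices to check $A_*\setminus X_n\in(\bigcap\mathcal{A})^*$ for each $n$, i.e.\ that its complement $A_*^c\cup X_n$ belongs to $\bigcap\mathcal{A}$. This complement contains $X_n$, hence lies in $U$ (as $X_n\in U$), and it contains $A_*^c$, hence lies in every $W\in\mathcal{A}\setminus\{U\}$; so $A_*^c\cup X_n\in\bigcap\mathcal{A}$ and $A_*\setminus X_n\in(\bigcap\mathcal{A})^*$ as required. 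Since $X\setminus X_n=A_*\setminus X_n\subseteq t_n$, this is precisely the defining condition of the $(\bigcap\mathcal{A})^*$-p.i.p.

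Finally, the ``in particular'' clause follows from Proposition~\ref{Prop: TomNatasha bound with p.i.p} applied with $F=U$ and $I=(\bigcap\mathcal{A})^*$. I do not expect a genuine obstacle in this argument; the only delicate point is reading the accumulation-point condition correctly and noticing that, in contrast with the finite case (which proceeds by contradiction through $U_i\subseteq\bigcup_{j\neq i}U_j$ and the collapse $U_i=U_j$), the non-accumulation hypothesis hands us the separating set $A_*$ directly, so that no pseudo-intersection ever needs to be built.
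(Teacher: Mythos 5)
Your proof is correct and is essentially the paper's argument in contrapositive form: the paper assumes the p.i.p.\ fails and derives $U\subseteq\bigcup\left(\mathcal{A}\setminus\{U\}\right)$, whereas you negate the accumulation condition first to obtain the separating set $A_*$ and then verify the p.i.p.\ directly, using the same two ingredients (the characterization of accumulation points and the ultrafilter dichotomy $Y\notin W^*\Leftrightarrow Y\in W$). The only real difference is cosmetic but worth noting: your direct version makes explicit that the single set $A_*$ witnesses the p.i.p.\ uniformly for every sequence $\langle X_n\mid n<\omega\rangle$, a fact left implicit in the paper's proof by contradiction.
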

 \begin{proof}
     Otherwise, we get that for some sequence $\l X_n\mid n<\omega\r\subseteq U$, for every $X\in U$, there is $n$ such that $X\setminus X_n\in V$ for some $V\in \mathcal{A}$. Since $X\setminus X_n\notin U$, it follows $V\neq U$. It follows that $U\subseteq \bigcup\mathcal{A}\setminus\{U\}$, contradiction. 
 \end{proof}
 
Recall that a sequence $\l U_n\mid n<\omega\r$ of ultrafilters on $\omega$ is called \textit{discrete} if there are disjoint sets $A_n\in U_n$. This is just equivalent to being a discrete set in the space $\beta\omega\setminus\omega$. In particular, no point $U_n$ is in the closure of the others.
 
 \begin{corollary}
     Suppose that $U$ is not in the closure of the $U_n$'s, namely, there is a set $A\in U$ such that for every $n$, $A\notin U_n$, then $U$ has the $\bigcap_{n<\omega}U_n^*$-p.i.p. 
 \end{corollary}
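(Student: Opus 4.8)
The plan is to read this off as the special case $\mathcal{A}=\{U_n\mid n<\omega\}$ of the preceding proposition on accumulation points, so that essentially no new work is required beyond two routine identifications. First I would set $\mathcal{A}=\{U_n\mid n<\omega\}\subseteq\beta\omega\setminus\omega$ and observe that the witnessing set $A$ separates $U$ from every member of $\mathcal{A}$: since $A\in U$ while $A\notin U_n$, the set $A$ already witnesses $U\neq U_n$ for every $n$, so in fact $U\notin\mathcal{A}$ and hence $\mathcal{A}\setminus\{U\}=\mathcal{A}$. Consequently $A\in U$ but $A\notin\bigcup(\mathcal{A}\setminus\{U\})$, which is exactly the statement $U\not\subseteq\bigcup(\mathcal{A}\setminus\{U\})$; that is, $U$ is not an accumulation point of $\mathcal{A}$ in the sense recalled before that proposition.

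The second identification I need is that the target set $\bigcap_{n<\omega}U_n^*$ coincides with $(\bigcap\mathcal{A})^*$, so that the conclusion of the proposition is literally the conclusion sought here. This is immediate from each $U_n$ being an ultrafilter: for any $X\subseteq\omega$ one has $X\in U_n^*$ iff $\omega\setminus X\in U_n$, whence
$$X\in\bigcap_{n<\omega}U_n^*\iff \forall n\,(\omega\setminus X\in U_n)\iff \omega\setminus X\in\bigcap_{n<\omega}U_n\iff X\in\Big(\bigcap\mathcal{A}\Big)^*.$$
Applying the preceding proposition to $\mathcal{A}$ then yields that $U$ has the $(\bigcap\mathcal{A})^*$-p.i.p., i.e.\ the $\bigcap_{n<\omega}U_n^*$-p.i.p., as desired.

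Since the statement is a direct specialization, I do not expect any genuine obstacle; the only thing worth double-checking is the translation between the topological hypothesis (\emph{not in the closure of the $U_n$'s}) and the algebraic hypothesis of the proposition (\emph{not an accumulation point}, i.e.\ $U\not\subseteq\bigcup(\mathcal{A}\setminus\{U\})$). If one preferred a self-contained argument, I would instead run the contrapositive directly, exactly as in the proof of the preceding proposition: assuming $U$ fails the $\bigcap_n U_n^*$-p.i.p.\ (noting $\bigcap_n U_n^*$ is a downward-closed ideal), pick a sequence $\langle X_n\mid n<\omega\rangle\subseteq U$ so that for every $Y\in U$ there is $n$ with $Y\setminus X_n\notin\bigcap_m U_m^*$; unpacking this produces some $m$ with $Y\setminus X_n\notin U_m^*$, hence $Y\setminus X_n\in U_m$ by maximality of $U_m$, and upward closure of $U_m$ together with $Y\setminus X_n\subseteq Y$ gives $Y\in U_m$. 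Taking $Y=A$ yields $A\in U_m$ for some $m$, contradicting $A\notin U_n$ for all $n$ and completing the argument.
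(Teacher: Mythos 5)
Your proposal is correct and follows exactly the route the paper intends: the corollary is stated as an immediate specialization of the preceding proposition to $\mathcal{A}=\{U_n\mid n<\omega\}$, and your two identifications (the witnessing set $A$ shows $U\notin\mathcal{A}$ and $U\not\subseteq\bigcup(\mathcal{A}\setminus\{U\})$, and $\bigcap_{n<\omega}U_n^*=(\bigcap\mathcal{A})^*$) are precisely the routine checks needed. Your self-contained contrapositive argument also faithfully mirrors the proof of that proposition, so there is nothing to correct.
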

 \begin{corollary}
     If $U_n$ is discrete then each $U_n$ has the $\bigcap_{n<\omega}U_n^*$-p.i.p. 
 \end{corollary}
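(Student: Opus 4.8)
The plan is to obtain the statement directly from the preceding results on non-accumulation points, with discreteness supplying the separating sets for free. Unwinding the definition of a discrete sequence, fix pairwise disjoint sets $\langle A_n \mid n<\omega\rangle$ with $A_n\in U_n$ for each $n$.

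Fix an arbitrary $m<\omega$; it suffices to show that $U_m$ has the $\bigcap_{n<\omega}U_n^*$-p.i.p. For every $n\neq m$ we have $A_m\cap A_n=\emptyset$, so $A_m\notin U_n$ (otherwise $\emptyset=A_m\cap A_n\in U_n$, a contradiction). Thus the single set $A_m\in U_m$ satisfies $A_m\notin U_n$ for all $n\neq m$, which is exactly the assertion $U_m\not\subseteq\bigcup_{n\neq m}U_n$; that is, $U_m$ is not an accumulation point of $\mathcal{A}=\{U_n\mid n<\omega\}$. Applying the Proposition on non-accumulation points then gives that $U_m$ has the $(\bigcap\mathcal{A})^*$-p.i.p.

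Finally I identify the witnessing set with the one in the statement by dualizing: $\left(\bigcap_{n<\omega}U_n\right)^*=\bigcap_{n<\omega}U_n^*$, since $B$ belongs to the left-hand side iff $\omega\setminus B\in U_n$ for all $n$, iff $B\in U_n^*$ for all $n$. Hence $U_m$ has the $\bigcap_{n<\omega}U_n^*$-p.i.p, and as $m$ was arbitrary the conclusion holds for every term of the sequence.

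There is no real obstacle here; the entire content is the observation that discreteness isolates each $U_m$ from the rest, so the hypotheses of the earlier results are met automatically. The only point deserving a second glance is whether $U_m$ itself ought to be dropped from the intersection: one may instead invoke the preceding Corollary on the reindexed family $\{U_n\mid n\neq m\}$ to obtain the $\bigcap_{n\neq m}U_n^*$-p.i.p, and then note that the missing $m$-th factor comes for free, since for any $\langle X_k\mid k<\omega\rangle\subseteq U_m$ and any witness $X\in U_m$ the difference $X\setminus X_k$ is contained in $\omega\setminus X_k\in U_m^*$, hence already lies in $U_m^*$. Either formulation yields the claim, the accumulation-point route being slightly cleaner because the full intersection $\bigcap_{n<\omega}U_n$ already includes the $m$-th coordinate.
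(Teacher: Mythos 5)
Your proof is correct and takes essentially the same route as the paper: the corollary there is stated as an immediate consequence of the proposition on non-accumulation points together with the remark that in a discrete sequence no $U_m$ lies in the closure of the others, which is exactly what your disjoint-sets argument verifies. The dualization $\left(\bigcap_{n<\omega}U_n\right)^*=\bigcap_{n<\omega}U_n^*$ and the observation that the $m$-th factor of the intersection comes for free are both fine, so nothing further is needed.
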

 The partition given by the discretizing sets of a discrete sequence of ultrafilters can be used to compute the cofinal type of the filter obtained by intersecting the sequence. 
 \begin{proposition}
     Suppose that $U_n$ is discrete. Then  $\bigcap_{n<\omega}U_n\equiv_T \prod_{n<\omega} U_n$.
 \end{proposition}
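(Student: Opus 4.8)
The plan is to exploit the witnessing disjoint sets directly to build monotone cofinal maps in both directions. Fix pairwise disjoint $A_n\in U_n$ witnessing discreteness, and set $F=\bigcap_{n<\omega}U_n$ and $A=\bigcup_{n<\omega}A_n$. Since $A\supseteq A_n\in U_n$ for each $n$, we have $A\in F$. Recall that filters are ordered by reverse inclusion and that $\prod_{n<\omega}U_n$ carries the everywhere-domination order, so in both posets going up corresponds to shrinking the relevant sets. The two arithmetic observations I will use repeatedly are: (i) for $Y\in F$ and any $n$, $Y\cap A_n\in U_n$, since $Y,A_n\in U_n$; and (ii) for any $\langle X_n\mid n<\omega\rangle\in\prod_{n<\omega}U_n$, the set $\bigcup_{n<\omega}(X_n\cap A_n)$ lies in $F$, because for each fixed $m$ it contains $X_m\cap A_m\in U_m$, whence it is in $U_m$.

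For $F\leq_T\prod_{n<\omega}U_n$ I would define $f:\prod_{n<\omega}U_n\to F$ by $f(\langle X_n\mid n<\omega\rangle)=\bigcup_{n<\omega}(X_n\cap A_n)$; by (ii) this is well defined, it is monotone (it is inclusion-preserving as a map from tuples to sets, hence order-preserving for the reverse-inclusion orders), and it is cofinal because $f(\langle Y\mid n<\omega\rangle)=Y\cap A\subseteq Y$ for every $Y\in F$. For the reverse inequality I would define $g:F\to\prod_{n<\omega}U_n$ by $g(Y)=\langle Y\cap A_n\mid n<\omega\rangle$; by (i) this lands in the product, it is monotone for the same reason, and it is cofinal because given $\langle X_n\mid n<\omega\rangle$ the set $Y:=\bigcup_{n<\omega}(X_n\cap A_n)$ belongs to $F$ and satisfies $g(Y)=\langle X_n\cap A_n\mid n<\omega\rangle\geq\langle X_n\mid n<\omega\rangle$, since $X_n\cap A_n\subseteq X_n$. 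Together these monotone cofinal maps give $F\equiv_T\prod_{n<\omega}U_n$.

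Alternatively, one can make the equivalence completely transparent by observing that $D=\{Y\in F\mid Y\subseteq A\}$ is a cofinal subset of $F$ (replace any $Y\in F$ by $Y\cap A\in F$), and that $Y\mapsto\langle Y\cap A_n\mid n<\omega\rangle$ is an order isomorphism of $D$ onto $\prod_{n<\omega}U_n$, with inverse $\langle X_n\mid n<\omega\rangle\mapsto\bigcup_{n<\omega}X_n$; since a directed poset is Tukey equivalent to any of its cofinal subsets, this again yields the claim. There is no serious obstacle here: the entire content is the disjointness of the $A_n$ (which makes $Y\cap A_n$ depend only on the $n$-th coordinate) together with the fact that each $U_n$ is an ultrafilter. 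The only points to watch are the orientation of the orders (filters and products are taken under reverse inclusion and domination) and the harmless possibility that $A$ does not exhaust $\omega$, which is absorbed either by passing to $D$ or, equivalently, by the intersections with $A$ built into the maps $f$ and $g$ above.
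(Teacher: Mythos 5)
Your main argument is correct and is essentially the paper's proof: your map $f(\langle X_n\mid n<\omega\rangle)=\bigcup_{n<\omega}(X_n\cap A_n)$ is exactly the map used in the paper, the only cosmetic differences being that the paper extends the $A_n$'s to a partition of $\omega$ (so that $f$ hits every element of $\bigcap_{n<\omega}U_n$ on the nose) and, instead of your second map $g$, shows that the same map $f$ is also unbounded, which yields the reverse Tukey inequality; your cofinality argument for $g$ is just the dual formulation of that unboundedness argument, so the content is identical.

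One caveat about your ``alternative'' paragraph: the map $Y\mapsto\langle Y\cap A_n\mid n<\omega\rangle$ is \emph{not} an order isomorphism of $D$ onto $\prod_{n<\omega}U_n$, and $\langle X_n\mid n<\omega\rangle\mapsto\bigcup_{n<\omega}X_n$ is not its inverse: for an arbitrary tuple, $X_m$ may meet $A_n$ for $m\neq n$, so $\bigl(\bigcup_{m<\omega}X_m\bigr)\cap A_n$ can properly contain $X_n\cap A_n$, and moreover $\bigcup_{m<\omega}X_m$ need not lie in $D$ at all. The image of $D$ is really $\prod_{n<\omega}\{X\in U_n\mid X\subseteq A_n\}$, which is a cofinal subset of the product; with that correction the alternative argument goes through, but as stated the isomorphism claim is false. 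Since your first argument is self-contained, this does not affect the correctness of the proposal.
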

 \begin{proof}
     Let $A_n$ be a partition of $\omega$ so that $A_n\in U_n$. Define $f(\l B_n\mid n<\omega\r)=\bigcup_{n<\omega} B_n\cap A_n$. It is clearly monotone. If $X\in\bigcap_{n<\omega} U_n$, we let $X\cap A_n=B_n\in U_n$ Then $X=X\cap\omega=X\cap(\bigcup_{n<\omega}A_n)=\bigcup_{n<\omega}X\cap A_n=f(\l B_n\mid n<\omega\r)$. To see it is unbounded, suppose that $f''\mathcal{A}$ is bounded by $B\in \bigcap_{n<\omega} U_n$, then for every $A\in\mathcal{A}$, $A\cap A_n\supseteq B\cap A_n$. and therefore $\l B\cap A_n\mid <\omega\r$ would bound $\mathcal{A}$.
 \end{proof}
 \begin{corollary}\label{Cor: increasing intersection of discrete}
     If $\l U_n\mid n<\omega\r$ is a discrete sequence of ultrafilters then for every $X\subseteq Y$, $\bigcap_{n\in X}U_n\leq_T \bigcap_{m\in Y}U_m$
 \end{corollary}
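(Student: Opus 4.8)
The plan is to reduce everything to the product characterization established in the preceding Proposition. The first observation is that discreteness passes to subfamilies: if $\langle A_n \mid n<\omega\rangle$ are pairwise disjoint with $A_n\in U_n$, then for any $X\subseteq\omega$ the sets $\langle A_n \mid n\in X\rangle$ witness that $\langle U_n\mid n\in X\rangle$ is discrete. Reindexing $X$ by its increasing enumeration (and, if necessary, absorbing $\omega\setminus\bigcup_{n\in X}A_n$ into the first piece so that the $A_n$, $n\in X$, genuinely partition $\omega$), the preceding Proposition applies to this subfamily and yields $\bigcap_{n\in X}U_n\equiv_T\prod_{n\in X}U_n$, and likewise $\bigcap_{m\in Y}U_m\equiv_T\prod_{m\in Y}U_m$.

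Granting these two equivalences, it suffices to prove the purely product-theoretic inequality $\prod_{n\in X}U_n\leq_T\prod_{m\in Y}U_m$ whenever $X\subseteq Y$. For this I would exhibit the coordinate projection $\rho:\prod_{m\in Y}U_m\to\prod_{n\in X}U_n$ given by $\rho(\langle A_m\mid m\in Y\rangle)=\langle A_n\mid n\in X\rangle$, which is monotone in the everywhere-domination order. To see that $\rho$ is cofinal it is enough to note that its range is all of $\prod_{n\in X}U_n$: any $\langle B_n\mid n\in X\rangle$ is hit by extending it to $Y$ via $B_m=\omega\in U_m$ for $m\in Y\setminus X$. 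Hence $\rho$ is a monotone cofinal map and $\prod_{n\in X}U_n\leq_T\prod_{m\in Y}U_m$. Chaining the two Tukey equivalences with this inequality gives $\bigcap_{n\in X}U_n\leq_T\bigcap_{m\in Y}U_m$.

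The one point that requires care, and the reason the result is not completely immediate, is that the naive candidate map, namely the inclusion $\bigcap_{m\in Y}U_m\hookrightarrow\bigcap_{n\in X}U_n$ (which is a genuine containment of filters, since enlarging the index set shrinks the intersection), is monotone but fails to be cofinal: if $m_0\in Y\setminus X$ and $C\in\bigcap_{n\in X}U_n$ with $C\notin U_{m_0}$, then no subset of $C$ can lie in $\bigcap_{m\in Y}U_m$. This is exactly why one must route the argument through the product representation rather than working directly with the intersections, and it is the step where discreteness is genuinely used.
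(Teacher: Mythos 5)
Your proof is correct and follows essentially the same route the paper intends, since the paper states this corollary as an immediate consequence of the preceding Proposition ($\bigcap_{n<\omega}U_n\equiv_T\prod_{n<\omega}U_n$ for discrete sequences): discreteness passes to subfamilies, the Proposition yields the two Tukey equivalences, and the coordinate projection gives $\prod_{n\in X}U_n\leq_T\prod_{m\in Y}U_m$. Your additional points of care (absorbing $\omega\setminus\bigcup_{n\in X}A_n$ into one piece so the discretizing sets partition $\omega$, and the observation that the naive inclusion of filters is monotone but not cofinal) are correct refinements of that same argument.
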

 
 \subsection{Simple and deterministic ideals}
 \begin{definition}
     An ideal $I$ is \textit{simple} if for every ideal $J$, $I\subseteq J$, $I\leq_T J$.
 \end{definition}
 Clearly any ultrafilter is simple, also $\fin$ is simple as $\fin\equiv_T\omega\leq_T F$ for any filter $F$ which is non-principal. To construct examples of ideals which are not simple, we have the following lemma.
 \begin{lemma}\label{Lemma: non-sipmle discrete}
     Suppose that $\l U_n\mid n<\omega\r$ is a sequence of discrete ultrafilters, $X\subseteq \omega$ and $n<\omega$  such that $U_n\not\leq_T \prod_{m\neq n}U_m$, then for every $X\cup\{n\}\subseteq Y$, 
     $\bigcap_{n\in X}U_n<_T \bigcap_{m\in Y}U_m$
 \end{lemma}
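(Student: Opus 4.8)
The plan is to transfer everything to products via the equivalence $\bigcap_{m\in Z}U_m\equiv_T\prod_{m\in Z}U_m$, which I claim holds for \emph{any} $Z\subseteq\omega$ and not only for $Z=\omega$. Indeed, a subfamily of a discrete sequence is again discrete, and the proof of the proposition preceding Corollary \ref{Cor: increasing intersection of discrete} goes through once one notes that, for $B\subseteq\omega$, membership of $B$ in $\bigcap_{m\in Z}U_m$ depends only on the traces $B\cap A_m$ for $m\in Z$ (where $\langle A_m\rangle$ are disjoint sets witnessing discreteness, $A_m\in U_m$); the part of $B$ lying outside $\bigcup_{m\in Z}A_m$ is irrelevant, so the map $\langle B_m\rangle\mapsto\bigcup_{m\in Z}B_m\cap A_m$ is still monotone, cofinal, and unbounded. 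I also note that we may assume $n\notin X$: if $n\in X$ then $X\cup\{n\}=X$, and nothing in the hypotheses would force strictness for a general $Y\supseteq X$.

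Granting this, the inequality $\bigcap_{k\in X}U_k\leq_T\bigcap_{m\in Y}U_m$ is immediate from Corollary \ref{Cor: increasing intersection of discrete}, since $X\subseteq Y$. For strictness I would argue by contradiction: suppose $\bigcap_{m\in Y}U_m\leq_T\bigcap_{k\in X}U_k$. Since $n\in Y$, the ultrafilter $U_n$ is a factor of $\prod_{m\in Y}U_m$, so the coordinate projection $\langle B_m\rangle\mapsto B_n$ is monotone cofinal and yields $U_n\leq_T\prod_{m\in Y}U_m\equiv_T\bigcap_{m\in Y}U_m$. Chaining with the assumed inequality and the product equivalence on $X$,
\[
U_n\leq_T\bigcap_{m\in Y}U_m\leq_T\bigcap_{k\in X}U_k\equiv_T\prod_{k\in X}U_k.
\]
Finally, because $n\notin X$ we have $X\subseteq\omega\setminus\{n\}$, so $\prod_{k\in X}U_k\leq_T\prod_{m\neq n}U_m$ (by projecting away the extra factors, or via Corollary \ref{Cor: increasing intersection of discrete} together with the product equivalence applied to $X\subseteq\omega\setminus\{n\}$). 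Hence $U_n\leq_T\prod_{m\neq n}U_m$, contradicting the hypothesis.

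The only genuinely delicate point is the first one: verifying that the product--intersection equivalence survives restriction to an arbitrary index set $Z$, which is used here both for $Z=Y$, $Z=X$, and $Z=\omega\setminus\{n\}$. Everything afterward is a short chain of standard facts (a factor is Tukey below a product, and Corollary \ref{Cor: increasing intersection of discrete} for monotonicity of $\bigcap_{m\in Z}U_m$ in $Z$). I expect no real obstacle beyond carefully bookkeeping the orderings (filters by reverse inclusion, products by everywhere domination) when checking that the projection $\langle B_m\rangle\mapsto B_n$ is cofinal.
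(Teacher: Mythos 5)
Your proof is correct and follows essentially the same route as the paper's: both use the product--intersection equivalence for discrete families (applied to the subfamilies indexed by $X$ and $Y$), the coordinate projection to get $U_n\leq_T\prod_{m\in Y}U_m$, and the hypothesis $U_n\not\leq_T\prod_{m\neq n}U_m$ to rule out Tukey equivalence. The only difference is that you spell out two points the paper leaves implicit --- that the equivalence $\bigcap_{m\in Z}U_m\equiv_T\prod_{m\in Z}U_m$ survives restriction to an arbitrary index set $Z$, and that the lemma tacitly requires $n\notin X$ --- which is careful bookkeeping rather than a different argument.
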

 \begin{proof}
     $\bigcap_{n\in X}U_n\leq_T \bigcap_{m\in Y}U_m$ follows from Corollary \ref{Cor: increasing intersection of discrete}. To see that it is strict, suppose otherwise, and let $t\in Y\setminus X$, then $U_t\leq_T \prod_{m\in Y} U_m$ but by assumption $U_t\not\leq_T \prod_{n\in X}$ and therefore $\bigcap_{n\in X}U_n\not\equiv_T \bigcap_{m\in Y}U_m$. 
 \end{proof}
 
 Clearly, taking any $X\cup\{n\}\subseteq Y$ in the previous lemma, we get $\bigcap_{m\in Y}U_m\subseteq\bigcap_{n\in X}U_n$. Hence $\bigcap_{m\in Y}U_m$ is not simple.
\begin{remark}
    A sequence $\l U_n\mid n<\omega\r$ of $\omega$-many mutually generic ultrafilters for $P(\omega)/\fin$  would be a discrete sequence satisfying the assumptions of Lemma \ref{Lemma: non-sipmle discrete}. To see this, note that by Lemma \ref{Lemma: Properties of generic mutual} $U_0\times\prod_{1<m<\omega}U_n$ is Tukey equivalent to some basically generated ultrafilter in $V[U_0,\l U_m\mid m>1\r]$. Now we argue as in Lemma \ref{Lemma: Properties of generic mutual}, concluding that $U_1\not\leq_T U_0\times \prod_{1<m<\omega}U_m$. 
\end{remark}
\begin{definition}\label{Def: deterministinc}
            We say that an ideal $I$ is \textit{deterministic} if there is a cofinal set $\mathcal{B}\subseteq I$ such that for every $\mathcal{A}\subseteq \mathcal{B}$, $\bigcup\mathcal{A}\in I$ or $\bigcup\mathcal{A}\in I^*$. 
        \end{definition}
         \begin{example}
            We claim that $\fin$ is deterministic. Indeed, let $\mathcal{B}=\omega$. Then clearly, $\mathcal{B}$ is a cofinal in $\fin$. Suppose that $\mathcal{A}\subseteq\omega$ is such that $\bigcup\mathcal{A}\notin fin$, then $\mathcal{A}$ is an unbounded set of natural numbers and therefore $\bigcup\mathcal{A}=\omega\in \fin^*$. 
        We will see later that
            for every $\alpha$,  $\fin^{\otimes\alpha}$ is deterministic.
        \end{example}
        The reason that deterministic ideals are interesting is due to the following proposition:
        \begin{proposition}
            If $I$ is deterministic then $I$ is simple.
        \end{proposition}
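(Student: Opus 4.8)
The plan is to verify that for an arbitrary (proper) ideal $J$ with $I\subseteq J$, the inclusion map witnesses $I\leq_T J$, with the deterministic base doing all of the work. Recall two soft facts: a cofinal subset of a directed poset is Tukey equivalent to the whole poset, and, by Schmidt's reformulation, $\mathbb{P}\leq_T\mathbb{Q}$ holds as soon as there is an \emph{unbounded} map $\mathbb{P}\to\mathbb{Q}$ (one sending unbounded families to unbounded families). So, fixing a cofinal set $\mathcal{B}\subseteq I$ witnessing that $I$ is deterministic, and noting $\mathcal{B}\subseteq I\subseteq J$, it suffices to show that the inclusion $g\colon\mathcal{B}\to J$ is unbounded; this yields $\mathcal{B}\leq_T J$ and hence $I\equiv_T\mathcal{B}\leq_T J$.

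First I would record the translation of unboundedness into the language of unions, which is the only place cofinality of $\mathcal{B}$ is used. A family $\mathcal{A}\subseteq\mathcal{B}$ is unbounded in $\mathcal{B}$ if and only if $\bigcup\mathcal{A}\notin I$: an upper bound $b\in\mathcal{B}\subseteq I$ for $\mathcal{A}$ gives $\bigcup\mathcal{A}\subseteq b$, so $\bigcup\mathcal{A}\in I$, and conversely if $\bigcup\mathcal{A}\in I$ then cofinality of $\mathcal{B}$ provides $b\in\mathcal{B}$ with $\bigcup\mathcal{A}\subseteq b$, bounding $\mathcal{A}$. Similarly, $g''\mathcal{A}=\mathcal{A}$ is unbounded in $J$ if and only if $\bigcup\mathcal{A}\notin J$.

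Now the core step: let $\mathcal{A}\subseteq\mathcal{B}$ be unbounded in $\mathcal{B}$, so $\bigcup\mathcal{A}\notin I$. Determinism forces $\bigcup\mathcal{A}\in I$ or $\bigcup\mathcal{A}\in I^*$, and since the first alternative fails we obtain $\bigcup\mathcal{A}\in I^*$, that is, $X\setminus\bigcup\mathcal{A}\in I$, where $X$ denotes the underlying set. If $\mathcal{A}$ were bounded in $J$ we would have $\bigcup\mathcal{A}\in J$; but then, using $I\subseteq J$, both $\bigcup\mathcal{A}$ and its complement $X\setminus\bigcup\mathcal{A}$ lie in $J$, whence $X\in J$, contradicting that $J$ is proper. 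Thus $\bigcup\mathcal{A}\notin J$, so $\mathcal{A}$ is unbounded in $J$. This proves $g$ is unbounded, and since $J\supseteq I$ was arbitrary, $I$ is simple.

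The argument is essentially soft once the pieces are assembled, and I do not anticipate a genuine obstacle: determinism is tailored precisely to rule out the single dangerous configuration, namely an $I$-unbounded union that nonetheless becomes bounded in the larger ideal $J$. The only point deserving care is the clean bookkeeping between unboundedness of a subfamily of $\mathcal{B}$ and membership of its union in $I$ versus $J$, together with the (standard) properness of the ideals in play.
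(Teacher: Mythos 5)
Your proof is correct and follows essentially the same route as the paper's: fix the witnessing cofinal set $\mathcal{B}$, show the inclusion $\mathcal{B}\to J$ is unbounded by noting that an unbounded $\mathcal{A}\subseteq\mathcal{B}$ has $\bigcup\mathcal{A}\notin I$ (via cofinality of $\mathcal{B}$), hence $\bigcup\mathcal{A}\in I^*\subseteq J^*$ by determinism, hence $\mathcal{A}$ is unbounded in $J$. Your only addition is spelling out the properness argument ($\bigcup\mathcal{A}\in J\cap J^*$ would force $X\in J$), which the paper leaves implicit.
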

        \begin{proof}
            Let $I\subseteq J$ and let $\mathcal{B}\subseteq I$ be the cofinal set witnessing that $I$ is deterministic. Let us prove that the identity function $id:\mathcal{B}\rightarrow J$ is unbounded. Suppose that $\mathcal{A}\subseteq\mathcal{B}$ is unbounded, then $\bigcup\mathcal{A}\notin I$, since otherwise, as $\mathcal{B}$ is cofinal in $I$, there would have been $b\in \mathcal{B}$ bounding $\mathcal{A}$. By definition of deterministic ideals, it follows that $\bigcup\mathcal{A}\in I^*$, and since $I\subseteq J$, $I^*\subseteq J^*$ hence $\bigcup\mathcal{A}\in J^*$. We conclude that $\bigcup\mathcal{A}\notin J$, namely, $\mathcal{A}$ is unbounded in $J$. Hence the identity function witnesses that $I\equiv_T \mathcal{B}\leq_T J$.
        \end{proof}
        \begin{proposition}\label{Prop: deterministic invariates}
            Suppose that $I\subseteq X$ is a deterministic ideal over $X$.
            \begin{enumerate}
                \item If $\pi:X\rightarrow Y$ is injective on a set in $I$. Then  $$\pi_*(I):=\{a\mid \pi^{-1}[a]\in I\}$$ is deterministic. 
                \item If $A\subseteq X$, then $I\cap P(A)$ is deterministic.
                \end{enumerate}
            \end{proposition}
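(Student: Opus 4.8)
The plan is to prove each part by producing, from the given determinism-witness $\mathcal{B}\subseteq I$, an explicit cofinal family in the target ideal and then checking that the defining dichotomy (``$\bigcup\mathcal{A}\in I$ or $\bigcup\mathcal{A}\in I^*$'') is inherited. I would prove (2) first, both because it is cleaner and because it feeds directly into (1).

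For (2): let $\mathcal{B}\subseteq I$ be cofinal with $\bigcup\mathcal{A}\in I$ or $\bigcup\mathcal{A}\in I^*$ for every $\mathcal{A}\subseteq\mathcal{B}$, and take $\mathcal{B}_A=\{b\cap A\mid b\in\mathcal{B}\}$ as the candidate witness for $I\cap P(A)$. Cofinality is immediate, since any $c\in I\cap P(A)$ satisfies $c\subseteq b$ for some $b\in\mathcal{B}$, whence $c=c\cap A\subseteq b\cap A$. For the dichotomy, given $\mathcal{A}\subseteq\mathcal{B}_A$ pick $\mathcal{A}_0\subseteq\mathcal{B}$ with $\mathcal{A}=\{b\cap A\mid b\in\mathcal{A}_0\}$ and put $U=\bigcup\mathcal{A}_0$, so that $\bigcup\mathcal{A}=U\cap A$. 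If $U\in I$ then $U\cap A\in I\cap P(A)$; if $U\in I^*$, then $A\setminus(U\cap A)=A\cap(X\setminus U)\subseteq X\setminus U\in I$, so $U\cap A\in(I\cap P(A))^*$.

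For (1), the crucial move is a reduction to the case of an injective $\pi$, and here I would use the hypothesis in the form that $\pi$ is injective on a set $S\in I^*$ (injectivity merely on a member of $I$ is too weak: a map collapsing an infinite set to one point is injective on infinitely many members of $\fin$ yet can have a non-deterministic $\fin$-pushforward). Because $X\setminus S\in I$, for every $a\subseteq Y$ the sets $\pi^{-1}[a]$ and $\pi^{-1}[a]\cap S=(\pi\restriction S)^{-1}[a]$ differ by a subset of $X\setminus S\in I$, so $\pi^{-1}[a]\in I$ iff $\pi^{-1}[a]\cap S\in I$; that is, $\pi_*(I)=(\pi\restriction S)_*(I\cap P(S))$. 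Since $I\cap P(S)$ is deterministic by (2) and $\pi\restriction S$ is injective, it suffices to treat injective $\pi$.

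Assuming $\pi$ injective, it is a bijection of $X$ onto $R:=\pi''X$, and I would take $\mathcal{B}'=\{\pi''b\cup(Y\setminus R)\mid b\in\mathcal{B}\}$, observing that $Y\setminus R$ is $\pi_*(I)$-null because $\pi^{-1}[Y\setminus R]=\emptyset$. Cofinality holds since for $a\in\pi_*(I)$ one has $\pi^{-1}[a]\subseteq b$ for some $b\in\mathcal{B}$ and hence $a\cap R=\pi''\pi^{-1}[a]\subseteq\pi''b$. For the dichotomy, a subfamily of $\mathcal{B}'$ comes from some $\mathcal{A}_0\subseteq\mathcal{B}$ and has union $\pi''U\cup(Y\setminus R)$ with $U=\bigcup\mathcal{A}_0$; if $U\in I$ its preimage is $U\in I$ by injectivity, while if $X\setminus U\in I$ then bijectivity onto $R$ gives $Y\setminus(\pi''U\cup(Y\setminus R))=R\setminus\pi''U=\pi''(X\setminus U)$, whose preimage is $X\setminus U\in I$. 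I expect the only genuine obstacle to be this last bookkeeping of the range $R$ and its complement, together with pinning down the correct injectivity hypothesis; once $\pi$ is a bijection onto $R$, the identity $\pi^{-1}[\pi''\,\cdot\,]=\mathrm{id}$ forces the dichotomy through.
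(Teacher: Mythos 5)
Your proof is correct, and part (2) is essentially the paper's own argument: the same witness $\{b\cap A\mid b\in\mathcal{B}\}$, the same cofinality check, and the same dichotomy computation. For part (1) the computational core also coincides with the paper's, but your packaging of the hypothesis is genuinely different and better. The paper takes the witness $\mathcal{C}=\{Y\setminus\pi''[X\setminus b]\mid b\in\mathcal{B}\}$, which for injective $\pi$ is literally your family $\{\pi''b\cup(Y\setminus\pi''X)\mid b\in\mathcal{B}\}$, and it justifies its key identity $X\setminus a=\pi^{-1}[\pi''[X\setminus a]]$ with the words ``as $\pi$ is one-to-one''; in other words, the paper's proof silently works with a globally injective $\pi$ rather than with the hypothesis as stated. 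You, instead, diagnose that the stated hypothesis ``injective on a set in $I$'' cannot be what is meant, and your counterexample sketch is valid: if $\pi$ collapses the evens to $0$ and is the identity on the odds, then $\pi_*(\fin)=\{a\mid 0\notin a,\ |a\cap\mathrm{Odd}|<\omega\}$, and given any cofinal $\mathcal{B}$ one may choose $b_k\in\mathcal{B}$ with $b_k\supseteq(\mathrm{Even}\setminus\{0\})\cup\{k\}$ for each odd $k$, so that $\bigcup_k b_k=\omega\setminus\{0\}$ is neither in $\pi_*(\fin)$ nor in its dual; yet $\pi$ is injective on many members of $\fin$. Your repair --- reading the hypothesis as injectivity on some $S\in I^*$ and reducing to the injective case via the identity $\pi_*(I)=(\pi\restriction S)_*(I\cap P(S))$, which is licensed by part (2) --- is sound (the two preimages $\pi^{-1}[a]$ and $\pi^{-1}[a]\cap S$ differ by a subset of $X\setminus S\in I$), and this reduction step does not appear in the paper. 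Note that in the paper's one application of this proposition the map $\pi\restriction A_n$ is genuinely injective, so both your version and the paper's implicit global-injectivity version suffice there; what your argument buys is a proof that actually covers maps that are only injective modulo $I$, together with a correct identification of where the literal statement fails.
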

            \begin{proof}
 For $(1)$, let $\mathcal{B}\subseteq I$ be a witnessing cofinal set. Let $\mathcal{C}=\{(Y\setminus(\pi''[X\setminus b]))\mid b\in\mathcal{B}\}$. Then $\mathcal{C}$ is a cofinal set in $\pi_*(I)$.
 Indeed, if $A\in \pi_*(I)$, then $\pi^{-1}[A]\in I$ and there is $b\in \mathcal{B}$ such that $b\supseteq \pi^{-1}[A]$. If $y\notin Y\setminus( \pi''X\setminus b)$, then $y=\pi(x)$ for some $x\in X\setminus b$.
 Since $\pi^{-1}[A]\subseteq b$, then $x\notin \pi^{-1}[A]$ which then implies that $y=\pi(x)\notin A$. We conclude that $A\subseteq Y\setminus (\pi''[X\setminus b])\in\mathcal{C}$, as wanted. We claim that $\mathcal{C}$ witnesses that $\pi_*(I)$ is deterministic.
 Let $\mathcal{A}\subseteq\mathcal{B}$ be such that $\bigcup_{a\in \mathcal{A}}Y\setminus(\pi''[X\setminus a])\notin \pi_*(I)$.
 Then $\pi^{-1}[\bigcup_{a\in \mathcal{A}}Y\setminus(\pi''[X\setminus a])]\notin I$. 
 Simplifying the above set we have
 $$\pi^{-1}[\bigcup_{a\in \mathcal{A}}Y\setminus(\pi''[X\setminus a])]=\pi^{-1}[Y\setminus(\bigcap_{a\in \mathcal{A}}(\pi''[X\setminus a]))]=X\setminus \bigcap_{a\in \mathcal{A}} \pi^{-1}[\pi''[X\setminus a]]$$
$$=\bigcup_{a\in\mathcal{A}}X\setminus \pi^{-1}[\pi''[X\setminus a]]= \bigcup_{a\in\mathcal{A}} a$$
The last inclusion holds as for each $a$, $X\setminus a=\pi^{-1}[\pi''[X\setminus a]]$ as $\pi$ is one-to-one. Then $a=X\setminus(X\setminus a)=X\setminus \pi^{-1}[\pi''[X\setminus a]]$. It follows that $\bigcup_{a\in\mathcal{A}} a\notin I$. Since $\mathcal{A}\subseteq \mathcal{B}$, we conclude that $$\pi^{-1}[\bigcup_{a\in \mathcal{A}}Y\setminus(\pi''[X\setminus a])]=\bigcup_{a\in\mathcal{A}} a\in I^*.$$ Namely, $\bigcup_{a\in \mathcal{A}}Y\setminus(\pi''[X\setminus a])\in \pi_*(I^*)=\pi_*(I)^*$. 



For $(2)$, again, let $\mathcal{B}\subseteq I$ be a cofinal set witnessing that $I$ is deterministic.  Consider $\mathcal{C}=\{b\cap A\mid b\in\mathcal{B}\}$. Then $\mathcal{C}$ is cofinal in $I\cap P(A)$. If $\bigcup_{b\in\mathcal{A}}b\cap A\notin I\cap P(A)$, then $\bigcup_{b\in\mathcal{A}}b\cap A\notin I$ (as it is clearly in $P(A)$). It follows that $\bigcup_{b\in\mathcal{A}}b\notin I$ and since $I$ is deterministic, $\bigcup_{b\in\mathcal{A}}b\in I^*$. 

It follows that $ \bigcap_{b\in\mathcal{A}}X\setminus b\in I$ and  $A\cap \bigcap_{b\in\mathcal{A}}X\setminus b\in I\cap P(A)$.    But $$A\cap \bigcap_{b\in\mathcal{A}}X\setminus b=\bigcap_{b\in\mathcal{A}}A\setminus (b\cap A)=A\setminus \bigcup_{b\in\mathcal{A}}b\cap A.$$ Hence  $\bigcup_{b\in\mathcal{A}}b\cap A\in (I\cap P(A))^*$.
            \end{proof}
            Note that $(2)$ above can be vacuous if $A\in I$, since in that case $I\cap P(A)$ is not proper. So we should at least assume that $A\in I^+$. Generally speaking, it is unclear whether an ideal relative to a positive set has the same Tukey-type. However, if the ideal is deterministic, this type does not change:
            \begin{fact}\label{Fact: Positive deterministic ideal is Tukey equivalent} Suppose that $I$ is an ideal over $X$ and $A\in I^+$, then $I\equiv_T I\cap P(A)$. 
            \end{fact}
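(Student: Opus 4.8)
The plan is to read the statement with the standing hypothesis (signalled by the sentence immediately preceding it) that $I$ is \emph{deterministic}, and to prove the two Tukey inequalities separately. Fix a cofinal $\mathcal{B}\subseteq I$ witnessing determinism, write $B=X\setminus A$, and recall that $A\in I^+$ means precisely $A\notin I$; this also guarantees that $I\cap P(A)$ is a proper ideal, so the Tukey notions make sense.

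For the easy inequality $I\cap P(A)\leq_T I$ I would simply check that the inclusion map $\iota:I\cap P(A)\to I$ is unbounded: if $\mathcal{A}\subseteq I\cap P(A)$ is unbounded in $I\cap P(A)$, then $\bigcup\mathcal{A}\notin I\cap P(A)$, and since $\bigcup\mathcal{A}\subseteq A$ this is the same as $\bigcup\mathcal{A}\notin I$, i.e.\ $\iota''\mathcal{A}=\mathcal{A}$ is unbounded in $I$. (Alternatively, $I\cap P(A)$ is deterministic by Proposition \ref{Prop: deterministic invariates}(2), hence simple, and $I\cap P(A)\subseteq I$.)

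The substance is the reverse inequality $I\leq_T I\cap P(A)$, for which I would exhibit a monotone cofinal map $g:I\cap P(A)\to I$. The definition that makes determinism do the work is
$$g(s)=\bigcup\{\,b\in\mathcal{B}\mid b\cap A\subseteq s\,\}, \qquad s\in I\cap P(A).$$
Monotonicity is immediate, since $s\subseteq s'$ only enlarges the family $\mathcal{A}_s:=\{b\in\mathcal{B}\mid b\cap A\subseteq s\}$; and cofinality is immediate too: given $c\in I$, cofinality of $\mathcal{B}$ yields $b_0\in\mathcal{B}$ with $b_0\supseteq c$, and then $b_0\in\mathcal{A}_{b_0\cap A}$ gives $g(b_0\cap A)\supseteq b_0\supseteq c$. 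The one point requiring an argument, and the main obstacle, is that $g$ actually takes values in $I$ rather than merely in $P(X)$.

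Here determinism enters. Since $\mathcal{A}_s\subseteq\mathcal{B}$, the dichotomy gives $g(s)=\bigcup\mathcal{A}_s\in I$ or $g(s)\in I^*$. By construction $g(s)\cap A=\bigcup_{b\in\mathcal{A}_s}(b\cap A)\subseteq s\in I$. If we had $g(s)\in I^*$, then $X\setminus g(s)\in I$, hence $A\setminus g(s)\in I$, and combining with $A\cap g(s)\subseteq s\in I$ we would get $A=(A\cap g(s))\cup(A\setminus g(s))\in I$, contradicting $A\in I^+$. Therefore $g(s)\in I$, as needed. I expect this positivity-versus-determinism step to be the crux; everything else is bookkeeping, and the two inequalities together give $I\equiv_T I\cap P(A)$.
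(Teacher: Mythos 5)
Your proof is correct --- including your reading that the standing hypothesis is that $I$ is deterministic --- but it is organized differently from the paper's. The paper extracts both Tukey inequalities from a single map, the restriction $f:\mathcal{B}\to I\cap P(A)$, $f(b)=b\cap A$, where $\mathcal{B}$ witnesses determinism: monotonicity together with the fact that its image is the cofinal set $\mathcal{C}$ from the proof of Proposition \ref{Prop: deterministic invariates}(2) gives $I\cap P(A)\leq_T I$, while unboundedness gives $I\leq_T I\cap P(A)$, and determinism enters only there --- if $\mathcal{A}\subseteq\mathcal{B}$ satisfies $\bigcup\mathcal{A}\notin I$, then $\bigcup\mathcal{A}\in I^*$, and the computation from that proposition yields $\bigcup f''\mathcal{A}\in (I\cap P(A))^*$, which by properness of $I\cap P(A)$ (i.e.\ $A\in I^+$) means $f''\mathcal{A}$ is unbounded. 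You instead use two maps: the inclusion $\iota$, checked unbounded, for the easy direction, and for the substantive direction the monotone cofinal map $g(s)=\bigcup\{b\in\mathcal{B}\mid b\cap A\subseteq s\}$ going the opposite way, with determinism needed only to see that $g$ takes values in $I$. Your $g$ is in effect the adjoint of the paper's $f$, and Schmidt's duality (an unbounded map in one direction is equivalent to a cofinal map in the other) explains why both mechanisms succeed; the crux is the same in both arguments --- rule out the $I^*$ horn of the dichotomy by writing $A$ as the union of $A\cap Y$ and $A\setminus Y$, both in $I$, contradicting $A\in I^+$. What your arrangement buys is the explicit observation that $I\cap P(A)\leq_T I$ needs no determinism at all (it holds for any ideal and any $A\in I^+$), so the hypothesis is invoked exactly once, where it is genuinely needed; what the paper's buys is economy --- one map, reusing the earlier computation verbatim, and exhibiting $b\mapsto b\cap A$ itself as a two-sided Tukey witness.
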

            \begin{proof}
            Let $\mathcal{B}$ be a witnessing cofinal set for $I$, and let $f:\mathcal{B}\rightarrow I\cap P(A)$ be the map $f(b)=b\cap A$. Then
                clearly, the map is monotone and its image is the cofinal set $\mathcal{C}$ from the proof of $(2)$ from the previous proposition (and therefore cofinal). To see it is unbounded, suppose that $\bigcup\mathcal{A}\notin I$, then $\bigcup\mathcal{A}\in I^*$ and the computation from the previous proposition applies to show that $\bigcup f''\mathcal{A}\in (I\cap P(A))^*$ and in particular not in $I$. Hence $f$ is unbounded.
            \end{proof}
            
        \begin{corollary}\label{Cor: sufficient condition for fubini power}
            Suppose that $I^\omega\equiv_TI$, $I$ is deterministic. Then for every ultrafilter $U$ such that $I\subseteq U^*$ and $U$ satisfies the $I$-p.i.p, $U\cdot U\equiv_T U$. 
        \end{corollary}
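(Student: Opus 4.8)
The plan is to reduce everything to Theorem~\ref{Thm: Sufficint condition for product from TomNatash2}, whose two hypotheses are that $U$ has the $I$-p.i.p and that $I^\omega\leq_T U$. The first is given outright, so the entire task is to verify $I^\omega\leq_T U$, after which the conclusion $U\cdot U\equiv_T U$ is immediate.

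To establish $I^\omega\leq_T U$, I would first use the assumption $I^\omega\equiv_T I$ to reduce the problem to showing $I\leq_T U$. For the latter, the idea is to exploit that a deterministic ideal is simple (by the proposition asserting that deterministic $\Rightarrow$ simple). Since $U$ is an ultrafilter, its dual $U^*$ is an ideal, and the hypothesis $I\subseteq U^*$ places $I$ inside an ideal; simplicity of $I$ then yields $I\leq_T U^*$ directly from the definition of simple. Finally, invoking the fact that $(U,\subseteq)\equiv_T(U^*,\supseteq)$, i.e.\ $U^*\equiv_T U$, we obtain
$$
I^\omega\equiv_T I\leq_T U^*\equiv_T U,
$$
so $I^\omega\leq_T U$, as required.

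There is no genuine obstacle here: the corollary is essentially a bookkeeping consequence of the machinery already developed. The only substantive move is recognizing that \emph{determinism}, via simplicity, is precisely the ingredient that converts the purely set-theoretic containment $I\subseteq U^*$ into the Tukey inequality $I\leq_T U^*$; without determinism one only has the containment, which in general need not force a Tukey bound. With $I\leq_T U$ in hand, the transitivity of $\leq_T$ together with $I^\omega\equiv_T I$ supplies hypothesis $(2)$ of Theorem~\ref{Thm: Sufficint condition for product from TomNatash2}, and the theorem delivers $U\cdot U\equiv_T U$.
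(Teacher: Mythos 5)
Your proposal is correct and follows essentially the same route as the paper: the paper's proof is precisely the chain $I^\omega\equiv_T I\leq_T U$ (using determinism $\Rightarrow$ simplicity together with $I\subseteq U^*$ and $U^*\equiv_T U$), followed by an application of Theorem~\ref{Thm: Sufficint condition for product from TomNatash2}. You have merely spelled out the steps that the paper compresses into one line.
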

        \begin{proof}
            Since $I$ is deterministic, $I^\omega\equiv_T I\leq_T U$. Since $U$ has the $I$-p.i.p we can apply Theorem \ref{Thm: Sufficint condition for product from TomNatash2} to conclude that $U\cdot U\equiv_T U$.
        \end{proof}
        Given any $\{X_i\mid i\in N\}$, where each $X_i\subseteq P(\omega)$, there is the smallest ideal (might not be proper) that contains all the $X_i$. We denote this ideal by $I(\{X_i\mid i\in N\})$. It is generated by the sets $\{\bigcup_{i\in M}b_i\mid b_i\in X_i , \ M\in[N]^{<\omega}\}$. We can replace each $X_i$ be some cofinal set in $B_i$ and obtain the same generated ideal.
\begin{theorem} Suppose that $I$ is an ideal $I=I(\l I_n\mid n<\omega\r)$, where $I_n\subseteq I$ are deterministic ideals. Then $I$ is deterministic. 
             \end{theorem}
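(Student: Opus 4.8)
The plan is to build the witnessing cofinal family by approximating $I$ through its finite sub-ideals and then handling the limit with a ``spine'' that forces unbounded unions to fill up. Write $\mathcal{B}_n\subseteq I_n$ for a cofinal set witnessing that $I_n$ is deterministic (Definition \ref{Def: deterministinc}), and note we may assume $\emptyset\in\mathcal{B}_n$, since adjoining $\emptyset$ alters no union. For $k<\omega$ put $J_k=I(\langle I_i\mid i<k\rangle)$, so that $J_0\subseteq J_1\subseteq\cdots$, $I=\bigcup_k J_k$, and hence $I^*=\bigcup_k J_k^*$. The first step is a finite version of the theorem: each $J_k$ is deterministic, witnessed by the product family $\mathcal{C}_k=\{\bigcup_{i<k}b_i\mid b_i\in\mathcal{B}_i\}$. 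Indeed, for $\mathcal{A}\subseteq\mathcal{C}_k$ one has $\bigcup\mathcal{A}=\bigcup_{i<k}\bigcup\mathcal{A}_i$, where $\mathcal{A}_i\subseteq\mathcal{B}_i$ collects the $i$-th coordinates; by determinism of $I_i$ each $\bigcup\mathcal{A}_i$ lies in $I_i$ or in $I_i^*$. If all lie in $I_i$ then $\bigcup\mathcal{A}\in J_k$ (a finite union), while if some $\bigcup\mathcal{A}_i\in I_i^*\subseteq J_k^*$ then $\bigcup\mathcal{A}\supseteq\bigcup\mathcal{A}_i$ forces $\bigcup\mathcal{A}\in J_k^*$ because $J_k^*$ is a filter. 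Since $\emptyset\in\mathcal{B}_i$ one may pad coordinates, so the witnesses nest: $\mathcal{C}_0\subseteq\mathcal{C}_1\subseteq\cdots$.

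The naive guess $\mathcal{B}=\bigcup_k\mathcal{C}_k$ (the finite-support generators) does \emph{not} work: a family whose members are concentrated on ``every other block'' can have a union that is positive yet not in $I^*$. To repair this I would fix an increasing spine $s_0\subseteq s_1\subseteq\cdots$ with $s_k\in\mathcal{C}_k$ and $\bigcup_k s_k=\omega$, built by recursion using cofinality of $\mathcal{C}_k$: list $\omega$ and absorb each point once it first becomes available in some $J_k$, taking $s_k\in\mathcal{C}_k$ above $s_{k-1}$ together with the points seen so far. (Here we assume, as in all our applications where the $I_n$ are duals of non-principal ultrafilters, that $\bigcup_n\bigcup I_n=\omega$; otherwise replace $\omega$ by this support throughout.) Now define $\mathcal{D}_k=\{c\in\mathcal{C}_k\mid c\supseteq s_k\}$ and take the witness to be $\mathcal{B}=\bigcup_k\mathcal{D}_k$. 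This is still cofinal in $I$: given $a\in J_k$, pick $c\in\mathcal{C}_k$ with $c\supseteq a\cup s_k$, so that $c\in\mathcal{D}_k$ and $c\supseteq a$.

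The verification then splits on how high the levels of a subfamily reach. Let $\mathcal{A}\subseteq\mathcal{B}$. If there is $K$ with $\mathcal{A}\subseteq\mathcal{C}_K$ (equivalently, the levels used are bounded, using the nesting of the $\mathcal{C}_k$), then by the finite step applied to $J_K$ we get $\bigcup\mathcal{A}\in J_K\cup J_K^*\subseteq I\cup I^*$. Otherwise, for every $K$ some member of $\mathcal{A}$ lies outside $\mathcal{C}_K$; such a member belongs to some $\mathcal{D}_k$ with $k>K$ and hence contains $s_k$. Thus $\bigcup\mathcal{A}$ contains $s_k$ for cofinally many $k$, so $\bigcup\mathcal{A}\supseteq\bigcup_k s_k=\omega$, giving $\bigcup\mathcal{A}\in I^*$. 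Hence $\mathcal{B}$ witnesses that $I$ is deterministic.

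The main obstacle is precisely this limit/unbounded case: determinism of a countable combination is genuinely not inherited from the finite combinations, since the finite-support witness fails, so the whole content lies in designing the $\mathcal{D}_k$ so that ``using arbitrarily high coordinates'' is forced to coincide with ``covering everything.'' The spine $s_k$ is the device converting unboundedness of levels into $I^*$-largeness; once it is in place, the bounded case reduces to the finite theorem and cofinality is routine. A secondary technical point to watch is the underlying set: one should either assume the $I_n$ have full support or run the argument over $Z=\bigcup_n\bigcup I_n$, reading $I^*$ as the dual filter on $Z$.
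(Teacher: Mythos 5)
Your proof is correct and takes essentially the same route as the paper's: the paper also builds the witness from finite unions of generators, arranges that the ``unboundedly many levels'' case forces the union to be $\omega$, and handles the bounded case by the same coordinate-wise application of determinism of each $I_i$ --- except that its spine is simply the initial segments, replacing each $b\in\mathcal{B}_n$ by $b\cup\{0,\dots,n-1\}$ rather than building $s_k$ abstractly from cofinality. Note that this padding implicitly assumes $\fin\subseteq I$, which is exactly equivalent to your full-support hypothesis $\bigcup_n\bigcup I_n=\omega$, so the caveat you flag explicitly is present (silently) in the paper's argument as well.
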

             \begin{proof}
                     
                 Let $\mathcal{B}_n\subseteq I_{n}$ be a cofinal set witnessing $I_{n}$ being deterministic. Let $\mathcal{B}'_n=\{b\cup n\mid b\in \mathcal{B}_n\}$. Then $I=I(\l \mathcal{B}_n\mid n<\omega\r)$. 
                 Consider the cofinal set $\mathcal{B}$ of all sets of the form $X_{T,(b'_i)_{i\in T}}:=\bigcup_{n\in T}b'_n$ where $b'_n\in \mathcal{B}'_n$ and $T\in[\omega]^{<\omega}$. 
                 Then $\mathcal{B}$ is a cofinal set in $I$.  Suppose $\bigcup_{j\in S}X_{T_j,(b^j_i)_{i\in T_j}}\notin I$. Note that $X_{T,(b'_i)_{i\in T}}\supseteq \max(T)$. And so, if $\bigcup_{j\in S}T_j$ is unbounded in $\omega$, then $\bigcup_{j\in S}X_{T_j,(b^j_i)_{i\in T_j}}=\omega\in I^*$. Otherwise, $\bigcup_{j\in S}T_j$ is bounded by some $N$ and therefore $$\bigcup_{j\in S}X_{T_j,(b^j_i)_{i\in T_j}}\subseteq \{0,...,N\}\cup \bigcup_{i\leq N}(\bigcup_{b\in \mathcal{A}_i}b),$$ where $\mathcal{A}_i=\{b^i_j\mid i\in T_j\}\subseteq \mathcal{B}_i$. Since this is a finite union of sets which is not in $I$, there is $i\leq N$ such that $\bigcup_{b\in \mathcal{A}_i}b\notin I$, and in particular, $\bigcup_{b\in \mathcal{A}_i}b\notin I_i$. Since $\mathcal{B}_i$ is a witness for $I_i$ being deterministic, $\bigcup_{b\in \mathcal{A}_i}b\in I_i^*\subseteq  I^*$. Since $\bigcup_{b\in \mathcal{A}_i}b\subseteq \bigcup_{j\in S}X_{T_j,(b^j_i)_{i\in T_j}}$, it follows that $\bigcup_{j\in S}X_{T_j,(b^j_i)_{i\in T_j}}\in I^*$.
             \end{proof}
\begin{proposition}\label{Prop: product of deterministic is also}        Suppose that $\fin\subseteq I$ be a deterministic ideal over $\omega$, and $\l J_n\mid n<\omega\r$ is a sequence of deterministic ideals over $\omega$ such that for every $n<\omega$, $J_{n+1}\geq_T J_n$. Then $\sum_I J_n$ is deterministic.
        \end{proposition}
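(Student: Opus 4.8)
The plan is to produce an explicit cofinal witness $\mathcal{B}\subseteq\sum_I J_n$ and verify the dichotomy column-by-column, using Fact~\ref{Fact: dual operation} to translate everything into the language of fibers. By that fact $(\sum_I J_n)^*=\sum_{I^*}J_n^*$, so for any $W\subseteq\omega\times\omega$ all of whose columns satisfy $(W)_n\in J_n\cup J_n^*$, we get the clean reformulation $W\in\sum_I J_n\iff E(W)\in I$ and $W\in(\sum_I J_n)^*\iff E(W)\in I^*$, where $E(W)=\{n:(W)_n\notin J_n\}$. Thus the whole problem reduces to designing $\mathcal{B}$ so that (a) every union $W=\bigcup\mathcal{A}$ of members of $\mathcal{B}$ has each column in $J_n\cup J_n^*$, and (b) the set $E(W)$ always lands in $I\cup I^*$.

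For (a) I would fix for each $n$ a cofinal $\mathcal{B}_n\subseteq J_n$ witnessing that $J_n$ is deterministic, and build members of $\mathcal{B}$ whose columns off a prescribed support are drawn from $\mathcal{B}_n$; then for any $\mathcal{A}$ each such column of $W$ is a union of members of $\mathcal{B}_n$, hence lies in $J_n$ or $J_n^*$ by determinism, while columns meeting the support are full, hence in $J_n^*$. For (b) --- the heart of the matter --- I would exploit $J_n\leq_T J_{n+1}$: by Schmidt's observation there is an \emph{unbounded} map $\phi_n:J_n\to J_{n+1}$, and I would require each member $\l b_n\mid n<\omega\r$ of $\mathcal{B}$ (outside its support) to be \emph{coherent}, i.e.\ $b_{n+1}\supseteq\phi_n(b_n)$. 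This forces the crucial propagation on unions: if $\{b_n^A:A\in\mathcal{A}\}$ is unbounded in $J_n$ (so $(W)_n\notin J_n$), then unboundedness of $\phi_n$ makes $\{b_{n+1}^A\}_A$ unbounded in $J_{n+1}$, so $(W)_{n+1}\notin J_{n+1}$. Hence away from the supports the ``large-column'' set is upward closed, i.e.\ empty or a tail $[M,\omega)$. Since $\fin\subseteq I$, such a tail is co-finite and so lies in $I^*$, while a finite set lies in $I$; this is exactly how the $\fin\subseteq I$ hypothesis absorbs the propagated part of $E(W)$ into the dual filter or the ideal.

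The genuine obstacle is the interaction of the \emph{support} (the full columns) with this propagation. Covering an arbitrary $A\in\sum_I J_n$ forces its support $D=\{n:(A)_n\notin J_n\}\in I$ to be covered by full columns, and $D$ may be an arbitrary member of $I$; but a union $\bigcup_{A\in\mathcal{A}}D_A$ of such supports need not be upward closed, so propagation alone does not control it. To handle this I would require supports to be drawn from a cofinal determinism-style witness for $I$ itself, so that $D^*=\bigcup_{A\in\mathcal{A}}D_A$ is forced into $I\cup I^*$ (a bounded union staying in $I$; an unbounded one exhausting $\omega\in I^*$, since $\fin\subseteq I$ gives $\bigcup I=\omega$). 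I would then finish with a case split: if $D^*\in I^*$ then $E(W)\supseteq D^*\in I^*$ puts $W$ in the dual filter; if $D^*\in I$, then $E(W)$ is $D^*$ together with the propagated tail-or-empty set, hence either co-finite (so in $I^*$) or contained in $D^*$ union a finite set (so in $I$). Checking cofinality of $\mathcal{B}$ and that the coherence requirement can always be met while enlarging a given set --- by taking finite intersections along the finitely many constraints active at each stage, as in Lemma~\ref{Lemma: increasing v_n's case} --- is then routine. I expect the main subtlety to be precisely this coupling of the $I$-support with the coherent fibers, and in particular securing the cofinal witness for $I$: this is where structure on the base ideal beyond $\fin\subseteq I$ (e.g.\ its being deterministic) is really used.
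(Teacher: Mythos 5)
Your plan is, at its core, the paper's own proof: the paper's witness is exactly your family, namely the sets $C_{A,\vec{b}}=\bigcup_{n\in A}\{n\}\times\omega\cup\bigcup_{n\notin A}\{n\}\times b_n$ with $A\in I$ and $\vec{b}\in\prod_{n<\omega}\mathcal{B}_n$ coherent with respect to unbounded maps $f_{m,n}\colon J_n\to J_m$ (Schmidt's observation); cofinality is proved by the same finitely-many-constraints recursion, and unions are handled by the same seed-and-propagation argument, with $\fin\subseteq I$ converting the resulting tail of $J_n^*$-columns into a set in $I^*$. One technical correction to your version: the coherent sequence must be defined on \emph{all} coordinates (the full column on the support simply overriding $b_n$ there), not only off the support as you propose; otherwise the chains $b_{n+1}\supseteq\phi_n(b_n)$ of individual members break at their own support columns, and unboundedness cannot be propagated across columns of $D^*$.

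Where you depart from the paper is precisely the point you call the genuine obstacle, and there your diagnosis is sharper than the paper's proof. The paper fixes \emph{any} $n_0\in A$, where $A=\{n:(W)_n\notin J_n\}\notin I$ for the union $W=\bigcup_{i\in T}C_{X_i,\vec{b}^i}$, and asserts $\bigcup_{i\in T}b^i_{n_0}\notin J_{n_0}$; this fails exactly when every large column is large because it is a full column, i.e.\ when $A=D^*=\bigcup_{i\in T}X_i$. The paper's family genuinely breaks there: fix one coherent $\vec{b}$, pick $D$ with $D\notin I$ and $\omega\setminus D\notin I$ (possible unless $I$ is a maximal ideal; e.g.\ $I=\fin$ and $D$ the even numbers), and take the subfamily $\{C_{\{d\},\vec{b}}:d\in D\}$ (singletons lie in $I$ since $\fin\subseteq I$); its union has column $\omega$ on $D$ and column $b_n\in J_n$ elsewhere, hence lies in neither $\sum_IJ_n$ nor its dual filter. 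So you have found a real gap in the paper's argument, not an artifact of your approach. The cost of your repair, however, is a hypothesis the proposition does not grant: drawing supports from a determinism-style witness for $I$ presupposes that $I$ is deterministic, whereas only $\fin\subseteq I$ is assumed --- and you say yourself that this is where determinism of $I$ is "really used". Thus your argument proves the statement for deterministic base ideals $I$ (in particular for $I=\fin$, where supports can be taken to be initial segments; this is all that the corollaries on $\fin^{\otimes\alpha}$ need at limit stages), but not as stated. The missing case is not vacuous for the paper: through Corollary \ref{Cor: I times fin is always deterministic} the proposition is applied to $I=U^*\cap V^*$, and since $U^*\cap V^*\equiv_T U\times V$ is contained in both $U^*$ and $V^*$, simplicity of this ideal would force $U\equiv_TV$; so whenever $U\not\equiv_TV$ it is not simple, hence not deterministic, and neither the paper's proof nor your fix covers it. In its stated generality the proposition therefore remains unproved by both arguments.
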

\begin{proof}
Let $\mathcal{B}_n\subseteq J_n$ witness that $J_n$ is deterministic, and $\mathcal{D}$ witness that $I$ is deterministic. Let $\l f_{m,n}:J_n\rightarrow J_m\mid n\leq m<\omega\r$ be a sequence of unbounded maps. Denote by $\mathcal{B}$ the set of all  sequences $\vec{b}=\l b_n\mid n<\omega\r\in \prod_{n<\omega}\mathcal{B}_n$ such that for every $n<m<\omega$, $b_{m}\supseteq f_{m,n}(b_{n})$. For $\vec{b}\in\mathcal{B}$  and $A\in \mathcal{D}$ defined $$C_{A,\vec{b}}=\bigcup_{n\in A}\{n\}\times \omega\cup\bigcup_{n\notin A}\{n\}\times b_n.$$
Let us show that $\mathcal{C}=\{C_{A,\vec{b}}\mid A\in \mathcal{D}, \vec{b}\in \mathcal{B}\}$ is  cofinal in $\sum_IJ_n$. Let $Z\in \sum_I J_n$, and $A=\{n\mid (Z)_n\notin J_n\}$. By definition of $\sum_I J_n$,  $A\in I$, so there is $D\in\mathcal{D}$ such that $A\subseteq D$. 
Construct an increasing sequence $\langle b_n\mid n<\omega\r\in \prod_{n<\omega}\mathcal{B}_n$ such that  for every $n\notin A$, $(Z)_n\subseteq b_n$ and for every $n<m<\omega$, $f_{m,n}(b_n)\subseteq b_m$. It is possible to construct such a sequence recursively.  At stage $n$, we need to find $b_n$ such that for all $k<n$, $f_{n,k}(b_k)\subseteq b_n$ and if $n\notin A$, then also $(Z)_n\subseteq b_n$. These are finitely many sets in $J_n$ and therefore, since $\mathcal{B}_n$ is a cofinal set in $J_n$, we can find a single $b_n\in\mathcal{B}_n$ including the union of these sets. It follows that for every $n$, $(Z)_n\subseteq (C_{D,{\vec{b}}})_n$ and therefore $Z\subseteq C_{D,\vec{b}}$. Let us prove that $\mathcal{C}$ witnesses that $\sum_I J_n$ is deterministic. Suppose that $\bigcup_{i\in T}C_{X_i,\vec{b}^i}\notin \sum_I J_n$. Then $A:=\{n<\omega\mid (\bigcup_{i\in T}C_{X_i,\vec{b}^i})_n\notin J_n\}\notin I$. 
    Note that $(\bigcup_{i\in T}C_{X_i,\vec{b}^i})_n=\bigcup_{i\in T}(C_{X_i,\vec{b}^i})_n$. 
    Let us split into cases: If $A\subseteq \bigcup_{i\in T}X_i$, then $\bigcup_{i\in T}X_i\notin I$ and since $\{X_i\mid i\in T\}\subseteq\mathcal{D}$, $\bigcup_{i\in T}X_i\in I^*$. Now for every $n\in \bigcup_{i\in T}X_i$, there is $i_0\in T$ such that  $(C_{X_{i_0},\vec{b}^{i_0}})_n=\omega$ and in particular $\omega=\bigcup_{i\in T}(C_{X_i,\vec{b}^i})_n\in J_n^*$. We conclude that $\{n<\omega\mid (\bigcup_{i\in T}C_{X_i,\vec{b}^i})_n\in J_n^*\}\supseteq \bigcup_{i\in T}X_i\in I^*$. Hence $\bigcup_{i\in T}C_i\in (\sum_{I}J_n)^*$.
    
    Otherwise, consider $n_0\in A$ such that for every $i\in T$, $n_0\notin X_i$. Then $(C_{X_i,\vec{b}^i})_{n_0}\in \mathcal{B}_{n_0}$ for every $i\in T$ and $\bigcup_{i\in T}(C_{X_i,\vec{b}^i})_{n_0}\notin J_{n_0}$. Since $\mathcal{B}_{n_0}$ witnesses that $J_{n_0}$ is deterministic, $\bigcup_{i\in T}(C_{X_i,\vec{b}^i})_{n_0}\in J^*_{n_0}$. For every $n\geq n_0$, either there is $i\in T$ such that $n\in X_i$, and as we have seen, $\cup_{i\in T}(C_{X_i,\vec{b}^i})_n=\omega\in J^*_n$. Otherwise,  for every $i\in T$, $(C_{X_i,\vec{b}^i})_n=b^i_n\in \mathcal{B}_n$, and by the assumption,
    $f_{n,n_0}(b^i_{n_0})\subseteq b^i_{n}$. Since  $\bigcup_{i\in T}b^i_{n_0}\notin J_{n_0}$, and $f_{n,n_0}$ is unbounded, $\bigcup_{i\in T}f_{n,n_0}((C_{X_i,\vec{b}^i})_n)\notin J_n$. Since $\mathcal{B}_n$ witnesses that $J_n$ is deterministic, it follows that $\bigcup_{i\in T}f_{n,n_0}((C_{X_i,\vec{b}^i})_n)\in J^*_n$. We conclude that for every $n\geq n_0$, $\bigcup_{i\in T}(C_{X_i,\vec{b}^i})_n\in J^*_n$. Since $\{n_0,n_0+1,...\}\in I^*$, we have that $\bigcup_{i\in T}C_{X_i,\vec{b}^i}\in (\sum_I J_n)^*$ as wanted.

\end{proof}
\begin{corollary}\label{cor: product of deterministic is also}
            Suppose that $I,J$ are deterministic ideals over $\omega$. Then $I\otimes J$ is deterministic.
\end{corollary}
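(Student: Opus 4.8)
The plan is to deduce the corollary directly from Proposition~\ref{Prop: product of deterministic is also} by specializing it to a constant sequence. Recall that $I\otimes J$ is by definition the Fubini sum $\sum_I J_n$ obtained by setting $J_n:=J$ for every $n<\omega$. Thus it suffices to check that the constant sequence $\l J_n\mid n<\omega\r$ meets the hypotheses of that proposition and then quote its conclusion.

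First I would note that each $J_n=J$ is deterministic, which is precisely the hypothesis of the corollary. Next, the monotonicity requirement $J_n\leq_T J_{n+1}$ is immediate: since $J_n=J_{n+1}=J$, the identity map $J\to J$ is unbounded (indeed an isomorphism), so $J_n\equiv_T J_{n+1}$ and in particular $J_n\leq_T J_{n+1}$; this is the one hypothesis one might worry about, and it degenerates to nothing. Finally, Proposition~\ref{Prop: product of deterministic is also} is stated under the assumption $\fin\subseteq I$, which I would take from the paper's standing convention that ideals on $\omega$ contain the finite sets. With all three hypotheses in place, the proposition yields that $\sum_I J_n=I\otimes J$ is deterministic.

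There is essentially no obstacle beyond this bookkeeping: all the combinatorial content — the construction of the cofinal witnessing family $\mathcal{C}$ and the case analysis showing that each union lands in $\sum_I J_n$ or in its dual — is already carried by Proposition~\ref{Prop: product of deterministic is also}. The only point deserving a sentence is the verification of the Tukey-monotonicity hypothesis, which the constant sequence satisfies trivially via the identity map, so no genuinely new argument is required.
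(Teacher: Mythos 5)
Your derivation is exactly the one the paper intends: the corollary appears with no proof of its own, as the constant-sequence instance of Proposition~\ref{Prop: product of deterministic is also} (take $J_n=J$ for all $n$, with the identity maps witnessing $J_n\leq_T J_{n+1}$), and your bookkeeping of those two hypotheses is correct.

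The step that does not survive scrutiny is the appeal to ``the paper's standing convention that ideals on $\omega$ contain the finite sets.'' No such convention is stated anywhere in the paper; on the contrary, the hypothesis $\fin\subseteq I$ is written out explicitly in Proposition~\ref{Prop: product of deterministic is also} (and again later in the form $\fin\subseteq I\subseteq U^*$), which indicates it is a genuine hypothesis rather than a blanket assumption. Moreover, it cannot simply be dropped. Take $I=\{\emptyset\}$ and $J=\fin$, so that $I\otimes J=\{A\subseteq\omega\times\omega\mid \forall n\ (A)_n\in\fin\}$ is the full-support product. Suppose $\mathcal{B}$ were a cofinal family witnessing that this ideal is deterministic, and partition $\mathcal{B}$ into the countably many pieces $\mathcal{B}_s=\{B\in\mathcal{B}\mid (B)_0=s\}$ for $s\in\fin$. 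Each union $\bigcup\mathcal{B}_s$ has $0$-th fiber equal to $s$, so its complement has infinite $0$-th fiber and hence $\bigcup\mathcal{B}_s$ is not in the dual filter; by determinism $\bigcup\mathcal{B}_s\in I\otimes J$. But then $\{\bigcup\mathcal{B}_s\mid s\in\fin\}$ is a countable cofinal subfamily of $I\otimes J$, which is impossible: the sets $A_f=\{\l n,m\r\mid m\leq f(n)\}$ for $f\in\omega^\omega$ all lie in $I\otimes J$, and a countable cofinal family would yield a countable family of functions dominating $(\omega^\omega,\leq)$ pointwise, contradicting the usual diagonalization. So both your proposal and the corollary as printed need the extra hypothesis $\fin\subseteq I$ (which does hold in the paper's main application, where $I$ is the dual ideal of an intersection of nonprincipal ultrafilters). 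Your instinct to flag this hypothesis was exactly right, but it has to be added to the statement rather than discharged by a convention the paper never adopts.
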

        Recall that for an ideal $I$ on $\omega$ we define recursively the ideal $I^{\otimes\alpha}$ on $\omega^\alpha$ by setting $I^1=I$,   $I^{\otimes(\alpha+1)}=I\otimes I^{\otimes\alpha}$ on $\omega^{\alpha+1}=\omega\times \omega^\alpha$. For limit $\alpha<\omega_1$, fix a cofinal sequence $\l\alpha_n\mid n<\omega\r$ in $\alpha$, then $I^{\otimes\alpha}=\sum_II^{\otimes\alpha_n}$ is an ideal on $\bigcup_{n<\omega}\{n\}\times \omega^{\alpha_n}$. It is clear that $\alpha\leq\beta$ then $I_\alpha\leq_{RK}I_\beta$ (see for example \cite[Lemma 3.2]{TomNatasha2}). \begin{corollary}\label{Cor: I times fin is always deterministic}
                 If $I$ is deterministic then for every $\alpha<\omega_1$, $I^{\otimes\alpha}$ is deterministic. In particular $\fin^{\otimes\alpha}$ is deterministic for every every $\alpha<\omega_1$. \end{corollary}
                 \begin{proof}
                     By induction on $\alpha$. For $\alpha=1$ $I$ is deterministic by assumption. Suppose that this was true for $\alpha$, then $I^{\alpha+1}=I\otimes I^{\otimes \alpha}$ which is deterministic from the previous corollary. Similarily, at limit steps we just use the induction hypothesis and Proposition \ref{Prop: product of deterministic is also}.

                 \end{proof}
        In \cite{TomNatasha2} it was noticed that a generic ultrafilter for $P(X)/I$ where $I$ is a $\sigma$-ideal satisfies the $I$-p.i.p. now together with Corollary \ref{Cor: sufficient condition for fubini power}, we recover the result from \cite{TomNatasha2} in our abstract settings.
        \begin{fact}
            For every $\alpha\geq 2$, $I^{\otimes\alpha}\equiv_T I\otimes I\equiv_T I^\omega$.
        \end{fact}
        \begin{proof}
            By induction on $\alpha\geq 2$. For $\alpha=2$, $$(I\otimes I)^\omega\equiv_T (I^\omega)^\omega\simeq I^\omega\equiv_T I\otimes I.$$
            For successor $\alpha+1$, we use that fact that $I\otimes I\leq_T I^{\otimes\alpha}$ to see that  $$I^{\otimes(\alpha+1)}\equiv_T I\otimes I^{\otimes \alpha}\equiv_T I\otimes (I\otimes I)\equiv_T I\otimes I$$
            At limit steps we have that $$I^{\otimes\alpha}\equiv_T \sum_{I}I^{\otimes\alpha}\equiv_T\sum_{I}I\times I\equiv_T I\otimes( I\otimes I)\equiv_T I\otimes I$$
        \end{proof}
        In particular, $(I^{\otimes \alpha})^{\omega}\equiv_T I^{\otimes \alpha}$.
         \begin{corollary}
        Let $I$ be a deterministic $\sigma$-ideal. Then for every $2\leq\alpha<\omega_1$, a generic ultrafilter $G$ for $P(\omega^\alpha)/I^{\otimes\alpha}$ satisfies $G\cdot G\equiv_T G$.
    \end{corollary}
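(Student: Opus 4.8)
The plan is to invoke Corollary~\ref{Cor: sufficient condition for fubini power} with the ideal $I=\fin^{\otimes\alpha}$ and the ultrafilter $U=G$. Three of its four hypotheses are already in place from the machinery developed above: $\fin^{\otimes\alpha}$ is deterministic by Corollary~\ref{corolary: finalpha is deterministic}; since $G$ is $P(\omega^\alpha)/\fin^{\otimes\alpha}$-generic it contains the dual filter $(\fin^{\otimes\alpha})^*$ and avoids $\fin^{\otimes\alpha}$, so $\fin^{\otimes\alpha}\subseteq G^*$; and because $\fin^{\otimes\alpha}$ is a $\sigma$-ideal, the observation from \cite{TomNatasha2} (a generic ultrafilter for $P(X)/I$ with $I$ a $\sigma$-ideal has the $I$-p.i.p.) shows that $G$ has the $\fin^{\otimes\alpha}$-p.i.p. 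Hence the entire argument reduces to checking the one remaining hypothesis, $(\fin^{\otimes\alpha})^\omega\equiv_T\fin^{\otimes\alpha}$.

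To establish this absorption I would first prove $\fin^{\otimes\alpha}\equiv_T\omega^\omega$ for every $\alpha$ with $2\le\alpha<\omega_1$. For the upper bound I would induct on $\alpha$ using the ideal form of Theorem~\ref{Thm:DobTod}: at a successor $\fin^{\otimes(\beta+1)}\leq_T\fin^{\otimes\beta}\times\fin^{\omega}$ and at a limit $\fin^{\otimes\alpha}=\sum_{\fin}\fin^{\otimes\alpha_n}\leq_T\fin\times\prod_n\fin^{\otimes\alpha_n}$, so by the inductive hypothesis each is $\leq_T\omega^\omega\times(\omega^\omega)^\omega\equiv_T\omega^\omega$, with the base case $\fin^{\otimes2}\leq_T\fin\times\fin^{\omega}\equiv_T\omega^\omega$ anchoring the induction. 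For the lower bound I would use that $\alpha\mapsto\fin^{\otimes\alpha}$ is increasing in the Tukey (indeed Rudin--Keisler) order, cf.\ \cite[Lemma~3.2]{TomNatasha2}, together with the classical $\omega^\omega\leq_T\fin^{\otimes2}$. Since $\omega^\omega\times\omega^\omega\equiv_T\omega^\omega$ and $(\omega^\omega)^\omega\equiv_T\omega^\omega$, this gives $(\fin^{\otimes\alpha})^\omega\equiv_T(\omega^\omega)^\omega\equiv_T\omega^\omega\equiv_T\fin^{\otimes\alpha}$, and Corollary~\ref{Cor: sufficient condition for fubini power} then yields $G\cdot G\equiv_T G$.

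The genuinely delicate point is the case $\alpha=1$, where the absorption fails: $\fin\equiv_T\omega$, so $\fin^{\omega}\equiv_T\omega^\omega>_T\fin$ and Corollary~\ref{Cor: sufficient condition for fubini power} does not apply verbatim. Here I would instead appeal to Theorem~\ref{Thm: Sufficint condition for product from TomNatash2} directly: the $P(\omega)/\fin$-generic $G$ is selective, hence rapid, hence Tukey above $\omega^\omega$, so $\fin^{\omega}\equiv_T\omega^\omega\leq_T G$, while $G$ has the $\fin$-p.i.p.\ as a $p$-point; both hypotheses of Theorem~\ref{Thm: Sufficint condition for product from TomNatash2} then hold and give $G\cdot G\equiv_T G$. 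I expect this edge case, and more precisely the lower-bound ingredient $\omega^\omega\leq_T\fin^{\otimes2}$ underlying the identification $\fin^{\otimes\alpha}\equiv_T\omega^\omega$, to be the main obstacle; everything else is routine bookkeeping over the transfinite definition of $\fin^{\otimes\alpha}$.
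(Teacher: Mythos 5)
Your proposal is correct and, at the top level, runs along the same skeleton as the paper's proof: deterministic-ness of $\fin^{\otimes\alpha}$ from Corollary~\ref{corolary: finalpha is deterministic}, the $\fin^{\otimes\alpha}$-p.i.p.\ of the generic $G$, and the absorption $(\fin^{\otimes\alpha})^\omega\equiv_T\fin^{\otimes\alpha}$, fed into Corollary~\ref{Cor: sufficient condition for fubini power}. Where you genuinely diverge is the absorption step: the paper simply cites \cite{TomNatasha2} (Thm.~3.3 and Fact~2.4 there) for $(\fin^{\otimes\alpha})^\omega\equiv_T\fin^{\otimes\alpha}$, whereas you re-derive it by identifying $\fin^{\otimes\alpha}\equiv_T\omega^\omega$ for $2\le\alpha<\omega_1$ — upper bound by induction through the ideal form of Theorem~\ref{Thm:DobTod}, lower bound from monotonicity of $\alpha\mapsto\fin^{\otimes\alpha}$ plus $\omega^\omega\le_T\fin^{\otimes 2}$. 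That lower bound, which you flag as the main obstacle, is indeed true and can be seen directly: $f\mapsto\{(n,m)\mid m\le\max_{k\le n}f(k)\}$ is a Tukey map from $\omega^\omega$ into $\fin^{\otimes2}$, since an everywhere-unbounded family of functions is sent to a family whose union has infinitely many infinite fibers; alternatively it follows from the Louveau--Veli\v{c}kovi\'{c} theorem quoted in the paper, as $\fin^{\otimes2}$ is not countably generated. Your treatment of $\alpha=1$ is a real improvement in precision: the absorption does fail there ($\fin^\omega\equiv_T\omega^\omega>_T\fin$), so Corollary~\ref{Cor: sufficient condition for fubini power} cannot be applied verbatim, and your detour through selectivity/rapidity of the $P(\omega)/\fin$-generic together with Theorem~\ref{Thm: Sufficint condition for product from TomNatash2} handles exactly the case that the paper's one-line citation glosses over.

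One slip worth correcting: you justify the $\fin^{\otimes\alpha}$-p.i.p.\ of $G$ by saying that $\fin^{\otimes\alpha}$ \emph{is} a $\sigma$-ideal. It is not — already $\fin$ is not closed under countable unions, and $\fin^{\otimes2}$ contains each column $\{n\}\times\omega$ but not $\omega\times\omega$. The property that actually drives the argument is that the quotient forcing $P(\omega^\alpha)/\fin^{\otimes\alpha}$ is $\sigma$-closed, which is what makes the generic ultrafilter diagonalize countable subfamilies modulo the ideal; this is the form in which the fact is established in \cite{TomNatasha2}, and it is the statement you should cite. (The paper's own remark preceding this corollary carries the same imprecision, so this does not affect the validity of your overall argument, but the hypothesis "$I$ is a $\sigma$-ideal" as you state it is literally false of $\fin^{\otimes\alpha}$.)
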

    \begin{proof}
        $G$ has the $I^{\otimes\alpha}$-p.i.p, and  by Corollary \ref{Cor: I times fin is always deterministic} $I^{\otimes\alpha}$ is deterministic. Also by the previous fact,  $(I^{\otimes\alpha})^\omega\equiv_T I^{\otimes\alpha}$ . Thus by Corollary \ref{Cor: sufficient condition for fubini power} $G\cdot G\equiv_T G$.
    \end{proof}
For $\alpha=1$ we can also say something:
\begin{proposition}\label{Proposition Generic via simple ideal}
    Let $I$ be a simple $\sigma$-ideal. If $G\subseteq P(\omega)/I$ is $V$-generic then $G\cdot G\equiv_T G\times I^\omega$. So in particular if $I^\omega\equiv_T I$, $G\cdot G\equiv_T G$
\end{proposition}
\begin{proof}
    Since $G$ has the $I$-p.i.p, $G\cdot G\leq_T G\times I^\omega$. In the other direction, we known that $I^*\subseteq G$ and since $I$ is simple, $I\leq_T G$. Hence $I^\omega\leq_T G^\omega\equiv_T G\cdot G$. We conclude that $G\times I^\omega\leq G\cdot G$.
\end{proof}
\section{Commutativity of cofinal types}\label{Sec: proof of the main result}
\begin{definition}
    We say that a class $\mathcal{C}$ of ultrafilters on $\omega$ is a commutative class if for every $U,W\in\mathcal{C}$, $U\cdot W\equiv_T W\cdot U$.
\end{definition}
\begin{example}
    By Dobrinen and Todorcevic, the class $\mathcal{C}_{RP}$ of rapid $p$-points is a commutative class and if $W$ is a rapid $p$-point then $U\cdot W\equiv_T U\times W$.
\end{example}
\begin{example}
    By Milovich \cite{Milovich12}, the class $\mathcal{C}_{P}$ of $p$-points is a commutative class since if $W$ is a $p$-point, then $U\cdot W\equiv_T U\times W\times \omega^\omega$.
\end{example}
 \begin{fact}
     For any three ultrafilters $U,W,Z$, $U\cdot (W\cdot Z)\equiv_{RK} (U\cdot W)\cdot Z$
 \end{fact}
 From the above associativity, we get the following:
 \begin{proposition}
     If $\mathcal{C}$ is a commutative class then so is $\overline{\mathcal{C}}=\{U_1\cdot U_2...\cdot U_n\mid U_1,...,U_n\in \mathcal{C}\}$.
 \end{proposition}
 One use of commutative classes relates to the character of an ultrafilter. Recall that for an ultrafilter $U$, $Ch(U)$ denoted the minimal size of a cofinal subset of $U$. It is clear that if $U\equiv_T U'$ then $Ch(U)=Ch(U')$. Hence we conclude the following: \begin{corollary}
                 Let $\mathcal{C}$ be a commutative class, then for any two ultrafilters $U,V\in\mathcal{C}$, $Ch(U\cdot V)=Ch(V\cdot U)$.
             \end{corollary}

\begin{definition}
    Let $D$ be a cofinal type, define $\mathcal{E}_D=\{U\mid U\cdot U\equiv_T U\times D\}$. 
\end{definition}
For example, $\mathcal{E}_{\{0\}}=\mathcal{E}_{\omega}=\{U\mid U\cdot U\equiv_T U\}$. Also $\mathcal{C}_P\subseteq \mathcal{E}_{\omega^\omega}$.
\begin{claim}
     The class $\mathcal{E}_{\{0\}}\subseteq \mathcal{E}_{\omega^\omega}$.
 \end{claim}
 \begin{proof}
     Just note that $\omega^\omega\leq_T U\cdot U$ for every uniform ultrafilter $U$ and therefore if $U\cdot U\equiv_T U$ then $U\equiv_T U\times \omega^\omega$ and in particular $U\cdot U\equiv_T U\times \omega^\omega$.
 \end{proof}
 \begin{proposition}
     For every cofinal type $D$, $\mathcal{E}_D$ is a commutative class
\end{proposition}
\begin{proof}
    Indeed, for any $U,V\in\mathcal{E}_D$, by Theorem \ref{Thm: Milovich}
    $$V\cdot U\equiv_T V\times U^\omega\equiv_T V\times U\cdot U\equiv_T V\times U\times D$$
    Since the formula above is symmetric for $V,U$, we conclude that $U\cdot V\equiv_T V\cdot U$.
\end{proof}
\begin{proposition}
    The commutative class $\mathcal{E}_{\omega^\omega}$ contains all Tukey-top ultrafilters, $p$-points, stable ordered union ultrafilters and generic ultrafilters for $P(\omega)/I$ where $I^\omega\equiv_T I$ is a simple $\sigma$-ideal 
\end{proposition}
\begin{proof}
    Clearly Tukey top ultrafilters are in any $\mathcal{E}_D$, for $D$ of size at most continuum. $p$-points are in $\mathcal{E}_{\omega^\omega}$ as we have seen by Milovich \cite{Milovich12}, stable ordered union and generic ultrafilters for $P(\omega^\alpha)/I$ are in fact in $\mathcal{E}_{\{0\}}$, by \ref{Cor: sufficient condition for fubini power}. 
\end{proof}
\begin{theorem}
    $\mathcal{E}_{\omega^\omega}$ is closed under Fubini sums.
\end{theorem} \begin{proof}
    Suppose that $\{W,W_0,W_1,...\}\subseteq \mathcal{E}_{\omega^\omega}$.  We need to prove that $$\sum_WW_n\cdot \sum_WW_n\equiv_T \sum_WW_n\times \omega^\omega.$$ Note that $\sum_{W}W_n\geq_T\omega^\omega$, so we will end up getting $\sum_WW_n\cdot \sum_WW_n\equiv_T \sum_WW_n$. It is not hard to see that $$\sum_WW_n\cdot \sum_WW_n\geq_T \sum_WW_n\times \omega^\omega.$$
For the other direction, recall that $\sum_WW_n\cdot \sum_WW_n\equiv_T (\sum_WW_n)^\omega$.    Let us prove that $(\sum_WW_n)^\omega$ is a uniformly below $\mathcal{B}(W,\l W_n\mid n<\omega\r)$. Since $(\sum_WW_n)^\omega$ is complete,  Theorem \ref{thm: uniformly below characterize} can be applied  that get $\sum_{W}W_n\geq_T (\sum_WW_n)^\omega$.

Let $f:W\times \omega^\omega\rightarrow W^\omega$ and $f_n:W_n\times \omega^\omega\rightarrow W_n^\omega$ be monotone and cofinal maps, which exists by the assumption that $W,W_n\in\mathcal{E}_{\omega^\omega}$. We need to define a sequence of monotone and cofinal maps $\l g_X: W\times \prod_{n\in X}W_n\rightarrow (\sum_{W}W_n)^\omega\mid X\in W\r$ such that $(\dagger)$ of definition \ref{Def: uniformly below} holds. 

Let $\l B,\l A_n\mid n\in X\r\r\in W\times \prod_{n\in X}W_n$, by lemma \ref{Lemma: uniformly below on a base}, we may restrict ourselves to sequences satisfying that 
$$(\star)\ \ \text{ for every }n_1,n_2\in X,  \ n_1<n_2\Rightarrow\min(A_{n_1})< \min(A_{n_2}).$$

The first step is to produce $\omega$-many functions in $\omega^\omega$ which are going to be the inputs of $f_n$'s. Fix a partition of $\omega=\biguplus_{l<\omega} Z_l$ such that each $Z_l$ is infinite. Recall that for a set $C$ on natural numbers,  $C(r)$ denotes the $r^{\text{th}}$ element of $C$ in its increasing enumeration. Define $\varphi_{X,i}$ by induction on $i$. For $i=0$, $\varphi_{X,0}(k)=X(Z_0(k))$. Inductively, $\varphi_{X,{i+1}}(k)=\max(\varphi_{X,i}(k),X(Z_{i+1}(k)))$.
\begin{claim}
    If $Y\subseteq X$, then for every $i$, $\varphi_{X,i}\leq \varphi_{Y,i}$.
\end{claim}
\begin{proof}
    Clearly, for every $m<\omega$, $X(m)\leq Y(m)$. So by definition, $\varphi_{X,0}(k)\leq \varphi_{Y,0}$. Suppose this was true for $i$, and let $k<\omega$, then by the induction hypothesis and our first observation, $$\max(\varphi_{X,i}(k),X(Z_{i+1}(k)))\geq\max(\varphi_{Y,i}(k),Y(Z_{i+1}(k)).$$
\end{proof}
The $i$'s function we will use is $h^i_{\l A_n\mid n\in X\r}(k)=\min(A_{\varphi_{X,i}(k)})$. This is well defined as by definition of $\varphi_{X,i}$, $\varphi_{X,i}(k)\in X$. If $Y\subseteq X$, then by the claim $\varphi_{X,i}(k)\leq \varphi_{Y,i}(k)$, and by $(\star)$, $h^i_{\l A_n\mid n\in X\r}(k)\leq h^i_{\l A_n\mid n\in Y\r}(k)$.
Now define 
$g_X(\l B,\l A_n\mid n\in X\r\r)$ by $$ \l\pi_X(\l f(B,h^0_{\l A_n\mid n\in X\r})_m, \l f_n(A_n,h^{n+1}_{\l A_n\mid n\in X\r})_m\mid n\in X\r)\mid m<\omega\r$$
The above seemingly complicated definition is nothing but the composition of the following quite natural monotone cofinal maps:
$$W\times \prod_{n\in X}W_n\overset{(id,\l h^i_*\mid i<\omega\r)}{\longrightarrow} W\times \prod_{n\in X}W_n \times (\omega^\omega)^\omega\rightarrow (W\times\omega^\omega)\times \prod_{n\in X}(W_n \times \omega^\omega)\overset{(f,\l f_n\mid n\in X\r)}{\longrightarrow}$$
$$\overset{(f,\l f_n\mid n\in X\r)}{\longrightarrow} W^\omega\times \prod_{n\in X} W_n^\omega\longrightarrow (W\times \prod_{n\in X} W_n)^\omega\overset{\pi_X^\omega}{\longrightarrow}(\sum_{W}W_n)^\omega$$
So $g_X$ is clearly monotone cofinal as the composition of such functions. To see $(\dagger)$, let $Y\subseteq X$, we ensured that $h^i_{\l A_n\mid n\in X\r}\leq h^i_{\l A_n\mid n\in Y\r}$. Since $f,f_n$ are monotone functions, for any $m<\omega$, and any $n\in Y$, $$f(B,h^0_{\l A_n\mid n\in Y\r})_m\subseteq f(B,h^0_{\l A_n\mid n\in X\r})_m$$ and
$$  f_n(A_n,h^{n+1}_{\l A_n\mid n\in Y\r})_m\subseteq f_n(A_n,h^{n+1}_{\l A_n\mid n\in X\r})_m.$$
By definition of $\pi_X$ and $\pi_Y$, as unions of the relevant sets (see p. 9) we conclude that  $g_Y(\pi_{X,Y}(\l B,\l A_n\mid n\in X\r\r))\geq g_X(\l B,\l A_n\mid n\in X\r\r)$, as wanted. 
\end{proof}
Given a cofinal type $D$, by a (yet) finer handling of coherent sequences of functions, we can define an analog of condition $(\dagger)$, that ensure we can amalgamate monotone cofinal maps $g_X:U\times \prod_{n\in X}V_n\times D\rightarrow \mathbb{P}$, to a monotone cofinal map $g:\sum_{U}V_n\times D\rightarrow \mathbb{P}$. Thus proving an analog of Theorem \ref{thm: uniformly below characterize} that $\sum_UV_n\times D$ is the greatest lower bound of $\{U\times \prod_{n\in X}V_n\times D\mid X\in U\}$ among uniform lower bounds. Then the proof of the previous Theorem generalizes to give the next theorem. Since the proofs are essentially the same, we do not include them. 
\begin{theorem}
Suppose that $D^\omega\equiv_T D$ then $\mathcal{E}_D$ is closed under Fubini sums.
\end{theorem}
We do not know whether the class $\mathcal{E}_{\omega^\omega}$ is provably all uniform ultrafilters on $\omega$. We conjecture that consistently it is not but this will require to produce new examples of non-Tukey top ultrafilters, as it is unknown whether there is a basically generated ultrafilter which is not in the (transfinite) closure of the $p$-points under Fibini sums,  and the only examples for non-Tukey-top non-basically generated ultrafilters are generic ultrafilters for $P(\omega^{\otimes\alpha})/\fin^{\otimes\alpha}$.

        \section{On ultrafilters above $I^\omega$}\label{section: above omega to omega}
        The cofinal type of $\omega^\omega$ came up in several  papers  \cite{Milovich08,Solecki/Todorcevic04} regarding the Tukey order on general ultrafilters. Milovich asked whether there is an ultrafilter $U$ such that $(U,\supseteq)$ is Tukey equivalent to $\omega^\omega$. Let us point out that a negative answer is a straightforward corollary\footnote{Milovich's question appeared only 4 years after Solecki and Todorcevic's result.}
         of Theorem \ref{Thm: todorcevic and solecki}:
        \begin{proposition}\label{Prop: Answer to Milovich}
            There is no non-principal ultrafilter $U$ over $\omega$ such that $(U,\supseteq)\equiv_T\omega^\omega$.
        \end{proposition}
        \begin{proof}
        By Sierpinski \cite{Sierpinski1938}, a non-principal ultrafilter over $\omega$ is a non-measurable set as a subset of $2^\omega$ and in particular non-analytic. An ultrafilter $U$ with the topology inherited from $2^\omega$ is a separable metric space and the set of predecessors is compact and $\omega^\omega$ is a basic analytic order, hence by Theorem \ref{Thm: todorcevic and solecki}, $(U,\supseteq)\not\leq_T \omega^\omega$. 
        \end{proof}
        It turns out that some general problems boil down to being Tukey above $\omega^\omega$. Such a problem is addressed in Theorem \ref{Thm: Dobrinen and todorcevic rapid p-points}, which sets up the equivalence for  $p$-point ultrafilter $U$, between $U\cdot U\equiv_T U$ and $U\geq_T \omega^\omega$. 
        This was generalized in Theorem \ref{Thm: TomNatahsa equivalence for product} which ensures that for a general ultrafilter $U$,  $U\cdot U\equiv_T U$ is equivalent to the existence of \textit{some} $I\subseteq U^*$ such that $U$ has the $I$-p.i.p and $U\geq_T I^\omega$. In the first part of this section, we tighten the connection between the $I$-p.i.p and being above $I^\omega$ for a deterministic ideal $I$. Then, in the second subsection, we shall restrict our attention to $I=\omega$.
        \subsection{The case of a deterministic ideal $I$}
        There is a slight difference between the type of equivalence in Theorem \ref{Thm: Dobrinen and todorcevic rapid p-points} to the equivalence $U\cdot U\equiv_T U$ for $p$-points and the general one in \ref{Thm: TomNatahsa equivalence for product}. Indeed, in the latter, the ideal $I$ can vary. Hence it is unclear in general if for a fixed $I$, the following is true: for any ultrafilter $U$  which has the $I$-p.i.p, $U\cdot U \equiv_TU$ iff $U\geq_TI^\omega$.
        Let us note first that such equivalence holds for simple ideals (and therefore also for deterministic ideals).
        \begin{proposition}\label{Prop: equivalence for simple ideals}
            Suppose that $I$ is simple. Then for any ultrafilter $U$ which has the $I$-p.i.p, $U\cdot U\equiv_T U\times I^\omega$. Therefore, the following are equivalent:
            \begin{enumerate}
                \item $U\cdot U\equiv_T U$.
                \item $U\geq_T I^\omega$.
            \end{enumerate}
        \end{proposition}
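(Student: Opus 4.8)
The plan is to reduce everything to the single Tukey-equivalence $U\cdot U\equiv_T U\times I^\omega$; once this is in hand, the equivalence of (1) and (2) is a formal manipulation. I would prove the two halves of the displayed equivalence separately, treating the upper bound as routine and locating the real content in the lower bound.

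For the upper bound $U\cdot U\leq_T U\times I^\omega$: Milovich's Theorem \ref{Thm: Milovich} (applied with $\kappa=\omega$) gives $U\cdot U\equiv_T U^\omega$, and since $U$ has the $I$-p.i.p with $I\subseteq U^*$, Proposition \ref{Prop: TomNatasha bound with p.i.p} gives $U^\omega\leq_T U\times I^\omega$. Concatenating the two yields $U\cdot U\leq_T U\times I^\omega$ with no further work.

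For the lower bound $U\times I^\omega\leq_T U\cdot U$, the decisive step, and the only place where simplicity is used, is to observe that $I\subseteq U^*$ together with the simplicity of $I$ gives at once $I\leq_T U^*$, and since a filter is Tukey-equivalent to its dual ideal we obtain $I\leq_T U$. Because $\leq_T$ is preserved under $\omega$-powers, this upgrades to $I^\omega\leq_T U^\omega\equiv_T U\cdot U$. Since also $U\leq_T U\cdot U$ trivially and $U\times I^\omega$ is the least upper bound of $U$ and $I^\omega$ in the Tukey order, we conclude $U\times I^\omega\leq_T U\cdot U$, and hence $U\cdot U\equiv_T U\times I^\omega$. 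I expect the only genuinely non-formal point to be the need to place the full $\omega$-power $I^\omega$, rather than merely $I$, below $U\cdot U$: this is handled by first extracting $I\leq_T U$ from simplicity (mere inclusion $I\subseteq U^*$ would not by itself deliver a Tukey reduction) and only then raising to the $\omega$-power, using $U^\omega\equiv_T U\cdot U$.

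Finally, from $U\cdot U\equiv_T U\times I^\omega$ the stated equivalence follows formally. If $U\cdot U\equiv_T U$ then $U\equiv_T U\times I^\omega\geq_T I^\omega$, which is (2). Conversely, if $U\geq_T I^\omega$ then $I^\omega\leq_T U$ forces $U\times I^\omega\equiv_T U$ (as $U\times I^\omega$ is the least upper bound of $U$ and $I^\omega$, and both lie Tukey-below $U$), so $U\cdot U\equiv_T U\times I^\omega\equiv_T U$, which is (1).
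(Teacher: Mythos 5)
Your proof is correct and follows essentially the same route as the paper's: Milovich's Theorem \ref{Thm: Milovich} together with Proposition \ref{Prop: TomNatasha bound with p.i.p} for the upper bound, and simplicity applied to $I\subseteq U^*$ (plus the least-upper-bound property of Cartesian products) for the lower bound. The only difference is cosmetic: for $(2)\Rightarrow(1)$ the paper cites Theorem \ref{Thm: Sufficint condition for product from TomNatash2}, whereas you derive it formally from the displayed equivalence $U\cdot U\equiv_T U\times I^\omega$ via $U\times I^\omega\equiv_T U$, which is equally valid and slightly more self-contained.
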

\begin{proof} 
By Theorem \ref{Thm: Milovich}, $U\cdot U\equiv_T U^\omega$. Since $I$ is simple, and $I\subseteq U^*$, $U\geq_T I$ and in particular $U\cdot U\equiv_TU^\omega\geq_T U\times I^\omega$. The other direction follows from the $I$-p.i.p of $U$ and Proposition \ref{Prop: TomNatasha bound with p.i.p}.
Now to see the equivalence,
$(2)\Rightarrow (1)$ follows from Theorem \ref{Thm: Sufficint condition for product from TomNatash2}, and $(1)\Rightarrow (2)$ follows from the first part as $U\cdot U\equiv_T U\times I^\omega\leq_T U\leq_T U\cdot U$.
\end{proof}
    The previous proposition is somewhat different from  \cite[Thm. 1.18]{TomNatasha2}, as here $I$ is a fix simple ideal.
    
    Our next objective it to study the class of ultrafilters which are Tukey above $I^\omega$. The next theorem shows that for deterministic $I$'s this class extends the class of ultrafilters which do not have the $I$-p.i.p.
    
    
            
\begin{theorem}
    Suppose that $\fin\subseteq I\subseteq U^*$ is deterministic, then if $U\not\geq_T I^\omega$, then $U$ has the $I$-p.i.p
\end{theorem}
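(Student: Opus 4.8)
The plan is to prove the contrapositive: assuming $U$ fails the $I$-p.i.p, I would show $U\geq_T I^\omega$. By Proposition~\ref{Prop: equivalent condition} the failure is witnessed by a partition $\langle A_n\mid n<\omega\rangle$ of $\omega$ into $U$-small pieces (each $A_n\notin U$) such that for every $A\in U$ there is an $n$ with $A\cap A_n\notin I$. The first reduction is to arrange that every piece is $I$-positive. Setting $S=\{n: A_n\in I\}$, the union $\bigcup_{n\in S}A_n$ cannot lie in $U$: otherwise, applying the witnessing property to $A=\bigcup_{n\in S}A_n$ would produce an $n$ with $A\cap A_n\notin I$, impossible since $A\cap A_n$ equals $A_n\in I$ for $n\in S$ and $\emptyset$ otherwise. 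Hence $X_0:=\bigcup_{n\notin S}A_n\in U$, and passing to $X_0$ I may assume every $A_n\in I^+$; here I use that $U\restriction X_0\equiv_T U$, that $I\restriction X_0=I\cap P(X_0)$ is again deterministic (Proposition~\ref{Prop: deterministic invariates}(2)) and, since $X_0\in I^+$, Tukey-equivalent to $I$ (Fact~\ref{Fact: Positive deterministic ideal is Tukey equivalent}).

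Next I would localize the ideal. By Proposition~\ref{Prop: deterministic invariates}(2) each $I\restriction A_n$ is deterministic, and since $A_n\in I^+$, Fact~\ref{Fact: Positive deterministic ideal is Tukey equivalent} gives $I\restriction A_n\equiv_T I$; applying the associated monotone cofinal maps coordinatewise yields $\prod_n(I\restriction A_n)\equiv_T\prod_n I=I^\omega$. Thus it suffices to prove $\prod_n(I\restriction A_n)\leq_T U$. The relevant combinatorial object is the family of \emph{$I$-slaloms} $\mathcal{E}=\{b\subseteq\omega:\forall n\,(b\cap A_n\in I)\}$, order-isomorphic to $\prod_n(I\restriction A_n)$ via $b\mapsto\langle b\cap A_n\mid n<\omega\rangle$. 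The witnessing property gives $\mathcal{E}\subseteq U^*$ (if some $b\in\mathcal{E}$ were in $U$, the witness would give $n$ with $b\cap A_n\notin I$). A refinement of the same argument shows every $A\in U$ is $I$-positive on \emph{infinitely many} pieces: if $\{n: A\cap A_n\in I^+\}$ were finite one could subtract those finitely many $U$-small pieces and land inside $\mathcal{E}\subseteq U^*$, contradicting $A\in U$.

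The core of the proof is then to promote the inclusion $\mathcal{E}\hookrightarrow U^*\equiv_T U$ to a genuine Tukey reduction $\prod_n(I\restriction A_n)\leq_T U$. The inclusion alone is \emph{not} a Tukey map: a family of slaloms unbounded only in a single coordinate $n$ is contained in $A_n\in U^*$, hence bounded there. Circumventing this localized unboundedness is exactly where determinism enters. I would pass to the canonical cofinal witnesses $\mathcal{B}_n\subseteq I\restriction A_n$ and exploit the dichotomy that, since $I\subseteq U^*$, a sub-union of $\mathcal{B}_n$ lies in $U$ if and only if it is not in $I$ (equivalently, lies in $(I\restriction A_n)^*$). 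Combining this $0$--$1$ law on each coordinate with the fact that every $U$-set is $I$-positive on infinitely many coordinates should permit a fusion/diagonalization: one builds the cofinal map $U\to\prod_n(I\restriction A_n)$ (equivalently the unbounded map into $U^*$) so that growth in the target is forced to spread across infinitely many coordinates, preventing any single element of $U^*$ from absorbing the image.

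The main obstacle is thus verifying that this diagonalized map is simultaneously well-defined (lands in $\prod_n(I\restriction A_n)$, i.e. is $I$-small in every coordinate) and cofinal/unbounded: the determinism dichotomy is what makes the coordinatewise membership both decidable and monotone, while the "infinitely many positive coordinates" fact is what defeats coordinate-localized boundedness. Once this reduction is in place the theorem follows immediately by contraposition, and I would remark that the present deterministic case is the clean prototype for the more general determinism-free statement asserting that $U\not\geq_T\prod_{x\in X}I$ implies $U$ has the $I$-p.i.p.
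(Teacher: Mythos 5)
Your preliminary reductions are sound: passing to $I$-positive pieces, localizing via Proposition~\ref{Prop: deterministic invariates}(2) and Fact~\ref{Fact: Positive deterministic ideal is Tukey equivalent} to get $\prod_n (I\cap P(A_n))\equiv_T I^\omega$, and observing that the slalom ideal $\mathcal{E}$ lies in $U^*$ while the inclusion is not by itself an unbounded map. But the proof stops exactly where it has to start: the ``fusion/diagonalization'' that is supposed to convert the inclusion $\mathcal{E}\subseteq U^*$ into a Tukey reduction $\prod_n(I\cap P(A_n))\leq_T U$ is never constructed --- you describe what it should achieve and explicitly list its well-definedness and unboundedness as an unresolved ``obstacle.'' Worse, the coordinatewise dichotomy you propose to drive it, namely that a sub-union of $\mathcal{B}_n$ lies in $U$ if and only if it is not in $I$, is false: every sub-union of $\mathcal{B}_n\subseteq I\cap P(A_n)$ is a subset of $A_n$, and $A_n\notin U$, so no such union is ever in $U$. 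Determinism of $I\cap P(A_n)$ only places the union in $I\cap P(A_n)$ or in the dual filter $(I\cap P(A_n))^*$ \emph{relative to $A_n$}, which carries no $U$-membership information; so the coordinatewise $0$--$1$ law you invoke cannot, by itself, defeat unboundedness concentrated in a single coordinate.

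What closes this gap in the paper is not a new diagonalization inside the proof of the theorem but two results proved earlier: (i) Proposition~\ref{Prop: product of deterministic is also}, showing that the Fubini sum $\sum_{\fin}I_n$ of deterministic ideals is deterministic --- the coherent sequences $\vec b$ linked by unbounded maps $f_{m,n}$ in that proof are precisely the mechanism forcing unboundedness in one coordinate to propagate to all later coordinates; and (ii) the proposition that deterministic ideals are simple, so that the mere inclusion $\sum_{\fin}I_n\subseteq W^*$ already yields $\sum_{\fin}I_n\leq_T W^*\equiv_T U$, with $\sum_{\fin}I_n\equiv_T I^\omega$. Your hypothesis does give that inclusion: if some $b\in U$ had all but finitely many columns $I$-small, deleting the finitely many bad columns (each $A_n\notin U$) would produce a set in $U$ witnessing the p.i.p.\ for your partition. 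Note also that the paper works with the Fubini sum (all but finitely many columns small) rather than your everywhere-small ideal $\mathcal{E}$, and this is essential: for the natural coherent witness families, a sub-union that is unbounded in coordinate $1$ but has a fixed column $b_0\in I$ in coordinate $0$ lands in neither $\mathcal{E}$ nor $\mathcal{E}^*$ (since $b_0$ is small but not co-small in $A_0$), so determinism cannot be witnessed this way for $\mathcal{E}$; allowing finitely many large columns is exactly what absorbs this defect. With (i) and (ii) substituted for your missing fusion step, your contrapositive skeleton does close up and becomes the paper's argument.
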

\begin{proof}
    Let us verify the equivalent condition in Proposition \ref{Prop: equivalent condition}, let $\l A_n\mid n<\omega\r$ be a partition of $\omega$ such that $A_n\notin U$. We need to find $X\in U$ such that $X\cap A_n\in I$ for every $n$. Without loss of generality, suppose that $A_n\in I^+$ for every $n$. Since $\fin\subseteq I$, $A_n$ is infinite and we can find a bijection $\pi:\omega\leftrightarrow \omega\times\omega$ such that $\pi''A_i=\{i\}\times \omega$. Let $W=\pi_*(U)$ be the Rudin-Keisler isomorphic copy of $U$. For each $n<\omega$, consider the ideal $I_n=\pi_*(I\cap P(A_n))$ on $\{n\}
    \times \omega$. By Proposition \ref{Prop: deterministic invariates}, $I\cap P(A_n)$ is a deterministic  and since $\pi\restriction A_n$ is one-to-one $I_n=\pi_*(I\cap P(A_n))$ is deterministic. It follows by \ref{Prop: product of deterministic is also} $\sum_{\fin}I_n$ is deterministic.
    Moreover, by Fact \ref{Fact: Positive deterministic ideal is Tukey equivalent}, $I\equiv_T I\cap P(A_n)\equiv_T I_n$ and therefore $$I^\omega\equiv_T\prod_{n<\omega}I_n\equiv_T\sum_{\fin}I_n$$ 
    Since $U\not\geq_T I^\omega$, 
 $W\not\geq_T\sum_{\fin}I_n$. Since $\sum_{\fin}I_n$ is deterministic, it follows that  $\sum_{\fin}I_n\not\subseteq W^*$. Thus, there is $X'\in \sum_{\fin}I_n\cap W$. Namely, for all but finitely many $n$'s, $(X')_n\in I_n$. Since each $A_i\notin U$, we may assume that for every $n$, $(X')_n\in I_n$. 
  Let $X=\pi^{-1}[X']$, then for every $n<\omega$, $X\cap A_n\in I$ as $\pi''X\cap A_n=\{x\}\cap(X)_n\in I_n$.
\end{proof}

The proof of the above actually gives the following:
        \begin{corollary}
            Suppose that $U$ does not have the $I$-p.i.p, then there is $W\subseteq \omega\times\omega$ such that $W\equiv_{RK}U$ and $\sum_{\fin}I_n\subseteq W$, and each $I_n\equiv_T I$. In particular, the Tukey type of $I^\omega$ is realized as a sub-ideal of $U^*$. 
        \end{corollary}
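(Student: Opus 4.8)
The plan is to reread the proof of the preceding theorem contrapositively. There one fixes an \emph{arbitrary} partition $\langle A_n\mid n<\omega\rangle$ with each $A_n\notin U$ and, under the hypothesis $U\not\geq_T I^\omega$, produces a witness $X\in U$ with $X\cap A_n\in I$ for all $n$; the entire production of $X$ rests on the single implication ``$\sum_{\fin}I_n\not\subseteq W^*\Rightarrow$ such an $X$ exists''. I would therefore keep the standing hypotheses ($\fin\subseteq I\subseteq U^*$ with $I$ deterministic), start instead from the assumption that $U$ fails the $I$-p.i.p, and invoke Proposition \ref{Prop: equivalent condition} to pin the failure to a \emph{single} partition $\langle A_n\mid n<\omega\rangle$ with each $A_n\notin U$ for which \emph{no} $A\in U$ satisfies $A\cap A_n\in I$ for every $n$. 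Running the theorem's construction on this particular partition and contraposing the key implication will yield $\sum_{\fin}I_n\subseteq W^*$, which is exactly the assertion.

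Concretely, I would first make the same reductions as in the theorem: the witnessing index $n$ for any $A$ necessarily has $A_n\in I^+$, and if $\bigcup\{A_n\mid A_n\in I\}$ were in $U$ it would itself be a witness $A$, so that union lies in $U^*$ and I may pass to the $U$-large set $\bigcup\{A_n\mid A_n\in I^+\}$ and assume WLOG $A_n\in I^+$ for all $n$. Since $\fin\subseteq I$ each $A_n$ is infinite, so I fix a bijection $\pi\colon\omega\leftrightarrow\omega\times\omega$ with $\pi''A_i=\{i\}\times\omega$, and set $W=\pi_*(U)\equiv_{RK}U$ and $I_n=\pi_*(I\cap P(A_n))$. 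By Proposition \ref{Prop: deterministic invariates} each $I_n$ is deterministic; by Fact \ref{Fact: Positive deterministic ideal is Tukey equivalent}, $I_n\equiv_T I\cap P(A_n)\equiv_T I$; and by Proposition \ref{Prop: product of deterministic is also}, $\sum_{\fin}I_n$ is deterministic with $\sum_{\fin}I_n\equiv_T\prod_{n}I_n\equiv_T I^\omega$.

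The crux is the claim $\sum_{\fin}I_n\subseteq W^*$, which I would prove by contradiction. If $\sum_{\fin}I_n\not\subseteq W^*$, pick $X'\in\sum_{\fin}I_n\cap W$; then $(X')_n\in I_n$ for all but finitely many $n$, and since each $\{n\}\times\omega=\pi''A_n\notin W$ I can delete the finitely many full fibers to arrange $(X')_n\in I_n$ for \emph{every} $n$ while keeping $X'\in W$. Setting $X=\pi^{-1}[X']$ then gives $X\in U$ with $X\cap A_n\in I$ for all $n$, contradicting the choice of the partition. Hence $\sum_{\fin}I_n\subseteq W^*$. Since $\pi$ is a bijection with $\pi_*(U)=W$, the pullback $\{\pi^{-1}[B]\mid B\in\sum_{\fin}I_n\}$ is a sub-ideal of $U^*$ order-isomorphic to $\sum_{\fin}I_n$, hence of Tukey type $I^\omega$, which is the final assertion; this also makes clear that the ``$W$'' in the displayed containment is to be read as the dual ideal $W^*$. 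The genuinely new content is essentially bookkeeping: every computation already appears in the theorem, and the only point needing care is the WLOG reduction to $A_n\in I^+$ together with the finitary cleanup of the exceptional fibers of $X'$, so I do not expect any further obstacle.
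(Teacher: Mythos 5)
Your proposal is correct and takes exactly the approach the paper intends: the paper justifies this corollary with the remark ``the proof of the above actually gives the following,'' i.e.\ one reruns the preceding theorem's proof contrapositively on a single partition witnessing the failure of the $I$-p.i.p (obtained from Proposition~\ref{Prop: equivalent condition}) and concludes $\sum_{\fin}I_n\subseteq W^*$ instead of producing a pseudo-intersection witness. Your reading of the displayed containment as $\sum_{\fin}I_n\subseteq W^*$ (the dual ideal) is the correct interpretation of the statement, and the reductions you flag (discarding the $I$-small pieces and cleaning up the finitely many bad fibers of $X'$) are precisely the bookkeeping steps already implicit in the theorem's proof.
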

Taking $I=\fin$ in the above we obtain the following corollary
\begin{corollary}
    
            Suppose that $U$ is a non-principal ultrafilter such that $U\not\geq_T \omega^\omega$ then $U$ is a $p$-point.
\end{corollary}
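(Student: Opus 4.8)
The plan is to apply the preceding theorem with the specific choice $I=\fin$, so the bulk of the work is verifying that the three hypotheses of that theorem hold and then translating its conclusion into the language of $p$-points.

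First I would check the hypotheses. Since $U$ is non-principal it contains no finite set, so every finite set lies in the dual ideal $U^*$, giving $\fin\subseteq U^*$; and trivially $\fin\subseteq\fin=I$. That $\fin$ is deterministic was already established in the Example above (witnessed by $\mathcal{B}=\omega$). Thus the choice $I=\fin$ satisfies $\fin\subseteq I\subseteq U^*$ and is deterministic, exactly as the theorem requires.

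Next I would identify $I^\omega$ up to Tukey type. Because $\fin$ carries the cofinal chain $\langle\{0,\dots,n\}\mid n<\omega\rangle$, it is Tukey equivalent to $\omega$, and Tukey equivalence is preserved under $\omega$-indexed products (apply monotone cofinal maps coordinatewise), so $I^\omega=\fin^\omega\equiv_T\omega^\omega$. Hence the standing hypothesis $U\not\geq_T\omega^\omega$ is precisely $U\not\geq_T I^\omega$. Applying the theorem then yields that $U$ has the $\fin$-p.i.p, and by the recorded Fact that a filter is a $p$-point if and only if it has the $\fin$-p.i.p, we conclude that $U$ is a $p$-point.

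The main obstacle, such as it is, is purely bookkeeping: one must confirm the identification $\fin^\omega\equiv_T\omega^\omega$ (rather than merely $\fin\equiv_T\omega$), and verify that the non-principality of $U$ is what genuinely guarantees $\fin\subseteq U^*$. Neither point presents a real difficulty, since the entire mathematical content is carried by the preceding theorem on deterministic ideals; the corollary is just its instantiation at $I=\fin$.
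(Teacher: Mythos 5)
Your proof is correct and is exactly the paper's argument: the paper derives this corollary by instantiating the preceding theorem on deterministic ideals at $I=\fin$, using $\fin\equiv_T\omega$ (hence $\fin^\omega\equiv_T\omega^\omega$) and the recorded fact that the $\fin$-p.i.p characterizes $p$-points. Your verification of the hypotheses ($\fin\subseteq U^*$ from non-principality, determinism of $\fin$ from the Example) is the same bookkeeping the paper leaves implicit.
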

        As a corollary, we see that in Proposition \ref{Prop: equivalence for simple ideals} and therefore also in Theorem \ref{Thm: Dobrinen and todorcevic rapid p-points}, the $I$-p.i.p assumption is somewhat redundant. 
\begin{corollary}
    If $I\subseteq U^*$ is deterministic then the following are equivalent:
    \begin{enumerate}
        \item $U\equiv_{T}U\cdot U$ or $U$ does not have the $I$-p.i.p.
        \item $U\geq_T I^\omega$ 
    \end{enumerate}
\end{corollary}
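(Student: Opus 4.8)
The plan is to prove the two implications separately, in each case resolving the disjunction in (1) by splitting into subcases.

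For $(2)\Rightarrow(1)$ I would argue as follows. Assume $U\geq_T I^\omega$. If $U$ fails the $I$-p.i.p., then the second disjunct of (1) holds and there is nothing more to do. Otherwise $U$ has the $I$-p.i.p.; since $I$ is deterministic it is in particular simple, so Proposition~\ref{Prop: equivalence for simple ideals} applies and gives the equivalence of $U\cdot U\equiv_T U$ with $U\geq_T I^\omega$. As we are assuming the latter, we obtain $U\cdot U\equiv_T U$, which is the first disjunct. Hence (1) holds.

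For $(1)\Rightarrow(2)$ I would again split on the disjunction. If $U\equiv_T U\cdot U$, then the earlier corollary asserting that $U\equiv_T U\cdot U$ forces $J^\omega\leq_T U$ for every ideal $J\subseteq U^*$ applies with $J=I$, yielding $U\geq_T I^\omega$. If instead $U$ does not have the $I$-p.i.p., I would invoke the contrapositive of the preceding theorem (which uses the standing hypothesis $\fin\subseteq I\subseteq U^*$ together with determinacy of $I$): that theorem states that $U\not\geq_T I^\omega$ implies $U$ has the $I$-p.i.p., so failure of the $I$-p.i.p. forces $U\geq_T I^\omega$. Either way (2) holds.

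I do not expect any genuine obstacle here: the corollary is essentially a bookkeeping combination of facts already established above — that deterministic ideals are simple, the simple-ideal equivalence of Proposition~\ref{Prop: equivalence for simple ideals}, and the two one-directional results, namely that $U\equiv_T U\cdot U$ implies $I^\omega\leq_T U$ and that $U\not\geq_T I^\omega$ implies the $I$-p.i.p. All the substantive work is packaged into the preceding theorem, whose proof (passing to a Rudin--Keisler copy on $\omega\times\omega$, replacing $I$ by the deterministic sum $\sum_{\fin}I_n$ via Proposition~\ref{Prop: product of deterministic is also}, and extracting a positive witness) is where the real difficulty lies. The only point requiring a moment's care is to ensure that the hypothesis $\fin\subseteq I$ is in force when appealing to that theorem in the second subcase of $(1)\Rightarrow(2)$.
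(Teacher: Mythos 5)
Your proposal is correct, and it is essentially the paper's own proof: the paper states this corollary with no written argument, precisely because it is the bookkeeping assembly you describe (deterministic implies simple, Proposition~\ref{Prop: equivalence for simple ideals}, the corollary that $U\equiv_T U\cdot U$ gives $J^\omega\leq_T U$ for every ideal $J\subseteq U^*$, and the preceding theorem that $U\not\geq_T I^\omega$ implies the $I$-p.i.p.).

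Concerning the one point you flag: the hypothesis $\fin\subseteq I$ really is absent from the corollary as stated, and this is the paper's own looseness rather than yours; the corollary is clearly meant to inherit the standing hypothesis of the theorem it follows. If you want the statement literally as written, the missing case can be closed cheaply. Suppose $\fin\not\subseteq I$; then some singleton $\{n\}$ is not in $I$, so by downward closure no member of $I$ contains $n$, while every member of $I^*$ (being the complement of a member of $I$) does contain $n$. Hence no union of members of $I$ can lie in $I^*$, and determinism forces every union of a subfamily of the witnessing cofinal family $\mathcal{B}$ to lie in $I$; in particular $M=\bigcup\mathcal{B}\in I$ is a maximum element, so $I=P(M)$ with $M\in U^*$. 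Then $I^\omega$ has a top element and is Tukey-trivial, so (2) holds outright; and (1) also holds, since either $U$ is principal (making $U\equiv_T U\cdot U$ trivial) or a decreasing sequence of cofinite sets with empty intersection witnesses the failure of the $P(M)$-p.i.p., because no set in $U$ is contained in $M$. Note finally that in the main case the inclusion $\fin\subseteq I\subseteq U^*$ automatically makes $U$ non-principal, which is the (also implicit) hypothesis you need when citing the corollary that $U\equiv_T U\cdot U$ implies $J^\omega\leq_T U$ for all $J\subseteq U^*$.
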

        
\subsection{Ultrafilters above $\omega^\omega$}        
As observed by Dobrinen and Todorcevic, rapid ultrafilters form a subclass of those which are Tukey above $\omega^\omega$. By Miller \cite{MillerNoQPoints}, in the Laver model where the Borel conjecture holds\footnote{ i.e. the model obtained by adding $\omega_2$-many Laver reals to a model of $CH$.}, there are no rapid ultrafilters. On the other hand, there are always ultrafilters which are above $\omega^\omega$, namely tukey-top ultrafilters. The question regarding the existence (in $ZFC$) of non-Tukey top ultrafilters is a major open problem. Hence we do not expect to have a simple $ZFC$ construction of a non-Tukey-top ultrafilter which is above $\omega^\omega$.

The result from the previous section entails some drastic consistency results regarding the class of ultrafilters above $\omega^\omega$:
        \begin{corollary}
            Suppose that there are no $p$-points, then every ultrafilter is above $\omega^\omega$. 
        \end{corollary}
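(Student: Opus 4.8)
The plan is to derive this immediately from the corollary stated just above, which asserts that any non-principal ultrafilter $U$ with $U\not\geq_T\omega^\omega$ must be a $p$-point. Reading that corollary contrapositively, every non-principal ultrafilter that fails to be a $p$-point is Tukey above $\omega^\omega$. So under the hypothesis that no $p$-points exist, every non-principal ultrafilter on $\omega$ is automatically not a $p$-point, and hence lies above $\omega^\omega$; principal ultrafilters are excluded here, since they are degenerate and trivially fail to be above $\omega^\omega$, so ``every ultrafilter'' is understood to range over the non-principal ones.

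To make the dependence fully explicit, I would trace the preceding corollary back to its source. Identifying $\omega^\omega$ with $\fin^\omega$ (legitimate since $\fin\equiv_T\omega$, so the two products are Tukey equivalent coordinatewise), and recalling from the earlier example that $\fin$ is deterministic with $\fin\subseteq U^*$ for any non-principal $U$, the theorem ``if $\fin\subseteq I\subseteq U^*$ is deterministic and $U\not\geq_T I^\omega$ then $U$ has the $I$-p.i.p'' applies with $I=\fin$. Combined with the basic fact that $U$ has the $\fin$-p.i.p if and only if $U$ is a $p$-point, this yields precisely the statement that $U\not\geq_T\omega^\omega$ forces $U$ to be a $p$-point, and the desired corollary follows by contraposition.

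There is essentially no obstacle here: the entire substance has already been absorbed into the deterministic-ideal machinery developed in this subsection, and the present statement is a direct logical consequence. The only point that deserves a word of care is the quantifier over ultrafilters---one should state, or assume by convention, that the claim concerns non-principal ultrafilters on $\omega$, since otherwise a principal ultrafilter provides a vacuous counterexample to being above $\omega^\omega$.
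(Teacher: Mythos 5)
Your proof is correct and matches the paper's (implicit) argument exactly: the paper presents this as an immediate consequence of the preceding corollary that any non-principal ultrafilter $U$ with $U\not\geq_T\omega^\omega$ is a $p$-point, which is precisely the contrapositive reading you use, and your tracing back through the deterministic-ideal theorem with $I=\fin$ is the same derivation the paper relies on. Your remark about restricting to non-principal ultrafilters is also the right convention here, since principal ultrafilters are Tukey-trivial and cannot lie above $\omega^\omega$.
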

        Models with no $p$-points were first constructed by Shelah \cite{ShelahProper} and later by Chudonsky and Guzman \cite{Chodounsky2017ThereAN}. 
        
            By yet another result of Shelah, in the Miller model \cite{Miller2}, which is obtained by countable support iteration of the superperfect tree forcing of length $\omega_2$ over a model of CH, every $p$-point is generated by $\aleph_1$-many sets. It is known that $\mathfrak{d}=\mathfrak{c}$ holds in that model. Therefore, every $p$-point is generated by less than $\mathfrak{d}$-many sets and in particular not above $\omega^\omega$. 
        \begin{corollary}
              It is consistent that  $p$-points are characterized by not being above $\omega^\omega$.
        \end{corollary}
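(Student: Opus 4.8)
The plan is to combine the two halves of the desired equivalence, each coming from a different source. One implication is already a theorem of $\mathsf{ZFC}$: by the corollary immediately above (every non-principal ultrafilter $U$ with $U\not\geq_T\omega^\omega$ is a $p$-point), the direction ``$U\not\geq_T\omega^\omega \Rightarrow U$ is a $p$-point'' holds in every model. So the only thing that needs a specific model is the converse, ``$U$ a $p$-point $\Rightarrow U\not\geq_T\omega^\omega$'', and for this I would work in the Miller model $V^{\mathbb M}$ obtained by the countable support iteration of superperfect tree forcing of length $\omega_2$ over a model of $\mathsf{CH}$, exactly as set up in the paragraph preceding the statement.

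The engine for the converse is a standard Tukey-invariance fact, which I would isolate first: for directed posets $P\leq_T Q$ one has $\cof(P)\leq\cof(Q)$, where $\cof$ denotes the least size of a cofinal subset. Applying this to $\omega^\omega\leq_T U$ gives $\cof(\omega^\omega)\leq\cof(U)$. Now $\cof(\omega^\omega)=\mathfrak d$, since a cofinal subset of $(\omega^\omega,\leq)$ is exactly a dominating family, and $\cof(U)=Ch(U)$, since a cofinal subset of $(U,\supseteq)$ is a base of the ultrafilter. Hence $U\geq_T\omega^\omega$ forces $\mathfrak d\leq Ch(U)$, and contrapositively $Ch(U)<\mathfrak d$ implies $U\not\geq_T\omega^\omega$.

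With this in hand the converse is immediate from the two facts recalled about $V^{\mathbb M}$: by Shelah's theorem every $p$-point in the Miller model is generated by $\aleph_1$ many sets, so $Ch(U)\leq\aleph_1$; and $\mathfrak d=\mathfrak c=\aleph_2$ there. Thus every $p$-point $U$ satisfies $Ch(U)\leq\aleph_1<\aleph_2=\mathfrak d$, whence $U\not\geq_T\omega^\omega$ by the previous paragraph. Combining this with the $\mathsf{ZFC}$ direction yields, in $V^{\mathbb M}$, the equivalence ``$U$ is a $p$-point iff $U\not\geq_T\omega^\omega$'' for all non-principal ultrafilters $U$ on $\omega$, which is precisely the asserted consistency. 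I do not anticipate a genuine obstacle here, since the heavy lifting is done by the cited model-theoretic facts; the only point requiring care is the clean identification $\cof(\omega^\omega)=\mathfrak d$ and $\cof(U)=Ch(U)$ together with the monotonicity of $\cof$ under $\leq_T$, so that ``generated by fewer than $\mathfrak d$ sets'' really does translate into ``not above $\omega^\omega$''.
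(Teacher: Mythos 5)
Your proof is correct and takes essentially the same route as the paper: the ZFC direction is quoted from the preceding corollary, and the converse is obtained in the Miller model from Shelah's theorem that every $p$-point there is generated by $\aleph_1$-many sets together with $\mathfrak{d}=\mathfrak{c}=\aleph_2$. The only difference is that you make explicit the step the paper leaves implicit, namely that $\omega^\omega\leq_T U$ implies $\mathfrak{d}=\cof(\omega^\omega)\leq \cof(U)=Ch(U)$, so an ultrafilter with a base of size less than $\mathfrak{d}$ cannot be Tukey above $\omega^\omega$.
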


        Focusing on models less extreme than the ones above,  we may be interested in those $p$-point which are above $\omega^\omega$. The purpose of this section is to address the question raised in \cite{TomNatasha2} whether rapid $p$-point are exactly those $p$-points which are above $\omega^\omega$ (The dashed area in Figure \ref{Figure 1}). As a warm-up, let us note that there are always non-rapid ultrafilters above $\omega^\omega$. To see this, we need the following result \cite[Thm. 4]{MillerNoQPoints}:
    \begin{proposition}[Miller]
            For any two ultrafilters $U,V$ on $\omega$, $U\cdot V$ is rapid iff $V$ is rapid.
        \end{proposition}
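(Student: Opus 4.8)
The plan is to work with the characterization of rapidity that is manifestly invariant under bijections, so that the statement ``$U\cdot V$ is rapid'' is well defined independently of the identification of $\omega\times\omega$ with $\omega$. Recall that an ultrafilter $W$ on a countable set $S$ is rapid iff for every partition $\langle P_n\mid n<\omega\rangle$ of $S$ into finite pieces there is $A\in W$ with $|A\cap P_n|\le n$ for all $n$; equivalently, after fixing an enumeration of $S$ of order type $\omega$ and writing $e_A$ for the increasing enumeration of $A$, the family $\{e_A\mid A\in W\}$ is dominating in $(\omega^\omega,\le)$. Since the first formulation never refers to the order on $S$, rapidity is preserved by bijections, so I may compute on $\omega\times\omega$ using the square pairing $s$, under which the first $k^2$ elements are exactly $k\times k=\{(i,j)\mid i,j<k\}$. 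The key feature of $s$ is that $A\cap(k\times k)=\bigcup_{n<k}\{n\}\times((A)_n\cap k)$, so that $|A\cap(k\times k)|=\sum_{n<k}|(A)_n\cap k|$ links the ``square counts'' of $A$ to the counts of its columns $(A)_n$.

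For the easy direction, assume $U\cdot V$ is rapid and fix an increasing $f$; I want $B\in V$ with $|B\cap f(m)|\le m$. Using rapidity of $U\cdot V$ I would first choose $A\in U\cdot V$ whose enumeration dominates $m\mapsto f(m)^2$, i.e.\ at most $m$ elements of $A$ precede the $f(m)^2$-th position for every $m$. Because $(A)_{n_0}\cap f(m)\subseteq A\cap(f(m)\times f(m))$ whenever $n_0<f(m)$, every column then satisfies $|(A)_{n_0}\cap f(m)|\le m$ for all large $m$. Finally, since $\{n\mid (A)_n\in V\}\in U$ is nonempty I may pick some $n_0$ with $B:=(A)_{n_0}\in V$; this $B$ witnesses rapidity of $V$ for $f$, after shrinking $B$ inside $V$ to absorb the finitely many small $m$. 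Thus thinness of a member of $U\cdot V$ forces thinness of each of its columns, and at least one column lies in $V$.

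The substantive direction is the converse: assume $V$ is rapid and build, for an arbitrary increasing $\phi\to\infty$, a set $A\in U\cdot V$ with $|A\cap(k\times k)|\le\phi(k)$ for all large $k$, which via $s$ yields the required enumeration domination. Here I would exploit that $U$ plays no role beyond $\omega\in U$, and take $A$ to use every column. The two levers are (i) rapidity of $V$, which provides a single $B_*\in V$ of prescribed slow density, say $|B_*\cap k|\le\psi(k)$ for all large $k$, where $\psi(k)=\lfloor\sqrt{\phi(k)}\rfloor$; and (ii) a staggering of starts, choosing an increasing $\theta$ with $\theta(n)\ge n$ and $|\{n\mid\theta(n)<k\}|\le\psi(k)$, and setting $(A)_n:=B_*\cap[\theta(n),\infty)\in V$. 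Then only the at most $\psi(k)$ columns $n$ with $\theta(n)<k$ meet $k\times k$, and each contributes at most $|B_*\cap k|\le\psi(k)$, giving $|A\cap(k\times k)|=\sum_{n<k}|(A)_n\cap k|\le\psi(k)^2\le\phi(k)$ for large $k$; moreover $A=\bigcup_n\{n\}\times(A)_n\in U\cdot V$ since every column lies in $V$.

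The main obstacle, and the only place where real care is needed, is this last construction: one must balance the number of ``active'' columns against their individual density so that the product meets the target $\phi$, and this is precisely what forces the split $\phi=\psi\cdot\psi$ together with the staggered starts $\theta(n)$. I expect the routine parts to be (a) the equivalence of the partition and domination formulations of rapidity, which I would quote as standard and which also secures the well-definedness of ``$U\cdot V$ is rapid'', and (b) the translation between ``square counts'' and enumeration domination, which is pure bookkeeping about the pairing $s$.
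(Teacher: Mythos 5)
The paper does not actually prove this proposition; it is quoted verbatim from Miller \cite[Thm.~4]{MillerNoQPoints}, so there is no in-paper argument to compare yours against. Your proof, however, is correct and complete as a self-contained argument. The easy direction is sound: under the square pairing the first $f(m)^2$ positions are exactly $f(m)\times f(m)$, a column $(A)_{n_0}\cap f(m)$ with $n_0<f(m)$ injects into $A\cap(f(m)\times f(m))$, and $A\in U\cdot V$ guarantees some column lies in $V$; the finitely many bad $m$ are absorbed by deleting a finite set, which is legitimate since $V$ is nonprincipal. The substantive direction is also sound, and its two levers do exactly what you claim: rapidity of $V$ (in its standard ``counting function eventually below $\psi$'' form) yields $B_*$ with $|B_*\cap k|\le\psi(k)$, and the staggering function $\theta$ exists, e.g.\ $\theta(n)=$ the least $k$ with $\psi(k)>n$, which gives $|\{n\mid \theta(n)<k\}|\le\psi(k)$; since every column $(A)_n=B_*\cap[\theta(n),\infty)$ lies in $V$, membership $A\in U\cdot V$ is automatic for \emph{every} $U$, which is precisely why $U$ drops out of this direction. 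The count $|A\cap(k\times k)|\le\psi(k)^2\le\phi(k)$ then closes the argument, and the two bookkeeping steps you flag as routine --- bijection-invariance of rapidity via the finite-sets characterization (which the paper itself quotes from Miller), and the translation between square counts and enumeration domination --- are indeed standard and unproblematic. The only points worth writing out explicitly in a final version are the mod-finite robustness of rapidity (used three times: to fix small $m$, small $k$, and in the translation step) and the verification that $\theta$ as defined satisfies the required bound, both of which are elementary.
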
By results of Choquet \cite{choquet1968deux}, there are always non-rapid ultrafilters. Taking any such $U$, $U\cdot U$ is certainly above $\omega^\omega$ and by the above result of Miller, it is non-rapid.
        
    \begin{corollary}
        
        There is a non-rapid ultrafilter which is Tukey above $\omega^\omega$.\
    \end{corollary}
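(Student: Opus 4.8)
The plan is to exhibit the required ultrafilter as a Fubini square. First I would use Choquet's theorem \cite{choquet1968deux} to fix a non-rapid ultrafilter $U$ on $\omega$, and then claim that $U\cdot U$ is the desired example; so it remains only to check that $U\cdot U$ is non-rapid and that $U\cdot U\geq_T\omega^\omega$. These two verifications are independent, and I would carry them out separately.

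For the Tukey bound, I would argue as follows. Since $U$ is non-principal we have $\fin\subseteq U^*$, and $\fin$ is simple with $\fin\equiv_T\omega$, so $\omega\leq_T U$. As the everywhere-domination product is monotone in each coordinate, this gives $\omega^\omega=\prod_{n<\omega}\omega\leq_T\prod_{n<\omega}U=U^\omega$. By Milovich's Theorem \ref{Thm: Milovich} we have $U\cdot U\equiv_T U^\omega$, and therefore $U\cdot U\geq_T\omega^\omega$.

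For non-rapidity, I would apply the result of Miller quoted above with $V=U$: it asserts that $U\cdot V$ is rapid if and only if $V$ is rapid, so $U\cdot U$ is rapid exactly when $U$ is. As $U$ was chosen non-rapid, $U\cdot U$ is non-rapid. Putting the two points together, $U\cdot U$ is a non-rapid ultrafilter that is Tukey above $\omega^\omega$. I do not expect any genuine obstacle here: both of the key inputs are previously established facts, and the only computation carried out by hand, the inequality $\omega^\omega\leq_T U^\omega$, uses nothing beyond non-principality of $U$.
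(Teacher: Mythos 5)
Your proposal is correct and takes essentially the same route as the paper: the paper also fixes a non-rapid ultrafilter $U$ via Choquet, passes to $U\cdot U$, and invokes Miller's theorem for non-rapidity, while treating $U\cdot U\geq_T\omega^\omega$ as a known easy fact. The only difference is that you spell out that last step explicitly (via $\omega\leq_T U$, coordinatewise products, and Milovich's Theorem \ref{Thm: Milovich}), which is a valid and clean verification of what the paper leaves implicit.
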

    Note that the ultrafilter we constructed in the previous proof in not a $p$-point as it is a product.
\vskip 0.7 cm
            \begin{center}
            {\tiny Figure \ref{Figure 1}}.
            \vskip 0.2 cm
\begin{tikzpicture}\label{Figure 1}
\draw[black, very thick] (0,0) rectangle (10,7);
\draw[black, fill=gray, opacity=0.3, very thick] (0,4.5) rectangle (10,7);

\draw[black, fill=gray!60, opacity=0.5, very thick] (0,0) rectangle (10,2.5);
\draw[black, fill=gray, opacity=0.2, very thick] (7,0) rectangle (10,1);
\draw[black, fill=gray, opacity=0.5, very thick] (0,0) rectangle (3,7);
\fill [pattern={Lines[angle=45,distance=7pt]},pattern color=black] 
    (3,0) rectangle (7,2.5);
    \fill [pattern={Lines[angle=45,distance=7pt]},pattern color=black] 
    (7,1) rectangle (10,2.5);
    \node at (5,5.5){Tukey-top};
    \node at (5,1.5){p-point};
    \node at (1.5,3.5){rapid};
    \node at (8.5,0.5){$\not\geq_T\omega^\omega$};
    \end{tikzpicture}
    \end{center}
    \vskip 0.7 cm

        The real issue is to construct a $p$-point which is not rapid but still above $\omega^\omega$. To do that, let us introduce the class of $\alpha$-almost rapid ultrafilters. 
            
                Given a function $f:\omega\rightarrow\omega\setminus\{0\}$ such that $f(0)>0$. We denote by $exp(f)(0)=f(0)$ and $$exp(f)\big(n+1\big)=f\big(exp(f)(n)\big)=f(f(f(f...f(0)..))).$$ We define the $n^{\text{th}}$ $f$-exponent function, $$exp_0(f)=f\text{ and }exp_n(f)=exp(exp_{n-1}(f)).$$ 
                This definition continuous transfinitely for every $\alpha<\omega_1$:
                $$exp_{\alpha+1}(f)=exp(exp_{\alpha}(f)).$$ For limit $\delta<\omega_1$, we fix some increasing cofinal sequence $\l \delta_n\mid n<\omega\r$ in $\delta$, and let $$exp_\delta(f)(n)=\max\{exp_{\delta_n}(f)(n), exp_{\delta}(f)(n-1)+1\}.$$
                \begin{lemma}\label{Lemma: Prop of exp}
                  Let $f,g:\omega\rightarrow\omega$ be increasing functions. \begin{enumerate}
                      \item For every $\alpha<\omega_1$, $exp_\alpha(f)$ is increasing.
                      \item  If $f\leq g$ then for every $\alpha<\omega_1$, $exp_\alpha(f)\leq exp_\alpha(g)$.
                      \item For every $\alpha<\beta<\omega_1$, $exp_\alpha(f)<^* exp_\beta(f)$.
                  \end{enumerate}  
                \end{lemma}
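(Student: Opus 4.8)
The plan is to prove all three items by transfinite induction on the ordinal, with $\alpha$ (resp.\ $\beta$) ranging over $\omega_1$, treating the base case $0$, successor ordinals, and limit ordinals separately. In every case the successor clause reduces to a fact about the one-step operator $exp$, while the limit clause must exploit the specific ``catch-up'' shape of the definition $exp_\delta(f)(n)=\max\{exp_{\delta_n}(f)(n),\,exp_\delta(f)(n-1)+1\}$ and, crucially, the fact that the \emph{same} cofinal sequence $\langle\delta_n\mid n<\omega\rangle$ is used regardless of which function is fed in.

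For item (1) I would strengthen the inductive hypothesis to: $exp_\alpha(f)$ is strictly increasing \emph{and} $exp_\alpha(f)(n)\geq n+1$ for every $n$ (equivalently $exp_\alpha(f)(m)>m$). The base case is the standing hypothesis on $f$. For a successor, writing $g=exp_\alpha(f)$, the bound $g(m)>m$ gives $exp(g)(n+1)=g(exp(g)(n))>exp(g)(n)$, so $exp_{\alpha+1}(f)=exp(g)$ is strictly increasing, and $exp_{\alpha+1}(f)(0)=g(0)\geq 1$ carries the auxiliary bound forward. At a limit $\delta$, the term $exp_\delta(f)(n-1)+1$ inside the maximum forces strict monotonicity directly, while $exp_\delta(f)(0)=exp_{\delta_0}(f)(0)\geq 1$ by the hypothesis at $\delta_0<\delta$.

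For item (2) I would again induct on $\alpha$, with an inner induction on $n$. The base case is immediate. For the successor step I would use monotonicity of $exp$ in its functional argument: if $F\leq G$ pointwise and $G$ is increasing (which holds for $G=exp_\alpha(g)$ by item (1)), then $exp(F)(n+1)=F(exp(F)(n))\leq G(exp(F)(n))\leq G(exp(G)(n))=exp(G)(n+1)$, the first inequality from $F\leq G$ and the second from monotonicity of $G$ together with the inner hypothesis. The limit step is where the identical cofinal sequence is essential: then $exp_\delta(f)$ and $exp_\delta(g)$ are produced by the \emph{same} recursion, and comparing them termwise (via $exp_{\delta_n}(f)(n)\leq exp_{\delta_n}(g)(n)$ from the outer hypothesis and monotonicity of $\max$) yields $exp_\delta(f)\leq exp_\delta(g)$.

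Item (3) is the heart of the lemma, and I expect the limit step to be the main obstacle. By transitivity of $<^*$ it suffices to establish the \emph{successor domination} $exp_\alpha(f)<^* exp_{\alpha+1}(f)$ and then to treat limits. The successor domination is a compounding computation: with $G=exp_\alpha(f)$ one has $exp_{\alpha+1}(f)(n)=G^{(n+1)}(0)=G\big(G^{(n)}(0)\big)$, and since the orbit satisfies $G^{(n)}(0)>n$ for all large $n$, monotonicity of $G$ gives $exp_{\alpha+1}(f)(n)=G\big(G^{(n)}(0)\big)>G(n)=exp_\alpha(f)(n)$ eventually; the unique degenerate case here is the pure successor function, which I would rule out by noting $f(n)>n+1$ for some $n$ (inherited by every $exp_\alpha(f)$). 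For a limit $\delta$ and $\alpha<\delta$ I would use $exp_\delta(f)(n)\geq exp_{\delta_n}(f)(n)$ together with $\delta_n\geq\alpha+1$ for all large $n$. The difficulty is that the eventual-domination thresholds furnished by the induction hypothesis are \emph{not uniform} in the ordinal, so one cannot simply quote $exp_\alpha(f)<^* exp_{\delta_n}(f)$ and evaluate along the diagonal $n\mapsto\delta_n$. To overcome this I would enlarge the inductive statement with an eventual ordinal-monotonicity clause ($\gamma\leq\gamma'\Rightarrow exp_\gamma(f)\leq^* exp_{\gamma'}(f)$) and control the diagonal using the additive catch-up term $exp_\delta(f)(n-1)+1$, which supplies a strictly increasing floor independent of the current position of $\langle\delta_n\rangle$. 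This interplay between the diagonal term and the catch-up term is precisely the delicate point the limit definition was engineered to handle.
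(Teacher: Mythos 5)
Your treatment of items (1) and (2) is correct and essentially the same as the paper's: the same induction with the strengthening ``strictly increasing, hence $exp_\alpha(f)(n)>n$'' driving the successor step of (1), the same outer-induction-on-$\alpha$/inner-induction-on-$n$ for (2), and the same termwise comparison at limits using that $f$ and $g$ are processed through identical cofinal sequences. You are in fact slightly more careful than the paper at the limit step of (2): the paper writes $exp_\delta(f)(n)=exp_{\delta_n}(f)(n)$, silently dropping the catch-up term, whereas your observation that a maximum of termwise-dominated entries is dominated is the correct repair.

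For item (3) there is nothing in the paper to compare against: its entire proof is the sentence ``$(3)$ is a standard diagonalization argument.'' Your analysis goes well beyond this, and both difficulties you isolate are genuine. The degenerate case you flag is an outright counterexample to the lemma as stated: for $f(n)=n+1$ one computes $exp(f)=f$, and then $exp_\alpha(f)=f$ for every $\alpha<\omega_1$ (at limits the catch-up term never exceeds the diagonal term), so $exp_0(f)<^*exp_1(f)$ already fails. Your proposed side hypothesis ($f(m)\ge m+2$ for some $m$, inherited up the hierarchy) is exactly what the successor step needs and is harmless in the paper's application.

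The genuine gap is the limit step of (3), and it is a gap you share with the paper rather than one the paper avoids: your proposed repair does not close it. Strengthening the induction to eventual ordinal-monotonicity $exp_\gamma(f)\le^*exp_{\gamma'}(f)$ runs into the very diagonal non-uniformity you diagnose, one level up: to bound $exp_{\delta_n}(f)(n)$ below by $exp_\alpha(f)(n)$ you need thresholds for the pairs $(\alpha,\delta_n)$ that grow no faster than $n$, and the catch-up term supplies only linear growth, which cannot dominate $exp_\alpha(f)$. In fact no argument can succeed at the paper's level of generality, because (3) is false for badly chosen cofinal sequences. For example, take $\omega_n=n$; give each $\omega\cdot(j+1)$ the legal increasing cofinal sequence whose $m$-th term is $m$ for $m\le (j+1)^2$ and $\omega\cdot j+(m-(j+1)^2)$ afterwards; and give $\omega^2$ the sequence $(\omega^2)_n=\omega\cdot(n+2)$. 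Using $exp_m(f)(m)\ge exp_{m-1}(f)(m-1)+1$, an easy induction gives $exp_{\omega\cdot(n+2)}(f)(n)=exp_n(f)(n)=exp_\omega(f)(n)$, and then $exp_{\omega^2}(f)=exp_\omega(f)$ exactly, so $exp_\omega(f)<^*exp_{\omega^2}(f)$ fails. A correct proof of (3) therefore must restrict the fundamental sequences themselves (e.g., to canonical, coherently chosen ones satisfying a Bachmann-type property guaranteeing $exp_\alpha(f)(n)\le exp_{\delta_n}(f)(n)$ for all $n$ past the first index with $\delta_n\ge\alpha$), rather than strengthen the induction hypothesis over arbitrary sequences. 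Your instinct that this is ``precisely the delicate point'' is right; the conclusion to draw is that the statement, not just the proof, needs this amendment.
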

                \begin{proof}
                For $(1)$, we proceed by induction. For $\alpha=0$, $exp_0(f)=f$ is increasing. Suppose $exp_\alpha(f)$ is increasing, then for every $n<\omega$, $exp_\alpha(f)(n)>n$. For $\alpha+1$, let $n<\omega$. Since $exp_\alpha(f)$ is increasing, $$exp_{\alpha+1}(f)(n+1)=exp_{\alpha}(f)(exp_{\alpha+1}(f)(n))>exp_{\alpha+1}(f)(n).$$ For limit $\delta$,  is clear from the definition that $exp_\delta(f)$ is increasing. Also (2) is proven by induction. The base case is $exp_0(f)=f\leq g=exp_0(g)$. Suppose this was true for $\alpha$, and let us prove by induction on $n<\omega$. The base again is $$exp_{\alpha+1}(f)(0)=exp_\alpha(f)(0)\leq exp_\alpha(g)(0)\leq exp_{\alpha+1}(g)(0)$$
                Suppose that $exp_{\alpha+1}(f)(n)\leq exp_{\alpha+1}(g)(n)$, then by $(1)$ and the induction hypothesis
                $$exp_{\alpha+1}(f)(n+1)=exp_{\alpha}(f)(exp_{\alpha+1}(f)(n))\leq exp_{\alpha}(f)(exp_{\alpha+1}(g)(n))$$
                $$\leq exp_\alpha(g)(exp_{\alpha+1}(g)(n))=exp_{\alpha+1}(g)(n+1)$$
                At limit stages $\delta$, by the induction hypothesis, $$exp_\delta(f)(n)=exp_{\delta_n}(f)(n)\leq exp_{\delta_n}(g)(n)=exp_{\delta}(g)(n).$$
                Finally, $(3)$ is a standard diagonalization argument.
                \end{proof}
        \begin{definition}\label{Def: almost-rapid}
            For $\alpha<\omega_1$, we say that an ultrafilter $U$ is  \textit{$
            \alpha$-almost-rapid} if for every function $f\in\omega^\omega$ there is $X\in U$ such that $exp_\alpha(f_X)\geq^*f$, where $f_X$ is the increasing enumeration of $X$. 
        \end{definition}
        \begin{remark}
            By strengthening the  above definition, we may require that $exp_{\alpha}(f_X)\geq f$. However, this strengthening turns out to be an equivalent definition.
        \end{remark}
            Note that $0$-almost-rapid is just rapid, and by $(3)$ of the previous lemma, if $\beta\leq\alpha$ then $\beta$-almost-rapid implies $\alpha$-almost-rapid. We call $U$ \textit{almost-rapid} if it is $1$-almost-rapid.
            \begin{proposition}
                If $U$ is $\alpha$-almost-rapid implies $U\geq_T\omega^\omega$
            \end{proposition}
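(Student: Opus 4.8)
The plan is to produce a single monotone cofinal map
$\Phi\colon (U,\supseteq)\to(\omega^\omega,\le)$, where $\le$ denotes everywhere domination. Since $\omega^\omega$ is a complete directed order, exhibiting such a map witnesses $\omega^\omega\le_T U$, i.e.\ $U\ge_T\omega^\omega$, exactly in the monotone-cofinal-map formalism used throughout Section~\ref{Sec: Sums and prem}. The natural candidate, dictated by Definition~\ref{Def: almost-rapid}, is
$$\Phi(X)=exp_\alpha(f_X),$$
where $f_X$ is the increasing enumeration of $X$. This is well defined on all of $U$ because an $\alpha$-almost-rapid ultrafilter is non-principal, so every $X\in U$ is infinite and $f_X\in\omega^\omega$.

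First I would record the elementary enumeration fact that if $Y\subseteq X$ are infinite subsets of $\omega$, then $f_X(n)\le f_Y(n)$ for every $n$, since deleting elements of a set can only increase each entry of its increasing enumeration; thus $f_X\le f_Y$ pointwise. As $f_X$ and $f_Y$ are increasing, Lemma~\ref{Lemma: Prop of exp}(2) applies and yields $exp_\alpha(f_X)\le exp_\alpha(f_Y)$, that is $\Phi(X)\le\Phi(Y)$. Because $X\supseteq Y$ is precisely the relation $X\le_{(U,\supseteq)}Y$, this shows $\Phi$ is order-preserving. For cofinality, let $g\in\omega^\omega$ be arbitrary; $\alpha$-almost-rapidness supplies $X\in U$ with $exp_\alpha(f_X)\ge^* g$, and invoking the strengthened (everywhere-domination) form of the definition guaranteed by the Remark following Definition~\ref{Def: almost-rapid}, I may take $X$ with $exp_\alpha(f_X)\ge g$ pointwise. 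Hence $\Phi(X)\ge g$, so the image of $\Phi$ is cofinal in $(\omega^\omega,\le)$, and the proof is complete.

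I expect no genuine obstacle here: essentially all the content is packaged into Lemma~\ref{Lemma: Prop of exp}(2), whose monotonicity of $exp_\alpha$ in the function argument is exactly what forces $\Phi$ to be monotone, and the remaining care is only in tracking the reverse-inclusion order on $U$ against everywhere domination on $\omega^\omega$. If one prefers to avoid appealing to the Remark, the cofinality step still goes through using only $\ge^*$, since $(\omega^\omega,\le)\equiv_T(\omega^\omega,\le^*)$; either way the conclusion $U\ge_T\omega^\omega$ follows at once.
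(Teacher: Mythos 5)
Your main argument is correct and is essentially identical to the paper's own proof: the same map $X\mapsto exp_\alpha(f_X)$, monotonicity obtained from the enumeration fact $Y\subseteq X\Rightarrow f_Y\geq f_X$ together with Lemma \ref{Lemma: Prop of exp}(2) (which you cite correctly; the paper's text cites item (3), evidently a typo), and cofinality from $\alpha$-almost-rapidity in its strengthened, everywhere-domination form supplied by the Remark following Definition \ref{Def: almost-rapid}.

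The one genuine error is your closing fallback: the claim $(\omega^\omega,\le)\equiv_T(\omega^\omega,\le^*)$ is false, so the appeal to the Remark is not optional. The least cardinality of an unbounded set is a Tukey invariant; in $(\omega^\omega,\le)$ the constant functions form a countable unbounded family, so this invariant equals $\aleph_0$, whereas in $(\omega^\omega,\le^*)$ it equals $\mathfrak{b}\geq\aleph_1$. Consequently $(\omega^\omega,\le^*)<_T(\omega^\omega,\le)$ strictly, and a monotone map on $U$ whose range is merely $\le^*$-cofinal witnesses only $U\geq_T(\omega^\omega,\le^*)$, which does not yield the proposition: throughout the paper $\omega^\omega$ denotes the everywhere-domination order (see the Notations and the definition of products in Section \ref{Sec: Sums and prem}). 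If you wish to avoid citing the Remark, prove it inline instead: given $g$ and $X\in U$ with $exp_\alpha(f_X)(n)\geq g(n)$ for all $n\geq N$, pass to $Y=X\setminus k\in U$ (using non-principality) where $k\geq\max\{g(i)\mid i<N\}$; then $f_Y\geq f_X$ pointwise, so $exp_\alpha(f_Y)\geq exp_\alpha(f_X)$ by Lemma \ref{Lemma: Prop of exp}(2), and since $exp_\alpha(f_Y)$ is increasing with $exp_\alpha(f_Y)(0)=f_Y(0)=\min Y\geq k$, the finitely many coordinates $n<N$ are also dominated. (The same implicit non-principality convention is what makes your map well defined on all of $U$; the paper assumes it tacitly as well.)
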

\begin{proof}
Consider the map $X\mapsto exp_\alpha(f_X)$. We claim that it is monotone and cofinal.
   First, suppose that $X\subseteq Y$, then the natural enumerations $f_X,f_Y$ of $X,Y$ (resp.) satisfy $f_X\geq f_Y$. Then by Lemma \ref{Lemma: Prop of exp}(3) $exp_\alpha(f_X)\geq exp_\alpha(f_Y)$. The map above is cofinal by the $\alpha$-almost rapidness of $U$.
\end{proof}
Rapid ultrafilters are characterized by the following property \cite{MillerNoQPoints}: An ultrafilter over $\omega$ is rapid if and only if for every sequence $\l P_n \mid n<\omega\r$ of finite subsets of $\omega$, there is $X\in U$ such that for every $n<\omega$, $|X\cap P_n|\leq n$. The proposition below provides an analogous characterization of almost-rapid ultrafilters.
    \begin{proposition}
        The following are equivalent:
        \begin{enumerate}
            \item $U$ is almost-rapid.
            \item For any sequence $\l P_n\mid n<\omega\r$ of sets, such that $P_n$ is finite, there is $X\in U$ such that for each $n<\omega$, $exp(f_X)(n-1)\geq |X\cap P_n|$ (where $exp(f_X)(-1)=0$).
            \end{enumerate}
    \end{proposition}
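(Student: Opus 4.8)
The plan is to reduce both implications to a single recursive identity for the exponent function. Note that for any infinite $X\subseteq\omega$ with increasing enumeration $f_X$ one has, under the convention $exp(f_X)(-1)=0$,
$$exp(f_X)(n)=f_X\big(exp(f_X)(n-1)\big)\qquad(n\geq 0),$$
which for $n=0$ reads $exp(f_X)(0)=f_X(0)$ and for $n\geq 1$ is just the definition of $exp$. Writing $k_n:=exp(f_X)(n-1)$, the number $exp(f_X)(n)$ is therefore the $k_n$-th element of $X$, and since $f_X$ is increasing I would record the key equivalence: for every $m\in\omega$ (identifying $m$ with $\{0,\dots,m-1\}$),
$$|X\cap m|\leq exp(f_X)(n-1)\iff exp(f_X)(n)\geq m.$$
Indeed $\{j:f_X(j)<m\}$ is an initial segment of length $|X\cap m|$, so $|X\cap m|\leq k_n$ holds exactly when $f_X(k_n)=exp(f_X)(n)\geq m$. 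This equivalence is the whole point: the self-referential bound $exp(f_X)(n-1)$ in clause $(2)$ is calibrated so that ``few elements of $X$ below a threshold'' is the same as ``$exp(f_X)$ dominates that threshold.''

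For $(2)\Rightarrow(1)$ I would, given $f\in\omega^\omega$, apply $(2)$ to the initial segments $P_n=\{0,\dots,f(n)-1\}$. This produces $X\in U$ with $|X\cap f(n)|\leq exp(f_X)(n-1)$ for every $n$, and the displayed equivalence (with $m=f(n)$) turns this into $exp(f_X)(n)\geq f(n)$ for all $n$, that is $exp(f_X)\geq f$. Hence $U$ is almost-rapid, in fact in the everywhere-form.

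For $(1)\Rightarrow(2)$ I would first pass from arbitrary finite sets $\langle P_n\mid n<\omega\rangle$ to initial segments by setting $g(n)=1+\max\bigcup_{m\leq n}P_m$, so that $P_n\subseteq\{0,\dots,g(n)-1\}$ and $|X\cap P_n|\leq|X\cap g(n)|$ for all $X$. Applying almost-rapidness to the target function $g$ (in the equivalent everywhere-form from the Remark following Definition \ref{Def: almost-rapid}) gives $X\in U$ with $exp(f_X)(n)\geq g(n)$ for every $n$; the displayed equivalence (with $m=g(n)$) then yields $|X\cap g(n)|\leq exp(f_X)(n-1)$, and therefore $|X\cap P_n|\leq exp(f_X)(n-1)$, which is exactly $(2)$. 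Here I use that $exp(f_X)$ is increasing (Lemma \ref{Lemma: Prop of exp}(1)).

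The hard part is conceptual rather than computational: recognizing that the awkward, implicitly defined bound $exp(f_X)(n-1)$ in $(2)$ is precisely the quantity that the recursion $exp(f_X)(n)=f_X(exp(f_X)(n-1))$ converts into the clean domination statement $exp(f_X)\geq g$. Once the displayed equivalence is in place, both directions are immediate. The only remaining care is bookkeeping at $n=0$, where the convention $exp(f_X)(-1)=0$ forces $X\cap P_0=\emptyset$, matched by $exp(f_X)(0)=f_X(0)\geq g(0)>\max P_0$, together with the use of the everywhere-form of the definition so that the inequalities hold at every $n$ rather than merely $\geq^*$; alternatively one shrinks $X$ by finitely many of its least elements, which only increases $exp(f_X)$ and decreases each $|X\cap P_n|$.
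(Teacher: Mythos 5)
Your proof is correct and follows essentially the same route as the paper: both directions rest on the same counting observation that $exp(f_X)(n)=f_X(exp(f_X)(n-1))\geq m$ exactly when at most $exp(f_X)(n-1)$ elements of $X$ lie below $m$, applied after dominating the maxima of the $P_n$'s (your $g$ versus the paper's $f(n)=\max(P_n)+1$). Isolating this as an explicit biconditional is a cleaner packaging, but the mathematical content coincides with the paper's argument.
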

    \begin{proof}

        \underline{$(1)\Rightarrow(2)$:} Suppose that $U$ is almost rapid, and let $\l P_n\mid n<\omega\r$ be a sequence as above. Let $f(n)=\max(P_n)+1$. By $(1)$, there is $X\in U$ which is obtained by almost-rapidness, namely $exp(f_X)(0)=f_X(0)=\min(X)> f(0)>\max(P_0)$ and therefore $X\cap P_0=\emptyset$. Next, $exp(f_X)(1)=f_X(f_X(0))>f(1)>\max(P_1)$ hence $|P_1\cap X|\leq f_X(0)=exp(f_X)(0)$. In general $f_X(exp(f_X)(n))=exp(f_X)(n+1)>f(n+1)>\max(P_{n+1})$ and therefore $|X\cap  P_{n+1}|\leq exp(f_X)(n)$.
        
        \underline{$(2)\Rightarrow (1)$:} Let $f$ be any function. Let $P_n=f(n)$. Then by $(2)$, there is $X$ such that $|X\cap f(n)|\leq exp(f_X)(n-1)$. In particular, $X\cap f(0)=\emptyset$ and therefore $exp(f_X)(0)=f_X(0)=\min(X)\geq f(0)$. In general, $|X\cap f(n)|\leq exp(f_X)(n-1)$ and therefore $f(n)<f_X(exp(f_X)(n-1))=exp(f_X)(n)$. 
    \end{proof}
    \begin{theorem}\label{Thm: Separating almost rapid from rapid}
        Assume CH. Then there is a $p$-point which is almost-rapid but not rapid
    \end{theorem}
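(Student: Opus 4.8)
The plan is to build $U$ by a transfinite recursion of length $\omega_1$ under CH, producing a $\subseteq^*$-decreasing sequence $\langle A_\alpha \mid \alpha<\omega_1\rangle$ of infinite subsets of $\omega$ generating an ultrafilter. Fixing enumerations of all subsets, all $g:\omega\to\omega$, and all $f\in\omega^\omega$, at stage $\alpha$ we handle one of three requirements: (i) decide a subset $Z_\alpha\subseteq\omega$, so the final filter is an ultrafilter; (ii) given $g_\alpha$, thin $A_\alpha$ so that $g_\alpha$ becomes finite-to-one or constant on $A_{\alpha+1}$, which will make $U$ a $p$-point; (iii) given $f_\alpha$, thin $A_\alpha$ to $A_{\alpha+1}$ with $\exp(f_{A_{\alpha+1}})\geq^* f_\alpha$. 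By Lemma \ref{Lemma: Prop of exp} the map $X\mapsto\exp(f_X)$ is monotone, so the witnesses from (iii) make it cofinal; hence $U$ is $1$-almost-rapid and in particular $U\geq_T\omega^\omega$. Throughout we maintain a density invariant that will force $U$ to fail rapidity.

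Fix the tower scale $t_0=0$, $t_{n+1}=2^{t_n}$, put $\tau(n)=t_n$, and fix pairwise disjoint clusters $K_m\subseteq[t_{m-1},t_m)$ with $|K_m|=2^m$. The invariant maintained at every stage is: there is an infinite set $N_\alpha\subseteq\omega$ of \emph{good levels} such that for each $m\in N_\alpha$ we have $|A_\alpha\cap t_m|>m$, i.e. $f_{A_\alpha}(m)<\tau(m)$. Since a superset only increases these counts, every $X\in U$ inherits $f_X(m)<\tau(m)$ for infinitely many $m$; thus no $X\in U$ satisfies $f_X\geq^*\tau$, so $U$ is not rapid (not $0$-almost-rapid). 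The exponential cluster size gives robustness: in step (i), for each large good $m$ at least one of $|K_m\cap Z_\alpha|$, $|K_m\setminus Z_\alpha|$ exceeds $m$ (as $2^m>2m$), so by pigeonhole one of $A_\alpha\cap Z_\alpha$, $A_\alpha\setminus Z_\alpha$ keeps infinitely many good levels; we take that side, deciding $Z_\alpha$ while preserving the invariant. In step (iii) we keep an infinite subfamily of the clusters intact and make $A_{\alpha+1}$ extremely sparse off the clusters; since the clusters are rare and low while the iterates $f_{A_{\alpha+1}}^{(n+1)}(0)$ defining $\exp(f_{A_{\alpha+1}})$ eventually climb into the sparse tail, we can force $\exp(f_{A_{\alpha+1}})\geq^* f_\alpha$ with the invariant intact.

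The delicate step is (ii). Given $g=g_\alpha$, set $M_v=\{m\in N_\alpha:|K_m\cap g^{-1}(v)|>m\}$ for each value $v$. In the \emph{concentrated case}, some $M_v$ is infinite: then $A_{\alpha+1}=A_\alpha\cap g^{-1}(v)$ makes $g$ constant, and the sets $K_m\cap g^{-1}(v)$ for $m\in M_v$ witness the invariant. In the \emph{spread case}, every $M_v$ is finite: a greedy selection then produces an infinite $N'\subseteq N_\alpha$ and subclusters $K_m'\subseteq K_m$ with $|K_m'|>m$ meeting each $g$-fiber finitely, so that $A_{\alpha+1}=\bigcup_{m\in N'}K_m'$ together with a finite-to-one selector of the remainder of $A_\alpha$ makes $g$ finite-to-one while preserving the invariant. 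The exponential size of the clusters is exactly what makes this dichotomy exhaustive: if most of the mass of each $K_m$ fell into $g$-fibers that are infinite at this stage, pigeonhole over those finitely many fibers would place more than $m$ of its points in a single fiber for infinitely many $m$, i.e. we would be in the concentrated case. \textbf{This reconciliation of the $p$-point requirement, which forces thinning to finite-to-one sets, with the density invariant, which forbids excessive thinning, is the main obstacle}, and it is resolved precisely by the concentrated/spread dichotomy above.

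At a limit stage $\delta<\omega_1$, of cofinality $\omega$, fix $\delta_n\nearrow\delta$ and build a pseudo-intersection $A_\delta\subseteq^* A_{\delta_n}$ in the usual way, arranging that $A_\delta$ agrees with $A_{\delta_n}$ below $t_{m_n}$ for an increasing sequence $m_n$ of good levels of $A_{\delta_n}$; this keeps $\{m_n\}$ good for $A_\delta$, so the invariant passes to limits. This completes the recursion: the enumeration of all $g$ makes $U$ a $p$-point, the enumeration of all $f$ makes $U$ almost-rapid (hence $U\geq_T\omega^\omega$), and the density invariant makes $U$ non-rapid, as required. I expect the only genuine work to be the greedy selection in the spread case and the bookkeeping ensuring that each fixed level's subcluster is shrunk only finitely often before being dropped from $N_\alpha$; both become routine once the exponential cluster sizes are fixed.
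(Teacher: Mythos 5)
Your outline has a genuine gap, and it traces back to the density invariant being too weak. The condition ``$|A_\alpha\cap t_m|>m$ for infinitely many $m$'' is positivity with respect to the family $\{A \mid |A\cap t_m|\le m \text{ for all but finitely many } m\}$, and this family is \emph{not an ideal}: it is not closed under the union of two sets. Consequently the invariant cannot survive even two-valued partitions, and no dichotomy can be exhaustive. Concretely, nothing in your invariant prevents the recursion from reaching a stage where $A_\alpha$ has exactly two points in each cluster $K_m$ (the invariant holds, since $|A_\alpha\cap t_m|=2m>m$); now let $g$ assign to the two points of each $K_m$ the values $1$ and $2$. Every $B\subseteq A_\alpha$ on which $g$ is constant has at most one point per cluster, hence $|B\cap t_m|\le m$ for \emph{every} $m$; and every $B\subseteq A_\alpha$ on which $g$ is finite-to-one is finite, since $g$ takes only two values on $A_\alpha$. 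So neither branch of your concentrated/spread dichotomy (nor any other choice) can produce $A_{\alpha+1}$ satisfying the invariant: the recursion halts. This is not an artificial configuration --- your own spread case leaves subclusters of size barely $m+1$ at a possibly sparse set of levels, after which a two-valued map halving each subcluster defeats both cases by the same computation. Two further defects: in the spread case, subclusters ``meeting each $g$-fiber finitely'' do not make $g$ finite-to-one on their union (one fiber can meet infinitely many subclusters, once each --- exactly what happens above); and the invariant is not mod-finite robust, so even if maintained for all $A_\alpha$ it does not pass to arbitrary $X\in U$ (which are only almost-supersets of the $A_\alpha$'s), and non-rapidity of $U$ does not follow when the counts are tight.

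The paper's proof shows what is needed to repair this: the non-rapidity invariant must be positivity with respect to a genuine \emph{ideal}, with superlinear slack. It fixes $P_n=\{1,\dots,2^n\}$ and $I=\{A\mid \exists k\,\forall n\ |A\cap P_n|\le k\cdot n\}$, and maintains that every set in the filter under construction is $I$-positive (for every $k$ there is $n$ with $|A\cap P_n|>kn$), starting from $V_0=I^*$. Because $I$ is an ideal, positivity automatically passes to one side of any finite partition --- precisely the room that the linear threshold ``$>m$'' lacks, since a two-valued map can halve density and $2m>m$ has no slack, whereas ``$>kn$ for every $k$'' does. Moreover, the paper never performs a fiber dichotomy at all: the $p$-point requirement is met by closing the ultrafilter under pseudo-intersections, using a single key lemma asserting that every $\subseteq$-decreasing sequence of $I$-positive sets has an $I$-positive pseudo-intersection $B$ with $\exp(f_B)>f$ for any prescribed $f$; once the final ultrafilter is closed under pseudo-intersections of countable subfamilies, the constant-or-finite-to-one property follows formally (apply it to the sets $\omega\setminus\bigcup_{i\le n}g^{-1}(i)$ when no fiber is in the ultrafilter). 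Your skeleton could be salvaged by replacing your invariant with $I$-positivity and your step (ii) with this pseudo-intersection lemma, but as written both the invariant and the dichotomy fail.
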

    \begin{proof}
        Let $P_n=\{1,...,2^n\}$. Let $I=\{A\subseteq\omega\mid \exists k\forall n, \ |A\cap P_n|\leq k\cdot n\}$. Then $I$ is a proper ideal on $\omega$. Suppose that $\l P_n\mid n<\omega\r$ is not a counterexample for $U$ being rapid, then there is a set $X\in U$ such that $|X\cap P_n|\leq n$ for every $n$ and therefore $X\in I$. Hence, as long as we have $U\subseteq I^+$, $U$ will not be rapid. 
        Note that
        $$A\in I^+\text{ iff for every }k, \ \text{there is }n_k\text{ such that }|A\cap P_{n_k}|>kn_k.$$ Or equivalently, $n\mapsto |A\cap P_n|$ is not asymptotically bounded by a linear function of $n$. The following is the key lemma for our construction:
        \begin{lemma}
            Suppose that $\l A_n\mid n<\omega\r\subseteq I^+$ is $\subseteq$-decreasing, and $f:\omega\rightarrow\omega$. Then there is $B\subseteq \omega$ such that
            \begin{enumerate}
                \item $B\subseteq^* A_n$ for every $n$.
                \item $B\in I^+$.
                \item $exp(f_B)>f$.
            \end{enumerate} 
        \end{lemma}
        \begin{proof}
            Suppose without loss of generality that $f$ is increasing. In particular, $f(k)\geq k$. Consider $f(1)$, find $2<n_1$ so that $$  |A_1\cap P_{n_1}|>(f(1)+2)\cdot n_1$$ such an $n_1$ exists as $A_1$ is positive and taking $k=f(1)+2$. Find $a_0,...,a_{n_1+1}$ such that
            \begin{enumerate}
                \item $f(0),n_1+1<a_0<a_1<....<a_{n_1+1}$.
                \item $a_{n_1+1}>f(1)$.
            \item $a_0,a_1,...,a_{n_1+1}\in A_1\cap P_{n_1}$.  
            \end{enumerate}   It is possible to find such elements as $$|A_1\cap P_{n_1}\setminus \{0,...,n_1+1\}|>(f(1)+2)n_1-(n_1+2)\geq 3n_1-n_1-2=2n_1-2\geq n_1+1.$$ So there are $n_1+1$ elements in $A_1\cap P_{n_1}$ greater than $n_1+1$. Since $|A_1\cap P_{n_1}|>f(1)$, we can also make sure that the $n_1+1$ element we choose is above $f(1)$. This way, we have guaranteed that:
            \begin{enumerate}
                \item $f(0)<f(1)<a_0$.
                \item $a_{a_0}$ was not defined yet (!), but as long as the sequence is increasing, $a_{a_0}>f(1)$.
                \item For $k=1$, there is $n_1$ such that $|B\cap P_{n_1}|>n_1$
            \end{enumerate}
            Now consider $f(2)$ and find $n_2>2,a_{n_1+1}$ so that $$|A_2\cap P_{n_2}|>(f(2)+2)(a_{n_1+1}+1)n_2,$$ we find
            \begin{enumerate}
                \item $n_1+1+2n_2+1<a_{n_1+2}<...<a_{n_1+1+2n_2+1}$.
                \item $f(2)<a_{n_1+1+2n_2+1}$.
                \item $a_{n_1+2},...,a_{n_1+1+2n_2+1}\in A_2\cap P_2$.
            \end{enumerate}
            This is possible to do since
            $$|A_2\cap P_{n_2}\setminus \{0,...,n_1+2n_2+2\}|>(f(2)+2)(a_{n_1+1}+2)n_2-(n_1+2n_2+3)>$$
            $$>8n_2-(3n_2+3)=6n_2-3>2n_2+1$$
            So we can find $a_{n_1+2},...,a_{n_1+1+2n_2+1}$ above $n_1+1+2n_2+1$  (and therefore also above $a_{n_1+1}$). We can also make sure that the last element we pick is above $f(2)$. This way we ensured the following:
            \begin{enumerate}
                \item As we observed, $a_{a_0}$ was not defined in the first round (and might not be defined in the second round as well) and therefore (a possibly future) $a_1':=a_{a_0}>n_1+1+2n_2+1$. Thus a future $a'_2:=a_{a_{a_0}}>a_{n_1+1+2n_2+1}>f(2)$.
                \item For $k=2$, there is $n_2$ such that $|B\cap P_{n_2}|>2n_2$.
            \end{enumerate}
            In general suppose we have defined $n_1<n_2<....<n_k$ 
            and $a_0,...,a_{\sum_{i=1}^kin_i+1}$ and such that $a'_{k-1}>\sum_{i=1}^kin_i+1$. Then we find $n_{k+1}>k+1,a_{\sum_{i=1}^kin_i+1}$ such that $|A_{k+1}\cap P_{n_{k+1}}|>3(k+1)(f(k+1)+1)n_{k+1}$. We now define $$a_{(\sum_{i=1}^kin_i+1)+1},a_{(\sum_{i=1}^kin_i+1)+2}...,a_{\sum_{i=1}^{k+1}in_i+1}$$ (that is $(k+1)n_{k+1}+1$ many elements) so that:
            \begin{enumerate}
                \item $\sum_{i=1}^{k+1}in_i+1<a_{(\sum_{i=1}^kin_i+1)+1}<...<a_{\sum_{i=1}^{k+1}in_i+1}$,
                \item $a_{\sum_{i=1}^{k+1}in_i+1}>f(k+1)$.
                \item $a_{(\sum_{i=1}^kin_i+1)+1},...,a_{\sum_{i=1}^{k+1}in_i+1}\in A_{k+1}\cap P_{n_{k+1}}$.
            \end{enumerate}
            To see that such $a$'s exists, 
note that $$|A_{k+1}\cap P_{n_{k+1}}\setminus\{0,...,\sum_{i=1}^{k+1}in_i+1\}|>3(k+1)(f(k+1)+1)n_{k+1}-(\sum_{i=1}^{k+1}in_i+1)-1$$
$$>3(k+1)(f(k+1)+1)n_{k+1}-((k+1)n_{k+1}+1)-(\sum_{i=1}^{k}in_i+1)-1$$
$$(k+1)(3f(k+1)+3)n_{k+1}-2((k+1)n_{k+1}+1)$$
$$>(k+1)(3f(k+1)+1)n_{k+1}>(k+1)n_{k+1}+1$$
Hence we can find $(k+1)n_{k+1}+1$-many elements in $A_{k+1}\cap P_{k+1}$ above $\sum_{i=1}^{k+1}in_i+1$. Also, since $|A\cap P_n|>f(k+1)$ we can make sure that $a_{\sum_{i=1}^{k+1}in_i+1}>f(k+1)$.
            This way we ensure that:
            \begin{enumerate}
                \item Since $a_{a'_{k-1}}$ was not previously defined in previous rounds, $a'_{k}:=a_{a'_{k-1}}>\sum_{i=1}^{k+1}in_i+1$ and $a_{a'_k}$ has not been defined yet. Hence a future $a'_{k+1}:=a_{a'_k}>f(k+1)$.
                \item $|B\cap P_{n_{k+1}}|>(k+1)n_{k+1}+1$.
            \end{enumerate}            
            Set $B=\{a_n\mid n<\omega\}$. So by the construction, for every $k$ there is $n_k$ such that $|B\cap P_{n_k}|>kn_k$.  Hence $B\in I^+$. Also, note that $f_B(n)=a_n$ since the $a_n$'s are increasing. By the construction and definition of $exp(f)$, $exp(f_B)(n)=a'_n>f(n)$. Finally, note that for each $n$, there is $k$ such that for every $k'\geq k$, $a_{k'}\in A_m$ for some $m\geq n$. Since the sequence of $A_n$'s is $\subseteq$-decreasing, $a_{k'}\in A_n$. We conclude that $B\setminus A_n\subseteq \{a_0,...,a_k\}$.
        \end{proof}
        Now for the construction of the ultrafilter. Enumerate $P(\omega)=\l X_\alpha\mid \alpha<\omega_1\r$, and $P(\omega)^{\omega}=\l \vec{A}_\alpha\mid \alpha<\omega_1\r$ such that each sequence in $P(\omega)^\omega$ appears cofinaly many times in the enumeration. Also enumerate $\omega^\omega=\l \tau_\alpha\mid\alpha<\omega_1\r$.
        We define a sequence of filters $V_\alpha$ such that:
        \begin{enumerate}
            \item $\beta<\alpha\Rightarrow V_{\beta}\subseteq V_\alpha$.
            \item $V_\alpha\subseteq I^+$.
            \item Either $X_\alpha$ or $\omega\setminus X_\alpha\in V_{\alpha+1}$.
            \item There is $X\in V_{\alpha+1}$ such that $\tau_\alpha<exp(f_X)$.
            \item If $\vec{A}_\alpha\subseteq V_{\alpha}$ then there is a pseudo-intersection $A\in V_{\alpha+1}$.  
        \end{enumerate}
        Let $V_0=I^*$. At limit steps $\delta$ we define $V_\delta=\bigcup_{\beta<\delta}V_\beta$. It is clear that $(1)-(5)$ still holds at limit steps. At successors, given $V_\alpha$, since we have only performed countably many steps so far, there are sets $B_n\in V_\alpha$ such that $V_\alpha=I^*[\l B_n\mid n<\omega\r]$ where $B_n$ is $\supseteq$-decreasing. If either $X_\alpha$ or $\omega\setminus X_\alpha$ is already in $V_\alpha$, we ignore it. Otherwise, we must also have $V_\alpha[X_\alpha]\subseteq I^+$.
        If $\vec{A}_\alpha\not\subseteq V_\alpha[X_\alpha]$ ignore it. Otherwise, enumerate the set
        $$\{B_n\cap X_\alpha\mid n<\omega\}\cup \{\vec{A}_\alpha(n)\mid n<\omega\}\subseteq V_\alpha[X_\alpha]$$ by $\l B'_n\mid n<\omega\r$ and let $C_n=\cap_{m\leq n}B'_m$. We apply the previous lemma to the sequence $\l C_n\mid n<\omega\r$, and $\tau_\alpha$ to find $A^*\subseteq \omega$ such that:
        \begin{enumerate}
            \item $A^*\in I^+$.
            \item $exp(f_{A^*})>\tau_\alpha$.
            \item $A^*\subseteq^* C_n$ for every $n$.
        \end{enumerate}
        Since for every $n<\omega$, there is $n'$ such that $C_{n'}\subseteq \vec{A}_\alpha(n)\cap B_n$, $A^*\subseteq^* \vec{A}_\alpha(n)$, namely $A^*$ is a pseudo intersection of both $\l B_n\mid n<\omega\r$ and $\vec{A}_\alpha$. Also, $A^*$ is a positive set with respect to the ideal $V_\alpha[X_\alpha]$. Otherwise, there is some $A\in I^*$ and $B_n$ such that $A^*\cap (A\cap B_n\cap X_\alpha)=\emptyset$. But then $(A^*\cap B_n\cap X_\alpha)\cap A=\emptyset$ which implies that $A^*\cap B_n\cap X_\alpha\in I$. However, $A^*\subseteq^* B_n\cap X_\alpha$, which implies that $A^*\in I$, contradicting property $(1)$ above in the choice of $A^*$. Hence we can define $V_{\alpha+1}=V_\alpha[X_\alpha,A^*]$ and $(1)-(5)$ hold. 
        
        This concludes the recursive definition. The ultrafilter witnessing the theorem is defined by $V^*=\bigcup_{\alpha<\omega_1}V_\alpha$.
        \begin{proposition}
            $V^*$ is a non-rapid almost-rapid $p$-point ultrafilter. \end{proposition}
        \begin{proof}
            $V^*$ is an ultrafilter since for every $X\subseteq\omega$, there is $\alpha$ such that $X=X_\alpha$ and so either $X_\alpha$ or $\omega\setminus X_\alpha$ are in $V_{\alpha+1}\subseteq V^*$. Also $V^*$ is a $p$-point since if $\l A_n\mid n<\omega\r\subseteq V^*$ then there is $\alpha<\omega_1$ such that $\l A_n\mid n<\omega\r\subseteq V_\alpha$ and by the properties of the enumeration there is $\beta>\alpha$ such that $\vec{A}_\beta=\l A_n\mid n<\omega\r$. This means that in $V_{\beta+1}$ there is a pseudo intersection for the $A_n$'s.
It is non-rapid as $V^*\subseteq I^+$ and, in fact, the sequence $P_n$ witnesses that it is non-rapid. Finally, it is almost rapid since for any function $\tau:\omega\rightarrow\omega$, there is $\alpha$ such that $\tau=\tau_\alpha$ and therefore in $V_{\alpha+1}$ there is a set $X$ such that $exp(f_X)>\tau$.
        \end{proof}

    \end{proof}
    \begin{corollary}
        It is consistent that there is a $p$-point which is not rapid but still above $\omega^\omega$.
    \end{corollary}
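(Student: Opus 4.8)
The plan is to read this consistency statement off directly from Theorem \ref{Thm: Separating almost rapid from rapid}, combined with the proposition established just before it, namely that every $\alpha$-almost-rapid ultrafilter is Tukey above $\omega^\omega$. All the substantive work has already been done in the proof of Theorem \ref{Thm: Separating almost rapid from rapid}; what remains is only to package its conclusion with the appropriate Tukey-theoretic input and to note the consistency of the hypothesis.

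Concretely, I would argue as follows. Since $L\models\mathrm{CH}$, the theory $\mathrm{ZFC}+\mathrm{CH}$ is consistent, so it suffices to exhibit the desired $p$-point under the assumption of CH. Working in a model of CH, Theorem \ref{Thm: Separating almost rapid from rapid} furnishes a $p$-point $V^*$ which is almost-rapid but not rapid. Recall that \emph{almost-rapid} is, by definition, the same as $1$-almost-rapid; hence the proposition asserting that $\alpha$-almost-rapidness implies $U\geq_T\omega^\omega$ applies with $\alpha=1$ and yields $V^*\geq_T\omega^\omega$. Therefore $V^*$ is simultaneously a $p$-point, not rapid, and Tukey above $\omega^\omega$, which is exactly the configuration whose consistency is asserted.

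There is no genuine obstacle at this level of the argument: the only point requiring attention is that the notion of almost-rapidness witnessed by $V^*$ in Theorem \ref{Thm: Separating almost rapid from rapid} coincides with the $1$-almost-rapidness to which the $\geq_T\omega^\omega$ proposition applies, and this is immediate from the definitions. The entire difficulty of the result is absorbed into the CH-construction of $V^*$ inside the proof of Theorem \ref{Thm: Separating almost rapid from rapid}; in particular into the key lemma producing, from any $\subseteq$-decreasing sequence of $I^+$-sets and any target function $f$, a pseudo-intersection $B\in I^+$ with $exp(f_B)>f$, which is what simultaneously secures the $p$-point property, positivity with respect to $I$ (hence non-rapidness), and almost-rapidness.
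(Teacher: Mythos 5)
Your proposal is correct and is exactly the paper's intended argument: the corollary is stated immediately after Theorem \ref{Thm: Separating almost rapid from rapid} precisely because it follows by combining that theorem (under CH, hence consistently) with the proposition that $\alpha$-almost-rapid ultrafilters are Tukey above $\omega^\omega$, applied with $\alpha=1$. Your additional care in noting that almost-rapid means $1$-almost-rapid and that CH is consistent via $L$ only makes explicit what the paper leaves implicit.
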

    \begin{remark}
        CH is not necessary in order to obtain such an ultrafilter, since we can, for example, repeat a similar argument in the iteration of Mathias forcing after we forced the failure of $CH$ and obtain such an ultrafilter. In fact, we conjecture that the construction of Ketonen \cite{Ketonen1976} of a $p$-point from $\mathfrak{d}=\mathfrak{c}$ can be modified to get a non-rapid almost-rapid $p$-point.
    \end{remark}

 \section{questions}\label{Sec:Question}
 We collect here some problems which relate to the work of this paper. The first batch of questions regards the Tukey-type of sums of ultrafilters:
 \begin{question}
    If $\mathbb{P}$ is below $\mathcal{B}(U,V_\alpha)$ does it imply that $\mathbb{P}$ is uniformly below $\mathcal{B}(U,V_\alpha)$? What about the case where $\mathbb{P}$ is an ultrafilter?
\end{question}
\begin{question}
    Is it true in general that $\sum_{U}V_\alpha=\inf\mathcal{B}(U,V_\alpha)$?  
\end{question}
\begin{question}
Is there a nice characterization for the Tukey-type of $\sum_UV_\alpha$ if we assume that $V_0\geq_T V_1\geq_T V_2....$? 
\end{question}
\begin{question}
    Is there a nice characterization for the Tukey-type of $\sum_UV_\alpha$ when the sequence of $V_\alpha$'s is discrete? 
\end{question}
How much of the theory developed here generalizes to measurable cardinals? more concretely:
\begin{question}
Does Lemma \ref{Lemma: increasing v_n's case} hold true for $\sigma$-complete ultrafilters over uncountable cardinals?    
\end{question}

The next type of questions relate to the $I$-pseudo intersection property
\begin{question}
    What is the characterization of the $I$-p.i.p property in terms of Skies and Constellations of ultrapowers from \cite{PURITZ1972215}?
\end{question} 

 \begin{question}
     Is the equivalence of Proposition \ref{Prop: equivalence for simple ideals} true for every ideal $I$?
 \end{question}

 The following addresses the possibility of full commutativity of cofinal types between ultrafilters on $\omega$ and on different cardinals.
 \begin{question}
 Is it provable that for any two ultrafilters $U,V$, $U\cdot V\equiv_T V\cdot U$?\end{question}
\begin{question}
     Is it true that for every two ultrafilters $U,V$ on any cardinals, $U\cdot V\equiv_T V\cdot U$?
 \end{question}
 Let us note that if $U,V$ are $\lambda$ and $\kappa$ ultrafilters respectively, then this holds. To see this, first note that if $\kappa=\lambda\geq_T\omega$, then this is a combination of the results from \cite{TomNatasha} and this paper's main result. Without loss of generality, $\lambda<\kappa$. In which case, $U\cdot V=U\times V^\lambda$. Since $V$ is $\lambda^+$-complete, it is not had to see that $V^\lambda\equiv_T V$, hence $U\cdot V\equiv_T V\cdot U$. On the other hand, since $2^\lambda<\kappa$, every set $X\in V\cdot U$, contained a set of the form $X\times Y$ for $X\in V$ and $Y\in U$. It follows that also $V\cdot U\equiv_T U\times V\equiv_T U\cdot V$.  So the question above is interesting in case we drop the completeness assumption on the ultrafilters.

the results in \S~\ref{section: above omega to omega} suggest that the Tukey-type of $I^\omega$ as an important role in the analysis of non Tukey-top ultrafilters. Note that the same argument which worked for $\omega^\omega$ in Proposition \ref{Prop: Answer to Milovich} works for $I^\omega$, hence leading to the conclusion that if $I$ is an analytic ideal the no non-principal ultrafilter can be Tukey equivalent to $I^\omega$. The following question is more open-ended:
\begin{question}
    How can we force different values of the Tukey-type of $I^\omega$, when $I\not\equiv_T \omega^\omega$?
\end{question}
 Finally, we present a few questions regarding the new class of $\alpha$-almost-rapid ultrafilters.
\begin{question}
Is it true that for every $\alpha<\beta<\omega_1$, the class of $\alpha$-almost-rapid ultrafilters is consistently strictly included in the class of $\beta$-almost-rapid ultrafilters?  
\end{question} 
We conjecture a positive answer to this question and that similar methods to the one presented in Theorem \ref{Thm: Separating almost rapid from rapid} under CH should work.
\begin{question}
    Does $\mathfrak{d}=\mathfrak{c}$ imply that there is a $p$-point which is almost-rapid but not rapid?
\end{question}
Ultimately we would like to understand if such ultrafilters exist in $ZFC$:
    \begin{question}
        Is it consistent that there are no almost-rapid ultrafilters?
    \end{question}
    Following Miller, a natural model would be adding $\aleph_2$-many Laver reals. However, the current argument does not immidietly rule out $\alpha$-almost rapid ultrafilters. 
    \subsection*{Acknowledgment} I would like to thank Gabriel Goldberg for stimulating discussions and for suggesting an example of ultrafilters which eventually led to Proposition \ref{prop: example in full support}. I would also like to thank Slawomir Solecki and Stevo Todorcevic for answering some questions regarding their work on the Tukey order. 
    Finally, a special gratitude goes to Natasha Dobrinen for introducing me to the subject,  a task which took a great amount of her precious time, and for her comments on early drafts of this paper.  
    

     
 
\bibliographystyle{amsplain}
\bibliography{ref}
\end{document}